\setlist{
  font=\normalfont
}
\tikzstyle{label}=[fill=black, draw=black, shape=circle, scale=0.3]
\tikzstyle{green label}=[fill={rgb,255: red,0; green,128; blue,128}, draw=none, shape=circle, scale=0.3]
\tikzstyle{red label}=[fill={rgb,255: red,191; green,0; blue,64}, draw=none, shape=circle, scale=0.3]
\tikzstyle{SIGN}=[fill=yellow, draw=black, shape=rectangle]
\tikzstyle{green line}=[-, draw={rgb,255: red,0; green,128; blue,128}]
\tikzstyle{punteada}=[-, dashed]
\tikzstyle{red line}=[-, draw={rgb,255: red,191; green,0; blue,64}]
\tikzstyle{level}=[-, draw=blue]
\def\textSq#1{%
\begingroup%
\setlength{\fboxsep}{0.3ex}%
\setbox1=\hbox{#1}%
\setlength{\@tempdima}{\maxof{\wd1}{\ht1+\dp1}}%
\setlength{\@tempdimb}{(\@tempdima-\ht1+\dp1)/2}%
\raise-\@tempdimb\hbox{\fbox{\vbox to \@tempdima{%
  \vfil\hbox to \@tempdima{\hfil\copy1\hfil}\vfil}}}%
\endgroup%
}
\def\Sq#1{\textSq{\ensuremath{#1}}}%
\newtheorem*{thma}{Theorem~A}
\newtheorem*{thmb}{Theorem~B}
\newtheorem*{thmc}{Theorem~C}
\newtheorem{theorem}{Theorem}
\numberwithin{theorem}{section}
\newtheorem{lemma}[theorem]{Lemma}
\newtheorem{proposition}[theorem]{Proposition}
\newtheorem{corollary}[theorem]{Corollary}
\newtheorem{claim}{Claim}[theorem]
\newtheorem{fact}[theorem]{Fact}
\theoremstyle{definition}
\newtheorem{definition}[theorem]{Definition}
\newtheorem{remark}[theorem]{Remark}
\newtheorem{notation}[theorem]{Notation}
\newcommand{\seq}[1]{{\langle{#1}\rangle}}
\newcommand{\restr}{\mathord{\upharpoonright}}
\renewcommand{\subset}{\subseteq}
\renewcommand{\epsilon}{\varepsilon}
\renewcommand{\L}{\mathrm{L}}
\newcommand{\cf}{\mathrm{cf}}
\newcommand{\dom}{\mathrm{dom}}
\newcommand{\bb}{\mathbb}
\newcommand{\otp}{\mathrm{otp}}
\newcommand{\nacc}{\mathrm{nacc}}
\newcommand{\acc}{\mathrm{acc}}
\newcommand{\ssup}{\mathrm{ssup}}
\newcommand{\mc}{\mathcal}
\newcommand{\sub}{\mathrm{sub}}
\newcommand{\ra}{\rightarrow}
\newcommand{\Coll}{\mathrm{Coll}}
\newcommand{\Q}{\bb{Q}}
\newcommand{\Z}{\mathbb{Z}}
\renewcommand{\P}{\bb{P}}
\newcommand{\w}{\omega}
\newcommand{\C}{\mathcal{C}}
\newcommand{\Tr}{\mathrm{Tr}}
\newcommand{\bS}{\mathtt{b}S}
\newcommand{\ZFC}{\sf ZFC}
\newcommand{\resh}{{\phncfamily\textphnc{r}}}
\newcommand{\sgn}{\mathrm{sgn}}
\title{Higher walks and squares}
\author{Chris Lambie-Hanson}
\address[Lambie-Hanson]{
Institute of Mathematics, 
Czech Academy of Sciences, 
{\v Z}itn{\'a} 25, Prague 1, 
115 67, Czech Republic
}
\email{lambiehanson@math.cas.cz}
\urladdr{https://clambiehanson.github.io}
\author{Pedro Marun}
\address[Pedro Marun]{
Institute of Mathematics, 
Czech Academy of Sciences, 
{\v Z}itn{\'a} 25, Prague 1, 
115 67, Czech Republic
}
\email{marun@math.cas.cz}
\urladdr{https://pedromarun.github.io}
\keywords{walks on ordinals, square principles, coherence, cohomology}
\subjclass[2020]{03E05, 03E35, 03E55, 03E10}
\thanks{We thank Jeffrey Bergfalk for a number of valuable conversations. Both authors were supported by the Czech Academy of Sciences (RVO 67985840) and the GA\v{C}R project 23-04683S.}
\begin{document}
\begin{abstract}
We continue the development of the theory of higher dimensional walks on ordinals began recently by Bergfalk. In particular we identify natural coherence conditions on higher dimensional $C$-sequences that entail coherence of the resultant higher rho-functions. We also introduce various higher square principles by adding non-triviality conditions to these coherent higher $C$-sequences and investigate basic properties of said square principles. For example, in analogy with the classical case, we prove that these higher square principles abound in the constructible universe but can be forced to fail, modulo large cardinals. Finally, we prove that certain higher rho-functions obtained by walking along higher square sequences exhibit non-triviality in addition to coherence. In particular, it follows that higher square principles on a cardinal $\lambda$ entail certain non-vanishing \v{C}ech cohomology groups for $\lambda$ considered with the order topology.
\end{abstract}
\maketitle

\section{Introduction}

One of the most important tools in the contemporary study of combinatorial set theory 
is Todorcevic's method of \emph{walks on ordinals}. This method, introduced in 
\cite{todorcevic_countable} (see \cite{todorcevic_walks_book} for a book-length treatment), 
has proven transformatively useful in a variety of contexts, and is particularly efficacious 
at the level of $\omega_1$, the first uncountable cardinal.

We will define the walks machinery in more detail below, but for now let us give a general overview. 
To begin, one fixes an uncountable cardinal $\lambda$ and a $C$-sequence 
$\mc{C} = \langle C_\alpha \mid \alpha < \lambda \rangle$.\footnote{See Sections \ref{section: prelim} and \ref{section: squares} below for any undefined notions in this introduction.} 
This then provides a uniform 
method of ``walking'' down from an ordinal $\beta < \lambda$ to any ordinal $\alpha \leq \beta$. 
Namely, we define an \emph{upper trace function} $\Tr : [\lambda]^2 \ra [\lambda]^{<\omega}$ 
recursively by setting, for each $\alpha \leq \beta < 
\lambda$,
\[
  \Tr(\alpha,\beta) = \{\beta\} \cup \Tr(\alpha, \min(C_\beta \setminus \alpha)),
\]
with a boundary condition specifying that $\Tr(\alpha,\alpha) = \{\alpha\}$ for all 
$\alpha < \lambda$. 

This trace function, and its interaction with the $C$-sequence $\mc{C}$, can then be used 
to uncover a variety of interesting and nontrivial combinatorial behavior at the cardinal 
$\lambda$. In this paper, we will particularly be interested in the existence of families of 
functions that are simultaneously \emph{coherent} and \emph{nontrivial}. As the definitions will 
make clear, coherence is a local form of triviality, so nontrivial coherent families will be 
witnesses to \emph{incompactness phenomena}, closely linked to cohomological considerations. 
In fact, (equivalence classes of) such families will comprise the nonzero elements of first 
\v{C}ech cohomology groups of the ordinal $\lambda$, taken with the order topology 
(cf.\ \cite{cohomology_of_the_ordinals}).

To illustrate this phenomenon, we consider here one of the simplest of the combinatorial objects 
derived from the walks apparatus: the \emph{number of steps function} 
$\rho_2 : [\lambda]^2 \ra \omega$ defined by setting $\rho_2(\alpha,\beta) = 
|\Tr(\alpha,\beta)| - 1$ for all $\alpha < \beta < \lambda$. From this one obtains the family 
of functions 
\[
  \Phi(\rho_2) = \langle \rho(\cdot, \beta) : \beta \ra \omega \mid \beta < \lambda \rangle.
\]
It turns out (cf. \cite[\S 6.3 and \S 7.1]{todorcevic_walks_book}) that, if $\mc{C}$ is a 
$\square(\lambda)$-sequence, then $\Phi(\rho_2)$ is coherent and nontrivial \emph{modulo 
locally semi-constant functions}:
\begin{itemize}
  \item (coherence) for all $\alpha < \beta < \lambda$, the function
  \[
    \rho_2(\cdot,\alpha) - \rho_2(\cdot,\beta) \restriction \alpha : \alpha \ra \bb{Z}
  \]
  is locally semi-constant;
  \item (nontriviality) there does not exist a function $\psi:\lambda \ra \bb{Z}$ such that, 
  for every $\alpha < \lambda$, the function
  \[
    \rho_2(\cdot, \alpha) - \psi \restriction \alpha
  \]
  is locally semi-constant.
\end{itemize}
The aforementioned efficacy of the walks machinery at $\omega_1$ is largely due to the fact 
that $\square(\omega_1)$ trivially holds in $\ZFC$. Therefore, $\rho_2$ provides a 
canonical $\ZFC$ instance of incompactness at $\omega_1$. To recover this behavior at higher 
cardinals $\lambda$, one must make assumptions that go beyond $\ZFC$ (namely, one must 
assume some $\square(\lambda)$-like property).

The classical walks on ordinals machinery is inherently \emph{one-dimensional} in nature. For 
instance, the $C$-sequences that serve as inputs and the coherent families of functions that 
are among its most prominent outputs are one-dimensional families of objects, and walking 
along a $C$-sequence on a cardinal $\lambda$ yields insight into the \emph{first} cohomology 
group of $\lambda$. There has been a growing recognition of late that, just as the cardinal 
$\omega_1$ provably exhibits nontrivial one-dimensional combinatorial phenomena, and just as 
these phenomena can be probed via walks on ordinals, so it is the case that the cardinal 
$\omega_n$ for $1 < n < \omega$ provably exhibits nontrivial $n$-dimensional combinatorial 
phenomena, and these phenomena should be legible via a higher-dimensional analogue of the walks 
on ordinals machinery. The beginnings of this recognition were visible already in work of 
Goblot \cite{goblot}, Mitchell \cite{mitchell}, and Osofsky 
\cite{osofsky_continuum, osofsky_cardinality} from around 1970 and has been made explicit in 
recent years, most notably in two works of Bergfalk beginning to develop the theory of 
higher-dimensional walks on ordinals  \cite{bergfalk_alephs, bergfalk2024introductionhigherwalks}.

In this paper, we build upon \cite{bergfalk2024introductionhigherwalks} and continue to develop 
the theory of higher-dimensional walks on ordinals. We focus in particular on Task 5 of that 
paper, and on applying the machinery of $n$-dimensional walks to cardinals greater than $\omega_n$. 
In the classical case of $n = 1$, effectively employing the walks machinery to cardinals
$\lambda > \omega_1$ typically requires making $\square(\lambda)$-type assumptions. Accordingly, 
we begin our story with an exploration of possible higher-dimensional square principles, 
asserting the existence of higher-dimensional $C$-sequences that simultaneously satisfy 
certain \emph{coherence} and \emph{nontriviality} conditions. The coherence condition we 
isolate is motivated by our eventual proof that $n$-dimensional walks along coherent 
$n$-$C$-sequences generate analogues of the function $\rho_2$ yielding coherent families of 
functions. We then isolate three natural nontriviality conditions, in increasing order of 
strength: weak nontriviality, nontriviality, and strong nontriviality. Then, given an 
$n \geq 1$ and a regular uncountable cardinal $\lambda$, we let 
$\Sq{n}^{w}(\lambda)$ (\emph{resp.}\ $\Sq{n}(\lambda)$, $\Sq{n}^{s}(\lambda)$) assert the existence of 
a coherent $n$-$C$-sequence on $\lambda$ that is weakly nontrivial (\emph{resp.}\ nontrivial, 
strongly nontrivial). In the case $n = 1$, all three principles coincide with the classical 
$\square(\lambda)$.

In Section \ref{section: squares}, we provide the formal definitions and begin to develop the 
theory of these higher-dimensional square principles, proceeding largely by analogy with the 
classical one-dimensional setting. For example, we establish all of the following results: if $1 \leq n < \omega$, then $\Sq{n}^s(\omega_n)$ holds, 
$\Sq{n}^s(\lambda)$ fails for all $\lambda < \omega_n$, and, if 
$\mathrm{V} = \mathrm{L}$, then $\Sq{n}^s(\lambda)$ holds for every 
regular uncountable cardinal $\lambda \geq \omega_n$ that is not Mahlo.
In the other direction, we establish a consistency result showing that, consistently, 
$\omega_n$ is the unique regular cardinal $\lambda$ satisfying $\Sq{n}^s(\lambda)$ (or even 
$\Sq{n}^w(\lambda)$:

\begin{thma}
  If there are infinitely many supercompact cardinals, then there is a forcing extension in 
  which, for all $1 \leq n < \omega$ and all regular cardinals $\lambda > \omega_n$, 
  $\Sq{n}^w(\lambda)$ fails.
\end{thma}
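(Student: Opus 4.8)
The plan is to reduce Theorem~A to two ingredients: a $\ZFC$-level result asserting that sufficient compactness at a cardinal $\kappa$ refutes every higher square principle at every regular $\lambda \geq \kappa$; and a forcing construction — an iterated L\'evy collapse of the given supercompacts — after which, for each $n \geq 1$, the cardinal $\omega_{n+1}$ carries exactly this compactness. Since in the resulting model ``$\lambda$ is a regular cardinal larger than $\omega_n$'' is literally the assertion ``$\lambda$ is a regular cardinal $\geq \omega_{n+1}$'', combining the two ingredients (applied with $\kappa = \omega_{n+1}$) yields the theorem.

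For the first ingredient I would prove: \emph{if $\kappa$ is supercompact and $\lambda \geq \kappa$ is regular, then $\Sq{n}^w(\lambda)$ fails for every $n \geq 1$.} This is the higher-dimensional analogue of the classical fact that a supercompact cardinal refutes $\square(\lambda)$ at every regular $\lambda$ above it. Given a coherent $n$-$C$-sequence $\vec{\mc C}$ on $\lambda$, fix an elementary embedding $j \colon V \to M$ with $\crit(j) = \kappa$, $j(\kappa) > \lambda$ and ${}^{\lambda}M \subseteq M$; then $j(\vec{\mc C})$ is a coherent $n$-$C$-sequence on $j(\lambda)$ in $M$, and restricting its entries to the tuples that are cofinal in $\delta := \sup j[\lambda]$ and pulling back along $j$ should produce precisely the kind of global uniformizing object whose non-existence is asserted by weak nontriviality. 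Verifying that this pulled-back object genuinely witnesses weak triviality of $\vec{\mc C}$ — in particular, tracking how the coherence condition isolated in Section~\ref{section: squares} transforms under $j$ and behaves at the ``seam'' $\delta$ — is the genuinely new combinatorial content, but it should be driven by the same bookkeeping as the coherence results of that section.

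For the second ingredient, start from an increasing sequence $\langle \kappa_n \mid 1 \leq n < \omega \rangle$ of supercompacts, first perform the Laver preparation at each $\kappa_n$ (arranged to commute, e.g.\ via an Easton-support product, and working over a ground model of $\GCH$), and then force with the full-support iterated L\'evy collapse $\P = \Coll(\omega_1,{<}\kappa_1) * \dot{\Coll}(\kappa_1,{<}\kappa_2) * \dot{\Coll}(\kappa_2,{<}\kappa_3) * \cdots$, which is countably closed and makes $\kappa_n = \omega_{n+1}$ for every $n$. The assertion to be proved is then that, in the extension $V[G]$, every coherent $n$-$C$-sequence on a regular $\lambda \geq \kappa_n$ is weakly trivial. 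Fixing $n$, $\lambda$ and such a $\vec{\mc C}$, one factors $\P = \P_{\leq n} * \dot\P_{>n}$, where $\P_{\leq n}$ is $\kappa_n$-cc of size $\kappa_n$ and $\P_{>n}$ is $\kappa_n$-directed-closed; via indestructibility together with a standard commutativity argument for the $\kappa_n$-cc and $\kappa_n$-closed parts one recovers, in $V[G]$, an embedding with critical point $\kappa_n$ closed enough to imitate the argument of the first ingredient. Since $\kappa_n = \omega_{n+1}$ is now a successor cardinal, this cannot be a genuine supercompactness embedding of $V[G]$, so for $\lambda = \kappa_m$ ($m \geq n$) one instead runs a Silver-style reflection of $\vec{\mc C}$ to a point below $\kappa_n$ using the residual indescribability of $\kappa_m$, and for $\lambda > \kappa_\omega$ one shows that the ground-model failure of $\Sq{n}^w(\lambda)$ (which holds because $\kappa_n < \lambda$ is supercompact) is preserved by $\P$.

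The main obstacle is precisely this passage from the ground-model embeddings of $\kappa_n$ to the required reflection in $V[G]$. The delicate point is that $\P_{\leq n}$ has size $\kappa_n$ and collapses cofinally in $\kappa_n$, so L\'evy--Solovay does not apply and the ground-model supercompactness embeddings of $\kappa_n$ cannot be lifted through it naively. As in the classical constructions that extract generic supercompactness at small cardinals from L\'evy collapses of supercompacts, the remedy is to factor $j(\P_{\leq n}) \cong \P_{\leq n} * \dot{\Coll}(\kappa_{n-1},[\kappa_n,j(\kappa_n)))$, absorb the first factor into the generic already at hand, and assemble a generic for the residual collapse together with a master condition for the remainder of $j(\P)$ out of the later stages of the iteration — the counting of dense sets that must be met being exactly where the Laver preparation and the $\GCH$ hypotheses earn their keep. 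Fusing this absorption machinery with the $n$-dimensional triviality/coherence combinatorics of the first ingredient, and carrying it out uniformly in $n$ and $\lambda$ so that the final conclusion holds in a single model, is the heart of the argument.
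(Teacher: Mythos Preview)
Your two-ingredient plan has the right shape, but the first ingredient as sketched has a real gap. For $n>1$, pulling back $C'_\delta$ (where $\mc{C}'=j(\mc{C})$ and $\delta=\sup j[\lambda]$) along $j$ does not produce a witness to weak triviality. Weak triviality asks for a club $D'$ with $D'\cap\alpha=C_\alpha$ for every $\alpha\in\acc(D')$, but the coherence condition of Definition~\ref{def: n_c_sequence}(\ref{coherence_condition}) links $C_\alpha$ only to the \emph{$n$-dimensional} entries $C_{\vec\gamma}$, and only for $\alpha\in X(\mc{C})$; it imposes no relation whatsoever between the one-dimensional entries $C_\alpha$ and $C_\beta$. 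The classical threading argument you are imitating relies precisely on the implication $\alpha\in\acc(C_\beta)\Rightarrow C_\beta\cap\alpha=C_\alpha$, which is simply absent here, and there is no reason for $\sup j[\alpha]$ to lie in $X(\mc{C}')$. Your phrase ``restricting its entries to the tuples that are cofinal in $\delta$'' does not pin down an object that fixes this.

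The paper's route around the obstacle is an induction on $n$ that reduces to a \emph{one-dimensional} partial square. Given a coherent $n$-$C$-sequence $\mc{C}$ on $\lambda$, for each $\delta$ of large enough cofinality the derived $(n-1)$-$C$-sequence $\mc{C}^\delta$ is weakly trivial by the inductive hypothesis, witnessed by some club $E_\delta$. One then verifies (this is where the coherence condition actually does work, via Claim~\ref{claim: x_claim}) that the $E_\delta$'s together with the $C_\alpha$'s for $\alpha\in X(\mc{C})$ form a $\square(S)$-sequence in the sense of Definition~\ref{def: partial_square}. Now the failure of $\square(S)$---a purely one-dimensional statement, proved in the extension by a single lift of a supercompactness embedding (Lemma~\ref{lemma: 1_collapse}) plus a product-absoluteness argument (Proposition~\ref{prop: product_thread})---yields a thread $E^*$, and that thread is the desired witness to weak triviality of $\mc{C}$. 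This reduction is the missing idea.

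Your second ingredient is also more elaborate than necessary. The paper uses no Laver preparation: it works directly in the iterated-collapse extension, factors $\P\ast\dot{\Q}$ as $\Coll(\mu^+,{<}\kappa_1)$ followed by a $\kappa_1$-directed closed tail, and proves $\square(S)$ fails there for every suitable $S$. The inductive reduction then runs entirely inside $V[G]$, so the case-splitting you propose (reflection for $\lambda=\kappa_m$, preservation for $\lambda>\kappa_\omega$) and the indestructibility machinery are avoided altogether.
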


Beginning in Section \ref{Walks}, we shift our attention to higher-dimensional walks. We begin 
by recalling the relevant definitions from \cite{bergfalk2024introductionhigherwalks} and 
introducing some fundamental notation before turning to the primary subject of the remainder of 
the paper: higher-dimensional generalizations of $\rho_2$. In 
\cite{bergfalk2024introductionhigherwalks}, Bergfalk identifies a natural function 
$\rho_2^n : [\lambda]^{n+1} \ra \Z$ derived from performing $n$-dimensional walks on an 
$n$-$C$-sequence over a cardinal $\lambda$. If $n = 1$, then $\rho_2^n$ is essentially the 
classical function $\rho_2$, and Bergfalk proves that, if $\lambda = \omega_n$ the 
$n$-$C$-sequence used for walking is ``order-minimal'', then the family
\[
  \Phi(\rho_2^n) = \langle \rho(\cdot, \vec{\gamma}):\gamma_0 \ra \Z \mid \vec{\gamma} \in 
  [\lambda]^n \rangle
\]
is coherent modulo locally semi-constant functions. In addition, he asks for a more general 
condition on the $n$-$C$-sequence being employed that can hold at cardinals greater than 
$\omega_n$ and implies that $\Phi(\rho_2^n)$ is coherent. We prove that the notion of 
coherence isolated in Section \ref{section: squares} suffices for this. Looking forward to 
later sections on nontriviality, we choose not to focus on $\rho_2^n$ itself but rather on an 
\emph{enrichment} of $\rho_2^n$ of the form
\[
  \resh_n : [\lambda]^{n+1} \ra \bigoplus_{[\lambda]^{n-1}} \Z.
\]
If $n = 1$, then $\resh_n$ is again essentially the classical function $\rho_2$. For all 
positive $n$, the function $\resh_n$ will be seen to ``project'' to $\rho_2^n$ in such a way 
that the coherence of $\Phi(\resh_n)$ immediately yields the coherence of $\Phi(\rho_2^n)$. 
The following therefore answers Bergfalk's question from \cite{bergfalk2024introductionhigherwalks}:

\begin{thmb}
  Suppose that $1 \leq n < \omega$, $\lambda$ is a cardinal, and the function $\resh_n$ is 
  derived from performing $n$-dimensional walks along a coherent $n$-$C$-sequence over $\lambda$. 
  Then $\Phi(\resh_n)$ is coherent modulo locally semi-constant functions.
\end{thmb}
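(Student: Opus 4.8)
The plan is to argue by induction on $n \geq 1$. For $n = 1$ the function $\resh_1$ essentially coincides, under the identification $\bigoplus_{[\lambda]^{0}}\Z \cong \Z$, with Todorcevic's function $\rho_2$, so the conclusion is the classical statement recalled in the introduction (cf.\ \cite[\S 6.3 and \S 7.1]{todorcevic_walks_book}); we may therefore assume $n \geq 2$ and that the theorem holds at $n - 1$. Unwinding the definition of coherence for $\Phi(\resh_n)$, what must be shown is that for every $\vec\delta \in [\lambda]^{n+1}$ the alternating difference
\[
  d_{\vec\delta} \;:=\; \sum_{i=0}^{n} (-1)^{i}\,\resh_n\bigl(\cdot,\vec\delta\setminus\{\delta_i\}\bigr)\restriction\delta_0
  \;\colon\; \delta_0 \longrightarrow \bigoplus_{[\lambda]^{n-1}}\Z
\]
is locally semi-constant, where the $i = 0$ summand is the restriction to $\delta_0$ of a function defined on $\delta_1$, and the remaining summands are already defined on $\delta_0$. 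Since the locally semi-constant functions on $\delta_0$ form a subgroup closed under restriction, and since (by the definition recalled in Section~\ref{section: prelim}) local semi-constancy is a property to be verified in a neighbourhood of each point of $\delta_0$, it suffices to fix such a $\vec\delta$ together with a limit ordinal $\eta \leq \delta_0$ and to exhibit $\bar\eta < \eta$ with $d_{\vec\delta}$ constant on $(\bar\eta,\eta)$.

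The engine of the argument is a stabilization lemma for $n$-dimensional walks, and it is here that the coherence hypothesis on the $n$-$C$-sequence is essential; I expect this to be the main obstacle. Roughly, the lemma should assert that for every $\vec\gamma \in [\lambda]^{n}$ and every limit ordinal $\eta \leq \gamma_0$ there is $\bar\eta < \eta$ such that, for all $\xi \in (\bar\eta,\eta)$, the part of the $n$-dimensional trace of the walk along $\vec\gamma$ lying weakly above $\eta$ is a fixed finite set $S_{\vec\gamma,\eta}$, and
\[
  \resh_n(\xi,\vec\gamma) \;=\; e_{\vec\gamma,\eta} \;+\; r_{\vec\gamma,\eta}(\xi),
\]
where $e_{\vec\gamma,\eta} \in \bigoplus_{[\lambda]^{n-1}}\Z$ depends only on $\vec\gamma$ and $\eta$, and where the $\xi$-dependent term $r_{\vec\gamma,\eta}(\xi)$ is, in a fashion depending only on $\vec\gamma$ and $\eta$, a finite-support homomorphic image of the data of the walk from $\xi$ up to $\eta$ (for $n = 1$ one has simply $r_{\vec\gamma,\eta}(\xi) = \rho_2(\xi,\eta)$). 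For $n = 1$ this is the classical fact underlying the coherence of $\Phi(\rho_2)$; for $n \geq 2$ one must propagate it through the recursion defining the $n$-dimensional walk, checking that coherence of the $n$-$C$-sequence forces both the finite sets of ``steps'' generated along the top coordinates of $\vec\gamma$ and the auxiliary $(n-1)$-dimensional walks invoked at each such step to agree, for all sufficiently large $\xi < \eta$, with values determined by a single $\eta$-dependent datum. This is precisely the role played by order-minimality in Bergfalk's analysis of $\omega_n$, and the content of the lemma is that the weaker coherence condition isolated in Section~\ref{section: squares} already suffices.

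Granting the lemma, fix $\vec\delta$ and a limit $\eta \leq \delta_0$ and choose $\bar\eta < \eta$ below all $n+1$ of the thresholds the lemma provides for the tuples $\vec\delta\setminus\{\delta_i\}$ (for $i \geq 1$ the relevant top coordinate is $\delta_0 \geq \eta$, and for $i = 0$ it is $\delta_1 > \delta_0 \geq \eta$, so the lemma applies in every case). On $(\bar\eta,\eta)$ the sum defining $d_{\vec\delta}$ then splits as a constant, namely $\sum_{i}(-1)^{i}e_{\vec\delta\setminus\{\delta_i\},\eta}$, plus a ``tail'' term assembled from the $r_{\vec\delta\setminus\{\delta_i\},\eta}(\xi)$. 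The bookkeeping heart of the matter is to verify, from the recursion relating $\resh_n$ to $\resh_{n-1}$, that this tail term is, modulo a remainder constant on a final segment of $\eta$, a finite $\Z$-linear combination of finite-support homomorphic images of coboundaries of $\Phi(\resh_{n-1})$ at tuples derived from $\vec\delta$ and $\eta$. The point is that the $(-1)^{i}$ signs, together with the compatibility of the stabilized walk data supplied by the lemma, force the genuinely dimension-$n$ contributions to cancel, so that the surviving expression lives at dimension $n-1$, where the inductive hypothesis applies. Since the inductive hypothesis makes those coboundaries locally semi-constant, hence constant on a final segment of $\eta$, a final shrinking of $\bar\eta$ yields that $d_{\vec\delta}$ is constant on $(\bar\eta,\eta)$, completing the induction. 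Finally, post-composing $\resh_n$ with the projection $\bigoplus_{[\lambda]^{n-1}}\Z \to \Z$ transports the coherence of $\Phi(\resh_n)$ to that of $\Phi(\rho_2^n)$, answering Bergfalk's question from \cite{bergfalk2024introductionhigherwalks}.
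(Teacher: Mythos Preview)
Your inductive plan has a genuine gap at the step where you assert that the tail term organizes as a linear combination of coboundaries of $\Phi(\resh_{n-1})$. There is no recursion in the paper expressing $\resh_n$ in terms of $\resh_{n-1}$, and two concrete obstructions stand in the way of producing one. First, the $(n-1)$-$C$-sequence obtained from $\mc{C}$ by restricting to indices of length at most $n-1$ need not be coherent in the sense of Definition~\ref{def: n_c_sequence}(\ref{coherence_condition}): both the set $X(\cdot)$ and the coherence clause refer to clubs indexed at the top dimension, so your inductive hypothesis would not apply to it. Second, when one actually unpacks the tail, the lower-dimensional walks that appear are $(n-2)$-dimensional, not $(n-1)$-dimensional; see Lemma~\ref{simulation}, which shows that an $(m+2)$-dimensional walk from a node of the form $(\xi,\eta_\xi,\alpha,\vec\gamma)$ simulates a truncated $m$-dimensional walk from $(\alpha,\vec\gamma)$. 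Your stabilization lemma is close in spirit to Lemmas~\ref{end_ext_n} and~\ref{nbadNodes}, but those lemmas by themselves do not yield the cocycle structure you need.

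The paper's proof (Theorem~\ref{mainn}, with the $n=3$ case worked out in full as Theorem~\ref{main3}) is not by induction on $n$ but by a direct combinatorial analysis. The engine is Bergfalk's partition lemma (Lemma~\ref{7.12}), which pairs the terminal nodes across the $n+1$ walks $\Tr_n((-1)^i,\alpha,\vec\beta^i)$ so that each pair carries the same ordinal label with opposite sign. One then examines, pair by pair, how the walks from $\xi$ extend beneath these nodes as $\xi \nearrow \alpha$. Terminal nodes are classified as \emph{spectacled} (label of the form $(\alpha,\alpha,\ldots)$) or not, and as descending from a \emph{bad} node (one labeled $(\alpha,\vec\beta')$ with $\vec\beta' \in I(\mc{C})$ and $\alpha \in \acc(C_{\vec\beta'})$) or not; coherence of $\mc{C}$ is invoked precisely at bad nodes, where it gives $C_{\vec\beta'} \cap \alpha = C_\alpha$. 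Most pairs cancel on the nose; the residual mixed case (one member of the pair spectacled and descending from a bad node, its partner not) is dispatched by a finite case analysis together with the $(n-2)$-dimensional simulation of Lemma~\ref{simulation}, which shows the surviving contributions are eventually constant in $\xi$.
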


In Section \ref{section: nontriviality}, 
we address nontriviality. As will be discussed in further detail 
at the end of Section \ref{section: nontriviality}, it seems unlikely that, for $n > 1$, a nontrivial coherent 
$n$-dimensional family of functions taking values in $\bb{Z}$ can be straightforwardly and 
uniformly derived from performing $n$-dimensional walks along an $n$-$C$-sequence over a cardinal 
$\lambda$. Indeed, it remains a major open whether, for $n > 1$, there provably exists a 
nontrivial coherent $n$-family of functions of the form 
\[
  \Phi = \langle \varphi_{\vec{\gamma}} : \gamma_0 \ra \bb{Z} \mid \vec{\gamma} \in [\omega_n]^n 
  \rangle.
\]
This is the primary motivation for our introduction of the $\resh_n$ function, which, for 
$n > 1$, takes values in the larger group $\bigoplus_{[\lambda]^{n-1}} \Z$ rather than 
$\Z$:

\begin{thmc}
  Suppose that $1 \leq n < \omega$, $\lambda$ is a regular uncountable cardinal, and the function 
  $\resh_n$ is derived from performing $n$-dimensional walks along a $\Sq{n}^s(\lambda)$-sequence. 
  Then $\Phi(\resh_n)$ is nontrivial modulo locally semi-constant functions.
\end{thmc}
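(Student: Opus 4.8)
The plan is to argue by contradiction, lifting by one dimension the classical proof that $\Phi(\rho_2)$ is nontrivial whenever one walks along a $\square(\lambda)$-sequence (cf.\ \cite[\S 6.3, \S 7.1]{todorcevic_walks_book}). Since $\resh_1$ is essentially $\rho_2$ and $\Sq{1}^s(\lambda)$ is just $\square(\lambda)$, the case $n = 1$ reduces to the classical theorem, so I will assume $n > 1$. Suppose toward a contradiction that $\Phi(\resh_n)$ is trivial modulo locally semi-constant functions, and fix a witnessing $(n-1)$-dimensional family $\Psi = \langle \psi_{\vec\delta} : \delta_0 \ra \bigoplus_{[\lambda]^{n-1}}\Z \mid \vec\delta \in [\lambda]^{n-1}\rangle$; thus for every $\vec\gamma \in [\lambda]^n$ the function $\resh_n(\cdot,\vec\gamma)$ differs, by a locally semi-constant function on $\gamma_0$, from the alternating sum over the $(n-1)$-subsets $\vec\gamma\setminus\{\gamma_i\}$ of $\vec\gamma$ of the corresponding components $\psi_{\vec\gamma\setminus\{\gamma_i\}}$, each restricted to $\gamma_0$.

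First I would make explicit the internal structure of $\resh_n$ relative to the coherent, strongly nontrivial $n$-$C$-sequence $\vec{\mc C}$ along which it is computed — structure already implicit in the proof of Theorem~B. Classically, $\rho_2(\xi,\beta) = 1$ for every $\xi \in C_\beta$, and more generally $\rho_2(\cdot,\beta)$ is reconstructed interval by interval from $C_\beta$; this is precisely why a trivialization of $\Phi(\rho_2)$ can be converted into a thread through a $\square(\lambda)$-sequence. The analogous point here is that the coordinates of $\resh_n(\cdot,\vec\gamma)$ — which, in contrast to $\rho_2^n$, live in the large group $\bigoplus_{[\lambda]^{n-1}}\Z$ — record, up to a controlled and eventually constant error, how the $n$-dimensional walk from $\vec\gamma$ meets the relevant level sets of $\vec{\mc C}$. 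Securing this is in fact the whole point of passing from $\rho_2^n$ to the enriched function $\resh_n$.

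Next I would read a trivialization of $\vec{\mc C}$ off of $\Psi$. Concretely: project each $\psi_{\vec\delta}$ onto the coordinates of $\bigoplus_{[\lambda]^{n-1}}\Z$ that encode $\vec{\mc C}$, and patch the resulting $(n-1)$-families together along $\vec{\mc C}$, using coherence of $\vec{\mc C}$ to make the pieces fit; this yields a candidate for a higher analogue of a thread through $\vec{\mc C}$. To see that it genuinely trivializes $\vec{\mc C}$, observe that a locally semi-constant function is eventually constant on each level set of $\vec{\mc C}$, so the locally-semi-constant error terms in the defining property of $\Psi$ vanish on a tail of each relevant set; the recursion defining $\resh_n$ then forces the extracted family to cohere with $\vec{\mc C}$ on those tails. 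This contradicts the strong nontriviality of $\vec{\mc C}$ guaranteed by $\Sq{n}^s(\lambda)$.

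The main obstacle is this extraction step, and it has two components. The first is purely combinatorial bookkeeping: pinning down exactly which coordinates of $\resh_n$ and of $\Psi$ see $\vec{\mc C}$, and checking that the coboundary of the $(n-1)$-family $\Psi$ — an alternating sum over $(n-1)$-subsets of an $n$-set — translates, coordinate by coordinate and compatibly across all the relevant index sets at once, into the coboundary relation that a would-be thread of $\vec{\mc C}$ must satisfy. The second, more delicate, is controlling the locally-semi-constant slack: one must check that, running over all $n+1$ arguments of $\resh_n$ simultaneously, the error terms never conspire in a way that would let triviality of $\Phi(\resh_n)$ be strictly weaker than triviality of $\vec{\mc C}$. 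In effect the theorem asserts that the enrichment defining $\resh_n$ was designed precisely so that this cannot happen; once that is confirmed, the contradiction with $\Sq{n}^s(\lambda)$ is immediate.
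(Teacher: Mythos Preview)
Your proposal has a genuine gap: the ``extraction step'' you flag as the main obstacle is never carried out, and the sketch you give for it does not match how strong nontriviality is actually defined. For $n>1$, a $\Sq{n}^s(\lambda)$-sequence $\mc C$ is strongly nontrivial not because it resists ``threads'' in any direct sense, but because the set of $\delta$ for which the \emph{lower-dimensional} sequence $\mc C^\delta$ is strongly nontrivial is stationary. Your plan to ``project each $\psi_{\vec\delta}$ onto the coordinates that encode $\vec{\mc C}$'' and ``patch the resulting $(n-1)$-families together along $\vec{\mc C}$'' to produce a higher thread never specifies what object is being constructed or which clause of Definition~\ref{def: trivial_c_sequence} it would violate; as written it is a hope rather than an argument.

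The paper's proof is by induction on $n$, and the inductive step uses two ingredients you do not mention. First, a dimension-reduction identity (Corollary~\ref{cor: resh_dim}): for $\delta\in\acc(D)$ and $\vec\gamma\in[C_\delta]^{n}$, one has $\resh_{n-1}^{\mc C^\delta}(\alpha,\vec\gamma)=\pi_n^{C_\delta}\bigl(\resh_n^{\mc C}(\alpha,\vec\gamma,\delta)\bigr)$, where $\pi_n^{C_\delta}$ is the group homomorphism stripping a terminal $\delta$. Second, a closure argument: the set $E$ of $\delta\in D$ such that every $\psi_{\vec\gamma}$ with $\vec\gamma\in[D\cap\delta]^{n-1}$ takes values in $\bigoplus_{[D\cap\delta]^{n-1}}\Z$ is club, so one may choose $\delta\in E$ with $\mc C^\delta$ strongly nontrivial. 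Since $\delta\in E$, the projection $\pi_n^{C_\delta}$ annihilates the term $\psi_{\vec\gamma}$ (the one \emph{not} ending in $\delta$) in the trivialization identity for $\resh_n^{\mc C}(\cdot,\vec\gamma,\delta)$; what survives is exactly a trivialization of $\Phi(\resh_{n-1}^{\mc C^\delta})$ by the family $\langle\pi_n^{C_\delta}\circ\psi_{\vec\beta\delta}\mid\vec\beta\in[C_\delta]^{n-2}\rangle$, contradicting the inductive hypothesis. The argument never touches the $n$-$C$-sequence notion of nontriviality directly.
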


\subsection{Notation and conventions} If $A$ is a set of ordinals, then we let 
$\ssup(A) = \sup\{\alpha + 1 \mid \alpha \in A\}$, $\acc^+(A) = \{\beta < \ssup(A) \mid 
\sup(A \cap \beta) = \beta > 0\}$, $\acc(A) = A \cap \acc^+(A)$, and 
$\nacc(A) = A \setminus \acc(A)$. If $\lambda$ is an ordinal and $D \subseteq \lambda$, we say 
that $D$ is \emph{club} in $\lambda$ if $\ssup(D) = \lambda$ and $\acc^+(D) \subseteq D$. Note 
that this is slightly nonstandard usage, as it is possible, for instance, for a set $D$ to be club 
in a successor ordinal. If $\gamma$ is an infinite ordinal and $\mu$ is a regular cardinal, then $S^\gamma_{\mu}:=\{\alpha<\gamma:\cf(\alpha)=\mu\}$. Symbols like $S^\gamma_{{R}\mu}$ for $R\in \{<,>,\le,\ge\}$ receive the obvious meaning.

If $m < n < \omega$ and $\vec{\gamma}$ is a sequence of ordinals of length $n$, 
then we will always denote the $m^{\mathrm{th}}$ element of $\vec{\gamma}$ by 
$\gamma_m$, i.e., $\vec{\gamma} = \langle \gamma_0, \gamma_1, \ldots, \gamma_{n-1} \rangle$. 
In such situations, we let $\vec{\gamma}^m$ denote the sequence obtained 
by removing $\gamma_m$ from $\vec{\gamma}$; i.e.,
\[
  \vec{\gamma}^m = \langle \gamma_0, \ldots, \gamma_{m-1}, \gamma_{m+1}, \ldots, \gamma_{n-1} \rangle.
\] 
If $A$ is a set of ordinals and $n < \omega$, then 
$[A]^n = \{x \subseteq A : |x| = n\}$. We will frequently identify $[A]^n$ 
with the set of all strictly increasing sequences $\vec{\gamma}$ of length $n$ 
consisting of elements of $A$. Similarly, we will let $A^{[n]}$ denote the 
set of all \emph{weakly} increasing sequences of length $n$ from $A$, i.e., 
the set of all $\vec{\gamma} \in A^n$ such that $\gamma_m \leq \gamma_{m+1}$ for 
all $m < n-1$. We will sometimes slightly abuse notation and write, e.g., 
$(\vec{\alpha},\vec{\beta})$ instead of $\vec{\alpha}^\frown \vec{\beta}$. 
For instance, if $n$ is a positive integer, $\lambda$ is an ordinal, 
$\vec{\gamma} \in \lambda^{[n]}$, and $\alpha \leq \gamma_0$, then we will think 
of $(\alpha,\vec{\gamma})$ as an element of $\lambda^{[n+1]}$, namely 
$\langle \alpha \rangle ^\frown \vec{\gamma}$. We let 
$\lambda \otimes [\lambda]^n$ denote the set of all $(\alpha,\vec{\gamma})$ such that 
$\vec{\gamma} \in [\lambda]^n$ and $\alpha \leq \gamma_0$. As in the previous sentence, we 
will typically think of elements of $\lambda \otimes [\lambda]^n$ as $(n+1)$-tuples.

If $\varphi$ and $\psi$ are functions mapping into the same abelian group, 
then we will slightly 
abuse notation and write $\varphi + \psi$ to denote the function 
$\rho$ such that $\dom(\rho) = \dom(\varphi) \cap \dom(\psi)$ and 
$\rho(x) = \varphi(x) + \psi(x)$ for all $x \in \dom(\rho)$. Analogous conventions 
apply to similar sums of more than two functions.

If $I$ is an index set, then $\bigoplus_I \bb{Z}$ denotes the free abelian group on $I$; 
formally, its elements are finitely supported functions from $I$ to $\bb{Z}$. Given 
$e \in I$, we let $\lfloor e \rfloor$ denote the basis element of $\bigoplus_I \bb{Z}$ 
associated with $e$; formally, this is the function taking value $1$ on $e$ and $0$ on all 
elements of $I \setminus \{e\}$. We will sometimes slightly abuse notation and write, say, 
$\lfloor ((-1)^k,e) \rfloor$ instead of $(-1)^i \lfloor e \rfloor$.

\section{Preliminaries on coherence and triviality} \label{section: prelim}

In this brief section, we review some background information about nontrivial coherent families 
of functions.

\begin{definition}
  Suppose that $D$ is a set of ordinals that is closed in its supremum. 
  Given an abelian group $H$, and an ordinal $\gamma \in D$, we say that a function 
  $\varphi:D \cap \gamma \ra H$ is \emph{locally semi-constant} if, for every 
  $\alpha \in \acc(D) \cap (\gamma + 1)$, there is $\eta < \alpha$ such that 
  $\varphi \restriction D \cap (\eta,\alpha)$ is constant.
\end{definition}

\begin{definition} \label{def: coh_triv_def}
  Suppose that $n$ is a positive integer, $D$ is a set of ordinals closed in its 
  supremum, $H$ is an abelian group. We say that a family of functions
  \[
    \Phi = \langle \varphi_{\vec{\gamma}}:\gamma_0 \ra H \mid \vec{\gamma} \in 
    [D]^n \rangle
  \]
  is
  \begin{enumerate}
    \item \emph{coherent modulo locally semi-constant functions} if, for all 
    $\vec{\delta} \in [D]^{n+1}$, the function
    \[
      \sum_{i \leq n} (-1)^i \varphi_{\vec{\delta}^i} : \delta_0 \ra H
    \]
    is locally semi-constant;
    \item \emph{trivial modulo locally semi-constant functions} if
    \begin{enumerate}
      \item $n = 1$ and there exists a function $\psi : D \ra H$ such that, 
      for all $\gamma \in D$, $\varphi_\gamma - \psi \restriction (D \cap \gamma)$ 
      is locally semi-constant;
      \item $n > 1$ and there exists a family of functions
      \[
        \Psi = \langle \psi_{\vec{\beta}}: \beta_0 \ra H \mid 
        \vec{\beta} \in [D]^{n-1} \rangle
      \]
      such that, for all $\vec{\gamma} \in [D]^n$, the function
      \[
        \varphi_{\vec{\gamma}} - \sum_{i < n} (-1)^i \psi_{\vec{\gamma}^i}: 
        \gamma_0 \ra H
      \]
      is locally semi-constant.
    \end{enumerate}
  \end{enumerate}
\end{definition}

We will often refer to a family of the form 
\[
    \Phi = \langle \varphi_{\vec{\gamma}}:\gamma_0 \ra H \mid \vec{\gamma} \in 
    [D]^n \rangle
\]
as an \emph{$n$-family} on $D$, taking values in $H$ (or simply an $n$-family, if 
$D$ and $H$ are clear from context).
It is readily verified that, under the assumptions of Definition 
\ref{def: coh_triv_def}, if a family $\Phi = \langle \varphi_{\vec{\gamma}} 
\mid \vec{\gamma} \in [D]^n \rangle$ is trivial, then it is coherent. 
The general question motivating much of this paper concerns determining 
the situations in which there exists such a family that is coherent but 
nontrivial.

By varying the modulus, one obtains other notions of coherence and triviality. 
One of the most prominent in the literature is coherence and triviality 
\emph{modulo finite}, in which one replaces the requirement that the relevant 
functions in Definition \ref{def: coh_triv_def} be locally semi-constant by 
the requirement that they be finitely supported. In this paper, we will be 
almost exclusively be working with coherence and triviality modulo almost 
locally constant functions, so if there is no risk of confusion we will 
write, simply ``coherent" instead of ``coherent modulo locally semi-constant 
functions". Moreover, from the point of view of the existence of coherent 
nontrivial families, it does not matter whether the modulus is ``locally 
semi-constant" or ``finite", as evidenced by the following fact, 
combining Theorem 3.2 and Lemma 7.4 from 
\cite{bergfalk2024introductionhigherwalks}:\footnote{The cited results from 
\cite{bergfalk2024introductionhigherwalks} are stated in the case in 
which the club $D$ is itself an ordinal, but the apparently more 
general statement given here is easily seen to be equivalent to this special 
case.}

\begin{fact}
  Suppose that $n$ is a positive integer, $D$ is a set of ordinals closed 
  in its supremum, and 
  $H$ is an abelian group. Then the following are equivalent:
  \begin{enumerate}
    \item there exists a family of functions $\langle \varphi_{\vec{\gamma}} 
    : \gamma_0 \ra H \mid \vec{\gamma} \in [D]^n \rangle$ that is coherent 
    and nontrivial modulo locally semi-constant functions;
    \item there exists a family of functions $\langle \varphi_{\vec{\gamma}} 
    : \gamma_0 \ra H \mid \vec{\gamma} \in [D]^n \rangle$ that is coherent 
    and nontrivial modulo finite.
  \end{enumerate}   
\end{fact}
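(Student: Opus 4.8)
The right framework here is cohomological, and the plan is to show that the two notions of ``coherent and nontrivial'' detect nonvanishing of one and the same cohomology group. Fix $D$ and $H$ as in the statement, and for each $k \geq 1$ let $C^k$ be the group of all $k$-families on $D$ with values in $H$, equipped with the differential $d \colon C^k \to C^{k+1}$ carrying $\langle \varphi_{\vec{\gamma}} \rangle$ to the $(k+1)$-family whose $\vec{\delta}$-th component (for $\vec{\delta} \in [D]^{k+1}$) is $\sum_{i \leq k}(-1)^i \varphi_{\vec{\delta}}^i$; setting $C^0$ to be the group of functions $D \to H$ and $d \colon C^0 \to C^1$ to be restriction, one checks $d \circ d = 0$ by the usual simplicial identity. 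Let $\mathrm{Fin}^k \subseteq \mathrm{LSC}^k \subseteq C^k$ be the subgroups of families all of whose component functions are finitely supported, respectively locally semi-constant. Since finitely supported functions are locally semi-constant, and since restrictions to initial segments and finite sums of locally semi-constant functions are again locally semi-constant, $\mathrm{Fin}^\bullet$ and $\mathrm{LSC}^\bullet$ are subcomplexes of $C^\bullet$. Unwinding Definition~\ref{def: coh_triv_def}, an $n$-family is coherent modulo locally semi-constant functions exactly when it is a cocycle of $C^\bullet/\mathrm{LSC}^\bullet$ in degree $n$, and trivial modulo locally semi-constant functions exactly when it is a coboundary there; hence clause~(1) of the Fact is equivalent to $H^n(C^\bullet/\mathrm{LSC}^\bullet) \neq 0$, and clause~(2) is equivalent to $H^n(C^\bullet/\mathrm{Fin}^\bullet) \neq 0$.

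It therefore suffices to prove that the inclusion $\mathrm{Fin}^\bullet \subseteq \mathrm{LSC}^\bullet$ induces an isomorphism $H^n(C^\bullet/\mathrm{Fin}^\bullet) \cong H^n(C^\bullet/\mathrm{LSC}^\bullet)$. For this I would use the short exact sequence of complexes
\[
  0 \to \mathrm{LSC}^\bullet/\mathrm{Fin}^\bullet \to C^\bullet/\mathrm{Fin}^\bullet \to C^\bullet/\mathrm{LSC}^\bullet \to 0
\]
and its associated long exact sequence, in which the map $H^n(C^\bullet/\mathrm{Fin}^\bullet) \to H^n(C^\bullet/\mathrm{LSC}^\bullet)$ is sandwiched between $H^n(\mathrm{LSC}^\bullet/\mathrm{Fin}^\bullet)$ and $H^{n+1}(\mathrm{LSC}^\bullet/\mathrm{Fin}^\bullet)$; it is thus an isomorphism provided both of these groups vanish. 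The claim reduces to the following \emph{straightening lemma}: if $1 \leq k < \omega$ and $\Phi$ is a $k$-family all of whose components are locally semi-constant, with every component of $d\Phi$ finitely supported, then there is a $(k-1)$-family $\Psi$, again with all components locally semi-constant, such that every component of $\Phi - d\Psi$ is finitely supported.

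I would prove the straightening lemma by induction on $k$, the driving structural fact being that a locally semi-constant function $f \colon D \cap \gamma \to H$ has, at each $\alpha \in \acc(D) \cap (\gamma+1)$, a well-defined ``left value'' $\ell_f(\alpha) \in H$, namely the constant value of $f$ on a final segment of $D \cap \alpha$. Applied to the components of $\Phi$, and using that the components of $d\Phi$ are finitely supported, one sees that the induced left values satisfy, off a finite set near each accumulation point, exactly the coboundary relation needed to invoke the inductive hypothesis one dimension down; one then assembles $\Psi$ by recursion along $D$, with complete freedom at non-accumulation points of $D$ (used to cancel the finitely supported coboundary $d\Phi$) and the left values dictating the choices at accumulation points. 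A counting argument --- each component of $\Phi - d\Psi$ accrues only finitely many disagreements below any given accumulation point, since only finitely many constraints and corrections are active there --- then shows $\Phi - d\Psi \in \mathrm{Fin}^k$, and the same analysis keeps each component of $\Psi$ locally semi-constant. I expect the main obstacle to be precisely the bookkeeping in this recursion, and in particular the base case $k = 1$, where one must build a single locally semi-constant $\psi \colon D \to H$ with $\varphi_\gamma - \psi \restriction (D \cap \gamma)$ finitely supported for all $\gamma \in D$ out of a locally semi-constant, coherent-modulo-finite $1$-family, checking that the exceptional sets, although they may grow with $\gamma$, do not accumulate. An alternative route --- the one underlying the cited combination of Theorem~3.2 and Lemma~7.4 of \cite{bergfalk2024introductionhigherwalks} --- is to identify $H^n(C^\bullet/\mathrm{LSC}^\bullet)$ directly with a \v{C}ech cohomology group of $\lambda$ in the order topology with coefficients in the constant sheaf $H$, which is visibly independent of the choice of modulus, and to run the analogous identification for the modulus ``finite''.
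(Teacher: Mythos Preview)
The paper does not give its own proof of this Fact; it simply cites the combination of Theorem~3.2 and Lemma~7.4 of \cite{bergfalk2024introductionhigherwalks}. Your cohomological framing is correct and is essentially the unwinding of that citation: Theorem~3.2 there is what identifies $H^n(C^\bullet/\mathrm{Fin}^\bullet)$ and $H^n(C^\bullet/\mathrm{LSC}^\bullet)$ with the \v{C}ech groups $\check{\mathrm{H}}^n(\lambda,\mathcal{F}_H)$ and $\check{\mathrm{H}}^n(\lambda,\mathcal{A}_H)$ respectively, and Lemma~7.4 is what shows those two \v{C}ech groups coincide. (A small inaccuracy: the coefficients are the presheaves $\mathcal{F}_H$ and $\mathcal{A}_H$ described just after the Fact in the paper, not the constant sheaf $H$; showing their \v{C}ech cohomologies agree is precisely the content of Lemma~7.4, not something ``visibly independent of the modulus.'')

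Your direct route via the long exact sequence and the ``straightening lemma'' is a genuine alternative packaging, and the reduction is correct: vanishing of $H^k(\mathrm{LSC}^\bullet/\mathrm{Fin}^\bullet)$ for $k=n,n+1$ is exactly what is needed. The left-value observation is the right starting point --- two locally semi-constant functions that agree modulo finite have identical left values at every accumulation point, so the left values of a cocycle in $\mathrm{LSC}^\bullet/\mathrm{Fin}^\bullet$ satisfy an \emph{exact} cocycle relation (not merely modulo finite). But the sketch from there is not yet a proof: the phrase ``off a finite set near each accumulation point'' is doing a lot of work, and the recursive construction of $\Psi$ needs a careful argument that the finitely many corrections introduced at each stage do not accumulate into an infinite support. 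You flag this yourself. What your straightening lemma amounts to, once written out, is a combinatorial proof that the quotient presheaf $\mathcal{A}_H/\mathcal{F}_H$ is acyclic for the \v{C}ech cover by initial segments --- so your approach and the cited one are closer than the presentation suggests, with yours trading the sheaf-theoretic language for an explicit cochain calculation.
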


Let us mention at the end of this section that part of the motivation for the 
study of nontrivial coherent families of functions comes from cohomological considerations. 
Given an ordinal $\lambda$, consider $\lambda$ as a topological space with the order 
topology. Given an abelian group $H$, let $\mc{F}_H$ denote the presheaf on $\lambda$ 
defined by setting $\mc{F}_H(U) = \bigoplus_U H$ for all open $U \subseteq \lambda$, and 
let $\mc{A}_H$ denote the presheaf defined by letting $\mc{A}_H(U)$ be the set of all 
locally semi-constant functions from $U$ to $H$ for all open $U \subseteq \lambda$. Then, as 
noted in \cite[Theorem 3.2]{bergfalk2024introductionhigherwalks}, $n$-families 
\[
  \Phi = \langle \varphi_{\vec{\gamma}}:\gamma_0 \ra H \mid \vec{\gamma} \in [\lambda]^n \rangle
\]
that are coherent and nontrivial modulo finite represent the nonzero cohomology classes in the 
\v{C}ech cohomology group $\check{\mathrm{H}}(\lambda, \mc{F}_H)$, while families of the above 
form that are coherent and nontrivial modulo locally semi-constant functions represent the nonzero 
cohomology classes of $\check{\mathrm{H}}(\lambda, \mc{A}_H)$. In particular, when we prove below 
that a certain cardinal $\lambda$ carries an $n$-family that is nontrivial and coherent modulo 
locally semi-constant functions, this will establish that the \v{C}ech cohomology group 
$\check{\mathrm{H}}(\lambda, \mc{A}_H)$ is nonzero; by \cite[Theorem 3.2 and 
Lemma 7.4]{bergfalk2024introductionhigherwalks}, this is equivalent to the assertion that 
$\check{\mathrm{H}}(\lambda, \mc{F}_H)$ is nonzero.

\section{Higher squares} \label{section: squares}

In this section we begin our investigation into higher-dimensional square principles. 
We first recall the notion of an \emph{$n$-$C$-sequence} from \cite{bergfalk2024introductionhigherwalks} 
and then specify what we mean by \emph{coherence} of an $n$-$C$-sequence. 
We note that a different notion of coherence is considered in 
\cite[Theorem 7.2(3)]{bergfalk2024introductionhigherwalks}. We found this notion to be too 
strong; for instance, in analogy with the $1$-dimensional situation at $\omega_1$, one would 
like it to be the case that, for all $n$, an \emph{order-type-minimal} $n$-$C$-sequence 
on $\omega_n$ is trivially coherent. This does not seem to be the case for the notion of 
coherence considered in \cite{bergfalk2024introductionhigherwalks} but is easily seen to hold 
for our weaker notion of coherence, as in such situations the set 
$X(\mc{C})$ introduced below will be empty.

\begin{definition} \label{def: n_c_sequence}
  Suppose that $n$ is a positive integer, $\delta$ is an infinite ordinal, 
  and $D$ is a club in $\delta$. An $n$-$C$-sequence on $D$ is a system 
  $\mc{C} = \langle C_{\vec{\gamma}} \mid \vec{\gamma} \in I(\mc{C}) \rangle$ 
  such that $I(\mc{C}) \subseteq [D]^{\leq n}$ and the following hold:
  \begin{enumerate}
    \item $\emptyset \in I(\mc{C})$ and $C_\emptyset = D$;
    \item given $\langle \beta \rangle^\frown \vec{\gamma} \in [D]^{\leq n}$, 
    $\langle \beta \rangle^\frown \vec{\gamma} \in I(\mc{C})$ if and only if 
    $\vec{\gamma} \in I(\mc{C})$ and $\beta \in C_{\vec{\gamma}}$. In 
    that case, $C_{\beta\vec{\gamma}}$ is club in $\beta \cap C_{\vec{\gamma}}$ 
    (and $C_{\beta\vec{\gamma}} = \emptyset$ if $\beta \cap C_{\vec{\gamma}} = 
    \emptyset$).
  \end{enumerate}
  We say that an $n$-$C$-sequence $\mc{C}$ on $D$ is \emph{order-type-minimal} 
  if $\otp(C_{\vec{\gamma}}) = \cf(\gamma_0 \cap C_{\vec{\gamma}^0})$ for all 
  nonempty $\vec{\gamma} \in I(\mc{C})$.  
  
  Given an $n$-$C$-sequence $\mc{C}$ on $D$, let $X(\mc{C})$ denote the 
  set of $\alpha \in D$ for which there exists $\vec{\gamma} \in 
  I(\mc{C}) \cap [D]^n$ such that $\alpha \in \acc(C_{\vec{\gamma}})$. 
  We say that $\mc{C}$ is \emph{$n$-coherent}, or simply \emph{coherent} if 
  the value of $n$ is clear from context, if:
  \begin{enumerate}[start=3]
    \item \label{coherence_condition} For all $\alpha \in X(\mc{C})$, the following hold:
    \begin{itemize}
      \item for all $\vec{\gamma} \in I(\mc{C}) \cap [D]^n$ such that 
      $\alpha \in \acc(C_{\vec{\gamma}})$, we have 
      $C_{\vec{\gamma}} \cap \alpha = C_\alpha$;
      \item for all $\vec{\beta} \in I(\mc{C})$ such that $\beta_0 = \alpha$ 
      and $\sup(C_{\vec{\beta}}) = \alpha$, we have $C_{\vec{\beta}} = C_\alpha$.
    \end{itemize}
  \end{enumerate}    
\end{definition}

\begin{remark} \label{remark: minimal_at_successors}
  We will typically assume without comment that all $n$-$C$-sequences under consideration 
  are \emph{minimal at successors}, i.e., if $\vec{\gamma} \in I(\mc{C}) \cap [D]^{<n}$ and 
  $\beta \in \nacc(C_{\vec{\gamma}})$, then $C_{\beta\vec{\gamma}} = \{\max(C_{\vec{\gamma}} 
  \cap \beta)\}$ (or $C_{\beta\vec{\gamma}} = \emptyset$ if $C_{\vec{\gamma}} \cap 
  \beta = \emptyset$). In this case, we will let $I^+(\mc{C})$ denote the set of all 
  $\vec{\gamma} \in I(\mc{C})$ such that $C_{\vec{\gamma}}$ has more than one element, i.e., 
  the set of $\vec{\gamma} = \langle \gamma_0, \ldots, \gamma_{m-1} \rangle$ in 
  $I(\mc{C})$ such that $\gamma_{m-1} \in \acc(D)$ and, for all $k < m-1$, we have 
  $\gamma_k \in \acc(C_{\gamma_{k+1}\ldots\gamma_{m-1}})$.
  Thus, to specify an $n$-$C$-sequence $\mc{C}$ on a club $D$, it will 
  suffice to explicitly define clubs of the form $C_{\vec{\gamma}}$ such that 
  $\vec{\gamma} \in I^+(\mc{C})$.
\end{remark}

Suppose that $n$ is a positive integer, $\lambda$ is an infinite ordinal, 
$D$ is a club in $\lambda$, and $\mc{C}$ is an $(n+1)$-$C$-sequence on 
$D$. Fix an ordinal $\delta \in D$. Let $I(\mc{C}^\delta) = 
\{\vec{\gamma} \in [C_\delta]^{\leq n} : \vec{\gamma}^\frown \langle 
\delta \rangle \in I(\mc{C})\}$ and, for all $\vec{\gamma} \in 
I(\mc{C}^\delta)$, set $C^\delta_{\vec{\gamma}} = C_{\vec{\gamma}\delta}$. 

\begin{proposition}
  The sequence $\mc{C}^\delta = \langle C^\delta_{\vec{\gamma}} \mid 
  \vec{\gamma} \in I(\mc{C}^\delta) \rangle$ is an $n$-$C$-sequence on 
  $C_\delta$. Moreover, if $\mc{C}$ is $(n+1)$-coherent, then $\mc{C}^\delta$ is 
  $n$-coherent.
\end{proposition}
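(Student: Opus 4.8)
The plan is to verify the two parts of the statement separately, in each case reducing the claim about $\mc{C}^\delta$ to the corresponding property of $\mc{C}$ by unwinding the definitions, using the fact that for $\vec{\gamma} \in I(\mc{C}^\delta)$ we have $C^\delta_{\vec{\gamma}} = C_{\vec{\gamma}\delta}$ and that $\delta \in D$ is fixed. First I would check that $\mc{C}^\delta$ is an $n$-$C$-sequence on $C_\delta$ by going through the two clauses of Definition~\ref{def: n_c_sequence}. For clause~(1): $\emptyset \in I(\mc{C}^\delta)$ because $\langle \delta \rangle \in I(\mc{C})$ (as $\delta \in D = C_\emptyset$), and $C^\delta_\emptyset = C_{\langle \delta \rangle}$, which by clause~(2) applied to $\mc{C}$ is club in $\delta \cap C_\emptyset = \delta \cap D = C_\delta$, using that $\delta \in D$ is closed in its supremum so $\delta \cap D$ equals $C_\delta$ (here one should note $D$ is a club in $\lambda$ and $\delta\in D$, so $C_\delta := D\cap\delta$ — this is the intended reading). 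For clause~(2): given $\langle \beta \rangle ^\frown \vec{\gamma} \in [C_\delta]^{\leq n}$, note $\langle \beta \rangle^\frown \vec{\gamma} \in I(\mc{C}^\delta)$ iff $\langle\beta\rangle^\frown\vec{\gamma}^\frown\langle\delta\rangle \in I(\mc{C})$, which by clause~(2) for $\mc{C}$ holds iff $\vec{\gamma}^\frown\langle\delta\rangle \in I(\mc{C})$ and $\beta \in C_{\vec{\gamma}\delta} = C^\delta_{\vec{\gamma}}$, i.e.\ iff $\vec{\gamma} \in I(\mc{C}^\delta)$ and $\beta \in C^\delta_{\vec{\gamma}}$; and in that case $C^\delta_{\beta\vec{\gamma}} = C_{\beta\vec{\gamma}\delta}$ is club in $\beta \cap C_{\vec{\gamma}\delta} = \beta \cap C^\delta_{\vec{\gamma}}$, again by clause~(2) for $\mc{C}$. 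This establishes the first assertion.

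For the second assertion, assume $\mc{C}$ is $(n+1)$-coherent; I would show $X(\mc{C}^\delta) \subseteq X(\mc{C})$ and then read off condition~(\ref{coherence_condition}) for $\mc{C}^\delta$ from that for $\mc{C}$. If $\alpha \in X(\mc{C}^\delta)$, there is $\vec{\gamma} \in I(\mc{C}^\delta) \cap [C_\delta]^n$ with $\alpha \in \acc(C^\delta_{\vec{\gamma}}) = \acc(C_{\vec{\gamma}\delta})$; since $\vec{\gamma}^\frown\langle\delta\rangle \in I(\mc{C}) \cap [D]^{n+1}$, this witnesses $\alpha \in X(\mc{C})$. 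Now fix $\alpha \in X(\mc{C}^\delta)$. For the first bullet of~(\ref{coherence_condition}): take any $\vec{\gamma} \in I(\mc{C}^\delta) \cap [C_\delta]^n$ with $\alpha \in \acc(C^\delta_{\vec{\gamma}})$; then $\vec{\gamma}^\frown\langle\delta\rangle \in I(\mc{C}) \cap [D]^{n+1}$ and $\alpha \in \acc(C_{\vec{\gamma}\delta})$, so $(n+1)$-coherence of $\mc{C}$ gives $C_{\vec{\gamma}\delta} \cap \alpha = C_\alpha$, i.e.\ $C^\delta_{\vec{\gamma}} \cap \alpha = C_\alpha$ — exactly what is required, since in $\mc{C}^\delta$ the role of the one-element sequence $C_\alpha$ is played by $C^\delta_{\langle\alpha\rangle} = C_{\alpha\delta}$, which by the first bullet of coherence for $\mc{C}$ (applied with $\alpha \in \acc(C_{\vec{\gamma}\delta})$ and the sequence $\vec{\gamma}\delta$) equals $C_\alpha$; so $C^\delta_{\vec{\gamma}} \cap \alpha = C^\delta_{\langle\alpha\rangle}$. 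For the second bullet: take $\vec{\beta} \in I(\mc{C}^\delta)$ with $\beta_0 = \alpha$ and $\sup(C^\delta_{\vec{\beta}}) = \alpha$; then $\vec{\beta}^\frown\langle\delta\rangle \in I(\mc{C})$, its first coordinate is $\alpha$, and $\sup(C_{\vec{\beta}\delta}) = \alpha$, so the second bullet of coherence for $\mc{C}$ yields $C_{\vec{\beta}\delta} = C_\alpha = C^\delta_{\langle\alpha\rangle}$, i.e.\ $C^\delta_{\vec{\beta}} = C^\delta_{\langle\alpha\rangle}$, as needed.

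I do not anticipate a genuine obstacle here; the proposition is essentially a bookkeeping exercise confirming that restriction to a fixed top coordinate $\delta$ commutes with all the relevant structure. The one point requiring a little care is the consistent identification of ``$C_\alpha$'' in the ambient sequence $\mc{C}$ with the distinguished club $C^\delta_{\langle\alpha\rangle}$ of $\mc{C}^\delta$ when verifying the coherence condition: one must invoke the first bullet of coherence for $\mc{C}$ once at the outset to see that $C_{\alpha\delta} = C_\alpha$ whenever $\alpha \in X(\mc{C}^\delta)$ (which follows because any witness $\vec\gamma$ for $\alpha\in X(\mc C^\delta)$ has $\alpha\in\acc(C_{\vec\gamma\delta})$, and then $\alpha\in C_{\vec\gamma\delta}$ with $C_{\alpha\delta}$ club in $\alpha\cap C_{\vec\gamma\delta}$, so coherence forces $C_{\alpha\delta}=C_\alpha$), and thereafter consistently substitute. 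A secondary minor point is checking $\alpha\in D$ for $\alpha\in X(\mc C^\delta)$, which is immediate since $C_\delta\subseteq D$. Everything else is a direct translation along the definitional equivalence $\langle\beta\rangle^\frown\vec\gamma^\frown\langle\delta\rangle\in I(\mc C) \iff \langle\beta\rangle^\frown\vec\gamma\in I(\mc C^\delta)$ together with $C^\delta_{\vec\gamma}=C_{\vec\gamma\delta}$.
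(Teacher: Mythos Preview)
The paper states this proposition without proof, treating it as a routine verification. Your proposal supplies exactly that verification and is correct in substance: reducing each clause of Definition~\ref{def: n_c_sequence} for $\mc{C}^\delta$ to the corresponding clause for $\mc{C}$ via the translation $C^\delta_{\vec{\gamma}} = C_{\vec{\gamma}\delta}$, observing $X(\mc{C}^\delta) \subseteq X(\mc{C})$, and then invoking $(n+1)$-coherence.

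Two small corrections are worth making. First, the parenthetical claiming $C_\delta = D \cap \delta$ is not generally true; $C_\delta$ is whatever club the sequence $\mc{C}$ assigns to the index $\langle\delta\rangle$, which is merely required to be club in $\delta \cap D$. Fortunately you do not actually use this identification: what you need is $C^\delta_\emptyset = C_{\langle\delta\rangle} = C_\delta$, which holds by definition. Second, the key identity $C_{\alpha\delta} = C_\alpha$ for $\alpha \in X(\mc{C}^\delta)$ comes from the \emph{second} bullet of the coherence condition for $\mc{C}$ (not the first, as you initially write): one checks $\alpha \in \acc(C_{\vec{\gamma}\delta}) \subseteq \acc(C_\delta)$, hence $\sup(C_{\alpha\delta}) = \alpha$, and then the second bullet applies to $\langle\alpha,\delta\rangle \in I(\mc{C})$. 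You do give this reasoning correctly in your final paragraph, so the argument is sound once the attribution is fixed.
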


\begin{proposition} \label{prop: stat_x}
  Suppose that $n$ is a positive integer, $\lambda$ is a limit ordinal, $D$ is 
  a club in $\lambda$, and $\mc{C}$ is a coherent $n$-$C$-sequence on $D$. 
  Then, for every $\delta \in \acc(D) \cap S^{\lambda}_{\geq \aleph_n}$, 
  $X(\mc{C}) \cap \delta$ is stationary in $\delta$.
\end{proposition}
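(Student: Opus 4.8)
The plan is to induct on $n$, using the recursive structure of $n$-$C$-sequences. For the base case $n = 1$, a coherent $1$-$C$-sequence on $D$ is (by unwinding the definitions) exactly a classical $\square(\lambda)$-sequence relative to $D$, and $X(\mc{C})$ is the set of $\alpha \in D$ that are accumulation points of $D = C_\emptyset$; given $\delta \in \acc(D) \cap S^\lambda_{\geq \aleph_1}$, the set $\acc(D) \cap \delta = \acc(C_\emptyset) \cap \delta$ is club in $\delta$, hence certainly stationary, so $X(\mc{C}) \cap \delta \supseteq \acc(D) \cap \delta$ is stationary in $\delta$.

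For the inductive step, suppose the result holds for $n - 1$ and let $\mc{C}$ be a coherent $n$-$C$-sequence on $D$. Fix $\delta \in \acc(D) \cap S^\lambda_{\geq \aleph_n}$. First I would handle the contribution coming from the top coordinate: the family $\langle C_{\vec{\gamma}} \mid \vec{\gamma} \in I(\mc{C}) \cap [D]^n$ with $\gamma_{n-1}$ large$\rangle$ — more precisely, for each $\beta \in \acc(D) \cap \delta$ with $\cf(\beta) \geq \aleph_{n-1}$ (note $\cf(\delta) \geq \aleph_n$ ensures $\acc(D)\cap\delta$ contains a club of such $\beta$, since $\{\beta<\delta:\cf(\beta)\geq\aleph_{n-1}\}$ is club in $\delta$), the restricted sequence $\mc{C}^\beta$ from the Proposition preceding this one is a coherent $(n-1)$-$C$-sequence on $C_\beta$, to which the inductive hypothesis applies. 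This gives, for each such $\beta$, that $X(\mc{C}^\beta)$ is stationary in many points below $\beta$; the key observation is that if $\alpha \in X(\mc{C}^\beta)$, witnessed by some $\vec{\gamma} \in I(\mc{C}^\beta) \cap [C_\beta]^{n-1}$ with $\alpha \in \acc(C^\beta_{\vec{\gamma}}) = \acc(C_{\vec{\gamma}\beta})$, then $\vec{\gamma}^\frown\langle\beta\rangle \in I(\mc{C}) \cap [D]^n$ witnesses $\alpha \in X(\mc{C})$. So $X(\mc{C}^\beta) \subseteq X(\mc{C})$ for every relevant $\beta < \delta$.

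It remains to promote ``stationary below each $\beta$ in a club of $\beta<\delta$'' to ``stationary below $\delta$.'' Here I would argue as follows: let $E \subseteq \delta$ be an arbitrary club; I must find a point of $X(\mc{C})$ in $E$. Using that $D$ is club in $\lambda$ and that $\delta \in \acc(D)$, pick $\beta \in \acc(D) \cap \delta$ with $\cf(\beta) \geq \aleph_{n-1}$ such that $E \cap \beta$ is club in $\beta$ — such $\beta$ exists because $\acc(E) \cap \{\beta < \delta : \cf(\beta) \geq \aleph_{n-1}\} \cap \acc(D)$ is club in $\delta$ (this is where $\cf(\delta) \geq \aleph_n$ is used again, to intersect finitely many clubs/the set of points of uncountable-enough cofinality). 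For such $\beta$, $\mc{C}^\beta$ is a coherent $(n-1)$-$C$-sequence on $C_\beta$, and since $C_\beta$ is club in $\beta$ with $\cf(\beta)\geq\aleph_{n-1}$, the inductive hypothesis applied inside $\beta$ (taking the role of ``$\delta$'' there, and noting $\beta\in\acc(C_\beta)\cap S^\beta_{\geq\aleph_{n-1}}$) gives that $X(\mc{C}^\beta) \cap \beta$ is stationary in $\beta$; intersecting with the club $E \cap \beta$ produces a point $\alpha \in E \cap X(\mc{C}^\beta) \subseteq E \cap X(\mc{C})$, as desired.

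The main obstacle I anticipate is the bookkeeping in the inductive step — specifically, being careful that $\mc{C}^\beta$ genuinely lands in the inductive hypothesis (i.e., that $C_\beta$ is the right ``$D$,'' that $\beta$ has the right cofinality, and that $\beta \in \acc(C_\beta)$ so the hypothesis is non-vacuous), and verifying that the cofinality hypothesis $\cf(\delta)\geq\aleph_n$ propagates correctly: one needs enough points $\beta<\delta$ of cofinality $\geq\aleph_{n-1}$ that are also accumulation points of $D$, which is exactly guaranteed by $\cf(\delta)\geq\aleph_n$ together with $\delta\in\acc(D)$. Everything else is a routine club-intersection argument. I would also double-check the edge case $n=1$ against the convention ``$S^\lambda_{\geq\aleph_1}$'' to make sure the statement is not vacuous there and matches the classical fact that $\square(\lambda)$-sequences are nontrivial in a stationary sense.
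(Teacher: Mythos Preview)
Your overall strategy (induct on $n$, use the containment $X(\mc{C}^\cdot) \subseteq X(\mc{C})$, and finish with a club-intersection argument) matches the paper's, but both your base case and your inductive step contain genuine errors.

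For $n=1$ you misread the definition of $X(\mc{C})$: the witnessing $\vec{\gamma}$ must lie in $[D]^n = [D]^1$, not $[D]^0$, so $X(\mc{C}) = \bigcup_{\gamma \in D}\acc(C_\gamma)$, not $\acc(C_\emptyset)=\acc(D)$. Your claimed inclusion $\acc(D)\cap\delta \subseteq X(\mc{C})$ is false in general. The correct observation is $\acc(C_\delta)\subseteq X(\mc{C})$, which is club in $\delta$ since $\cf(\delta)\geq\aleph_1$; this is what the paper uses.

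In the inductive step you pick $\beta<\delta$ with $\cf(\beta)\geq\aleph_{n-1}$ and then attempt to apply the inductive hypothesis to the $(n-1)$-$C$-sequence $\mc{C}^\beta$ on $C_\beta$ with $\beta$ itself in the role of ``$\delta$'', asserting $\beta\in\acc(C_\beta)\cap S^\beta_{\geq\aleph_{n-1}}$. But $\beta\notin C_\beta$ (so $\beta\notin\acc(C_\beta)$) and $\beta\notin S^\beta_{\geq\aleph_{n-1}}$, so this instantiation is illegal; the hypothesis only tells you $X(\mc{C}^\beta)\cap\gamma$ is stationary in $\gamma$ for $\gamma\in\acc(C_\beta)\cap S^\beta_{\geq\aleph_{n-1}}$, not that $X(\mc{C}^\beta)$ is stationary in $\beta$. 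The fix is to stay at $\delta$ and use $\mc{C}^\delta$: for each $\gamma\in\acc(C_\delta)\cap S^\delta_{\geq\aleph_{n-1}}$ the inductive hypothesis yields $X(\mc{C}^\delta)\cap\gamma$ stationary in $\gamma$; since such $\gamma$ are stationary in $\delta$ (here is where $\cf(\delta)\geq\aleph_n$ enters), your own club-intersection argument then shows $X(\mc{C}^\delta)\subseteq X(\mc{C})$ is stationary in $\delta$. This is precisely the paper's proof.
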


\begin{proof}
  The proof is by induction on $n$. If $n = 1$, then, for every 
  $\delta \in \acc(D) \cap S^{\lambda}_{\geq \aleph_n}$, we have 
  $\acc(C_\delta) \subseteq X(\mc{C})$, and the conclusion follows. 
  Thus, suppose that $n > 1$, and fix $\delta \in \acc(D) \cap 
  S^{\lambda}_{\geq \aleph_n}$. It follows immediately from the definitions 
  that $X(\mc{C}^\delta) \subseteq X(\mc{C})$. Moreover, by the induction 
  hypothesis, we know that, for every $\gamma \in \acc(C_\delta) \cap 
  S^\delta_{{\geq} \aleph_{n-1}}$, the set $X(\mc{C}^\delta) \cap \gamma$ is 
  stationary in $\gamma$. Since $\cf(\delta) \geq \aleph_n$, it follows that 
  $X(\mc{C}^\delta)$ is stationary in $\delta$, and hence 
  $X(\mc{C}) \cap \delta$ is stationary in $\delta$ as well.
\end{proof}

We now define various natural notions of nontriviality for coherent $n$-$C$-sequences.

\begin{definition} \label{def: trivial_c_sequence}
  Suppose that $n$ is a positive integer, $\lambda$ is an ordinal, 
  $D$ is a club in $\lambda$, and $\mc{C}$ is a coherent $n$-$C$-sequence on $D$.
  \begin{enumerate}
    \item $\mc{C}$ is \emph{weakly nontrivial} if, for every club 
    $D' \subseteq D$ in $\lambda$, there exists $\alpha \in \acc(D')$ such that 
    $D' \cap \alpha \neq C_\alpha$;  
    \item $\mc{C}$ is \emph{nontrivial} if it cannot be extended to a 
    coherent $n$-$C$-sequence on $D \cup \{\lambda\}$, i.e., there does not exist an 
    $n$-$C$-sequence $\mc{C'}$ on $D \cup \{\lambda\}$ such that $I(\mc{C}') \cap 
    [\lambda]^{\leq n} = I(\mc{C})$ and, for all nonempty $\vec{\gamma} \in I(\mc{C})$, we have 
    $C_{\vec{\gamma}} = C'_{\vec{\gamma}}$;
    \item $\mc{C}$ is \emph{strongly nontrivial} if $\otp(D)$ is a regular uncountable 
    cardinal and either
    \begin{itemize}
      \item $n = 1$ and $\mc{C}$ is nontrivial; or 
      \item $n > 1$ and the set $\{\delta \in D \mid \mc{C}^\delta \emph{is strongly nontrivial}\}$ 
      is stationary in $\lambda$.
    \end{itemize}
  \end{enumerate}
  We let $\Sq{n}(D)$ (resp.\ $\Sq{n}^{w}(D)$, resp.\ $\Sq{n}^s(D)$) denote the assertion that there exists 
  a coherent $n$-$C$-sequence on $D$ that is nontrivial (resp.\ weakly nontrivial, resp.\ 
  strongly nontrivial). A witness to $\Sq{n}(D)$ is called a $\Sq{n}(D)$-sequence 
  (and similarly for $\Sq{n}^w(D)$ and $\Sq{n}^s(D)$).
\end{definition}

In Proposition \ref{prop: small_cof} and Fact \ref{fact: cof_omega_n} below, 
we establish analogues of the trivial observations about classical square principles 
at $\aleph_0$ and $\aleph_1$, namely that $\square(\aleph_0)$ fails and 
$\square(\aleph_1)$ holds in $\ZFC$.

\begin{proposition} \label{prop: small_cof}
  Suppose that $n$ is a positive integer, $\lambda$ is an ordinal with $\cf(\lambda) < \aleph_n$, 
  $D$ is club in $\lambda$. Then $\Sq{n}^s(D)$ fails.
\end{proposition}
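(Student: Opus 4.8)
The plan is to proceed by induction on $n$, mirroring the recursive structure of the definition of strong nontriviality.

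\emph{Base case $n=1$.} Here $\cf(\lambda)<\aleph_1$, so $\cf(\lambda)\le\aleph_0$. If $\cf(\lambda)$ is finite, then $\lambda$ is a successor ordinal, hence any club $D$ in $\lambda$ has a maximum element $\alpha$, and $\otp(D)$ is a successor ordinal, not a regular uncountable cardinal; so $\Sq{1}^s(D)$ fails by the first clause of the definition of strong nontriviality. If $\cf(\lambda)=\aleph_0$, suppose toward a contradiction that $\mc{C}$ is a coherent $1$-$C$-sequence on $D$ that is strongly nontrivial, hence (for $n=1$) nontrivial. Fix a cofinal sequence $\langle \lambda_k \mid k<\omega\rangle$ in $\lambda$ with $\lambda_k\in D$. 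I would build by recursion an increasing sequence $\langle \alpha_k \mid k < \omega \rangle$ cofinal in $\lambda$ with each $\alpha_k\in D$ and $\alpha_{k+1}\in\acc(D)$, together with clubs witnessing coherence, so that setting $D'=\bigcup_k C_{\alpha_k}\cup\{\lambda\}$ (or rather taking a suitable club refinement) produces an extension of $\mc{C}$ to $D\cup\{\lambda\}$. The key point is that at a point of cofinality $\omega$ one can always thread the finitely-coherent family: the coherence condition \eqref{coherence_condition} guarantees that the clubs $C_\alpha$ for $\alpha\in X(\mc{C})$ cohere, so their union along a cofinal $\omega$-sequence is a club of order type $\omega$ in $\lambda$ (or one builds $C_\lambda$ directly as such a union), contradicting nontriviality. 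This is essentially the classical fact that $\square(\lambda)$ fails when $\cf(\lambda)=\omega$.

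\emph{Inductive step $n>1$.} Assume the result for $n-1$, and suppose $\cf(\lambda)<\aleph_n$; let $\mu=\cf(\lambda)$. If $\mu$ is not uncountable regular, then again $\otp(D)$ fails to be a regular uncountable cardinal and we are done by the first part of the definition. So assume $\aleph_1\le\mu<\aleph_n$, and suppose toward a contradiction that $\mc{C}$ is a strongly nontrivial coherent $n$-$C$-sequence on $D$. By definition, $S=\{\delta\in D\mid \mc{C}^\delta \text{ is strongly nontrivial}\}$ is stationary in $\lambda$. For such $\delta$, the proposition established earlier shows $\mc{C}^\delta$ is a coherent $(n-1)$-$C$-sequence on $C_\delta$, and strong nontriviality requires $\otp(C_\delta)$ to be a regular uncountable cardinal. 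Now since $\cf(\delta)\le\mu<\aleph_n$, and (using minimality / the structure of $C$-sequences) $\otp(C_\delta)\le\cf(\delta\cap C_\emptyset)=\cf(\delta)$ in the order-type-minimal case — or more generally one argues that a stationarily-often-occurring $\delta$ can be chosen with $\cf(\delta)<\aleph_n$ — we get $\cf(C_\delta)=\cf(\delta)<\aleph_n\le\aleph_{(n-1)+1}$, i.e.\ $\cf(\sup C_\delta)<\aleph_n$. Wait — more carefully: what we need is a $\delta\in S$ for which $\cf(\sup(C_\delta))<\aleph_{n-1}$... so that the induction hypothesis applies to $\mc{C}^\delta$ and yields that $\mc{C}^\delta$ is \emph{not} strongly nontrivial, contradicting $\delta\in S$.

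\emph{Locating the pressing-down argument.} The main obstacle, and the heart of the proof, is producing such a $\delta$. Since $S$ is stationary in $\lambda$ and $\cf(\lambda)=\mu<\aleph_n$, fix a club $E$ in $\lambda$ of order type $\mu$; then $S\cap E$ is... not necessarily stationary. Instead I would use that $S$ is stationary together with Fodor-style reflection: the map $\delta\mapsto\cf(\delta)$ is regressive-ish, and since there are fewer than $\aleph_n$ many possible infinite cofinalities below $\aleph_n$ while cofinalities at least $\aleph_n$ cannot occur on a set reflecting below $\lambda$ when $\cf(\lambda)<\aleph_n$... Concretely: the set of $\delta<\lambda$ with $\cf(\delta)\ge\aleph_{n-1}$, intersected with a stationary set, when $\cf(\lambda)<\aleph_n$ — one shows it cannot be the case that $S\subseteq S^\lambda_{\ge\aleph_{n-1}}$ modulo a club, because then $S$ would reflect to a point of cofinality $\ge\aleph_{n-1}$ inside a club of order type $\cf(\lambda)\le\aleph_{n-1}$, impossible. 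Hence $S\setminus S^\lambda_{\ge\aleph_{n-1}}$ is stationary, so we may pick $\delta\in S$ with $\cf(\delta)<\aleph_{n-1}$; then $\cf(\sup C_\delta)=\cf(\delta)<\aleph_{n-1}$, the induction hypothesis applies to $\mc{C}^\delta$ on $C_\delta$, giving that $\mc{C}^\delta$ is not strongly nontrivial — contradicting $\delta\in S$. I expect the delicate bookkeeping to be exactly this cofinality-counting argument showing $S$ cannot concentrate on high cofinalities when $\cf(\lambda)$ is small, plus checking the minimal-at-successors conventions let us identify $\cf(\sup C_\delta)$ with $\cf(\delta)$.
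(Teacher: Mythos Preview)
Your approach is the same as the paper's---induction on $n$, using that small cofinality of $\lambda$ forces stationarily many $\delta$ of cofinality $<\aleph_{n-1}$---but both cases are overcomplicated.

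For $n=1$: you correctly invoke the ``$\otp(D)$ is a regular uncountable cardinal'' clause when $\lambda$ is a successor, but then abandon it when $\cf(\lambda)=\omega$ and instead try to thread the sequence. This is unnecessary: if $\cf(\lambda)=\omega$ then $\cf(\otp(D))=\omega$, so $\otp(D)$ is not a regular uncountable cardinal, and strong nontriviality fails by the same clause. The threading argument you sketch is beside the point (and would need more care anyway).

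For $n>1$: you eventually land on the right idea, but the Fodor/reflection language obscures a one-line observation. Since $\cf(\lambda)\le\aleph_{n-1}$, fix a club $E\subseteq D$ of order type $\le\aleph_{n-1}$; then every $\delta\in\acc(E)$ satisfies $\cf(\delta)\le\otp(E\cap\delta)<\aleph_{n-1}$. So $D'=\acc(E)$ is a club of points of cofinality $<\aleph_{n-1}$, and by the inductive hypothesis $\mc{C}^\delta$ fails to be strongly nontrivial for every $\delta\in D'$. Hence the set of $\delta$ with $\mc{C}^\delta$ strongly nontrivial is nonstationary, and $\mc{C}$ is not strongly nontrivial. (If $\cf(\lambda)=\omega$, the $\otp(D)$ clause handles it directly as in the base case.) There is no need to worry about ``minimal-at-successors'' conventions: $C_\delta$ is club in $\delta$ by definition, so $\cf(\sup C_\delta)=\cf(\delta)$ automatically.
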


\begin{proof}
  The proof is by induction on $n$. If $n = 1$, then $\cf(\lambda) < \aleph_1$, and hence $\otp(D)$ 
  cannot be a regular uncountable cardinal, so $\Sq{n}^s(D)$ fails. Suppose that $n > 1$ and 
  we have established the proposition for $n-1$. Let $\mc{C}$ be a coherent $n$-$C$-sequence 
  on $D$; we will show that it is not strongly nontrivial. Let $D' \subseteq D$ be club in $\lambda$ 
  such that $\cf(\delta) < \aleph_{n-1}$ for all $\delta \in D'$. Then, for all $\delta \in D'$, 
  the inductive hypothesis implies that $\mc{C}^\delta$ is not strongly nontrivial; it follows 
  that $\mc{C}$ itself is not strongly nontrivial.
\end{proof}


It is straightforward to prove by induction on $n \geq 1$ that, if $\mc{C}$ is a 
coherent order-minimal $n$-$C$-sequence, then it is a $\Sq{n}^s(D)$-sequence. 
Moreover, if $D$ is a club in an ordinal $\lambda$ and $\otp(D) = \omega_n$, 
then there exists a coherent order-minimal $n$-$C$-sequence on $D$. We therefore 
obtain the following fact.

\begin{fact} \label{fact: cof_omega_n}
  Suppose that $n$ is a positive integer, $\lambda$ is an ordinal, and 
  $D$ is club in $\lambda$ with $\otp(D) = \aleph_n$. Then $\Sq{n}^s(D)$ holds.
\end{fact}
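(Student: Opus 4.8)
The Fact follows from two claims, each proved by induction on $n$: (i) every coherent order-minimal $n$-$C$-sequence on a club $D$ with $\otp(D)=\aleph_n$ is strongly nontrivial; and (ii) for every club $D$ (in some ordinal) with $\otp(D)=\aleph_n$ there is a coherent order-minimal $n$-$C$-sequence on $D$. The key observation underlying both is the following: if $\mc{C}$ is \emph{any} order-minimal $n$-$C$-sequence on a club $D$ with $\otp(D)=\aleph_n$, then $\otp(C_{\vec\gamma})\le\aleph_{n-k}$ for every $k\le n$ and every $\vec\gamma\in I(\mc{C})\cap[D]^k$. This is an induction on $k$: the case $k=0$ is $\otp(D)=\aleph_n$, and for the step one notes that $C_{\vec\delta}\cap\beta$ is a proper initial segment of $C_{\vec\delta}$, so $\otp(C_{\vec\delta}\cap\beta)<\otp(C_{\vec\delta})\le\aleph_{n-k+1}$, whence order-minimality gives $\otp(C_{\beta\vec\delta})=\cf(C_{\vec\delta}\cap\beta)\le\aleph_{n-k}$. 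Taking $k=n$, every $C_{\vec\gamma}$ with $\vec\gamma\in[D]^n$ has order type at most $\omega$ and hence no accumulation points; thus $X(\mc{C})=\emptyset$ and the coherence requirement in Definition~\ref{def: n_c_sequence} is satisfied vacuously. So in this setting order-minimality already entails coherence.

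For (ii), observe first that $\otp(D)=\aleph_n$ forces $D$ to be club in a limit ordinal $\lambda$ with $\cf(\lambda)=\aleph_n$. Build $\mc{C}$ by recursion on the length of $\vec\gamma$: set $C_\emptyset=D$, and, given $C_{\vec\delta}$ and $\beta\in C_{\vec\delta}$ with $\langle\beta\rangle^\frown\vec\delta\in[D]^{\le n}$, let $C_{\beta\vec\delta}$ be $\emptyset$ if $C_{\vec\delta}\cap\beta=\emptyset$, be $\{\max(C_{\vec\delta}\cap\beta)\}$ if $\beta\in\nacc(C_{\vec\delta})$, and otherwise be any closed cofinal subset of $C_{\vec\delta}\cap\beta$ of order type $\cf(C_{\vec\delta}\cap\beta)$ — such a set exists by the standard fact that every ordinal carries a closed cofinal subset of order type equal to its cofinality. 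By construction $\mc{C}$ is order-minimal and minimal at successors, and by the previous paragraph it is coherent.

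For (i), first take $n=1$: let $\mc{C}$ be a coherent order-minimal $1$-$C$-sequence on $D$ club in $\lambda$ with $\otp(D)=\aleph_1$, so by the observation each $C_\gamma$ has order type at most $\omega$. Suppose for contradiction that $\mc{C}$ extends to a coherent $1$-$C$-sequence $\mc{C}'$ on $D\cup\{\lambda\}$. Then $C'_{\langle\lambda\rangle}$ is a closed subset of $D$ unbounded in $\lambda$, so $\otp(C'_{\langle\lambda\rangle})=\cf(\lambda)=\aleph_1$; let $\alpha$ be its element in position $\omega\cdot 2$, so $\alpha\in\acc(C'_{\langle\lambda\rangle})$ and $\otp(C'_{\langle\lambda\rangle}\cap\alpha)>\omega$. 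Since $\alpha\in D$ we have $C'_\alpha=C_\alpha$, and since $\langle\lambda\rangle\in I(\mc{C}')\cap[D\cup\{\lambda\}]^1$ with $\alpha\in\acc(C'_{\langle\lambda\rangle})$ we get $\alpha\in X(\mc{C}')$, so coherence of $\mc{C}'$ forces $C'_{\langle\lambda\rangle}\cap\alpha=C'_\alpha=C_\alpha$, contradicting $\otp(C_\alpha)\le\omega$. Hence $\mc{C}$ is nontrivial, and as $\otp(D)=\aleph_1$ is regular uncountable, $\mc{C}$ is strongly nontrivial. Now let $n>1$ and let $\mc{C}$ be as in (i). By order-minimality $\otp(C_\delta)=\cf(D\cap\delta)=\cf(\otp(D\cap\delta))$ for every $\delta\in D$, so, writing $e:\aleph_n\ra D$ for the continuous increasing enumeration of $D$, the set $E:=\{\delta\in D\mid\otp(C_\delta)=\aleph_{n-1}\}$ equals $e[S^{\aleph_n}_{\aleph_{n-1}}]$ and is therefore stationary in $\lambda$. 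For each $\delta\in E$ we have $\delta\in\acc(D)$, $C_\delta$ is club in $\delta$ with $\otp(C_\delta)=\cf(\delta)=\aleph_{n-1}$, and the derived sequence $\mc{C}^\delta$ is an $(n-1)$-$C$-sequence on $C_\delta$ that is coherent (by the Proposition on the sequences $\mc{C}^\delta$, since $\mc{C}$ is coherent) and order-minimal (immediate from order-minimality of $\mc{C}$, using $C^\delta_{\alpha\vec\gamma}=C_{\alpha\vec\gamma\delta}$ and $C^\delta_{\vec\gamma}=C_{\vec\gamma\delta}$). By the induction hypothesis each such $\mc{C}^\delta$ is strongly nontrivial, so $\{\delta\in D\mid\mc{C}^\delta\text{ is strongly nontrivial}\}\supseteq E$ is stationary in $\lambda$; since $\otp(D)=\aleph_n$ is regular uncountable, $\mc{C}$ is strongly nontrivial.

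Combining (i) and (ii) proves the Fact: given $D$ with $\otp(D)=\aleph_n$, claim (ii) produces a coherent order-minimal $n$-$C$-sequence on $D$, which by claim (i) witnesses $\Sq{n}^s(D)$. The steps that require genuine attention are, in (i) for $n>1$, checking that $\mc{C}^\delta$ inherits order-minimality and that $E$ is stationary in $\lambda$ (via the pushforward of $S^{\aleph_n}_{\aleph_{n-1}}$ along the enumeration $e$), and, in (ii), the invocation of the standard fact on closed cofinal subsets of minimal order type; everything else is routine.
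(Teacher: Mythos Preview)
Your proof is correct and follows precisely the approach the paper sketches in the paragraph preceding the Fact: you establish by induction on $n$ that (i) any coherent order-minimal $n$-$C$-sequence on a club $D$ with $\otp(D)=\aleph_n$ is strongly nontrivial, and (ii) such a sequence exists, with the key observation that order-minimality forces $X(\mc{C})=\emptyset$ so coherence is vacuous. This is exactly what the paper has in mind; you have simply supplied the details that the paper leaves as ``straightforward.''
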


We now show that, in the constructible universe there is a preponderance of higher square 
sequences.

\begin{lemma} \label{lemma: square_stepping_up}
  Suppose that $m$ is a positive integer, $n = m+1$, $\kappa < \lambda$ are regular 
  uncountable cardinals, 
  $\Sq{m}^s(\kappa)$ holds, and there is a $\square(\lambda)$-sequence $\mc{D}$ such that the 
  set
  \[
    S = \{\gamma \in S^\lambda_\kappa \mid \otp(D_\gamma) = \kappa\}
  \]
  is stationary. Then $\Sq{n}^s(\lambda)$ holds.
\end{lemma}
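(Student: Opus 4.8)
The plan is to build an $n$-$C$-sequence $\mc{C}$ on $\lambda$ by ``layering'' a fixed $\Sq{m}^s(\kappa)$-sequence beneath each of the clubs $D_\gamma$ appearing in the $\square(\lambda)$-sequence $\mc{D}$. Concretely, let $\mc{E} = \langle E_{\vec{\beta}} \mid \vec{\beta} \in I(\mc{E}) \rangle$ be a coherent strongly nontrivial $m$-$C$-sequence on $\kappa$; by Fact \ref{fact: cof_omega_n} and the remarks before it we may even take it order-minimal if convenient, but strong nontriviality is what we need. For $\vec{\gamma} \in [\lambda]^{\leq n}$ I would set $C_{\vec\gamma}$ as follows. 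The top level ($\vec\gamma = \langle \gamma \rangle$) is handled by $\mc D$: put $C_{\langle\gamma\rangle} = D_\gamma$. For a nonempty tuple of the form $(\vec\beta,\gamma)$ with $\gamma \in S$ and $\otp(D_\gamma)=\kappa$, transport $\mc E$ along the (unique) order isomorphism $e_\gamma : \kappa \to D_\gamma$: declare $(\vec\beta,\gamma)\in I(\mc C)$ iff $e_\gamma^{-1}[\vec\beta] \in I(\mc E)$, and set $C_{(\vec\beta,\gamma)} = e_\gamma\big[E_{e_\gamma^{-1}[\vec\beta]}\big]$. For $\gamma \notin S$ (or $\gamma \in S$ but with $\otp(D_\gamma) \neq \kappa$, which doesn't occur on $S$), and more generally whenever $\acc(D_\gamma)\cap S^\gamma_\kappa$ is empty below the relevant point, take the sub-$C$-sequence to be order-type-minimal in the trivial way so that it contributes nothing to $X(\mc C)$; since $\mc D$ is a $\square(\lambda)$-sequence each $D_\gamma$ with $\gamma \in \acc(\lambda)$ is club in $\gamma$, and its accumulation points $\delta$ carry $D_\delta = D_\gamma \cap \delta$, so the coherence of $\mc D$ at the top level is inherited directly.

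The key steps, in order, are: (1) check clause (2) of Definition \ref{def: n_c_sequence}, i.e. that each $C_{(\beta,\vec\gamma)}$ is club in $\beta \cap C_{\vec\gamma}$ — at the top level this is the defining property of $\square(\lambda)$, and below a node $\gamma \in S$ it follows because $e_\gamma$ is an order isomorphism carrying $\mc E$'s clubs to clubs, using that $\mc E$ is an $m$-$C$-sequence on $\kappa$; (2) verify $n$-coherence (clause (\ref{coherence_condition})) by analyzing $X(\mc C)$: an $\alpha \in X(\mc C)$ is either an accumulation point of some $D_\gamma$, in which case $\square(\lambda)$-coherence of $\mc D$ gives $D_\gamma \cap \alpha = D_\alpha = C_\alpha$, or $\alpha$ sits below some $\gamma \in S$ as $e_\gamma(\bar\alpha)$ for $\bar\alpha \in X(\mc E)$, in which case the $m$-coherence of $\mc E$ transported along $e_\gamma$ gives the required agreement of the lower clubs — one must check the two bullets of clause (\ref{coherence_condition}) match up across the ``seam'' where a node of length $<n$ meets a node of length $n$, which is exactly where coherence of $\mc E$ (respectively $\mc D$) is invoked; (3) confirm $\otp(\lambda)=\lambda$ is a regular uncountable cardinal (given) and establish strong nontriviality, which by Definition \ref{def: trivial_c_sequence} for $n>1$ means showing $\{\delta \in \lambda \mid \mc C^\delta \text{ is strongly nontrivial}\}$ is stationary; but for $\delta = \gamma \in S$ the sub-$C$-sequence $\mc C^\gamma$ is (isomorphic to) $\mc E$, which is strongly nontrivial by hypothesis, and $S$ is stationary, so we are done.

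The main obstacle I anticipate is step (2): making the bookkeeping of $X(\mc C)$ and the two bullets of the coherence condition genuinely consistent at the interface between the $\mc D$-layer and the transported $\mc E$-layers. The delicate point is that an ordinal $\alpha$ could conceivably lie in $\acc(C_{\vec\gamma})$ for a length-$n$ tuple $\vec\gamma$ whose last coordinate is some $\gamma \in S$, while simultaneously being an accumulation point of $D_{\gamma'}$ for a different $\gamma'$; one must check that ``$C_\alpha$'' is unambiguously defined, i.e. that $D_\alpha$ (the canonical club assigned to $\alpha$ at level $1$) agrees with whatever club the lower computation produces. This should work out because the definition forces $C_{\langle\alpha\rangle}=D_\alpha$ always, and both the $\square(\lambda)$-coherence of $\mc D$ and the $m$-coherence of $\mc E$ are precisely the statements that the relevant restricted clubs collapse to this canonical one; but verifying it requires carefully tracking how $e_\gamma$ interacts with accumulation points and with the order isomorphisms $e_{\gamma'}$ for $\gamma' \in \acc(D_\gamma)\cap S$, and confirming that $e_\gamma \restriction \gamma' = e_{\gamma'}$ up to the evident reindexing. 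A secondary (routine but tedious) point is handling the minimal-at-successors convention of Remark \ref{remark: minimal_at_successors} uniformly across both layers so that $I^+(\mc C)$ is described cleanly.
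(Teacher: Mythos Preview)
Your high-level plan --- transport a fixed $\Sq{m}^s(\kappa)$-sequence $\mc{E}$ along the order isomorphism $e_\gamma:\kappa\to D_\gamma$ for each $\gamma\in S$, and observe that $\mc{C}^\gamma\cong\mc{E}$ is then strongly nontrivial on a stationary set --- is the paper's strategy, and your step (3) is correct as stated. The gap is in step (2), and it is precisely the obstacle you flag: your proposed resolution of it does not work.

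Concretely, suppose $\gamma\in S$, $\vec\beta\in[D_\gamma]^{m}$ with $\bar{\vec\beta}:=e_\gamma^{-1}[\vec\beta]\in I(\mc E)\cap[\kappa]^m$, and $\alpha\in\acc(C_{(\vec\beta,\gamma)})$. Then $\alpha\in X(\mc C)$, so the first bullet of clause (\ref{coherence_condition}) demands $C_{(\vec\beta,\gamma)}\cap\alpha=C_\alpha$. Now $C_{(\vec\beta,\gamma)}\cap\alpha=e_\gamma[E_{\bar{\vec\beta}}\cap\bar\alpha]$, where $\bar\alpha=e_\gamma^{-1}(\alpha)$, and by $m$-coherence of $\mc E$ this equals $e_\gamma[E_{\bar\alpha}]$. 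But you have set $C_\alpha=D_\alpha=D_\gamma\cap\alpha=e_\gamma[\bar\alpha]$. These agree only if $E_{\bar\alpha}=\bar\alpha$, which is absurd. So the two layers do \emph{not} collapse to the same canonical club: $\mc D$-coherence pins $C_\alpha$ to $D_\alpha$, while the transported $\mc E$-coherence pins it to the strictly thinner club $e_\gamma[E_{\bar\alpha}]$. The same conflict recurs in the second bullet of clause (\ref{coherence_condition}): for $\alpha\in X(\mc C)$ every cofinal $C_{\vec\delta}$ with $\delta_0=\alpha$ must equal $C_\alpha$, and your order-type-minimal default below $\alpha\notin S$ will not produce $e_\gamma[E_{\bar\alpha}]$ either.

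The paper repairs this in two moves. First, for $\alpha$ with $\otp(D_\alpha)\le\kappa$ (a set $S_0\supseteq S$), it does \emph{not} put $C_\alpha=D_\alpha$; instead $C_\alpha=\{D_\alpha(\xi):\xi\in E_{\eta_\alpha}\}$ where $\eta_\alpha=\otp(D_\alpha)$, and more generally indices into $\mc E$ are taken via $\gamma\mapsto\eta_\gamma$ rather than via $e_\gamma^{-1}$. This way the level-$1$ clubs on $S_0$ already carry the $\mc E$-structure, and the seam disappears. Second, one must prevent interference from $\gamma$ with $\otp(D_\gamma)>\kappa$: the paper thins each such $D_\gamma$ to $D'_\gamma=D_\gamma\setminus D_\gamma(\kappa)$, ensuring $\acc(D'_\gamma)\cap S_0=\emptyset$, and then treats $S_0$ and $S_1:=\acc(\lambda)\setminus S_0$ by disjoint constructions (on $S_1$, simply $C_{\vec\gamma}=D'_{\gamma_0}$). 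Without this separation, an $\alpha\in S_0$ could lie in $\acc(D_{\gamma'})$ for some $\gamma'$ with $\otp(D_{\gamma'})>\kappa$, again forcing $C_\alpha$ to be all of $D_\alpha$ and reinstating the conflict.
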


\begin{proof}
  We begin by slightly modifying $\mc{D}$. Let $S_0 = \{\gamma \in \acc(\lambda) \mid 
  \otp(D_\gamma) \leq \kappa\}$ and $S_1 = \acc(\lambda) \setminus S_0$. Now define a 
  sequence $\mc{D}' = \langle D'_\gamma \mid \gamma \in \acc(\lambda) \rangle$ by setting
  \[
    D'_\gamma = 
    \begin{cases}
      D_\gamma & \text{if } \gamma \in S_0 \\ 
      D_\gamma \setminus D_\gamma(\kappa) & \text{if } \gamma \in S_1,
    \end{cases}
  \]
  where $D_\gamma(\kappa)$ denotes the unique $\alpha \in D_\gamma$ such that 
  $\otp(D_\gamma \cap \alpha) = \kappa$. Note that $S \subseteq S_0$ and, for all 
  $\gamma \in S_1$, we have $\acc(D'_\gamma) \cap S_0 = \emptyset$. Fix a 
  $\Sq{m}^s(\kappa)$-sequence $\mc{E} = \langle E_{\vec{\eta}} \mid \vec{\eta} \in I(\mc{E}) 
  \rangle$.
  
  We will now define a $\Sq{n}^s(\lambda)$-sequence $\mc{C} = \langle C_{\vec{\gamma}} 
  \mid \vec{\gamma} \in I(\mc{C}) \rangle$, recalling from Remark \ref{remark: minimal_at_successors} 
  our assumption of minimality at successors. The construction will be essentially disjoint 
  on the sets $S_0$ and $S_1$. In particular, we will have $I^+(\mc{C}) \subseteq 
  [S_0]^{\leq n} \cup [S_1]^{\leq n}$. Let us first deal with $S_1$. Given $\vec{\gamma} 
  = \langle \gamma_0, \ldots, \gamma_{k-1} \rangle \in [S_1]^{\leq n}$, we put 
  $\vec{\gamma} \in I^+(\mc{C})$ if and only if, for all $j < k-1$, we have 
  $\gamma_j \in \acc(D'_{\gamma_{k-1}})$; for all such $\vec{\gamma}$, set 
  $C_{\vec{\gamma}} = D'_{\gamma_0}$.
  
  We next deal with $S_0$. The idea is to copy $\mc{E}$ along $D_\gamma$ for each 
  $\gamma \in S$. First, for each $\gamma \in S_0$, set 
  $\eta_\gamma = \otp(D_\gamma)$. For each $\vec{\gamma} = \langle \gamma_0, \ldots, 
  \gamma_{k-1}\rangle \in [S_0]^{\leq n}$, we set 
  $\vec{\gamma} \in I^+(\mc{C})$ if and only if, for all $j < k-1$, the following hold:
  \begin{itemize}
    \item $\gamma_j \in \acc(D_{\gamma_{k-1}})$; and
    \item one of the following two holds:
    \begin{itemize}
      \item $\gamma_{k-1} \in S$, $\langle \eta_{\gamma_{j+1}}, \ldots, 
      \eta_{\gamma_{k-2}} \rangle \in I(\mc{E})$, and 
      $\eta_{\gamma_j} \in \acc(E_{\eta_{\gamma_{j+1}}\ldots\eta_{\gamma_{k-2}}})$;
      \item $\gamma_{k-1} \notin S$, $\langle \eta_{\gamma_{j+1}}, \ldots, 
      \eta_{\gamma_{k-1}} \rangle \in I(\mc{E})$, and $\eta_{\gamma_j} \in \acc(E_{\eta_{\gamma_{j+1}}\ldots\eta_{\gamma_{k-1}}})$.
    \end{itemize}
  \end{itemize}
  For such $\vec{\gamma}$, we define $C_{\vec{\gamma}}$ according to the following cases:
  \begin{itemize}
    \item if $\gamma \in S$, then set $C_\gamma = D_\gamma$;
    \item if $\vec{\gamma} = \langle \gamma_0, \ldots, \gamma_{k-1} \rangle$ and 
    $\gamma_{k-1} \in S$, then set
    \[
      C_{\vec{\gamma}} = \{D_{\gamma_0}(\xi) \mid \xi \in E_{\eta_{\gamma_0}
      \ldots\eta_{\gamma_{k-2}}}\};
    \]
    \item if $\vec{\gamma} = \langle \gamma_0, \ldots, \gamma_{k-1} \rangle$,
    $\gamma_{k-1} \notin S$, and $k < n$, then set
    \[
      C_{\vec{\gamma}} = \{D_{\gamma_0}(\xi) \mid \xi \in E_{\eta_{\gamma_0}
      \ldots\eta_{\gamma_{k-1}}}\};
    \]
    \item if $\vec{\gamma} = \langle \gamma_0, \ldots, \gamma_{k-1} \rangle$,
    $\gamma_{k-1} \notin S$, and $k = n$, then set
    \[
      C_{\vec{\gamma}} = C_{\vec{\gamma}^0} \cap \gamma_0.
    \]
  \end{itemize}
  It is now straightforward if tedious to use the coherence of $\mc{D}$ and $\mc{E}$ to verify 
  that $\mc{C}$ is a coherent $n$-$C$-sequence on $\lambda$. Moreover, by construction, for 
  each $\gamma \in S$, $\mc{C}^\gamma$ is isomorphic to $\mc{E}$ via the order-preserving bijection
  $\pi_\gamma : D_\gamma \ra \kappa$. Since $\mc{E}$ is strongly nontrivial, this implies that 
  $\mc{C}^\gamma$ is strongly nontrivial for all $\gamma \in S$. Since $S$ is stationary, it 
  follows that $\mc{C}$ is a $\Sq{n}^s(\lambda)$-sequence, as desired.
\end{proof}

\begin{corollary}
  Let $n$ be a positive integer. If $\lambda \geq \omega_n$ is a regular cardinal that is not Mahlo 
  in $\mathrm{L}$, then $\Sq{n}^s(\lambda)$ holds.
\end{corollary}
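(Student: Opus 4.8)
The plan is to argue by induction on $n$, using Lemma~\ref{lemma: square_stepping_up} to climb from dimension $m = n-1$ to dimension $n$. For the base case $n = 1$, I would first recall (cf.\ the remarks following Definition~\ref{def: trivial_c_sequence}) that $\Sq{1}^s(\lambda)$ is nothing but the classical principle $\square(\lambda)$: a coherent nontrivial $1$-$C$-sequence on $\lambda$ is precisely a $\square(\lambda)$-sequence, and $\otp(\lambda) = \lambda$ is a regular uncountable cardinal by hypothesis. Since every weakly compact cardinal is Mahlo, a $\lambda$ that is not Mahlo in $\mathrm{L}$ is in particular not weakly compact in $\mathrm{L}$, and hence $\square(\lambda)$ holds by the classical theorem of Todorcevic building on Jensen (see, e.g., \cite{todorcevic_walks_book}). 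This disposes of $n = 1$.

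For the inductive step, fix $n = m+1 \geq 2$ and a regular cardinal $\lambda \geq \omega_n$ that is not Mahlo in $\mathrm{L}$, and set $\kappa = \omega_m$. Then $\kappa$ is a regular uncountable cardinal with $\kappa < \omega_n \leq \lambda$, and $\Sq{m}^s(\kappa)$ holds by Fact~\ref{fact: cof_omega_n}. (If one prefers a genuinely recursive argument one may instead take $\kappa$ to be any regular cardinal with $\omega_m \leq \kappa < \lambda$ that is not Mahlo in $\mathrm{L}$ and invoke the inductive hypothesis for $\Sq{m}^s(\kappa)$; nothing else changes.) By Lemma~\ref{lemma: square_stepping_up}, it then suffices to produce a $\square(\lambda)$-sequence $\mc{D} = \langle D_\gamma \mid \gamma \in \acc(\lambda) \rangle$ for which $\{\gamma \in S^\lambda_\kappa \mid \otp(D_\gamma) = \kappa\}$ is stationary in $\lambda$; granting such a $\mc{D}$, Lemma~\ref{lemma: square_stepping_up} delivers $\Sq{n}^s(\lambda)$ and closes the induction.

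The remaining task, and the only place where the hypothesis ``not Mahlo in $\mathrm{L}$'' is used essentially rather than merely ``not weakly compact in $\mathrm{L}$'', is to produce a $\square(\lambda)$-sequence with such control on order types. Working inside $\mathrm{L}$, the standard fine-structural construction of a $\square(\lambda)$-sequence can be arranged so that $\otp(D_\gamma) = \cf^{\mathrm{L}}(\gamma)$ for every $\gamma$ that is singular in $\mathrm{L}$, the only possible exceptions being ordinals that are inaccessible in $\mathrm{L}$; since $\lambda$ is not Mahlo in $\mathrm{L}$, the set of such exceptional ordinals below $\lambda$ is nonstationary, so after intersecting each $D_\gamma$ with a fixed club avoiding it one obtains a $\square(\lambda)$-sequence with $\otp(D_\gamma) = \cf(\gamma)$ for all $\gamma$ in a club of $\lambda$, and hence $\otp(D_\gamma) = \kappa$ for every $\gamma$ in the stationary set $S^\lambda_\kappa$ lying in that club. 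When $\mathrm{V} = \mathrm{L}$ this already finishes the argument. For the statement as phrased one must, in addition, run the absoluteness and ``repair'' argument underlying Todorcevic's theorem to transfer such a sequence to $\mathrm{V}$ and check that the repair does not destroy the stationarity of $\{\gamma \in S^\lambda_\kappa \mid \otp(D_\gamma) = \kappa\}$ --- the subtlety being that $\cf^{\mathrm{L}}(\gamma)$ and $\cf^{\mathrm{V}}(\gamma)$ can disagree. I expect this last verification to be the main technical obstacle, with the rest of the proof being a routine assembly of Lemma~\ref{lemma: square_stepping_up}, Fact~\ref{fact: cof_omega_n}, and the classical theory of square sequences in $\mathrm{L}$.
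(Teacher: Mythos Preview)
Your overall strategy is exactly the paper's: induction on $n$, base case via the classical fact that $\square(\lambda)$ holds whenever $\lambda$ is not weakly compact in $\mathrm{L}$, and inductive step via Fact~\ref{fact: cof_omega_n} (giving $\Sq{m}^s(\omega_m)$) together with Lemma~\ref{lemma: square_stepping_up}, reducing everything to the existence of a $\square(\lambda)$-sequence $\mc{D}$ with $\{\gamma \in S^\lambda_{\omega_m} \mid \otp(D_\gamma) = \omega_m\}$ stationary.

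The one place where you diverge is the production of this last object, and this is also the one place where you flag a genuine obstacle. Your sketch --- take Jensen's sequence in $\mathrm{L}$ with $\otp(D_\gamma) = \cf^{\mathrm{L}}(\gamma)$, thin past the $\mathrm{L}$-inaccessibles, then repair in $\mathrm{V}$ --- runs into exactly the problem you name: $\cf^{\mathrm{L}}$ and $\cf^{\mathrm{V}}$ need not agree, so it is not clear that the resulting set of $\gamma$ with $\otp(D_\gamma) = \omega_m^{\mathrm{V}}$ is stationary in $\mathrm{V}$. The paper sidesteps this entirely by quoting two results from \cite{todorcevic_walks_book}: Theorem~7.3.1, which says that if $\lambda$ is not Mahlo in $\mathrm{L}$ then there is (in $\mathrm{V}$) a \emph{special} $\square(\lambda)$-sequence, and Lemma~7.2.12, which converts any special $\square(\lambda)$-sequence into a $\square(\lambda)$-sequence $\mc{D}$ with $\{\gamma \in S^\lambda_{\omega_m} \mid \otp(D_\gamma) = \omega_m\}$ stationary --- all computed in $\mathrm{V}$, so no cofinality-transfer issues arise. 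Replacing your fine-structural sketch with these two citations closes the gap and yields precisely the paper's proof.
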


\begin{proof}
  If $n = 1$, then \cite[Theorem 7.1.5]{todorcevic_walks_book} implies that $\square(\lambda)$ 
  holds for every regular uncountable cardinal $\lambda$ that is not weakly compact in $\mathrm{L}$. 
  We can thus assume that $n = m+1$ for some positive integer $m$. By assumption, we have 
  $\lambda > \omega_m$. If $\lambda$ is not Mahlo in $\mathrm{L}$, then 
  \cite[Theorem 7.3.1]{todorcevic_walks_book} implies that there is a special 
  $\square(\lambda)$-sequence. The exact definition of \emph{special $\square(\lambda)$-sequence} 
  is not important here; what is relevant is that, by \cite[Lemma 7.2.12]{todorcevic_walks_book}, 
  there exists a $\square(\lambda)$-sequence $\mc{D}$ such that the set
  \[
    S = \{\gamma \in S^\lambda_{\omega_m} \mid \otp(D_\gamma) = \omega_m\}
  \]
  is stationary. Then Lemma \ref{lemma: square_stepping_up} together with 
  Fact \ref{fact: cof_omega_n} implies that $\Sq{n}^s(\lambda)$ holds.
\end{proof}

\begin{proposition}
  Suppose that $n$ is a positive integer, $\lambda$ is an ordinal with 
  $\cf(\lambda) \geq \aleph_n$, $D$ is club in $\lambda$, 
  and $\mc{C}$ is a coherent $n$-$C$-sequence on $D$. If $\mc{C}$ is strongly nontrivial, then 
  it is nontrivial, and if $\mc{C}$ is nontrivial, then it is weakly nontrivial.
\end{proposition}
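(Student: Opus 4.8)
Both implications are proved by contraposition, and the second one also by induction on $n$.

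\smallskip\noindent\emph{Nontrivial implies weakly nontrivial.} Assume $\mc{C}$ is not weakly nontrivial, as witnessed by a club $D'\subseteq D$ in $\lambda$ with $D'\cap\alpha = C_\alpha$ for every $\alpha\in\acc(D')$; the plan is to extend $\mc{C}$ to a coherent $n$-$C$-sequence $\mc{C}'$ on $D\cup\{\lambda\}$, which shows that $\mc{C}$ is not nontrivial. Retain every $C_{\vec{\gamma}}$ for $\vec{\gamma}\in I(\mc{C})$, put $C'_{\langle\lambda\rangle} = D'$, and for every strictly increasing tuple $\langle\gamma_0,\dots,\gamma_{k-1}\rangle$ from $\acc(D')$ with $k\le n-1$ set $C'_{\langle\gamma_0,\dots,\gamma_{k-1},\lambda\rangle} = D'\cap\gamma_0$; by Remark \ref{remark: minimal_at_successors} the remaining values are then forced by minimality at successors. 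A routine unwinding of Definition \ref{def: n_c_sequence} confirms that $\mc{C}'$ is an $n$-$C$-sequence extending $\mc{C}$. For coherence, the key point is that every set $C'_{\vec{\delta}}$ with $\lambda\in\vec{\delta}$ is an initial segment of $D'$; consequently $X(\mc{C}') = X(\mc{C})\cup\acc(D')$, and whenever $\alpha\in\acc(C'_{\vec{\delta}})$ one has $\alpha\in\acc(D')$ and hence $C'_{\vec{\delta}}\cap\alpha = D'\cap\alpha = C_\alpha = C'_\alpha$, which is precisely the first coherence clause for $\mc{C}'$. The second coherence clause then follows because, for $\alpha$ as above, every relevant $C'_{\vec{\beta}}$ with $\beta_0 = \alpha$ is pinned down to equal $C_\alpha$: by the identity just displayed when $\lambda\in\vec{\beta}$, and by the coherence of $\mc{C}$ itself when $\vec{\beta}\in I(\mc{C})$.

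\smallskip\noindent\emph{Strongly nontrivial implies nontrivial.} For $n = 1$ this is immediate from the definition of strong nontriviality. For $n > 1$, argue contrapositively: assume $\mc{C}$ is not nontrivial, fix a coherent extension $\mc{C}'$ of $\mc{C}$ to $D\cup\{\lambda\}$, and set $D' = C'_{\langle\lambda\rangle}$, a club in $\lambda$. Then $\mc{C}'^\lambda$ is a coherent $(n-1)$-$C$-sequence on $D'$, and since $\cf(\lambda)\ge\aleph_n > \aleph_{n-1}$, Proposition \ref{prop: stat_x} yields that $X(\mc{C}'^\lambda)\subseteq X(\mc{C}')$ accumulates at a club of points of $\lambda$. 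Combining this with the second coherence clause of $\mc{C}'$ one obtains a club $E\subseteq\acc(D')$ with $C'_{\langle\delta,\lambda\rangle} = C_\delta$ for all $\delta\in E$; for such $\delta$ the appropriate restriction of $\mc{C}'$ (the tuples from $C_\delta\cup\{\delta,\lambda\}$, re-indexed) provides a coherent extension of $\mc{C}^\delta$ to $C_\delta\cup\{\delta\}$, so $\mc{C}^\delta$ is not nontrivial, hence, by the inductive hypothesis, not strongly nontrivial. Thus $\{\delta\in D\mid\mc{C}^\delta\text{ is strongly nontrivial}\}$ is disjoint from the club $E$ and therefore nonstationary, so $\mc{C}$ is not strongly nontrivial.

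\smallskip\noindent The main obstacle, in the first implication, is the coherence verification for $\mc{C}'$: one must check that attaching $\lambda$ on top using the \emph{full intersections} $D'\cap\gamma_0$ (rather than any sparser clubs) really yields a coherent $n$-$C$-sequence, and in particular that the second coherence clause of Definition \ref{def: n_c_sequence} is satisfied at the freshly created points of $X(\mc{C}')$. This is exactly the kind of bookkeeping performed in the proof of Lemma \ref{lemma: square_stepping_up}, and it is also what dictates the precise shape of the extension. In the second implication, the delicate point is upgrading the ``stationarily many good $\delta$'' that Proposition \ref{prop: stat_x} directly supplies to ``club-many good $\delta$'', which is precisely where the hypothesis $\cf(\lambda)\ge\aleph_n$ is used, together with the elementary fact that the accumulation points of a stationary subset of $\lambda$ form a club.
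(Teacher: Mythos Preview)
For the first implication, your construction of $\mc{C}'$ matches the paper's, but the coherence verification has a gap. When you check the second coherence clause at some $\alpha \in X(\mc{C}')$ against an old index $\vec{\beta} \in I(\mc{C})$ with $\beta_0 = \alpha$ and $\sup(C_{\vec{\beta}}) = \alpha$, you invoke ``the coherence of $\mc{C}$ itself''; but that clause for $\mc{C}$ is only available when $\alpha \in X(\mc{C})$, whereas your $X(\mc{C}') = X(\mc{C}) \cup \acc(D')$ may (a priori) contain points of $\acc(D') \setminus X(\mc{C})$. The paper closes this gap with a separate claim establishing $\acc(D') \subseteq X(\mc{C})$: given $\alpha \in \acc(D')$, one uses Proposition~\ref{prop: stat_x} to find $\beta \in X(\mc{C}) \cap \acc(D')$ above $\alpha$, and then $C_{\vec{\varepsilon}} \cap \beta = C_\beta = D' \cap \beta$ for a suitable full-length $\vec{\varepsilon}$ forces $\alpha \in \acc(C_{\vec{\varepsilon}})$. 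This step is missing from your argument, and without it the case $\vec{\beta} \in I(\mc{C})$ is not handled.

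For the second implication, your approach diverges from the paper's and the gap is more serious. You assert that Proposition~\ref{prop: stat_x} combined with coherence yields a \emph{club} $E$ on which $C'_{\langle\delta,\lambda\rangle} = C_\delta$; but the second coherence clause only delivers this identity for $\delta \in X(\mc{C}')$, and while $X(\mc{C}'^\lambda)$ is stationary in $\lambda$, its accumulation points need not lie in $X(\mc{C}')$, so you only get $C'_{\langle\delta,\lambda\rangle} = C_\delta$ on a stationary set, not a club. That is not enough to conclude that the set of $\delta$ with $\mc{C}^\delta$ strongly nontrivial is nonstationary. The paper sidesteps this by treating \emph{every} $\delta \in \acc(C'_\lambda)$: when $\cf(\delta) < \aleph_{n-1}$ it invokes Proposition~\ref{prop: small_cof}, and when $\cf(\delta) \geq \aleph_{n-1}$ it builds an extension $\mc{E}$ of $\mc{C}^\delta$ by hand, setting $E_{\vec{\gamma}\delta} = C'_{\vec{\gamma}\delta\lambda} \cap C_\delta$ (with an ad hoc $\omega$-sequence when this intersection is bounded). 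The intersection with $C_\delta$ is the crucial device---it is what lets the argument proceed for all $\delta$ rather than only those in $X(\mc{C}')$.
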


\begin{proof}
  If $n = 1$, then all 
  three notions of nontriviality coincide, with strong nontriviality having 
  the additional requirement that $\otp(D)$ is regular, 
  so there is nothing to prove. Thus, assume 
  that $n > 1$. First, we leave to the reader the easy verification that, 
  if $\lambda$ is a successor ordinal or a limit ordinal of countable cofinality, then 
  $\mc{C}$ cannot satisfy any of the above varieties of nontriviality; we may 
  thus assume that $\cf(\lambda) > \omega$.
  
  Assume first that $\mc{C}$ is not weakly nontrivial, and fix 
  a club $D' \subseteq D$ such that $D' \cap \alpha = C_\alpha$ for all 
  $\alpha \in \acc(D')$. 
  
  \begin{claim} \label{claim: x_s}
    $\acc(D') \subseteq X(\mc{C})$.  
  \end{claim}
  
  \begin{proof}
    Fix $\alpha \in \acc(D')$. Fix $\gamma \in \acc(D') \cap S^\lambda_{\aleph_{n-1}}$ 
    with $\gamma > \alpha$, and fix $\delta \in \acc(D') \setminus (\gamma + 1)$. 
    By assumption, we have $D' \cap \delta = C_\delta$; in particular, 
    $\gamma \in \acc(C_\delta)$. By Proposition 
    \ref{prop: stat_x}, the set $X(\mc{C}^\delta) \cap \gamma$ is stationary 
    in $\gamma$. Since $X(\mc{C}^\delta) \subseteq X(\mc{C})$ and $\gamma \in 
    \acc(D')$, we can find $\beta \in \acc(D') \cap X(\mc{C})$ with 
    $\alpha < \beta < \gamma$. Now find $\vec{\varepsilon} \in I(\mc{C}) \cap 
    [D]^n$ with $\beta \in \acc({C_{\vec{\varepsilon}}})$. Then we have 
    $C_{\vec{\varepsilon}} \cap \beta = C_\beta = D' \cap \beta$. It follows 
    that $\alpha \in \acc(C_{\vec{\varepsilon}})$, and hence $\alpha \in 
    X(\mc{C})$.
  \end{proof}
  
  We define an extension $\mc{C'}$ of $\mc{C}$ to $D \cup \{\lambda\}$ 
  as follows. Let $J$ be the set of all $\vec{\gamma} \in [D']^{<n}$ such that, for 
  all $ i < |\vec{\gamma}|$, we have $\gamma_i \in \acc(D')$. Set 
  $I^+(\mc{C}') = I^+(\mc{C}) \cup \{\vec{\gamma}^\frown\langle \lambda \rangle \mid 
  \vec{\gamma} \in J\}$. By Remark \ref{remark: minimal_at_successors}, it suffices to specify $\C'\restr I^+(\C')$. Set $C'_\lambda = D'$ and, for nonempty 
  $\vec{\gamma} \in J$, set $C'_{\vec\gamma\lambda}=D'\cap \gamma_0$. 

  By Claim \ref{claim: x_s} and the fact that $D' \cap \alpha = C_\alpha$ 
  for all $\alpha \in \acc(D')$, it follows that $\mc{C'}$ is in fact 
  $n$-coherent and hence witnesses that $\mc{C}$ is not nontrivial.
  
  We will now show that strong nontriviality implies nontriviality. We will prove the contrapositive by induction on $n$, with the case $n=1$ having already been dealt with in the first paragraph. Suppose next that $\mc{C}$ is not nontrivial, and let $\mc{C}'$ be an extension of $\mc{C}$ to $D \cup \{\lambda\}$. We will show that, 
  for all $\delta \in \acc(D'_\lambda)$, $\mc{C}^\delta$ fails to be 
  strongly nontrivial. If $\cf(\delta) < \aleph_{n-1}$, then this follows 
  from Proposition \ref{prop: small_cof}. Thus, fix $\delta \in 
  \acc(D'_\lambda) \cap S^\lambda_{\geq \aleph_{n-1}}$. We will show that 
  $\mc{C}^\delta$ fails to be nontrivial; by the inductive hypothesis, this 
  will imply that it is not strongly nontrivial.
  
  We will define an extension $\mc{E}$ of $\mc{C}^\delta$ to 
  $C_\delta \cup \{\delta\}$ recursively as follows. We will assume 
  that $n > 2$. The case in which $n = 2$ is similar and simpler. Note that, 
  to define $\mc{E}$, it suffices to specify those $\vec{\gamma} 
  \in [C_\delta]^{{<}n-1}$ for which $\vec{\gamma}^\frown \langle \delta 
  \rangle \in I(\mc{E})$, and to define $E_{\vec{\gamma}\delta}$ for such 
  $\vec{\gamma}$. We will arrange so that, for each such $\vec{\gamma}$, 
  we have $\vec{\gamma}^\frown \langle \delta, \lambda \rangle \in 
  \mc{C}'$ and $E_{\vec{\gamma}\delta} \subseteq C'_{\vec{\gamma}\delta\lambda}$.
  Moreover, we will arrange so that, for all such $\vec{\gamma}$, 
  we have $E_{\vec{\gamma}\delta} = C'_{\vec{\gamma}\delta\lambda} \cap 
  C_\delta$ \textbf{unless} this set would be bounded in $E_{\vec{\gamma}^0\delta} 
  \cap \gamma_0$, in which case $\acc(E_{\vec{\gamma}\delta}) = \emptyset$.

  First put $\langle \delta \rangle \in I(\mc{E})$ and 
  set $E_\delta = C_\delta \cap C'_{\delta\lambda}$. Now suppose that 
  $\vec{\gamma} \in I(\mc{E}) \cap [C_\delta]^{{<}n-2}$ and 
  $\beta \in E_{\vec{\gamma}\delta}$. If $\beta \in \nacc(E_{\vec{\gamma}\delta})$, 
  then simply set $E_{\beta\vec{\gamma}\delta} = \{\max(E_{\vec{\gamma}\delta}) 
  \cap \beta)\}$ (or $\emptyset$ if $\beta = \min(E_{\vec{\gamma}\delta})$).
  
  If $\beta \in \acc(E_{\vec{\gamma}\delta})$, then by arrangement we have 
  $E_{\vec{\gamma}\delta} = C'_{\vec{\gamma}\delta\lambda} \cap C_\delta$. 
  Let $E_{\beta\vec{\gamma}} = C'_{\beta\vec{\gamma}\delta\lambda} \cap 
  C_\delta$ if this set is unbounded in $\beta$. Otherwise, note that we 
  must have $\cf(\beta) = \omega$. In this case, let 
  $E_{\beta\vec{\gamma}}$ be an arbitrary $\omega$-sequence that is a subset 
  of $\beta \cap E_{\vec{\gamma}\delta}$ and is cofinal in $\beta$. Note that, 
  in this latter case, we must have $\beta \notin X(\mc{C'})$, since otherwise 
  we would have $C'_{\beta\vec{\gamma}\delta\lambda} = C_\beta = C_{\beta\delta} 
  \subseteq C_\delta$.
  
  This completes the description of $\mc{E}$. We leave to the reader the 
  straightforward but somewhat tedious verification that it is coherent and 
  thus witnesses that $\mc{C}^\delta$ is not nontrivial.
\end{proof}

\begin{definition} \label{def: partial_square}
  Suppose that $\lambda$ is an ordinal of uncountable cofinality and $S \subseteq \acc(\lambda)$ 
  is stationary. Then $\square(S)$ is the assertion that there exists a sequence 
  $\mc{C} = \langle C_\alpha \mid \alpha \in \Gamma \rangle$ such that
  \begin{enumerate}
    \item $S \subseteq \Gamma \subseteq \acc(\lambda)$;
    \item for all $\alpha \in \Gamma$, $C_\alpha$ is club in $\alpha$;
    \item for all $\beta \in \Gamma$ and all $\alpha \in \acc(C_\beta)$, we have 
    $\alpha \in \Gamma$ and $C_\alpha = C_\beta \cap \alpha$;
    \item for every club $D \subseteq \lambda$, there exists $\alpha \in 
    \Gamma \cap \acc(D)$ such that $D \cap \alpha \neq C_\alpha$.
  \end{enumerate}
\end{definition}

\begin{proposition} \label{prop: product_thread}
  Suppose that $\mu$ is an infinite regular cardinal, $\lambda$ is an ordinal of cofinality 
  greater than $\mu$, and $S \subseteq \lambda$ is $({\geq} \mu)$-club. Suppose that 
  $\mc{C}$ is a $\square(S)$-sequence and $\P$ is a forcing poset such that 
  \[
    \Vdash_{\P \times \P} ``\cf(\lambda) > \mu".
  \]
  Then $\mc{C}$ remains a $\square(S)$-sequence in $V^{\P}$.
\end{proposition}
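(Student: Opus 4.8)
The plan is to argue by contradiction, leveraging the fact that clauses (1)--(3) of Definition~\ref{def: partial_square} are absolute between $V$ and $V^\P$, so the only thing that can fail is the non-triviality clause~(4). Call a club $D \subseteq \lambda$ a \emph{thread} of $\mc{C}$ if $D \cap \alpha = C_\alpha$ for every $\alpha \in \Gamma \cap \acc(D)$; clause~(4) says exactly that $\mc{C}$ has no thread. First I would note that ``$C_\alpha$ is club in $\alpha$'', ``$\alpha \in \acc(C_\beta)$'', ``$C_\alpha = C_\beta \cap \alpha$'', and the like involve only quantifiers bounded by $\lambda$ and so are absolute; hence clauses~(1)--(3) hold in $V^\P$ because they hold in $V$, and it remains only to show that $\mc{C}$ has no thread in $V^\P$.

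Suppose toward a contradiction that some $p \in \P$ forces a $\P$-name $\dot D$ to be a thread of $\mc{C}$. The key move is to pass to a model containing two ``independent'' threads: let $G_0, G_1$ be mutually $\P$-generic over $V$ with $p \in G_0 \cap G_1$, set $W = V[G_0 \times G_1]$, and let $D_0 = \dot D^{G_0}$, $D_1 = \dot D^{G_1}$. By absoluteness of ``is a club in $\lambda$'' and ``is a thread of $\mc{C}$'', both $D_0$ and $D_1$ are threads of $\mc{C}$ in $W$, and by the hypothesis on $\P \times \P$ we have $\cf^W(\lambda) > \mu$, so in particular $\cf^W(\lambda) > \omega$ and finite intersections of clubs in $\lambda$ are again clubs, as computed in $W$.

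The heart of the argument is to locate, cofinally often in $\lambda$, an ordinal $\alpha$ that is simultaneously a common limit point of $D_0$ and $D_1$ and a member of $\Gamma$, which then forces $D_0 \cap \alpha = C_\alpha = D_1 \cap \alpha$. To get membership in $\Gamma$ I would use $S$: let $L = \acc^+(S)$, which is a club in $\lambda$ (both in $V$ and in $W$, using $\cf(\lambda) > \omega$ and that ``$\beta \in L$'' is absolute), and consider the $W$-club $E = \acc(D_0) \cap \acc(D_1) \cap L$. Given $\beta < \lambda$, in $W$ I would take a strictly increasing $\mu$-sequence through $E$ above $\beta$ and let $\alpha$ be its supremum: then $\alpha \in E$ by closure, $\beta < \alpha < \lambda$ since $\cf^W(\lambda) > \mu$, and $\cf^W(\alpha) = \mu$, hence $\cf^V(\alpha) \geq \mu$; since $\alpha \in L$, i.e.\ $\sup(S \cap \alpha) = \alpha$, and $S$ is $(\geq \mu)$-club in $V$, we conclude $\alpha \in S \subseteq \Gamma$. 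Then $\alpha \in \Gamma \cap \acc(D_0) \cap \acc(D_1)$ gives $D_0 \cap \alpha = D_1 \cap \alpha$; since $\beta$ was arbitrary, $D_0$ and $D_1$ agree below unboundedly many ordinals, so $D_0 = D_1$.

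Finally I would conclude in the usual way: since this holds for every pair of mutually generic $G_0, G_1$ below $p$, we get $(p,p) \Vdash_{\P \times \P} \dot D^{(0)} = \dot D^{(1)}$, where the superscripts denote interpretation via the two coordinates; whence $p$ decides ``$\alpha \in \dot D$'' for each $\alpha < \lambda$ (otherwise incompatible decisions $p_0, p_1 \leq p$ would give $(p_0,p_1) \Vdash \dot D^{(0)} \neq \dot D^{(1)}$). So $D^* = \{\alpha < \lambda \mid p \Vdash \alpha \in \dot D\}$ lies in $V$ and is forced by $p$ to equal $\dot D$, hence to be a thread of $\mc{C}$; by absoluteness $D^*$ is then a thread of $\mc{C}$ in $V$, contradicting clause~(4). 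I expect the main obstacle to be the cofinality bookkeeping in the third paragraph --- arranging a common limit point of $D_0$ and $D_1$ that is also a limit point of $S$ and has cofinality $\geq \mu$ as computed in $V$ --- which is exactly the point at which the hypothesis $\Vdash_{\P \times \P} \cf(\lambda) > \mu$ is used; the rest is routine absoluteness together with the standard mutual-genericity argument.
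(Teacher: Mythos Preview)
Your proof is correct and follows essentially the same mutual-genericity argument as the paper: produce two threads $D_0, D_1$ in $V[G_0 \times G_1]$, use the hypothesis $\cf(\lambda) > \mu$ there together with the $(\geq \mu)$-club $S$ to find a common point $\alpha \in S \subseteq \Gamma$ that is an accumulation point of both, and conclude $D_0 \cap \alpha = C_\alpha = D_1 \cap \alpha$. The paper organizes the contradiction slightly more directly---it first fixes a single ordinal $\eta$ on which the two interpretations disagree and then finds one such $\alpha > \eta$---whereas you show $D_0 = D_1$ on a cofinal set and then run the standard ``$p$ decides $\dot D$'' argument to place the thread in $V$; but this is a purely cosmetic difference.
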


\begin{proof}
  $S$ remains stationary in $V^{\P}$, and items (1)--(3) of Definition \ref{def: partial_square} 
  are clearly upward absolute to $V^{\P}$, so it suffices to check that item (4) continues to 
  hold in $V^{\P}$. Suppose for the sake of contradiction that $p^* \in \P$ and 
  $\dot{D}$ is a $\P$-name forced by $p^*$
  to be a club in $\lambda$ with the property that, for all $\alpha \in \acc(\dot{D}) \cap 
  \Gamma$, we have $\dot{D} \cap \alpha = C_\alpha$.
  
  Since $\Vdash_{\P} ``\dot{D} \notin V"$, we can find $p, p' \leq p^*$ and an $\eta < \lambda$ 
  such that $p \Vdash ``\eta \in \dot{D}"$ and $p' \Vdash ``\eta \notin \dot{D}"$. 
  Now let $G \times G'$ be $\P \times \P$-generic over $V$ with $(p,p') \in G \times G'$. 
  Let $D$ and $D'$ be the interpretations of $\dot{D}$ in $V[G]$ and $V[G']$, respectively. 
  Since $\cf(\lambda) > \mu$ and $S$ is $(\geq \mu)$-club in $V[G \times G']$, we can find 
  $\alpha \in (\acc(D) \cap \acc(D') \cap S) \setminus (\eta + 1)$. Since $p,p' \leq p^*$, 
  it follows that $D \cap \alpha = D' \cap \alpha = C_\alpha$. However, since $p \in G$, 
  we have $\eta \in D$, and since $p' \in G'$, we have $\eta \notin D'$, which yields the 
  desired contradiction.
\end{proof}

\begin{lemma} \label{lemma: 1_collapse}
  Suppose that $\mu < \kappa$ are infinite regular cardinals, with $\kappa$ 
  supercompact. Let $\P = \mathrm{Coll}(\mu^+, {<}\kappa)$, and let $\dot{\Q}$ be 
  a $\P$-name for a $\kappa$-directed closed forcing. Then the following statement holds in 
  $V^{\P \ast \dot{\Q}}$: for every ordinal $\lambda \in \mathrm{cof}({\geq}\kappa)$ and every 
  $({\geq }\mu)$-club $S \subseteq \lambda$, $\square(S)$ fails.
\end{lemma}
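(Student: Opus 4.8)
The plan is to adapt the classical argument that a L\'evy collapse of a supercompact cardinal destroys square, by lifting a supercompactness embedding. Write $W := V^{\P \ast \dot\Q}$, let $G \ast H$ denote the generic, and suppose for a contradiction that in $W$ there are an ordinal $\lambda \in \cof({\geq}\kappa)$, a $({\geq}\mu)$-club $S \subseteq \lambda$, and a $\square(S)$-sequence $\mc C = \langle C_\alpha \mid \alpha \in \Gamma \rangle$. Using supercompactness of $\kappa$, fix in $V$ an elementary embedding $j : V \to M$ with $\crit(j) = \kappa$, $M^\theta \subseteq M$, and $j(\kappa) > \theta$, for some large $\theta$ (at least $\max(|\Q|, \lambda)$). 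Since $\P \in V_\kappa$, $j$ is the identity on $\P$ and $j(\P) = \P \ast \dot\P_{\mathrm{tail}}$, where $\dot\P_{\mathrm{tail}}$ is a $\P$-name for $\mathrm{Coll}(\mu^+, [\kappa, j(\kappa)))$ as computed in $M[G]$; this tail forcing is ${<}\mu^+$-closed.

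Next I would build the lift and check that it preserves $\square(S)$. Force over $W$ with $\P_{\mathrm{tail}}$ to add a generic $G_{\mathrm{tail}}$ over $M[G]$, mutually generic with $H$ over $V[G]$; then $j$ lifts to $\hat\jmath_0 : V[G] \to M[G \ast G_{\mathrm{tail}}]$ because $\hat\jmath_0$ extends $j$ and $j " G = G \subseteq G \ast G_{\mathrm{tail}}$. To lift through $\dot\Q$, note that $\{ \hat\jmath_0(q) \mid q \in H \}$ is a directed subset of $\hat\jmath_0(\Q) = j(\dot\Q)[G \ast G_{\mathrm{tail}}]$ of size $\leq |\Q| < j(\kappa)$, and $\hat\jmath_0(\Q)$ is $j(\kappa)$-directed closed in $M[G \ast G_{\mathrm{tail}}]$; a master-condition argument produces a lower bound $h^*$ for this set, and forcing with $\hat\jmath_0(\Q)$ below $h^*$ yields a generic $H'$ with $\{ \hat\jmath_0(q) \mid q \in H \} \subseteq H'$, hence a lift $\hat\jmath : W \to \bar M := M[G \ast G_{\mathrm{tail}} \ast H']$ living in a generic extension $W^+$ of $W$. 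Crucially, the forcing taking $W$ to $W^+$ is the two-step iteration of the ${<}\mu^+$-closed $\P_{\mathrm{tail}}$ with a $j(\kappa)$-directed closed (hence ${<}\mu^+$-closed) poset; so it, and therefore its square, is ${<}\mu^+$-closed and forces ``$\cf(\lambda) > \mu$''. Since $\mc C$ is a $\square(S)$-sequence in $W$, Proposition~\ref{prop: product_thread} (with this forcing in the role of the poset there) shows $\mc C$ is still a $\square(S)$-sequence in $W^+$.

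I would then reach a contradiction in $W^+$ by threading $\mc C$. Put $\lambda^* := \sup \hat\jmath " \lambda$. Since $j(\kappa)$ is a cardinal in $\bar M$ with $\lambda < j(\kappa) \leq \cf^{\bar M}(\hat\jmath(\lambda))$ and $\hat\jmath " \lambda \in \bar M$, we get $\lambda^* < \hat\jmath(\lambda)$; moreover $\hat\jmath " S$ is cofinal in $\lambda^*$ and $\cf^{\bar M}(\lambda^*) \geq \cf^{W^+}(\lambda) \geq \mu^+$, so, as $\hat\jmath(S)$ is $({\geq}\mu)$-club in $\bar M$, $\lambda^* \in \hat\jmath(S) \subseteq \hat\jmath(\Gamma)$. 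Let $C^* := \hat\jmath(\mc C)_{\lambda^*}$, a club in $\lambda^*$, and set $E := \{ \delta < \lambda \mid \hat\jmath(\delta) \in C^* \}$, with $\bar E$ its closure in $\lambda$. Since $\hat\jmath " \lambda$ is $\omega$-closed and cofinal in $\lambda^*$, it is stationary in $\lambda^*$ (as $\cf^{\bar M}(\lambda^*) > \omega$) and so meets the club $\acc(C^*)$ cofinally; hence $E$, and thus $\bar E$, is unbounded, so $\bar E$ is a club in $\lambda$. Finally, for $\alpha \in \acc(\bar E) \cap \Gamma$ one verifies $\bar E \cap \alpha = C_\alpha$: with $f(\alpha) := \sup \hat\jmath " \alpha$, the points $\{ \hat\jmath(\delta) \mid \delta \in E \cap \alpha \} \subseteq C^*$ are cofinal in $f(\alpha)$, so $f(\alpha) \in \acc(C^*)$, and likewise $f(\alpha) \in \acc(\hat\jmath(C_\alpha))$ when $f(\alpha) < \hat\jmath(\alpha)$; since $\lambda^*$ and $\hat\jmath(\alpha)$ both lie in $\hat\jmath(\Gamma)$, coherence of $\hat\jmath(\mc C)$ then forces $C^* \cap f(\alpha) = \hat\jmath(\mc C)_{f(\alpha)} = \hat\jmath(C_\alpha) \cap f(\alpha)$, and since every element of $\hat\jmath " \alpha$ lies below $f(\alpha)$, this yields $\bar E \cap \alpha = \{ \delta < \alpha \mid \hat\jmath(\delta) \in \hat\jmath(C_\alpha) \} = C_\alpha$ by elementarity (the case $f(\alpha) = \hat\jmath(\alpha)$ being similar and easier). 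Thus $\bar E$ witnesses the failure of item~(4) of Definition~\ref{def: partial_square} for $\mc C$ in $W^+$, a contradiction.

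The main obstacle, I expect, is the second step: producing the lift so that it lives in a model which still regards $\mc C$ as a $\square(S)$-sequence. This is delicate because one must choose the order of the auxiliary generics $G_{\mathrm{tail}}, H'$ and the closure of the embedding with care, so that the master-condition argument for $\dot\Q$ really goes through (this is exactly where $\kappa$-directed closure of $\dot\Q$ is needed) while the extra forcing stays ${<}\mu^+$-closed, which is what lets Proposition~\ref{prop: product_thread} apply. The threading argument in the third step is, by comparison, routine.
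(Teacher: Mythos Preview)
Your overall strategy matches the paper's: lift a supercompactness embedding, read off a thread to $\hat\jmath(\mc{C})$ at $\sup \hat\jmath``\lambda$, pull it back, and invoke Proposition~\ref{prop: product_thread} to reflect the contradiction down to $W$. The threading argument in your third paragraph is essentially the paper's Claim, and the use of Proposition~\ref{prop: product_thread} is the same.

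The gap is exactly where you anticipated it, in the master-condition step, but your proposed resolution does not work. You arrange $G_{\mathrm{tail}}$ and $H$ to be mutually generic over $V[G]$; this forces $H$ to be $\Q$-generic over $V[G \ast G_{\mathrm{tail}}]$ and hence over $M[G \ast G_{\mathrm{tail}}]$, so $H \notin M[G \ast G_{\mathrm{tail}}]$. Since $\hat\jmath_0 \restriction \Q$ \emph{is} in $M[G \ast G_{\mathrm{tail}}]$ (it is a $\theta$-sized object in $V[G \ast G_{\mathrm{tail}}]$, and $M[G \ast G_{\mathrm{tail}}]$ is $\theta$-closed there) and $\hat\jmath_0$ is injective, it follows that $\hat\jmath_0``H \notin M[G \ast G_{\mathrm{tail}}]$ either. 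Thus the $j(\kappa)$-directed closure of $\hat\jmath_0(\Q)$, which holds only in $M[G \ast G_{\mathrm{tail}}]$, cannot be applied to produce your lower bound $h^*$; and there is no reason for this closure to persist to $V[G][H][G_{\mathrm{tail}}]$, since $\Q$ may add new directed subsets of $\hat\jmath_0(\Q)$ of size up to $\theta \geq \kappa$.

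The paper circumvents this by invoking Magidor's absorption lemma \cite[Lemma~3]{magidor_reflecting}: because $\Q$ is $\mu^+$-closed and of size ${<}\,j(\kappa)$, one may factor $j(\P) = \Coll(\mu^+,{<}j(\kappa)) \cong \P \ast \dot\Q \ast \dot{\bb R}$ with $\dot{\bb R}$ a $\P \ast \dot\Q$-name for a $\mu^+$-closed forcing. The $j(\P)$-generic is then $G \ast H \ast I$, so the target model $M[G \ast H \ast I]$ already contains $H$; the $\chi$-closure of $M$ then gives $j``H \in M[G \ast H \ast I]$, and the master condition for $j(\Q)$ follows. This absorption is precisely the ``careful choice of order of auxiliary generics'' you identified as the crux; your direct product-style lift does not supply it.
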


\begin{proof}
  In $V$, fix an ordinal $\lambda \in \mathrm{cof}({\geq}\kappa)$ and $\P \ast \dot{\Q}$-names
  $\dot{S}$ and $\dot{\mc{C}}$ for a $({\geq} \mu)$-club in $\lambda$ and a sequence 
  satisfying items (1)--(3) of Definition \ref{def: partial_square}. We will show that 
  $\dot{S}$ is not a $\square(\dot{S})$-sequence in $V^{\P \ast \dot{\Q}}$.
  
  Fix a cardinal $\chi$ such that $\chi >> \max{|\dot{\Q}|, \lambda}$, and let $j:V \ra M$ witness that 
  $\kappa$ is $\chi$-supercompact. Let $G \ast H$ be $\P \ast \dot{\Q}$-generic over $V$, and let 
  $S$ and $\mc{C} = \langle C_\alpha \mid \alpha \in \Gamma \rangle$ be the interpretations of 
  $\dot{S}$ and $\dot{\mc{C}}$, respectively, in $V[G \ast H]$.
  
  Note that $j(\P) = \mathrm{Coll}(\mu^+, {<}j(\kappa))$. By 
  \cite[Lemma 3]{magidor_reflecting}, we can write 
  $j(\P) \cong \P \ast \dot{\Q} \ast \dot{\bb{R}}$, where $\dot{\bb{R}}$ is a 
  $\P \ast \dot{\Q}$-name for a $\mu^+$-closed forcing. Let $I$ be $\bb{R}$-generic 
  over $V[G \ast H]$. Then, in $V[G \ast H \ast I]$, we can lift $j$ to 
  $j:V[G] \ra M[G \ast H \ast I]$. In $V[G \ast H \ast I]$, consider the set 
  $j``H \subseteq j(\Q)$. Note that $j``H \in M[G \ast H \ast I]$ and, in 
  $M[G \ast H \ast I]$, $j(\Q)$ is $j(\kappa)$-directed closed. Since 
  $|j``H| < \chi < j(\kappa)$, we can find $q^+ \in j(\Q)$ such that 
  $q^+$ is a lower bound for $j``H$. Let $H^+$ be $j(\Q)$-generic over 
  $V[G \ast H \ast I]$ with $q^+ \in H^+$. Then, in 
  $V[G \ast H \ast I \ast H^+]$, we can lift $j$ to $j:V[G \ast H] \ra 
  M[G \ast H \ast I \ast H^+]$.
  
  Let $j(\mc{C}) = \mc{C'} = \langle C'_\alpha \mid \alpha \in j(\Gamma) \rangle$. 
  Let $\delta = \sup(j``\lambda) < j(\lambda)$. Note that $\sup(j(S) \cap \delta) = 
  \delta$ and $\cf(\delta) = \cf(\lambda) \geq \mu$ in $M[G \ast H \ast I \ast H^+]$, and 
  thus $\delta \in j(S)$. Let $D_0 = \{\alpha < \lambda \mid j(\alpha) \in 
  C'_\delta\}$; note that $D_0$ is $({<}\kappa)$-club in $\lambda$. The following claim will 
  imply that $D_0$ is actually club in $\lambda$.
  
  \begin{claim}
    For all $\alpha\in\acc(D_0)$, $D_0 \cap \alpha = C_\alpha$.
  \end{claim}
  
  \begin{proof}
    Let $\beta = \sup(j``\alpha)$. Note that $j``C_\alpha \subseteq C'_{j(\alpha)}$; 
    it follows that $\beta \in \acc(C'_{j(\alpha)})$, and hence $C'_\beta = C'_{j(\alpha)} 
    \cap \beta$. It follows from the definition of $D_0$ that $\beta \in \acc(C'_\delta)$, 
    and hence, for all $\eta \in D_0 \cap \alpha$, we have $j(\eta) \in C'_\beta \subseteq 
    C'_{j(\alpha)}$. By elementarity, it follows that $D_0 \cap \alpha \subseteq C_\alpha$.
    
    For the other inclusion, fix $\eta \in C_\alpha$. Then $j(\eta) \in C'_{j(\alpha)} 
    \cap \beta = C'_\beta$. Since $C'_\beta = C'_\delta \cap \beta$, it follows from the 
    definition of $D_0$ that $\eta \in D_0$.
  \end{proof}
  
  By the above claim, in $V[G \ast H \ast I \ast H^+]$, $D_0$ witnesses that $\mc{C}$ is 
  not a $\square(S)$-sequence. However, $V[G \ast H \ast I \ast H^+]$ is an extension 
  of $V[G \ast H]$ by $\mu^+$-closed forcing; therefore, Proposition \ref{prop: product_thread} 
  implies that $\mc{C}$ is not a $\square(S)$-sequence in $V[G \ast H]$, either.
\end{proof}

\begin{theorem}
  Suppose that $n$ is a positive integer, $\mu$ is an infinite regular cardinal, 
  let $\kappa_0 = \mu^+$, and let 
  $\langle \kappa_i \mid 1 \leq i < n \rangle$ be an increasing sequence of supercompact cardinals 
  with $\kappa_1 > \kappa_0$. Let $\P = \Coll(\kappa_0, {<}\kappa_1) \ast 
  \dot{\Coll}(\kappa_1, {<}\kappa_2) \ast \cdots \ast \dot{\Coll}(\kappa_{n-1}, {<}\kappa_n)$, and 
  let $\dot{\Q}$ be a $\P$-name for a $\kappa_n$-directed closed forcing. 
  Then, in $V^{\P \ast \dot{\Q}}$, for every ordinal $\lambda \in 
  \mathrm{cof}({\geq}\kappa_n)$ and every club $D \subseteq \lambda$, 
  $\Sq{n}^w(D)$ fails.
\end{theorem}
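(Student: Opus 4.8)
The plan is to argue by induction on $n$. For the bookkeeping it is convenient to observe that, for each $1 \le i \le n$, the iteration $\P \ast \dot\Q$ may be regrouped as $\P_i \ast \dot\Q_i$, where $\P_i = \Coll(\kappa_0,{<}\kappa_1) \ast \cdots \ast \dot\Coll(\kappa_{i-1},{<}\kappa_i)$ and $\dot\Q_i$ is a $\P_i$-name for the $\kappa_i$-directed-closed forcing $\dot\Coll(\kappa_i,{<}\kappa_{i+1}) \ast \cdots \ast \dot\Coll(\kappa_{n-1},{<}\kappa_n) \ast \dot\Q$ (so $\P_n = \P$ and $\dot\Q_n = \dot\Q$). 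Since each $\kappa_i$ is supercompact, Lemma~\ref{lemma: 1_collapse} applied to $\P_i \ast \dot\Q_i$, with $\kappa_i$ playing the role of $\kappa$, yields: in $V^{\P \ast \dot\Q}$, for every $1 \le i \le n$, every ordinal $\lambda$ with $\cf(\lambda) \ge \kappa_i$, and every $({\ge}\mu)$-club $S \subseteq \lambda$, the principle $\square(S)$ fails. This handles the base case $n = 1$ at once: if $\mc{C} = \langle C_\alpha \mid \alpha \in D\rangle$ were a coherent, weakly nontrivial $1$-$C$-sequence on a club $D$ in some $\lambda \in \cof({\ge}\kappa_1)$, then $\langle C_\alpha \mid \alpha \in \acc(D)\rangle$ would be a $\square(S)$-sequence with index set $\acc(D)$ for the $({\ge}\mu)$-club $S := \acc(D) \cap S^\lambda_{\ge\mu}$ — its coherence in the sense of Definition~\ref{def: partial_square}(3) is exactly condition~\eqref{coherence_condition} (using the immediate inclusion $X(\mc{C}) \subseteq \acc(D)$), and its non-threadability, Definition~\ref{def: partial_square}(4), follows from weak nontriviality of $\mc{C}$ together with the fact that $C_\alpha \subseteq D$ — contradicting the above with $i = 1$.

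Now suppose $n = m + 1 > 1$ and the theorem holds for $m$. Regrouping $\P \ast \dot\Q$ as $\P_m \ast \dot\Q_m$ and invoking the inductive hypothesis gives: in $W := V^{\P \ast \dot\Q}$, every coherent $m$-$C$-sequence on a club in an ordinal of cofinality ${\ge}\kappa_{n-1}$ admits a thread, i.e.\ fails to be weakly nontrivial. Fix in $W$ an ordinal $\lambda \in \cof({\ge}\kappa_n)$, a club $D$ in $\lambda$, and a coherent $n$-$C$-sequence $\mc{C}$ on $D$; the goal is a club $D' \subseteq D$ with $D' \cap \alpha = C_\alpha$ for every $\alpha \in \acc(D')$. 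Put $T := \acc(D) \cap S^\lambda_{\ge\kappa_{n-1}}$. For each $\delta \in T$, the slice $\mc{C}^\delta$ is a coherent $m$-$C$-sequence on the club $C_\delta$ of $\delta$, and $\cf(\delta) \ge \kappa_{n-1}$, so by the inductive hypothesis it admits a thread $E_\delta$. Two features of $E_\delta$ will be used: by the argument of Claim~\ref{claim: x_s} applied to $\mc{C}^\delta$, $\acc(E_\delta) \subseteq X(\mc{C}^\delta) \subseteq X(\mc{C})$; and for each $\alpha \in \acc(E_\delta) \subseteq \acc(C_\delta)$, applying the second bullet of condition~\eqref{coherence_condition} to $\vec\beta = \langle \alpha, \delta\rangle$ gives $C_{\alpha\delta} = C_\alpha$, hence $E_\delta \cap \alpha = C_{\alpha\delta} = C_\alpha$. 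So each $E_\delta$ in fact threads the one-dimensional part of $\mc{C}$ below $\delta$. Fix such a sequence $\langle E_\delta \mid \delta \in T\rangle$ in $W$.

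To glue the $E_\delta$ into one club we use the supercompactness of $\kappa_n$, following the proof of Lemma~\ref{lemma: 1_collapse} with the factorization $\P \ast \dot\Q = \P_n \ast \dot\Q_n$ (so $\P_n = \P$). Fix a large $\chi$ and a $\chi$-supercompact $j : V \to M$ with $\crit(j) = \kappa_n$, and lift it to $j : W \to N$; since the last factor of $\P$ is $\Coll(\kappa_{n-1},{<}\kappa_n)$, the forcing carrying $W$ to $N$ is $\kappa_{n-1}$-closed. Let $\delta^* = \sup j''\lambda$. As $\cf^W(\lambda) \ge \kappa_n$ and $W \to N$ is $\kappa_{n-1}$-closed, $\delta^* < j(\lambda)$, $\delta^* \in j(D)$, and $\cf^N(\delta^*) \ge \kappa_{n-1}$, so $\delta^* \in j(T)$. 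Write $\mc{C}' = j(\mc{C})$ and let $E^*$ be the $\delta^*$-th entry of $j(\langle E_\delta \mid \delta \in T\rangle)$; by elementarity $E^*$ threads both $(\mc{C}')^{\delta^*}$ and the one-dimensional part of $\mc{C}'$ below $\delta^*$, and $\acc(E^*) \subseteq X((\mc{C}')^{\delta^*}) \subseteq X(\mc{C}')$. Set $D_0 = \{\alpha < \lambda \mid j(\alpha) \in E^*\}$. Given $\alpha \in \acc(D_0)$, let $\beta = \sup j''\alpha$; then $\beta \le j(\alpha) < \delta^*$ and $j''(D_0 \cap \alpha)$ is cofinal in $\beta$ and contained in $E^*$, so $\beta \in \acc(E^*) \subseteq X(\mc{C}')$ and $E^* \cap \beta = C'_\beta$. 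Also $j''C_\alpha \subseteq C'_{j(\alpha)}$ with $C_\alpha$ cofinal in $\alpha$, so either $\beta \in \acc(C'_{j(\alpha)})$ or $\beta = j(\alpha)$ (in which case $C'_\beta = C'_{j(\alpha)}$ trivially). In both cases, since $\beta \in X(\mc{C}')$, condition~\eqref{coherence_condition} gives $C'_{j(\alpha)} \cap \beta = C'_\beta = E^* \cap \beta$, so
\[
  D_0 \cap \alpha = \{\eta < \alpha \mid j(\eta) \in E^* \cap \beta\} = \{\eta < \alpha \mid j(\eta) \in C'_{j(\alpha)}\} = C_\alpha
\]
by elementarity. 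A routine closing-off, exactly as in the proof of Lemma~\ref{lemma: 1_collapse}, promotes $D_0$ to an honest club in $\lambda$ with the same property, witnessing in $N$ that $\mc{C}$ is not weakly nontrivial. Finally, $N$ is obtained from $W$ by $\kappa_{n-1}$-closed (hence $\cf(\lambda)$-preserving-above-$\omega$) forcing, so a two-generic argument modeled on the proof of Proposition~\ref{prop: product_thread} — using that any two threads of $\mc{C}$ must agree with $C_\alpha$, hence with each other, on the club of their common accumulation points — shows $\mc{C}$ already admits a thread in $W$, contradicting weak nontriviality. This closes the induction.

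I expect the decisive point to be the interaction, in the last paragraph, between the Magidor-style threading argument and the fact that the coherence built into higher $C$-sequences (condition~\eqref{coherence_condition}) is only guaranteed on the set $X(\mc{C})$. What makes the argument go through is that the club $E^*$ transferred from $\delta^*$ threads the \emph{slice} $(\mc{C}')^{\delta^*}$, which is precisely what confines its accumulation points to $X((\mc{C}')^{\delta^*}) \subseteq X(\mc{C}')$ — exactly the set on which condition~\eqref{coherence_condition} can be invoked. The ancillary verifications — that the lift of $j$ can be arranged through $\kappa_{n-1}$-closed forcing (using that the relevant Levy collapse in $\P$ has bottom $\kappa_{n-1}$ rather than $\mu^+$, so the standard absorption produces a $\kappa_{n-1}$-closed remainder) and that $D_0$ closes off to a genuine club — are routine adaptations of the corresponding steps in Lemma~\ref{lemma: 1_collapse}.
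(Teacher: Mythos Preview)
Your overall architecture matches the paper's: induct on $n$, use the inductive hypothesis to thread each slice $\mc{C}^\delta$ for $\delta$ of cofinality $\geq \kappa_{n-1}$, and then glue these local threads using the supercompactness of $\kappa_n$. Where you diverge is in the gluing step. The paper first packages the threads $\langle E_\delta \mid \delta \in S\rangle$ together with the clubs $\langle C_\alpha \mid \alpha \in \bigcup_\delta \acc(E_\delta)\rangle$ into a \emph{classical} partial square sequence $\mc{E}$ on the $({\geq}\mu)$-club $S = \acc(D) \cap S^\lambda_{\geq \kappa_{n-1}}$, verifies that $\mc{E}$ satisfies the ordinary one-dimensional coherence of Definition~\ref{def: partial_square}, and then simply quotes Lemma~\ref{lemma: 1_collapse} to thread $\mc{E}$; a short argument then upgrades that thread to one witnessing the weak triviality of $\mc{C}$. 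You instead redo the supercompactness lifting of Lemma~\ref{lemma: 1_collapse} directly against $\mc{C}$, pulling back the thread $E^*$ at $\delta^* = \sup j''\lambda$.

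The gap is in your pullback computation. You assert that, for $\alpha \in \acc(D_0)$ with $\beta = \sup j''\alpha < j(\alpha)$, ``since $\beta \in X(\mc{C}')$, condition~\eqref{coherence_condition} gives $C'_{j(\alpha)} \cap \beta = C'_\beta$.'' But condition~\eqref{coherence_condition} does not say this: its first bullet concerns only indices $\vec\gamma$ of length $n$, and its second bullet concerns clubs indexed by sequences \emph{beginning} at $\beta$. Neither applies to the length-$1$ index $\langle j(\alpha)\rangle$. The conclusion $C'_{j(\alpha)} \cap \beta = C'_\beta$ would follow if you knew $j(\alpha) \in X(\mc{C}')$ (equivalently $\alpha \in X(\mc{C})$), since then some length-$n$ witness $\vec\gamma'$ with $j(\alpha) \in \acc(C'_{\vec\gamma'})$ would let you chain through $C'_{\vec\gamma'}$; but you have only established $\beta \in X(\mc{C}')$, not $j(\alpha) \in X(\mc{C}')$, and for $\alpha$ of cofinality $\geq \kappa_n$ there is no obvious reason the latter should hold. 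Without it, $C'_{j(\alpha)} = j(C_\alpha)$ is essentially unconstrained below $\beta$, and the identity $D_0 \cap \alpha = C_\alpha$ can fail.

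This is exactly the obstacle the paper's intermediate step is designed to avoid: by assembling $\mc{E}$ in $W$, the paper pushes all uses of higher coherence into the ground model (via Claim~\ref{claim: x_claim} and the second bullet of condition~\eqref{coherence_condition}), so that the object handed to the supercompactness argument already satisfies the \emph{classical} length-$1$ coherence needed for the pullback in Lemma~\ref{lemma: 1_collapse} to go through verbatim.
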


\begin{proof}
  The proof is by induction on $n$. If $n = 1$, then the conclusion follows immediately from 
  Lemma \ref{lemma: 1_collapse}, so assume that $n > 1$. Let $G \ast H$ be $\P \ast \dot{\Q}$-generic 
  over $V$. In $V[G \ast H]$, fix an ordinal $\lambda \in \mathrm{cof}({\geq} \kappa_n)$, a 
  club $D \subseteq \lambda$, and a coherent $n$-$C$-sequence $\mc{C}$ on $D$. 
  For each $\delta \in \acc(D) \cap S^\lambda_{{\geq}\kappa_{n-1}}$, consider the 
  coherent $(n-1)$-$C$-sequence $\mc{C}^\delta$ on $C_\delta$. By the inductive hypothesis, $\mc{C}^\delta$ is not weakly nontrivial; we can therefore fix a 
  club $E_\delta \subseteq \delta$ 
  such that, for all $\alpha \in \acc(E_\delta)$, we have $E_\delta \cap \alpha = C_{\alpha\delta}$. 
  
  \begin{claim} \label{claim: x_claim}
    For all $\delta \in \acc(D) \cap S^\lambda_{{\geq}\kappa_{n-1}}$ and all 
    $\alpha \in \acc(E_\delta)$, we have $\alpha \in X(\mc{C})$.
  \end{claim}
  
  \begin{proof}
    Fix $\delta$ and $\alpha$ as in the statement of the claim. By Proposition \ref{prop: stat_x}, 
    $X(\mc{C}) \cap \delta$ is stationary in $\delta$, so we can fix $\beta \in (X(\mc{C}) \cap 
    \acc(E_\delta)) \setminus (\alpha + 1)$. We can then find $\vec{\gamma} \in I(\mc{C}) \cap 
    [D]^n$ such that $\beta \in \acc(C_{\vec{\gamma}})$. By coherence and the choice of 
    $E_\delta$, we have
    \[
      C_{\vec{\gamma}} \cap \beta = C_\beta = C_{\beta\delta} = E_\delta \cap \beta.
    \]
    In particular, since $\alpha \in \acc(E_\delta)$, we also have $\alpha \in \acc(C_{\vec{\gamma}})$, 
    and hence $\alpha \in X(\mc{C})$.
  \end{proof}
  
  Let $S = \acc(D) \cap S^\lambda_{{\geq}\kappa_{n-1}}$ and $\Gamma = S \cup\bigcup \{\acc(E_\delta) 
  \mid \delta \in S\}$. For $\alpha \in \Gamma \setminus S$, let $E_\alpha = C_\alpha$. 
  By a straightforward if somewhat tedious case analysis, it follows that the sequence 
  $\mc{E} = \langle E_\alpha \mid \alpha \in \Gamma \rangle$ satisfies items (1)--(3) 
  of Definition \ref{def: partial_square}. By Lemma \ref{lemma: 1_collapse}, 
  $\square(S)$ fails in $V[G \ast H]$, so there exists a club $E^* \subseteq \lambda$ 
  such that, for all $\alpha \in \Gamma \cap \acc(E^*)$, we have $E^* \cap \alpha = 
  E_\alpha$. Since $\Gamma$ is stationary in $\lambda$ and $\acc(E_\alpha) \subseteq 
  \Gamma$ for all $\alpha \in \Gamma$, it follows that $\acc(E^*) \subseteq \Gamma$.
  
  \begin{claim}
    For all $\alpha \in \acc(E^*)$, we have $E^* \cap \alpha = C_\alpha$.
  \end{claim}
  
  \begin{proof}
    Fix $\alpha \in \acc(E^*)$. If $\alpha \in \Gamma \setminus S$, then we have 
    $C_\alpha = E_\alpha = E^* \cap \alpha$, so assume that $\alpha \in S$. Fix 
    $\beta \in (\acc(E^*) \cap S^\lambda_{\aleph_0}) \setminus (\alpha + 1)$. 
    Then we have $E^* \cap \beta = E_\beta = C_\beta$. Moroever, by the fact that 
    $\beta \in \Gamma \setminus S$, there exists $\delta \in S$ such that $\beta \in \acc(E_\delta)$, 
    so, by Claim \ref{claim: x_claim}, we have $\beta \in X(\mc{C})$. Fix 
    $\vec{\gamma} \in I(\mc{C}) \cap [D]^n$ such that $\beta \in \acc(C_{\vec{\gamma}})$. 
    It follows that 
    \[
      E_\alpha = E^* \cap \alpha = C_\beta \cap \alpha = C_{\vec{\gamma}} \cap \alpha.
    \]
    In particular, $\alpha \in X(\mc{C})$, and hence, by coherence, we have 
    $E_\alpha = C_{\vec{\gamma}} \cap \alpha = C_\alpha$, so again $E^* \cap \alpha = C_\alpha$.
  \end{proof}
  But now $E^*$ witnesses that $\mc{C}$ is not a $\Sq{n}^w(D)$-sequence 
  in $V[G \ast H]$, thus establishing the theorem.
\end{proof}
The next result will prove Theorem A.
\begin{corollary}
  Let $\kappa_0 = \aleph_1$ and let $\langle \kappa_n \mid 1 \leq n < \omega \rangle$ be 
  an increasing sequence of supercompact cardinals, and let $\P = \langle \P_n, \dot{\Q}_m 
  \mid n \leq \omega, \ m < \omega \rangle$ be a full-support forcing iteration such that, 
  for all $n < \omega$, we have
  \[
    \Vdash_{\P_n}``\dot{\Q}_n = \dot{\Coll}(\kappa_n, {<}\kappa_{n+1}).
  \]
  Then, in $V^{\P}$, for every positive integer $n$, every ordinal $\lambda \in 
  \mathrm{cof}({>}\aleph_n)$, and every club $D \subseteq \lambda$, 
  $\Sq{n}^w(D)$ fails. In particular, in $V^{\P}$,
  $\aleph_n$ is the unique regular cardinal $\kappa$ for which 
  $\Sq{n}^w(\kappa)$ holds.
\end{corollary}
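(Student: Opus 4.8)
The plan is to read the Corollary off the preceding Theorem after factoring the iteration $\P$ through its initial segment of length $n$. Fix a positive integer $n$. Since $\P$ is the full-support iteration $\langle \P_k, \dot{\Q}_m \mid k \leq \omega,\ m < \omega \rangle$, standard iteration theory gives a factorization $\P \cong \P_n \ast \dot{\P}_{[n,\omega)}$, where
\[
  \P_n = \Coll(\kappa_0,{<}\kappa_1) \ast \dot{\Coll}(\kappa_1,{<}\kappa_2) \ast \cdots \ast \dot{\Coll}(\kappa_{n-1},{<}\kappa_n)
\]
is exactly the poset appearing in the preceding Theorem (for the parameter choice $\mu = \aleph_0$, so that $\kappa_0 = \mu^+ = \aleph_1$), and $\dot{\P}_{[n,\omega)}$ is a $\P_n$-name for the full-support iteration of $\langle \dot{\Q}_m \mid n \leq m < \omega \rangle$. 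Each $\dot{\Q}_m$ with $m \geq n$ is forced to be $\kappa_m$-directed closed, hence $\kappa_n$-directed closed; since $\kappa_n$ is regular uncountable and this tail iteration has length $\omega < \kappa_n$, building lower bounds coordinate by coordinate shows that $\dot{\P}_{[n,\omega)}$ is a $\P_n$-name for a $\kappa_n$-directed closed forcing. Applying the preceding Theorem with $\dot{\Q} := \dot{\P}_{[n,\omega)}$, we conclude that in $V^{\P} = V^{\P_n \ast \dot{\P}_{[n,\omega)}}$, for every ordinal $\lambda \in \mathrm{cof}({\geq}\kappa_n)$ and every club $D \subseteq \lambda$, $\Sq{n}^w(D)$ fails.

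What remains is routine cofinality bookkeeping. Iterating the standard facts about L\'evy collapses, $\P_n$ makes each $\kappa_i$ the cardinal successor of $\kappa_{i-1}$ while collapsing the cardinals strictly in between; starting from $\kappa_0 = \aleph_1$, this gives $\kappa_i = \aleph_{i+1}$ in $V^{\P_n}$ for all $i \leq n$. The tail $\dot{\P}_{[n,\omega)}$ is $\kappa_n$-closed, hence preserves all cardinals $\leq \kappa_n$, so $\kappa_n = \aleph_{n+1}$ in $V^{\P}$ as well. Therefore $\mathrm{cof}({\geq}\kappa_n) = \mathrm{cof}({>}\aleph_n)$ in $V^{\P}$, and the conclusion of the previous paragraph is precisely the first assertion of the Corollary.

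For the concluding ``in particular'', I would examine the regular cardinals $\kappa$ of $V^{\P}$. If $\kappa > \aleph_n$, then $\kappa \in \mathrm{cof}({>}\aleph_n)$, so instantiating the first assertion with $\lambda = D = \kappa$ gives that $\Sq{n}^w(\kappa)$ fails. If $\kappa = \aleph_n$, then $\Sq{n}^s(\aleph_n)$ holds by Fact \ref{fact: cof_omega_n}, and, as $\cf(\aleph_n) = \aleph_n$, the Proposition relating strong nontriviality, nontriviality, and weak nontriviality upgrades the witnessing sequence to a weakly nontrivial one, so $\Sq{n}^w(\aleph_n)$ holds. For regular $\kappa$ with $\aleph_1 \leq \kappa < \aleph_n$ one needs, in addition, that $\Sq{n}^w(\kappa)$ fails; this does not follow from the preceding Theorem and I would treat it separately, along the lines of Proposition \ref{prop: small_cof} (which gives the analogous statement for $\Sq{n}^s$). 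Together with the cases $\kappa \geq \aleph_n$ just handled, this yields that $\aleph_n$ is the unique regular cardinal $\kappa$ with $\Sq{n}^w(\kappa)$.

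The whole argument is essentially bookkeeping once the preceding Theorem is available, and the two steps that I expect to require genuine attention are (a) the verification that the full-support tail iteration $\dot{\P}_{[n,\omega)}$ is $\kappa_n$-directed closed --- precisely what licenses the appeal to the preceding Theorem --- and (b) ruling out regular cardinals below $\aleph_n$ in the uniqueness statement, which lies outside the scope of the preceding Theorem; given how easily weakly nontrivial coherent $n$-$C$-sequences can be built at small cardinals when $n > 1$, I expect (b) to be the genuinely delicate point.
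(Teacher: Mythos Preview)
Your argument for the main assertion is correct and is exactly what the paper has in mind: the corollary is stated without proof in the paper, so the intended argument is precisely the factorization $\P \cong \P_n \ast \dot{\P}_{[n,\omega)}$ together with the observation that the full-support tail is $\kappa_n$-directed closed, followed by an application of the preceding Theorem and the cofinality bookkeeping giving $\kappa_n = \aleph_{n+1}$ in $V^{\P}$.

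Your caution about the ``in particular'' clause for regular $\kappa < \aleph_n$ is well-placed and worth flagging. Proposition~\ref{prop: small_cof} only handles $\Sq{n}^s$, and indeed for $n \geq 2$ one can build weakly nontrivial coherent $n$-$C$-sequences on $\aleph_1$ in $\ZFC$ (take $\langle C_\alpha \mid \alpha < \omega_1 \rangle$ to be any $\square(\omega_1)$-sequence and extend to an order-type-minimal $n$-$C$-sequence; then $X(\mc{C}) = \emptyset$, coherence is vacuous, and the absence of a thread through the first level gives weak nontriviality). So the uniqueness statement, read literally, appears to require more than the paper supplies; your instinct that this is the genuinely delicate point is correct, and the paper does not address it.
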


\subsection{Forcing to add higher square sequences}

In this subsection, we introduce a forcing to add a $\Sq{n}^s(\lambda)$-sequence 
to a regular uncountable cardinal $\lambda \geq \aleph_n$. Fix a positive integer 
$n$ and a regular uncountable $\lambda \geq \aleph_n$. Define a forcing 
$\bb{S} = \bb{S}(\lambda,n)$ as follows. Conditions in $\bb{S}$ are all coherent 
$n$-$C$-sequences $s = \langle C^s_{\vec{\gamma}} \mid 
\gamma \in I(s) \rangle$ such that $s$ is a coherent $n$-$C$-sequence on 
$\delta^s + 1$ for some $\delta^s < \lambda$. If $s,s' \in \bb{S}$, then
$s' \leq_{\bb{S}} s$ if and only iff $s'$ end-extends $s$, i.e.,
\begin{itemize}
  \item $\delta^{s'} \geq \delta^s$;
  \item $I(s') \cap [\delta^s]^{\leq n} = I(s)$;
  \item $C^s_{\vec{\gamma}} = C^{s'}_{\vec{\gamma}}$ for all nonempty 
  $\vec{\gamma} \in [\delta^s]^{\leq n}$.
\end{itemize}

\begin{lemma} \label{lemma: strategic_closure}
  $\bb{S}$ is $\lambda$-strategically closed.
\end{lemma}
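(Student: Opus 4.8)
The plan is to establish $\lambda$-strategic closure of $\bb{S}$ by describing an explicit winning strategy for the "good" player (call her \textsc{ii}) in the game of length $\lambda$, where \textsc{i} plays at successor stages and \textsc{ii} at limit stages, and \textsc{ii} must ensure that after any limit round the partial play still has a lower bound in $\bb{S}$. The fundamental point is that the only genuine constraint in the definition of a condition is coherence of the resulting $n$-$C$-sequence, i.e.\ condition \eqref{coherence_condition}, which only says something nontrivial about ordinals in the set $X(\mc{C})$; the strategy for \textsc{ii} will consist in keeping tight control of this set along the play.

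First I would set up the game precisely: a run produces an increasing sequence of conditions $\langle s_\xi \mid \xi < \lambda \rangle$ with $\delta^{s_\xi}$ strictly increasing, and for limit $\eta$ the natural candidate for a lower bound is obtained by taking the union $s^*_\eta = \bigcup_{\xi<\eta} s_\xi$ and then extending it to $\delta^* + 1$ where $\delta^* = \sup_{\xi<\eta}\delta^{s_\xi}$: one must decide, for each $\vec{\gamma} \in [D^*]^{\le n}$ with $\vec{\gamma}\in I(s^*_\eta)$ and $\sup$- behavior reaching $\delta^*$, what $C_{\vec{\gamma}\,\text{-with-}\delta^*}$ should be, and in particular one must set $C_{\delta^*} = D^*_\eta$ for a suitable club $D^*_\eta$ in $\delta^*$ of order type $\cf(\delta^*)$. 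The key invariant that \textsc{ii}'s strategy maintains is: \emph{after round $\xi$, for every $\alpha \in X(s_\xi)$ one has already committed to $C_\alpha$, and moreover there is a fixed "spine" club whose accumulation points below $\delta^{s_\xi}$ track exactly where $X$ can grow}; this is essentially the content of Proposition~\ref{prop: stat_x} read in reverse. At a limit stage $\eta$ of cofinality $<\aleph_n$, Proposition~\ref{prop: stat_x} (applied contrapositively) tells us $\delta^*$ need not enter $X$ of the extension in a way that forces anything, so \textsc{ii} can choose $C_{\delta^*}$ freely (respecting minimality at successors) and make $\acc(C_{\delta^*})$ disjoint from $X(s^*_\eta)$, trivially preserving coherence. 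At a limit stage $\eta$ of cofinality $\ge \aleph_n$ — here is the subtle case — \textsc{ii} must instead have been \emph{thinning} the clubs along the way so that $X(s^*_\eta)$ is the accumulation set of a single coherent thread; concretely, \textsc{ii} fixes in advance (using her strategic foresight at each of her earlier turns) a continuous $\subseteq$-increasing sequence of clubs $\langle C_{\delta^{s_\xi}} \mid \xi \in \text{(her turns)} \rangle$ cohering with each other, and at stage $\eta$ takes $C_{\delta^*}$ to be the union of these, which coheres with everything below by construction.

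The heart of the argument, and the step I expect to be the main obstacle, is verifying that the candidate lower bound $s^*_\eta$-together-with-the-extension really is a \emph{coherent} $n$-$C$-sequence — that is, checking condition \eqref{coherence_condition} holds at the newly added top ordinal $\delta^*$ and that no conflicts arise among the various $C_{\vec{\beta}}$ with $\beta_0 = \delta^*$ or with $\delta^*\in\acc(C_{\vec{\gamma}})$. This is where the inductive structure of the $n$-$C$-sequence bites: one has to simultaneously handle the "top slice" $\mc{C}^{\delta^*}$, which should itself be a coherent $(n-1)$-$C$-sequence on $C_{\delta^*}$ obtained as a lower bound in $\bb{S}(C_{\delta^*}, n-1)$-style game, suggesting the whole proof is really an induction on $n$: the strategy for \textsc{ii} in the $n$-dimensional game is built by recursively invoking (a parametrized version of) her strategy in the $(n-1)$-dimensional game to define the top slices, while the base case $n=1$ is the classical fact that the forcing to add a $\square(\lambda)$-sequence is $\lambda$-strategically closed (see \cite[\S6]{todorcevic_walks_book}). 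I would organize the write-up as: (i) reduce to describing \textsc{ii}'s strategy; (ii) state the invariant; (iii) treat the base case $n=1$ by citation; (iv) for $n>1$, describe how \textsc{ii} uses the $(n-1)$-strategy on each top slice plus a coherent choice of spine clubs, splitting into the two cofinality cases for $\delta^*$; (v) verify coherence of the limit condition, leaning on Proposition~\ref{prop: stat_x} to know that outside the controlled spine nothing new can enter $X$. The routine-but-tedious bookkeeping — that $I(s^*_\eta)$ is computed correctly, that minimality at successors is preserved, that order-type conditions are met — I would explicitly flag as left to the reader, as the paper does elsewhere.
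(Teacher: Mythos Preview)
Your plan is substantially more complicated than the paper's argument and, as written, has genuine gaps. The paper does \emph{not} induct on $n$, does \emph{not} split into cofinality cases, and does \emph{not} invoke Proposition~\ref{prop: stat_x}. Instead, Player~II maintains a single uniform invariant: at every even stage $\xi$ she arranges that the ``spine'' $\{\delta^\eta \mid \eta < \xi\}$ is exactly $C^\xi_{\delta^\xi}$, and more generally that for every increasing tuple $\langle \eta_i \mid i<m\rangle$ of limit ordinals below $\xi$ one has $C^{\eta_{m-1}}_{\langle \delta^{\eta_i}\mid i<m\rangle} = \{\delta^\zeta \mid \zeta < \eta_0\}$. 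With this in place, the limit lower bound is essentially forced: every new club added at a limit stage is a restriction of the spine, so any ordinal newly entering $X$ is some $\delta^\zeta$ for limit $\zeta$, and its $C_{\delta^\zeta}$ was already set to the spine below it at stage $\zeta$. Coherence is then automatic, with no case analysis at all.

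Your proposal has two concrete problems. First, your use of Proposition~\ref{prop: stat_x} ``contrapositively'' does not yield what you claim: that proposition says $X(\mc{C}) \cap \delta$ is stationary when $\cf(\delta) \geq \aleph_n$, but its contrapositive says nothing about being able to \emph{avoid} $X$ at small cofinality --- you would need a separate direct argument (which is possible, via order-minimal choices, but you do not supply it). Second, and more seriously, your large-cofinality case is where all the content lies, and you describe it only as ``\textsc{ii} must have been thinning the clubs along the way'' together with an undefined ``parametrized $(n-1)$-strategy.'' This thread-building is precisely the paper's spine idea, but by confining it to one branch of a case split you create an interaction problem you never address: the choices made at small-cofinality stages (where you pick $C_{\delta^*}$ ``freely'') must still be compatible with the coherent thread you need at later large-cofinality stages, and nothing in your outline guarantees this. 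The fix is simply to drop the case split and the induction on $n$ entirely and run the spine construction uniformly at every limit stage; this is what the paper does.
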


\begin{proof}
  We describe a winning strategy for Player II in $\Game_\lambda(\bb{S})$. 
  Given a (partial) run $\langle s_\eta \mid \eta < \xi \rangle$ of the game, 
  for readability we will write $C^\eta_{\vec{\gamma}}$ and $\delta^\eta$ 
  instead of $C^{s_\eta}_{\vec{\gamma}}$ and $\delta^{s_\eta}$, respectively. 
  We will arrange so that, if Player II plays according to their winning strategy, 
  then
  \begin{enumerate}
    \item $\langle \delta^\eta \mid \eta < \xi, ~ \eta \textrm{ even}\rangle$ 
    is strictly increasing;
    \item for every limit ordinal $\eta < \xi$, we have $\delta^\eta = 
    \sup\{\delta^\zeta \mid \zeta < \eta\}$;
    \item for all $m \leq n$ and all increasing sequences $\langle \eta_i 
    \mid i < m \rangle$ of limit ordinals below $\xi$, we have 
    $\langle \delta^{\eta_i} \mid i < m \rangle \in I(s_{\eta_{m-1}})$ and
    $C^{\eta_{m-1}}_{\langle \delta^{\eta_i} \mid i < m \rangle} = 
    \{\delta^\zeta \mid \zeta < \eta_0\}$.
  \end{enumerate}
  
  With the above requirements in place, the description of Player II's winning 
  strategy is now almost automatic. Suppose that $\xi < \lambda$ is an even 
  ordinal and $\langle s_\eta \mid \eta < \xi \rangle$ is a partial play of 
  the game with Player II playing thus far to ensure requirements (1)--(3) 
  above. If $\xi$ is a successor ordinal, say $\xi = \xi_0 + 1$, then 
  set $\delta^\xi = \delta^{\xi_0}+1$. Let $I(s_\xi) = I(s_{\xi_0}) \cup 
  \{\langle \delta^\xi \rangle, \langle \delta^{\xi_0},\delta^\xi \rangle\}$, 
  $C^\xi_{\delta^\xi} = \{\delta^{\xi_0}\}$, and $C^\xi_{\delta^{\xi_0}\delta^\xi} 
  = \emptyset$.  This completes the description of $s^\xi$. 
  
  If $\xi$ is a limit ordinal, then set $\delta^\xi = \sup\{\delta^\eta \mid 
  \eta < \xi\}$. Set
  \begin{align*}
    I(s_\xi) = & \bigcup\{I(s_\eta) \mid \eta < \xi\} \cup 
    \{\langle \delta^{\eta_i} \mid i < |\vec{\eta}| \rangle ^\frown 
    \langle \delta^\xi \rangle \mid \vec{\eta} \in [\acc(\xi)]^{<n} \} \\ 
    & \cup \{\langle \delta^{\eta},\delta^\xi \rangle \mid \eta \in 
    \nacc(\xi)\} \cup \{\langle \delta^{\eta}, \delta^{\eta+1},\delta^\xi \rangle 
    \mid \eta < \xi\}.
  \end{align*}
  Finally, set $C_{\delta^\xi} = \{\delta^\eta \mid \eta < \xi\}$ and, for all
  nonempty $\vec{\eta} \in [\acc(\xi)]^{<n}$, set $C^\xi_{\langle \delta^{\eta_i} \mid 
  i < |\vec{\eta}|\rangle \delta^\xi} = \{\delta^\zeta \mid \zeta < \eta_0\}$. 
  Set $C^\xi_{\delta^0 \delta^\xi} = \emptyset$ and, for all $\eta < \xi$, set 
  $C^\xi_{\delta^{\eta+1},\delta^\xi} = \{\delta^\eta\}$ and 
  $C^\xi_{\delta^\eta,\delta^{\eta+1},\delta^\xi} = \emptyset$. It is straightforward 
  to verify that this defines a lower bound to $\langle s_\eta \mid \eta < \xi 
  \rangle$ in $\bb{S}$ and continues to satisfy requirements (1)--(3) above, 
  thus completing the proof of the lemma.
\end{proof}

Lemma \ref{lemma: strategic_closure} implies that forcing with $\bb{S}$ preserves 
all cardinalities and cofinalities ${\leq} \lambda$.
By the proof of Lemma \ref{lemma: strategic_closure}, for all 
$\alpha < \lambda$, the set $E = \{s \in \bb{S} \mid \delta^s \geq \alpha\}$ 
is dense in $\bb{S}$. As a result, if $G$ is $\bb{S}$-generic over 
$V$, then, in $V[G]$, we can define a coherent $n$-$C$-sequence $\mc{C}$ on 
$\lambda$ by setting $I(\mc{C}) = \bigcup \{I(s) \mid s \in G\}$, 
$C_\emptyset = \lambda$, and, for all nonempty $\gamma \in I(\mc{C})$, 
$C_{\vec{\gamma}} = C^s_{\vec{\gamma}}$ for any $s \in G$ such that 
$\vec{\gamma} \in I(s)$. In $V$, let $\dot{\mc{C}}$ be an $\bb{S}$-name for 
$\mc{C}$. The following lemma will imply that 
$\dot{\mc{C}}$ is forced to be a $\Sq{n}^s(\lambda)$-sequence in $V^{\bb{S}}$.

\begin{lemma}
  If $n = 1$, then 
  \[
    \Vdash_{\bb{S}} ``\{\delta \in S^\lambda_\omega \mid \otp(C_\delta) = 
    \omega\}" \text{ is stationary in }\lambda".
  \]
  If $n > 1$, then
  \[
    \Vdash_{\bb{S}} ``\{\delta \in S^\lambda_{\omega_{n-1}} \mid \dot{\mc{C}}^\delta 
    \text{ is order-minimal}\} \text{ is stationary in } \lambda".
  \]
\end{lemma}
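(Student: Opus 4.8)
The plan is a density argument adapted to the strategically closed forcing $\bb{S}$, run by induction on $n$; put $\mu = \omega$ if $n = 1$ and $\mu = \omega_{n-1}$ if $n > 1$, and note $\mu < \lambda$. Since $\bb{S}$ preserves $\lambda$ and all cofinalities ${\leq}\lambda$ (Lemma~\ref{lemma: strategic_closure} and the remark following it), it suffices to show: for every $p \in \bb{S}$ and every $\bb{S}$-name $\dot E$ forced by $p$ to be a club in $\lambda$, there are $q \leq_{\bb{S}} p$ and $\delta < \lambda$ with $q \Vdash \delta \in \dot E$ and $q$ forcing $\delta$ into the set displayed in the lemma. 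Fix such $p$ and $\dot E$.

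The skeleton, which already handles $n = 1$, is to build a $\leq_{\bb{S}}$-decreasing sequence $\langle s_i \mid i < \mu \rangle$ below $p$ so that: the $\delta^{s_i}$ are strictly increasing and continuous (i.e., $\delta^{s_j} = \sup_{i<j}\delta^{s_i}$ at limit $j < \mu$); at each successor step $s_{i+1}$ decides an ordinal of $\dot E$ in the interval $(\delta^{s_i},\delta^{s_{i+1}})$, which is possible since $p$ forces $\dot E$ to be unbounded; and at limit stages we take the canonical lower bound supplied by the proof of Lemma~\ref{lemma: strategic_closure}. Letting $\delta = \sup_{i<\mu}\delta^{s_i}$ and letting $q$ be a lower bound of the whole sequence with $\delta^q = \delta$, we get $\cf(\delta) = \mu$, and $q$ forces $\dot E \cap \delta$ to be cofinal in $\delta$, hence $q \Vdash \delta \in \dot E$ because $\dot E$ is forced to be closed. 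For $n = 1$ the canonical closing-off has $C_\delta = \{\delta^{s_i}\mid i<\mu\}$, of order type $\omega$, so $q \Vdash \delta \in S^\lambda_\omega$ and $\otp(C_\delta) = \omega$, as required.

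For $n > 1$ we must in addition force $\dot{\mc{C}}^\delta$ to be order-minimal. Here $\mc{C}^\delta$ is an $(n-1)$-$C$-sequence on $C_\delta$, which we arrange to have order type $\mu = \omega_{n-1}$; by the existence statement recalled just before Fact~\ref{fact: cof_omega_n}, clubs of order type $\omega_{n-1}$ do carry coherent order-minimal $(n-1)$-$C$-sequences. We therefore run the skeleton as before, but at each limit stage $j$ take a lower bound $s_j$ with $\delta^{s_j} = \sup_{i<j}\delta^{s_i}$ in which the clubs with top coordinate $\delta^{s_j}$ are chosen \emph{not} as full initial segments of the spine but so that, along the spine, $\mc{C}$ is being built as an order-isomorphic copy of a fixed coherent order-minimal $(n-1)$-$C$-sequence, with a matching choice made at $\delta$ itself in the final closing-off. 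Provided this can be done compatibly, the partial objects remain coherent $n$-$C$-sequences, $\dot E$ is still decided cofinally, and the resulting $q$ forces $C_\delta$ to have order type $\omega_{n-1}$ and $\mc{C}^\delta$ to be a coherent order-minimal $(n-1)$-$C$-sequence on $C_\delta$. Together with $q \Vdash \delta \in \dot E$, this gives $q \Vdash \delta \in S^\lambda_{\omega_{n-1}}$ and $\dot{\mc{C}}^\delta$ order-minimal, completing the induction.

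The main obstacle is exactly the compatibility asserted in the previous paragraph: one must verify that an order-minimal structure can be installed along the spine without violating the coherence clause (item~\ref{coherence_condition} of Definition~\ref{def: n_c_sequence}) for the whole sequence. The tension is that any accumulation point of a higher club with top coordinate on the spine is itself an accumulation point of the spine, and coherence then pins the one-dimensional club at that point to a restriction of the higher club; so the spine's clubs cannot be taken to be the naive full initial segments but must be chosen small and mutually compatible from the outset. Checking that this can be orchestrated simultaneously with the interleaved decisions about $\dot E$ and with passage through limit stages is routine but genuinely delicate, and is the only place the argument departs from the classical $n = 1$ template.
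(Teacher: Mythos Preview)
Your skeleton is the same as the paper's, and you correctly isolate the real difficulty: installing an order-minimal $(n-1)$-$C$-sequence along the spine while respecting the coherence clause of Definition~\ref{def: n_c_sequence}(\ref{coherence_condition}). But you do not actually resolve this difficulty, and the phrase ``routine but genuinely delicate'' hides the one genuinely new idea. The obstruction is not that the spine clubs must be ``small and mutually compatible'' with one another; it is that the generic extensions taken at successor stages (to decide elements of $\dot E$) may introduce length-$n$ clubs of $\mc{C}$ that accumulate at some spine point $\delta^\eta$, placing $\delta^\eta$ into $X(\mc{C})$. Once that happens, the second bullet of the coherence clause forces \emph{every} cofinal club with first index $\delta^\eta$ to equal $C_{\delta^\eta}$, and there is no reason for the order-minimal clubs you copy from $\mc{D}$ to satisfy this (indeed $X(\mc{D})=\emptyset$, so $\mc{D}$ imposes no such compatibility).

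The paper's solution is not to arrange compatibility but to \emph{prevent the constraint from applying}: immediately after reaching each limit stage $\eta$ of countable cofinality, one extends by a single step to $s'_\eta$ with $\delta^{s'_\eta}=\delta^\eta+1$ and deliberately chooses $C^{s'_\eta}_{\delta^\eta,\delta^\eta+1}$ to be an $\omega$-sequence $A_\eta\neq C_{\delta^\eta}$. Since both $C_{\delta^\eta}$ and $C_{\delta^\eta,\delta^\eta+1}$ are cofinal in $\delta^\eta$ with first index $\delta^\eta$, no coherent extension can ever have $\delta^\eta\in X(\mc{C})$. A short stationarity argument then propagates this to limits $\eta$ of uncountable cofinality, and with all spine limits kept out of $X(\mc{C})$ the order-minimal $\mc{D}$ can be copied along the spine with no coherence conflict. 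This is the missing step in your proposal.
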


\begin{proof}
  We provide the proof in case $n > 1$. The proof in case $n = 1$ is similar 
  and already appears in the literature. 
  Fix a coherent order-minimal $(n-1)$-$C$-sequence $\mc{D}$ on $\omega_{n-1}$, 
  and note that $X(\mc{D}) = \emptyset$. 
  Fix a condition $s_0 \in \bb{S}$ and an $\bb{S}$-name $\dot{E}$ for a club 
  in $\lambda$. We will recursively define a decreasing sequence $\langle s_\eta \mid 
  \eta \leq \omega_{n-1} \rangle$. Among other things, our construction will 
  ensure that $\langle \delta^\eta \mid \eta \leq \omega_{n-1} \rangle$ is 
  strictly increasing and continuous at limits, and that for every limit 
  ordinal $\eta < \omega_{n-1}$ and every $\xi$ in the interval $[\eta,\omega_{n-1}]$, 
  we have $\delta^\eta \notin X(s_\xi)$. Since $\langle \delta^\eta \mid \eta 
  < \omega_{n-1} \rangle$ will be continuous, it suffices to explicitly ensure 
  this only for limit ordinals $\eta < \omega_{n-1}$ with $\cf(\eta) = \omega$.
  
  Suppose first that $\eta < \omega_{n-1}$ and we are given $s_\eta$. 
  If $\cf(\eta) = \omega$, then note that $\cf(\delta^\eta) = \omega$, and first 
  extend $s_\eta$ to $s'_\eta$ as follows. Set $\delta^{s'_\eta} = \delta^\eta + 1$, 
  and let $C^{s'_\eta}_{\delta^{s'_\eta}} = A_\eta \cup \{\delta^\eta\}$, where 
  $A_\eta$ is an $\omega$-sequence converging to $\delta^\eta$ such that 
  $A_\eta \neq C^\eta_{\delta_\eta}$. Set $C^{s'_\eta}_{\delta^\eta,\delta^{s'_\eta}} 
  = A_\eta$, and extend to an $n$-$C$-sequence in the obvious 
  way: set $C^{s'_\eta}_{A_\eta(0), \delta^{s'_\eta}} = C^{s'_\eta}_{A_\eta(0), 
  \delta^\eta,\delta^{s'_\eta}} = \emptyset$, and, for all $k < \omega$, set 
  $C^{s'_\eta}_{A_\eta(k+1),\delta^{s'_\eta}} = C^{s'_\eta}_{A_\eta(k+1),\delta^\eta, 
  \delta^{s'_\eta}} = \{A_\eta(k)\}$ and 
  $C^{s'_\eta}_{A_\eta(k),A_\eta(k+1),\delta^{s'_\eta}} = 
  C^{s'_\eta}_{A_\eta(k),A_\eta(k+1),\delta^\eta,\delta^{s'_\eta}} = \emptyset$. 
  The point of this is that, since $C^{s'_\eta}_{\delta^\eta} \neq 
  C^{s'_\eta}_{\delta^\eta,\delta^{s'_\eta}}$, and both are cofinal in 
  $\delta^\eta$, for any $t \leq_{\bb{S}} s'_\eta$, we must have 
  $\delta^\eta \notin X(t)$. If $\cf(\eta) \neq \omega$, then simply let 
  $s'_\eta = s_\eta$. Now find an ordinal $\beta_\eta \in [\delta^\eta,\lambda)$ 
  and a condition $s_{\eta+1} \leq s'_\eta$ such that $\delta^{\eta+1} > \beta_\eta$ 
  and $s_{\eta+1} \Vdash_{\bb{S}}``\beta_\eta \in \dot{E}"$.
  
  Now suppose that $\xi < \omega_{n-1}$ is a limit ordinal and we are given 
  $\langle s_\eta \mid \eta < \xi \rangle$. Define a lower bound $s_\xi$ as follows. 
  Let $\delta^\xi = \sup\{\delta^\eta \mid \eta < \xi\}$, and let 
  \[
    I(s_\xi) = \bigcup\{I(s_\eta) \mid \eta < \xi\} \cup 
    \{\langle \delta^{\eta_i} \mid i < |\vec{\eta}| \rangle \mid 
    \vec{\eta} \in I(\mc{D}) \cap [\xi+1]^{{<}n} \wedge 
    \max(\vec{\eta}) = \xi\}.
  \]
  For $\vec{\eta} \in I(\mc{D}) \cap [\xi+1]^{{<}n}$ such that 
  $\max(\vec{\eta}) = \xi$, let $C^{\xi}_{\langle \delta^{\eta_i} \mid 
  i < |\vec{\eta}|} = \{\delta^\zeta \mid \zeta \in D_{\vec{\eta}}\}$.
  The fact that, for all limit $\zeta < \xi$, we have 
  $\delta^\zeta \notin \bigcup \{X(s_\eta) \mid \eta < \xi\}$ 
  ensures that $s_\xi$ remains $n$-coherent, so $s_\xi$ is indeed a lower bound 
  for $\langle s_\eta \mid \eta < \xi \rangle$. 
  
  Finally, if $\xi = \omega_{n-1}$ and $\langle s_\eta \mid \eta < \xi \rangle$ 
  is given, define $s_\xi$ as follows. Set $\delta^{\xi} = \sup\{\delta^\eta 
  \mid \eta < \xi\}$ and 
  \[ 
    I(s_\xi) = \bigcup \{I(s_\eta) \mid \eta < \xi\} \cup 
    \{\langle \delta^{\eta_i} \mid i < |\vec{\eta}| \rangle^\frown 
    \langle \delta^\xi \rangle \mid \vec{\eta} \in I(\mc{D}) \}.
  \]
  Set $C^\xi_{\delta^\xi} = \{\delta^\eta \mid \eta < \xi\}$ and, for 
  nonempty $\vec{\eta} \in \mc{D}$, set 
  $C^\xi_{\langle \delta^{\eta_i} \mid i < |\vec{\eta}| \rangle^\frown 
  \langle \delta^\xi \rangle} = \{\delta^\zeta \mid \zeta \in D_{\vec{\eta}}\}$.
  Again, the fact that $\delta^\zeta \notin \bigcup \{X(s_\eta) \mid \eta < \xi\}$ 
  for all limit $\zeta < \xi$ ensures that $s_\xi$ remains $n$-coherent and is 
  thus a lower bound for $\langle s_\eta \mid \eta < \xi \rangle$. 
  Moreover, for all $\eta < \omega_{n-1}$, we have 
  $\delta^\eta \leq \beta_\eta < \delta^{\eta+1}$ and 
  $s_{\omega_{n-1}} \Vdash ``\beta_\eta \in \dot{E}"$. It follows that 
  $s_{\omega_{n-1}} \Vdash ``\delta^{\omega_{n-1}} \in \dot{E}"$. Moreover, 
  our construction guarantees that, via the unique order-preserving map from 
  $\{\delta^\eta \mid \eta < \omega_{n-1}\}$ to $\omega_{n-1}$, the 
  coherent $(n-1)$-$C$-sequence $\dot{\mc{C}}^{\delta^{\omega_{n-1}}}$ is 
  isomorphic to $\mc{D}$ and is therefore order-minimal. Since $\dot{E}$ was 
  an arbitrary $\bb{S}$-name for a club in $\lambda$, this establishes the 
  lemma.
\end{proof}

The above lemmata establish the following result.

\begin{theorem}
  Suppose that $n$ is a positive integer and $\lambda \geq \aleph_n$ is a 
  regular uncountable cardinal. Then forcing with $\bb{S} = \bb{S}(\lambda, n)$ 
  preserves all cardinalities and cofinalities ${\leq}\lambda$ and 
  \[
    \Vdash_{\bb{S}} \Sq{n}^s(\lambda).
  \]
  Moreover, if $\lambda^{<\lambda} = \lambda$, then $|\bb{S}| = \lambda$, and 
  hence forcing with $\bb{S}$ preserves all cardinalities and cofinalities.
\end{theorem}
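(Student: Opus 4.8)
The plan is to assemble the preceding lemmata, supplying only two routine extra pieces: deducing strong nontriviality of the generic sequence from the stationarity lemma just proved, and bounding $|\bb{S}|$ under $\lambda^{<\lambda}=\lambda$. First I would fix an $\bb{S}$-generic $G$ over $V$ and form the generic sequence $\mc{C}$ exactly as described in the paragraph preceding the final lemma. Since $\{s\in\bb{S}\mid\delta^s\ge\alpha\}$ is dense for every $\alpha<\lambda$ (read off from the proof of Lemma~\ref{lemma: strategic_closure}), the domain of $\mc{C}$ exhausts $\lambda$; and because the coherence conditions in Definition~\ref{def: n_c_sequence} are local, the union of the $\subseteq$-increasing chain $\langle s\mid s\in G\rangle$ of coherent $n$-$C$-sequences is again a coherent $n$-$C$-sequence, now on $\lambda$. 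Lemma~\ref{lemma: strategic_closure} and the remark following it give that $\bb{S}$ preserves all cardinalities and cofinalities ${\le}\lambda$; in particular $\lambda$ remains a regular uncountable cardinal, so $\otp(\lambda)=\lambda$ still satisfies the order-type clause in the definition of strong nontriviality.

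Next I would check strong nontriviality of $\mc{C}$ in $V[G]$, splitting on $n$. For $n=1$: the final lemma makes $T=\{\delta\in S^\lambda_\omega\mid\otp(C_\delta)=\omega\}$ stationary. If $\mc{C}$ were trivial, there would be a club $C\subseteq\lambda$ with $C_\alpha=C\cap\alpha$ for all $\alpha\in\acc(C)$; as $\lambda$ is regular, the increasing enumeration of $C$ is a normal function on $\lambda$, whose club $E$ of fixed points is contained in $\acc(C)$ and satisfies $\otp(C_\delta)=\otp(C\cap\delta)=\delta>\omega$ for all $\delta\in E$, contradicting $E\cap T\neq\emptyset$. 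Hence $\mc{C}$ is nontrivial, and since $\otp(\lambda)$ is a regular uncountable cardinal it is strongly nontrivial. For $n>1$: the final lemma makes $\{\delta\in S^\lambda_{\omega_{n-1}}\mid\mc{C}^\delta\text{ is order-minimal}\}$ stationary; each such $\mc{C}^\delta$ is, by the proposition identifying $\mc{C}^\delta$ as a coherent $(n-1)$-$C$-sequence on $C_\delta$ together with the coherence of $\mc{C}$, a coherent order-minimal $(n-1)$-$C$-sequence, hence a $\Sq{n-1}^s(C_\delta)$-sequence by the observation preceding Fact~\ref{fact: cof_omega_n}, in particular strongly nontrivial. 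Thus $\{\delta<\lambda\mid\mc{C}^\delta\text{ is strongly nontrivial}\}$ is stationary, which together with $\otp(\lambda)=\lambda$ being regular uncountable is precisely the definition of $\mc{C}$ being strongly nontrivial. Either way, $\Vdash_{\bb{S}}\Sq{n}^s(\lambda)$.

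For the ``moreover'', assume $\lambda^{<\lambda}=\lambda$. A condition $s\in\bb{S}$ with $\delta^s=\delta$ is coded by the map $\vec\gamma\mapsto C^s_{\vec\gamma}$ on $I(s)\subseteq[\delta+1]^{\le n}$, a set of size $<\lambda$, with each value a subset of $\delta+1$; hence there are at most $2^{|\delta|}\le\lambda^{<\lambda}=\lambda$ such conditions, and summing over $\delta<\lambda$ gives $|\bb{S}|=\lambda$. A poset of size $\lambda$ is $\lambda^+$-cc, so it preserves all cardinalities and cofinalities ${\ge}\lambda^+$; combined with the preservation ${\le}\lambda$ already noted, $\bb{S}$ preserves all cardinalities and cofinalities.

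I expect no serious obstacle: the real work is already contained in the lemmata, and what remains is bookkeeping. The only steps that need any care are the $n=1$ anti-threading argument via fixed points of the enumeration of a putative thread, the translation (for $n>1$) of ``order-minimal'' into ``strongly nontrivial'' through the observation before Fact~\ref{fact: cof_omega_n}, and the verification that the union of the generic conditions is genuinely a coherent $n$-$C$-sequence on all of $\lambda$.
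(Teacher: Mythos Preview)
Your proposal is correct and follows exactly the route the paper intends: the paper's proof is the single sentence ``The above lemmata establish the following result,'' and you have accurately reconstructed the connecting arguments (strategic closure for preservation ${\leq}\lambda$, the generic union being coherent on $\lambda$, the stationarity lemma yielding strong nontriviality via the fixed-point threading argument for $n=1$ and via the order-minimal $\Rightarrow$ strongly nontrivial observation for $n>1$, and the $\lambda^+$-cc size bound). The only cosmetic point is that in the $n=1$ case you should note explicitly that $E\cap T$ is stationary and hence contains some $\delta>\omega$, but this is immediate.
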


\section{Higher walks}\label{Walks}

\subsection{Preliminaries}

In this section, we recall the machinery of higher-dimensional walks from 
\cite{bergfalk2024introductionhigherwalks} and then prove that coherence of the 
$n$-$C$-sequence used for walks is sufficient to guarantee coherence of the 
associated $\rho^n_2$ and $\resh_n$ functions. We note that the $n = 1$ case is 
classical, while a version of the $n = 2$ case, using a stronger notion of coherence 
for $n$-$C$-sequences than we employ here, was proven in 
\cite[Theorem 7.2(3)]{bergfalk2024introductionhigherwalks}.
Before getting to the main result, Theorem \ref{mainn}, which deals with an arbitrary $n\ge 1$, we will first prove Theorem \ref{main3}, dealing with the special case $n=3$. This will introduce all of the important ideas that go into the proof of Theorem \ref{mainn}. Having said this, before getting to Theorem \ref{main3}, we will need to introduce a slew of terminology and lemmas. These will be given, for the most part, in their most general version (i.e. an arbitrary $n$ rather than $3$), for the jump from $3$ to $n$ does not represent a hurdle in understanding. We will permit ourselves to break this rule a few times though, most notably in Lemmas \ref{alphaAlphaTerminal}, \ref{badImmediate} and \ref{badNodes}, as the diagrams included in their proofs only make sense if $n=3$. We have strived to maintain a balance between economizing repetitions and expository considerations, and hope that the reader is content with the outcome.

\begin{definition}
  Let $n$ be a positive integer. An \emph{$n$-tree} is a subset $S \subseteq 
  {^{<\omega}}n$ that is downward closed, i.e., closed under taking initial segments. 
  Given an $n$-tree $S$, the elements of $S$ are referred to as \emph{nodes} and, 
  given $x \in S$, we say that $x$ is a \emph{terminal node} of $S$ if there does 
  not exist $i < n$ such that $x^\smallfrown \langle i \rangle \in S$. We say that 
  an $n$-tree $S$ is \emph{full} if, for all $x \in S$, one of the following two 
  alternatives holds:
  \begin{enumerate}
    \item $x$ is a terminal node of $S$;
    \item for all $i < n$, $x^\smallfrown \langle i \rangle \in S$.
  \end{enumerate}
  If $S$ is a full $n$-tree and $x \in S$ is not a terminal node of $S$, then we call 
  $x$ a \emph{splitting node} of $S$.
\end{definition}

We are now ready to introduce the higher walks machinery, which will describe, given an 
$n$-$C$-sequence $\mc{C}$ on a club $D$, a method of performing walks from 
$(n+1)$-tuples $\vec{\gamma} \in D^{[n+1]}$. The most important information associated with 
this walk will be recorded in the order-$n$ upper trace function $\Tr_n^{\mc{C}}$ defined 
below. We should note that our $\Tr_n^{\mc{C}}$ function is cosmetically different from 
that defined in \cite{bergfalk2024introductionhigherwalks}; on its face our upper trace function  
sees to record more data about the walk, though given the $n$-$C$-sequence $\mc{C}$, the two functions 
are informationally equivalent.

Recall that, if $\vec{\gamma}$ is a sequence of length $n$ and $i < n$, then $\vec{\gamma}^i$ 
denotes the sequence of length $n - 1$ obtained by removing $\gamma_i$ from $\vec{\gamma}$.
For notational simplicity, we will assume throughout this section that our $n$-$C$-sequences 
are on ordinals $\lambda$ rather than arbitrary clubs $D$. The general case involves no new ideas 
and can be reduced to this special case via the order-preserving bijection between a club 
$D$ and its order type.

\begin{definition}\label{Tr}
Given an $n$-$C$-sequence $\C$ on an ordinal $\lambda$, the \emph{order-$n$ upper trace function} $\Tr^{\C}_n$  
has domain $\{-1,1\} \times \lambda^{[n+1]}$ and is defined as follows: first, if $\vec\gamma\in\lambda^{[n+1]}$, write $\vec{\gamma}=\iota(\vec\gamma)^\smallfrown\tau(\vec\gamma)$, where $\tau(\vec\gamma)$ is the longest final segment of $\vec\gamma$ such that $\tau(\vec\gamma)\in I(\C)$. Note that $\tau(\vec\gamma)\neq\vec\gamma$, so we can set $j:=|\iota(\vec\gamma)|-1$. 

Suppose now that 
$(\alpha,\vec{\gamma}) \in \lambda^{[n+1]}$. We will recursively define a function
\[
\Tr^{\C}_n((-1)^k,\alpha,\vec\gamma):S^{\C}_n(\alpha,\vec\gamma)\to \{-1,1\}\times \lambda^{[n+1]},
\]
where $S^{\C}_n(\alpha,\vec\gamma)\subset {}^{<\w}n$ is a non-empty full $n$-tree that is constructed together with $\Tr^{\C}_n((-1)^k,\alpha,\vec\gamma)$. We think of $\Tr^{\C}_n((-1)^k,\alpha,\vec\gamma)$ as a labelled tree, and we will think of its labels, i.e., of elements of 
$\{-1,1\} \times \lambda^{[n+1]}$, as sequences of length $n+2$. So, for 
example, if $x \in S^{\C}_n(\alpha,\vec{\gamma})$ and 
$\Tr^{\C}_n((-1)^k,\alpha,\vec{\gamma})(x) = (-1,\alpha,\vec{\gamma}')$, 
then the expression $(\Tr^{\C}_n((-1)^k,\alpha,\vec{\gamma})(x))^1$ would denote the 
sequence $(-1,\vec{\gamma}') \in \{-1,1\} \times \lambda^{[n]}$. The underlying tree, $S^{\C}_n(\alpha,\vec\gamma)$, will depend only on the ordinal sequence $(\alpha,\vec\gamma)$ and not on the sign $(-1)^k$. The first ordinal entry, i.e. the entry in position $1$, of $\Tr^{\C}_n((-1)^k,\alpha,\vec\gamma)(x)$ will be $\alpha$ for every $x\in S^{\C}_n(\alpha,\vec\gamma)$.
\begin{itemize}
\item To start the recursion, decree that $\emptyset\in S^{\C}_n(\alpha,\vec\gamma)$ and set
\[
\Tr^{\C}_n((-1)^k,\alpha,\vec\gamma)(\emptyset):=((-1)^k,\alpha,\vec\gamma).
\]
\item Suppose that $x\in S^{\C}_n(\alpha,\vec\gamma)$ and let $((-1)^m,\vec\beta)=\Tr^{\C}_n((-1)^k,\alpha,\vec\gamma)(x)$, with $\beta_0=\alpha$. Set $j+1=|\iota(\vec\beta)|$.
\begin{itemize}
\item If $C_{\tau(\vec\beta)}\setminus\beta_j=\emptyset$, then $x$ is a terminal node of $S^{\C}_n(\alpha,\vec\gamma)$.
\item If $C_{\tau(\vec\beta)}\setminus\beta_j\neq\emptyset$, let $\seq{\ell_i:i<n}$ be the increasing enumeration of the set $\{1,\dots,n+1\}\setminus \{j+1\}$. We demand that $x^\smallfrown\seq{i}\in S^{\C}_n(\alpha,\vec\gamma)$ for every $i<n$ and set
\[
\Tr^{\C}_n((-1)^k,\alpha,\vec\gamma)(x^\smallfrown\seq{i}):=((-1)^{m+j+\ell_i},(\iota(\vec\beta),\min(C_{\tau(\vec\beta)}\setminus\beta_j),\tau(\vec\beta))^{\ell_i}).
\]
\end{itemize}
\end{itemize}
If $m\in\w$, we will write $(-1)^m\Tr_n^\C((-1)^k,\alpha,\vec\gamma)(x)$ to denote the result of multiplying the first coordinate of $\Tr_n^\C((-1)^k,\alpha,\vec\gamma)(x)$ by $(-1)^m$. More precisely, if $\Tr_n^\C((-1)^k,\alpha,\vec\gamma)(x)=((-1)^\ell,\alpha,\vec\beta)$, then 
\[
(-1)^m\Tr_n^\C((-1)^k,\alpha,\vec\gamma)(x)=((-1)^{\ell+m},\alpha,\vec\beta)
\]
As mentioned above, the signs play no role in determining the tree $S^{\C}_n(\alpha,\vec\gamma)$. For much of the analysis that awaits us, the signs in $\Tr^{\C}_n((-1)^k,\alpha,\vec\gamma)$ are also irrelevant. We will therefore introduce 
the symbol $\Tr^{\C}_n(\alpha,\vec\gamma)$ to denote the unsigned version of the upper trace function. This can be defined by a recursion similar to the one above, but can also be recovered from the signed upper trace by letting $\Tr^{\C}_n(\alpha,\vec\gamma)=(\Tr^{C}_n(+,\alpha,\vec\gamma))^0$. We will cavalierly refer to $\Tr^{\C}_n(\alpha,\vec\gamma)$ as the \emph{walk (down) from} $(\alpha,\vec\gamma)$. Given $x\in S^{\C}_n(\alpha,\vec\gamma)$, the \emph{step (down) from $x$} is the set
\[
\Tr^{\C}_n(\alpha,\vec\gamma)\restr\left(\{x\}\cup \{x^\smallfrown\seq{i}:i<n\}\right).
\]
If $x\in S^{\C}_n(\alpha,\vec\gamma)$, then $x^\smallfrown\seq{i}$ is said to be an \emph{immediate successor} of $x$ whenever $x^\smallfrown\seq{i}\in S^{\C}_n(\alpha,\vec\gamma)$. The same terminology applies to the signed versions of these objects. A sequence $\vec{\delta}$ is said to \emph{occur along the walk down from $(\alpha,\vec\gamma)$} iff there exists $x\in S^{\C}_n(\alpha,\vec\gamma)$ such that $\Tr^{\C}_n(\alpha,\vec\gamma)(x)=\vec\delta$.

\begin{remark}
  While it is not immediately evident from the definition, a well-foundedness argument 
  implies that higher walks are always \emph{finite}, i.e., the tree 
  $S_n^{\mc{C}}(\alpha,\vec{\gamma})$ is finite for all $(\alpha,\vec{\gamma}) \in \lambda^{[n+1]}$. 
  We refer the reader to \cite[Theorem 5.4]{bergfalk2024introductionhigherwalks} for a proof of 
  this fact.
\end{remark}

\begin{remark}
If the $n$-$C$-sequence $\C$ is clear from context, it will sometimes be omitted 
from $\Tr_n^{\C}$, $S_n^{\C}$, and similar notation introduced below.
We will frequently blur the distinction between a node $x\in S_n(\alpha,\vec\gamma)$ and its label $\Tr_n(\alpha,\vec\gamma)(x)$, referring, for example, to ``the terminal node $(\alpha,\vec\beta)$", where $(\alpha,\vec\beta)=\Tr_n(\alpha,\vec\gamma)(x)$. The same conventions will be used with other walk-related concepts. For example, if $(\alpha,\vec\beta)=\Tr_n(\alpha,\vec\gamma)(x)$, we will speak of the ``step down from $(\alpha,\vec\beta)$" or ``immediate successors of $(\alpha,\vec\beta)$". While this is not entirely unambiguous (for different nodes on $S_n(\alpha,\vec\gamma)$ can have the same label), the point is that determining whether $x$ is terminal or not, or determining the structure of the step down from $x$, depends only on its label $(\alpha,\vec\beta)$. 
\end{remark}

\end{definition}
The next lemma says that a walk can be restarted at any node along itself, without affecting what happens afterwards:
\begin{lemma} \label{lemma: restart}
Let $\C$ be an $n$-$C$-sequence on an ordinal $\lambda$. Suppose that 
$(\alpha,\vec{\gamma}) \in \lambda^{[n+1]}$ and $k\in\w$. Let $x\in S_n(\alpha,\vec\gamma)$ and set $((-1)^m,\alpha,\vec\beta)=\Tr_n((-1)^k,\alpha,\vec\gamma)(x)$. Then
\[
\{z\in S_n(\alpha,\vec\gamma): x\subset z\}=\{x^\smallfrown y:y\in S_n(\alpha,\vec\beta)\}
\]
and, if $y\in S_n(\alpha,\vec\beta)$,
\[
\Tr_n((-1)^m,\alpha,\vec\beta)(y)=\Tr_n((-1)^k,\alpha,\vec\gamma)(x^\smallfrown y).
\]
The analogous statements for the unsigned $\Tr_n(\alpha,\vec\gamma)$ are true as well.
\end{lemma}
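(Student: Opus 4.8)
The plan is to argue by induction on the length of $y$, exploiting the purely \emph{local} character of the recursion defining $\Tr_n$ and $S_n$. The crucial observation, already flagged in the remarks preceding the lemma, is that for a node $z\in S_n(\alpha,\vec\gamma)$ the decision whether $z$ is terminal, how many immediate successors $z$ has, and the labels those successors receive all depend only on the label $\Tr_n((-1)^k,\alpha,\vec\gamma)(z)$ together with the fixed $n$-$C$-sequence $\C$; the walk carries no ``memory'' of how it reached $z$. Concretely, in the recursive clause the successor label $((-1)^{m+j+\ell_i},(\iota(\vec\beta),\min(C_{\tau(\vec\beta)}\setminus\beta_j),\tau(\vec\beta))^{\ell_i})$ is manufactured entirely out of the label $((-1)^m,\vec\beta)$ at the node being expanded, the quantities $j$, the $\ell_i$, $\tau(\vec\beta)$, $\iota(\vec\beta)$, $\beta_j$ all being determined by $\vec\beta$.

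Fixing $x$ and writing $((-1)^m,\alpha,\vec\beta)=\Tr_n((-1)^k,\alpha,\vec\gamma)(x)$ as in the statement, I would first record that $\beta_0=\alpha$ and $(\alpha,\vec\beta)\in\lambda^{[n+1]}$ — since every label occurring along a walk has first ordinal coordinate $\alpha$ and lies in $\{-1,1\}\times\lambda^{[n+1]}$ — so that the walk $\Tr_n((-1)^m,\alpha,\vec\beta)$ is indeed defined. Then I would prove by induction on $\ell<\w$ the statement: \emph{for every $y\in{}^{\ell}n$ one has $x^\smallfrown y\in S_n(\alpha,\vec\gamma)$ if and only if $y\in S_n(\alpha,\vec\beta)$, and when this holds, $\Tr_n((-1)^k,\alpha,\vec\gamma)(x^\smallfrown y)=\Tr_n((-1)^m,\alpha,\vec\beta)(y)$.} The base case $\ell=0$ is immediate from the initialization clause of the recursion: $x^\smallfrown\emptyset=x\in S_n(\alpha,\vec\gamma)$ by hypothesis, $\emptyset\in S_n(\alpha,\vec\beta)$ always, and both sides of the label equation equal $((-1)^m,\alpha,\vec\beta)$.

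For the inductive step I would write $y=y'^\smallfrown\seq{i}$ with $i<n$ and apply the induction hypothesis to $y'$. Either $x^\smallfrown y'$ and $y'$ both lie outside their respective trees — in which case so do $x^\smallfrown y$ and $y$, by downward closure of $n$-trees, and there is nothing further to check — or they both lie inside with a common label $((-1)^{m'},\alpha,\vec\beta')$. In the latter case, since $S_n(\alpha,\vec\gamma)$ and $S_n(\alpha,\vec\beta)$ are \emph{full} $n$-trees, $x^\smallfrown y\in S_n(\alpha,\vec\gamma)$ (resp.\ $y\in S_n(\alpha,\vec\beta)$) holds precisely when $x^\smallfrown y'$ (resp.\ $y'$) is a splitting node; and by the locality principle both the terminal/splitting status and, in the splitting case, the successor labels are computed by the same recursive clause from the shared label $((-1)^{m'},\alpha,\vec\beta')$. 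Hence the two trees make the same decision about including $x^\smallfrown y$ versus $y$, and when they include them the labels agree. This closes the induction and yields both the set equality $\{z\in S_n(\alpha,\vec\gamma):x\subset z\}=\{x^\smallfrown y:y\in S_n(\alpha,\vec\beta)\}$ and the claimed label identity for the signed walk; the unsigned statements then follow by taking $k=0$ and projecting onto the ordinal coordinates, using $\Tr_n(\alpha,\vec\gamma)=(\Tr_n(+,\alpha,\vec\gamma))^0$ and the sign-independence of $S_n$, or equivalently by rerunning the induction with signs suppressed.

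I do not anticipate a genuine obstacle: the whole argument is a routine structural induction once the locality of the recursion is made precise. The only mildly delicate point is the sign bookkeeping, and it is harmless because the sign exponent is updated \emph{additively}, $m\mapsto m+j+\ell_i$, with the increment $j+\ell_i$ depending only on the ordinal part of the current label; restarting the walk at $x$ with the matching initial sign $(-1)^m$ therefore reproduces exactly the signs seen below $x$ in the original walk.
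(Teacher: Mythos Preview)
Your proposal is correct and is precisely the argument the paper intends: the paper's own proof is the single sentence ``Immediate from the recursive definition of $\Tr_n$,'' and your induction on $|y|$ is exactly the unpacking of that remark, resting on the same locality observation that the step from a node depends only on its label.
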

\begin{proof}
Immediate from the recursive definition of $\Tr_n$.
\end{proof}

Recall that $\lambda \otimes [\lambda]^n$ denotes the set of all 
$(\alpha,\vec{\gamma}) \in \lambda^{[n+1]}$ such that $\vec{\gamma} \in [\lambda]^n$.
While we have defined $\Tr_n$ and $S_n$ on $\lambda^{[n+1]}$ (or $\{-1,1\} \times 
\lambda^{[n+1]}$ for the signed version of $\Tr_n$), we will be most interested in its values 
on $\lambda \otimes [\lambda]^n$. The following proposition indicates that we can safely 
restrict our attention to this domain.

\begin{proposition} \label{prop: increasing}
  Let $\mc{C}$ be an $n$-$C$-sequence on an ordinal $\lambda$ and $(\alpha,\vec{\gamma}) 
  \in \lambda \otimes [\lambda]^n$. Then, for all $x \in S_n(\alpha,\vec{\gamma})$, we have 
  $\Tr_n(\alpha,\vec{\gamma}) \in \lambda \otimes [\lambda]^n$.
\end{proposition}

\begin{proof}
  The straightforward proof, by induction on $|x|$ using Definition \ref{Tr} and 
  Lemma \ref{lemma: restart}, is left to the reader.
\end{proof}

Note that, in the case $n = 1$, the walk described above from a pair $(\beta,\gamma) \in \lambda^{[2]}$ 
recreates the classical walk from $\gamma$ to $\beta$ and, modulo cosmetic differences, the 
upper trace function $\Tr_1$ is equivalent to the classical upper trace function $\Tr$.

Recall that, in the classical, one-dimensional setting, if we have some (one-dimensional) $C$-sequence on an ordinal $\lambda$ and we are using it to walk down from $\gamma$ to $\beta$ for 
some $\beta < \gamma < \lambda$, then the \emph{lower trace} $\L(\beta,\gamma)$ is a finite set of ordinals such that, if $\alpha < \beta$ and $\max(\L(\beta,\gamma))<\alpha<\beta$, then the walk from $\gamma$ down to $\alpha$ starts like the walk from $\gamma$ down to $\beta$, reaches $\beta$, and then continues like the walk from $\beta$ down to $\alpha$, i.e., $\Tr(\alpha,\gamma) = \Tr(\beta,\gamma) \cup \Tr(\alpha,\beta)$. In the higher-dimensional setting, an analogous function, $\L_n$, was introduced by Bergfalk in \cite[Definition 7.5]{bergfalk2024introductionhigherwalks}. Before reproducing it below (see Definition \ref{lowerTrace}), we make a small observation. Fix $n>0$. Let $\C$ be an $n$-$C$-sequence on an ordinal $\lambda$ and $(\alpha,\vec{\gamma}) \in \lambda^{[n+1]}$. We wish to compare $\Tr_n(\alpha,\vec\gamma)$ with $\Tr_n(\xi,\vec\gamma)$ for $\xi<\alpha$. We note that $\tau(\alpha,\vec\gamma)=\tau(\xi,\vec\gamma)$ and, if $|\tau(\alpha,\vec{\gamma})| < n$, 
then the step from $(\xi,\vec\gamma)$ looks exactly like that from $(\alpha,\vec\gamma)$, with the $\alpha$'s in coordinate $0$ being replaced with $\xi$'s.

\begin{definition}\label{extreme}
Fix $n>0$. Let $\C$ be an $n$-$C$-sequence on an ordinal $\lambda$ 
and $(\alpha,\vec{\gamma}) \in \lambda^{[n+1]}$. Define
\[
E_n(\alpha,\vec\gamma):=\{x\in S_n(\alpha,\vec\gamma):|\tau(\Tr_n(\alpha,\vec\gamma)(x))|=n\}.
\]
In words, $E_n(\alpha,\vec\gamma)$ is the set of all nodes $x\in S_n(\alpha,\vec\gamma)$ such that, when stepping down from $x$, we use a club of $\C$ whose index has the maximum possible length (namely, $n$).
\end{definition}

\begin{definition}\label{lowerTrace}
Fix $n>0$. Let $\C$ be an $n$-$C$-sequence on an ordinal $\lambda$ and $(\alpha,\vec{\gamma}) \in \lambda^{[n+1]}$. The \emph{order-$n$ lower trace function} $\L_n(\alpha,\vec\gamma):S_n(\alpha,\vec\gamma)\to \lambda$ is defined by
\[
\L_n(\alpha,\vec\gamma)(x):=\max\{\sup(\alpha\cap C_{(\Tr_n(\alpha,\vec\gamma)(y))^0}):y\subset x \wedge y\in E_n(\alpha,\vec\gamma)\}
\]
whenever $x\in S_n(\alpha,\vec\gamma)$, with the convention that $\max(\emptyset)=0$.
\end{definition}

\begin{notation}
If $f$ is a function, $I\subset \dom(f)$ and $t$ is a function with $\dom(t)=I$, then we let $\sub_I^t(f):=t\cup f\restr(\dom(f)\setminus I)$.

If $I=\{i_0,\dots,i_n\}$ is a finite set and $t(i_k)=\xi_k$, we will write $\sub_{i_0,\dots,i_n}^{\xi_0,\dots\xi_n}(f)$ instead of $\sub_I^t(f)$. We will only need to use this notation for $n\in \{0,1\}$.
\end{notation}

\begin{lemma}\label{7.7}
Fix $n>0$. Let $\C$ be an $n$-$C$-sequence on an ordinal $\lambda$ and $(\alpha,\vec{\gamma}) \in \lambda^{[n+1]}$. Let $x\in S_n(\alpha,\vec\gamma)$ and $\xi<\lambda$ be such that
\[
\L_n(\alpha,\vec\gamma)(x\restr(|x|-1))<\xi\le \alpha.
\]
Then $x\in S_n(\xi,\vec\gamma)$ and, for every $j\le|x|$,
\[
\Tr_n(\pm,\xi,\vec\gamma)(x\restr j)=\sub_1^\xi(\Tr_n(\pm,\alpha,\vec\gamma)(x\restr j)).
\]
In particular, if $\max(\L_n(\alpha,\vec\gamma))<\xi\le \alpha$, then $S_n(\xi,\vec\gamma)$ is an end-extension of $S_n(\alpha,\vec\gamma)$, and the labels $\Tr(\pm,\xi,\vec\gamma)$ on the initial segment $S_n(\xi,\vec\gamma)$ are obtained by taking the labels given by $\Tr(\pm,\alpha,\vec\gamma)$ and switching the $\alpha$ in coordinate $1$ with a $\xi$.
\end{lemma}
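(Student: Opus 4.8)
The plan is to argue by induction on $j \le |x|$ that $x\restriction j \in S_n(\xi,\vec\gamma)$ and that the displayed label identity holds. The base case $j = 0$ is trivial, since $\Tr_n(\pm,\xi,\vec\gamma)(\emptyset) = (\pm,\xi,\vec\gamma) = \sub_1^\xi((\pm,\alpha,\vec\gamma)) = \sub_1^\xi(\Tr_n(\pm,\alpha,\vec\gamma)(\emptyset))$. For the inductive step, suppose the claim holds for some $j < |x|$ and write $y = x\restriction j$, so that $y$ is a \emph{non-terminal} node of $S_n(\alpha,\vec\gamma)$ (because $y^\frown\langle i\rangle = x\restriction(j+1)$ is a node, using $j+1 \le |x|$). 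Let $\Tr_n(\pm,\alpha,\vec\gamma)(y) = ((-1)^m,\alpha,\vec\beta)$; by the inductive hypothesis, $\Tr_n(\pm,\xi,\vec\gamma)(y) = ((-1)^m,\xi,\vec\beta')$, where $\vec\beta' = \sub_1^\xi(\alpha,\vec\beta)$ agrees with $(\alpha,\vec\beta)$ except that its $0^{\mathrm{th}}$ entry is $\xi$ rather than $\alpha$. In particular $\tau(\xi,\vec\beta') = \tau(\alpha,\vec\beta)$ and $j+1 = |\iota(\alpha,\vec\beta)| = |\iota(\xi,\vec\beta')|$, since $\tau$ and $\iota$ depend only on the ordinal entries in positions $1$ through $n+1$, which are unchanged.

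The heart of the argument is to show that the step down from $y$ in the $\xi$-walk "looks the same" as in the $\alpha$-walk. Two things must be checked. First, $y$ must be non-terminal in $S_n(\xi,\vec\gamma)$: writing $\beta_j$ for the relevant ordinal entry of $(\alpha,\vec\beta)$, non-terminality of $y$ in the $\alpha$-walk means $C_{\tau(\alpha,\vec\beta)} \setminus \beta_j \ne \emptyset$, and the corresponding entry in $(\xi,\vec\beta')$ is again $\beta_j$ whenever the position in question is not position $0$ — which is exactly the case when $|\tau(\alpha,\vec\beta)| < n$, i.e.\ $y \notin E_n(\alpha,\vec\gamma)$; and when $y \in E_n(\alpha,\vec\gamma)$ one uses the hypothesis $\L_n(\alpha,\vec\gamma)(x\restriction(|x|-1)) < \xi$, which by the definition of $\L_n$ as a $\max$ over initial segments $z \subset x$ lying in $E_n$ guarantees $\sup(\alpha \cap C_{(\Tr_n(\alpha,\vec\gamma)(y))^0}) < \xi \le \alpha$, hence $\min(C_{\tau(\alpha,\vec\beta)} \setminus \beta_j) = \min(C_{\tau(\xi,\vec\beta')} \setminus \xi)$ and the step exists with the same minimum. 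Second, granting the step exists, the recursive clause defining $\Tr_n$ at the immediate successors $y^\frown\langle i\rangle$ depends only on $\iota$, $\tau$, the sign exponent $m$, the combinatorial data $j$ and $\ell_i$, and the single ordinal $\min(C_{\tau(\vec\beta)}\setminus\beta_j)$ — all of which we have just matched between the two walks, \emph{except} that in the $\xi$-walk the $0^{\mathrm{th}}$ coordinate is $\xi$ in place of $\alpha$. Hence $\Tr_n(\pm,\xi,\vec\gamma)(y^\frown\langle i\rangle) = \sub_1^\xi(\Tr_n(\pm,\alpha,\vec\gamma)(y^\frown\langle i\rangle))$ for every $i < n$, completing the induction. (The sign bookkeeping is automatic: the exponent added at each successor is $m + j + \ell_i$, and $m, j, \ell_i$ are all preserved.)

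The "in particular" clause follows by applying the main statement with $x$ ranging over \emph{all} nodes of $S_n(\alpha,\vec\gamma)$: if $\max(\L_n(\alpha,\vec\gamma)) < \xi \le \alpha$ then the hypothesis $\L_n(\alpha,\vec\gamma)(x\restriction(|x|-1)) < \xi$ holds vacuously for every such $x$, so every node of $S_n(\alpha,\vec\gamma)$ lies in $S_n(\xi,\vec\gamma)$ with the label obtained by the substitution $\alpha \mapsto \xi$ in coordinate $1$; since both trees are full $n$-trees and one contains the other with matching branching structure, $S_n(\xi,\vec\gamma)$ is an end-extension of $S_n(\alpha,\vec\gamma)$.

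I expect the main obstacle to be the bookkeeping around position $0$ versus the other positions: one must be careful that the ordinal called "$\beta_j$" in the step-down clause is the entry in position $j$ of $\vec\beta$ (equivalently $j+1$ of $(\alpha,\vec\beta)$ in my indexing convention), and to track exactly when that entry coincides with the $0^{\mathrm{th}}$ coordinate — this is precisely the dichotomy $y \in E_n$ versus $y \notin E_n$ that the $\L_n$ hypothesis is designed to handle. The observation recorded just before Definition \ref{extreme} (that when $|\tau(\alpha,\vec\gamma)| < n$ the step from $(\xi,\vec\gamma)$ is obtained from that of $(\alpha,\vec\gamma)$ by replacing $\alpha$ with $\xi$ in coordinate $0$) is the local, one-step version of what the induction globalizes.
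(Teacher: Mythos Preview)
The paper does not actually prove this lemma; its proof reads simply ``See \cite[Lemma 7.7]{bergfalk2024introductionhigherwalks}.'' Your induction on the depth $j$ of the node is the natural argument and is essentially correct: the key dichotomy is exactly whether the current node lies in $E_n(\alpha,\vec\gamma)$, and the hypothesis on $\L_n$ is precisely what forces $C_{\tau}\cap[\xi,\alpha)=\emptyset$ in that case so that $\min(C_\tau\setminus\xi)=\min(C_\tau\setminus\alpha)$.

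Two small points of presentation. First, you use the symbol $j$ both for the induction index and for the quantity $|\iota(\vec\beta)|-1$ from Definition~\ref{Tr}; renaming one of them would avoid confusion. Second, your description of $\vec\beta'$ is garbled: you write $\vec\beta'=\sub_1^\xi(\alpha,\vec\beta)$ and then say it ``agrees with $(\alpha,\vec\beta)$ except that its $0^{\mathrm{th}}$ entry is $\xi$'', but then the displayed label $((-1)^m,\xi,\vec\beta')$ would have the wrong length. What you mean is simply that the label at $y$ in the $\xi$-walk is $((-1)^m,\xi,\vec\beta)$, i.e.\ $\sub_1^\xi$ applied to the full signed label $((-1)^m,\alpha,\vec\beta)$. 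Once this is cleaned up the argument goes through without difficulty.
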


\begin{proof}
See \cite[Lemma 7.7]{bergfalk2024introductionhigherwalks}. 
\end{proof}

\begin{definition}\label{sigma}
Fix $n > 0$. Let $\C$ be an $n$-$C$-sequence on an ordinal $\lambda$ and $(\alpha,\vec{\gamma}) \in \lambda^{[n+1]}$. If $x\in S_n(\alpha,\vec\gamma)$, then $\sigma_{\alpha\vec\gamma}(x)$ denotes the signed basis element of the free abelian group $\bigoplus_{\lambda^{[n-1]}}\Z$ corresponding to $\Tr_n(+,\alpha,\vec\gamma)(x)$ with the first two ordinal coordinates deleted. More precisely,
\[
\sigma_{\alpha\vec\gamma}(x):=\lfloor ((\Tr_n(+,\alpha,\vec\gamma)(x))^1)^1\rfloor.
\]

We also let $\sgn_{\alpha\vec\gamma}(x)$ denote the sign of $\Tr_n(+,\alpha,\vec\gamma)(x)$, so that $\sgn_{\alpha\vec\gamma}(x)\in\{-1,1\}$. More formally,
\[
\sgn_{\alpha\vec\gamma}(x)=[\Tr_n(+,\alpha,\vec\gamma)(x)](0).
\]
Observe that the sign of $\Tr_n((-1)^k,\alpha,\vec\gamma)(x)$ is then $(-1)^k\sgn_{\alpha\vec\gamma}(x)$.
\end{definition}

We are now almost ready to introduce the walk-characteristic that will be the main object of study for 
the remainder of the paper. We first recall the ``generalized number of steps function'' $\rho_2^n$ 
from \cite{bergfalk2024introductionhigherwalks}.

\begin{definition}
  Fix $n > 0$, and let $\C$ be an $n$-$C$-sequence on an ordinal $\lambda$. Then the function 
  $\rho_2^n : \{-1,1\} \times \lambda^{[n+1]} \ra \bb{Z}$ is defined by setting
  \[
    \rho_2^n((-1)^k,\alpha,\vec{\gamma}) = \sum_{x \in S_n(\alpha,\vec{\gamma})} (-1)^k 
    \sgn_{\alpha\vec{\gamma}}(x)
  \]
  for all $k < \omega$ and $(\alpha,\vec{\gamma}) \in \lambda^{[n+1]}$. We also let 
  $\rho_2^n(\alpha,\vec{\gamma}) = \rho_2^n(+1,\alpha,\vec{\gamma})$.
\end{definition}

Note that, if $n = 1$, then $\rho_2^1$ is essentially the classical number of steps function $\rho_2$. 
More precisely, we have $\rho_2^1(\beta,\gamma) = \rho_2(\beta,\gamma) + 1$ for all 
$\beta \leq \gamma < \lambda$; this ``$+1$'' term is purely cosmetic and makes no difference 
in the relevant questions about coherence and triviality. In 
\cite[Theorem 7.2]{bergfalk2024introductionhigherwalks}, Bergfalk identifies some special 
cases in which the family 
\[
  \Phi(\rho_2^n) = \langle \rho_2^n(\cdot,\vec{\gamma}):\gamma_0 \ra \bb{Z} \mid 
  \vec{\gamma} \in [\lambda]^n \rangle
\]
is coherent modulo locally semi-constant functions. In particular, this holds if 
$\mc{C}$ is an order-type-minimal $n$-$C$-sequence on $\omega_n$. He asks for more general 
conditions on the $n$-$C$-sequence $\mc{C}$ that guarantee coherence of $\Phi(\rho_2^n)$. 
We show now that the coherence condition introduced in Definition \ref{def: n_c_sequence}(\ref{coherence_condition}) suffices. Looking ahead to an eventual proof of nontriviality in 
the next section, though, we prove this not directly for $\rho_2^n$ but for a richer function, 
$\resh_n$, which we now introduce.\footnote{The character $\resh$ is pronounced ``resh''. It is 
a letter in the Phoenician alphabet from which the Greek letter $\rho$ later descended.}

\begin{definition}\label{rho2}
Let $\C$ be a coherent $n$-$C$-sequence on an infinite ordinal $\lambda$ and $(\alpha,\vec{\gamma}) \in \lambda^{[n+1]}$. Then $\resh_n(\alpha,\vec\gamma)$ denotes the sum, in the free abelian group $\bigoplus_{\lambda^{[n-1]}}\Z$, of the outputs $\Tr_n(+,\alpha,\vec\gamma)(x)$ with the first two ordinal coordinates deleted, where $x$ rangers over $S_n(\alpha,\vec\gamma)$. More formally,
\[
\resh_n(\alpha,\vec\gamma):=\sum_{x\in S_n(\alpha,\vec\gamma)}\sigma_{\alpha\vec{\gamma}}(x)
\]
\end{definition}

\begin{remark}
  We will be most interested in the values $\resh_n(\alpha,\vec{\gamma})$ for 
  $(\alpha,\vec{\gamma}) \in \lambda \otimes [\lambda]^n$. Note that, by Proposition 
  \ref{prop: increasing}, we have $\resh_n(\alpha,\vec{\gamma}) \in \bigoplus_{[\lambda]^{n-1}} 
  \bb{Z}$ for all such $(\alpha,\vec{\gamma})$. In particular, the $n$-family
  \[
    \Phi(\resh_n) = \left\langle \resh_n(\cdot,\vec{\gamma}):\gamma_0 \ra 
    \bigoplus_{[\lambda]^{n-1}} \bb{Z} \ \middle| \ \vec{\gamma} \in [\lambda]^n
    \right\rangle
  \]
  consists of functions mapping into $\bigoplus_{[\lambda]^{n-1}}$.
\end{remark}

We first note that, in the case of $n = 1$, we have $\resh_1 = \rho_2^1$, while, for 
$n > 1$, it is a strictly richer object. There is a natural ``projection" from 
$\resh_n$ to $\rho_2^n$, though. More precisely, let 
$\varpi: \bigoplus_{\lambda^{[n-1]}} \Z \ra \Z$ be the group homomorphism defined by setting 
$\varpi(\lfloor \vec{\delta} \rfloor) = 1$
for all $\vec{\delta} \in \lambda^{[n-1]}$ and extending linearly. Then it follows immediately 
from the definitions that $\varpi(\resh_n(\alpha,\vec{\gamma})) = \rho_2^n(\alpha,\vec{\gamma})$ 
for all $(\alpha,\vec{\gamma}) \in \lambda^{[n+1]}$. It then follows that, if the $n$-family 
$\Phi(\resh_n)$ is coherent modulo locally semi-constant functions, then so is $\Phi(\rho_2^n)$.

Before turning directly to establishing the coherence of $\Phi(\resh_n)$, we will need some additional 
machinery. First, it will be convenient to have notation for a signed version of the tree $S_n$:
\begin{definition}\label{signS}
Let $\C$ be an $n$-$C$-sequence on an ordinal $\lambda$ and $(\alpha,\vec{\gamma}) \in 
\lambda^{[n+1]}$. Define
\[
S_n(\pm,\alpha,\vec\gamma):=\{(x,\pm\sgn_{\alpha\vec\gamma}(x)):x\in S_n(\alpha,\vec\gamma)\}.
\]
Note that $S_n(\pm,\alpha,\vec\gamma)\subset {}^{<\w}n \times \{-1,1\}$ and that
\[
S_n(\alpha,\vec\gamma)=\left\{t^{|t|-1}:t\in S_n(\pm,\alpha,\vec\gamma)\right\}.
\]
\end{definition}

\begin{lemma}[{\cite[Lemma 7.12]{bergfalk2024introductionhigherwalks}}]\label{7.12}
Let $\C$ be an $n$-$C$-sequence on an infinite ordinal $\lambda$ and $(\alpha,\vec{\gamma}) \in 
\lambda^{[n+2]}$. Then
\[
\bigsqcup_{i\le n}\bS_n(\alpha,\vec\gamma^i)
\]
admits a partition
\[
\bigcup_{t\in Z}\{(i(t),t),(i(t_-),t_-)\}
\]
such that
\[
\Tr_n((-1)^{i(t)},\alpha,\vec\gamma^{i(t)})(t^0)=-\Tr_n((-1)^{i(t_-)},\alpha,\vec\gamma^{i(t_-)})(t_-^0)
\]
for every $t\in Z$. 
\end{lemma}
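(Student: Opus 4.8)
The plan is to prove the lemma by induction on the walks from the faces $(\alpha,\vec\gamma^{\,i})$, $i\le n$, building the pairing $Z$ by descending through all $n+1$ walks simultaneously. Since the conclusion is the vanishing, with explicit cancellations, of an alternating sum of faces, the argument is organized so as to mirror the identity $\partial\circ\partial=0$. The induction is well-founded because higher walks are finite (the remark on finiteness following Definition~\ref{Tr}): a convenient rank is $\sum_{i\le n}\lvert S_n(\alpha,\vec\gamma^{\,i})\rvert$, which a single step down strictly decreases. The base cases — where every $S_n(\alpha,\vec\gamma^{\,i})$ is a single terminal root — are settled by inspecting the $n+1$ root labels directly and checking that they cancel in pairs.

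The heart of the argument is the analysis of the first step of all $n+1$ walks at once. Write $(\alpha,\vec\gamma)=\iota^{\frown}\tau$ with $\tau=\tau(\alpha,\vec\gamma)$ the longest final segment lying in $I(\mc{C})$, set $j+1=\lvert\iota\rvert$, and let $\mu$ be the last entry of $\iota$. For each $i$ one records how the decomposition $(\alpha,\vec\gamma^{\,i})=\iota(i)^{\frown}\tau(i)$ and the pivot relate to $\iota,\tau,\mu$. The crucial structural observation is that a single step down from $(\alpha,\vec\gamma^{\,i})$ inserts one new ordinal, producing a tuple $\vec\omega(i)\in\lambda^{[n+2]}$, and branches to exactly those faces $\vec\omega(i)^{\,\ell}$ that delete neither $\alpha$ nor the newly inserted ordinal — so that the remaining face $\vec\omega(i)^{\,\ell_0}$ is precisely the parent $(\alpha,\vec\gamma^{\,i})$ itself. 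Using the restart lemma, each tree $\bS_n(\alpha,\vec\gamma^{\,i})$ thus decomposes, level by level, into signed subwalks indexed by faces of the tuples $\vec\omega(i)$. For the ``generic'' indices $i$ — those for which deleting $\gamma_i$ neither changes the $I(\mc{C})$-maximal final segment, nor makes the root terminal, nor collides with $\mu$ or with $\min(C_\tau\setminus\mu)$ — one shows, invoking the coherence of $\mc{C}$, that insertion-then-face-deletion commutes with the face map ``delete $\gamma_i$'' up to a predictable reindexing; hence the corresponding children across all such $i$ reassemble into full families of faces, i.e.\ into instances of the lemma for strictly shorter walks. Here the bookkeeping heart of the matter is the sign: the factor $(-1)^{m+j+\ell_k}$ attached by the recursion in Definition~\ref{Tr}, combined with the ambient sign $(-1)^i$ of $\bS_n(\alpha,\vec\gamma^{\,i})$, must come out to exactly the alternating sign required to apply the induction hypothesis to each reassembled family; verifying this is a finite but delicate computation with the enumeration $\langle\ell_k:k<n\rangle$ of $\{1,\dots,n+1\}\setminus\{j+1\}$.

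The main obstacle is the non-generic, ``boundary'' behaviour: indices $i$ for which deleting $\gamma_i$ makes the root of $S_n(\alpha,\vec\gamma^{\,i})$ terminal, shifts which final segment is $I(\mc{C})$-maximal, or identifies $\gamma_i$ with the pivot or with the inserted minimum. In each such case the ``parallel first step'' picture degrades and one is left with a handful of root and near-root nodes that are not absorbed into the generic families; the crux is to show that these leftover signed nodes cancel \emph{in pairs} across the different trees. This is where one leans hardest on the coherence conditions of Definition~\ref{def: n_c_sequence}(\ref{coherence_condition}) — which, by pinning down $C_{\vec\beta}$ whenever $\sup(C_{\vec\beta})=\beta_0$ and forcing the clubs of $\mc{C}$ along accumulation points to agree, guarantee that two a priori different faces actually produce the same final-segment data — and on Lemma~\ref{7.7}, which ensures that, once the lower-trace bookkeeping is in place, the matched subwalks are literally equal up to overall sign. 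Having handled the generic families by the induction hypothesis and paired off the boundary nodes by hand, one takes $Z$ to be the union of all resulting pairs, completing the proof.
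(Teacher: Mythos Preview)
The paper does not supply a proof of this lemma: it is quoted verbatim from \cite[Lemma~7.12]{bergfalk2024introductionhigherwalks} and used as a black box, so there is no ``paper's own proof'' against which to compare your attempt.

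That said, your proposal has a genuine gap. The lemma is stated for an \emph{arbitrary} $n$-$C$-sequence $\mc{C}$ --- no coherence hypothesis is assumed --- yet your argument ``leans hardest on the coherence conditions of Definition~\ref{def: n_c_sequence}(\ref{coherence_condition})'' and invokes Lemma~\ref{7.7}. Neither is available or relevant here. Coherence governs how the clubs $C_{\vec\beta}$ agree along accumulation points and is simply not part of the data; Lemma~\ref{7.7} compares $\Tr_n(\alpha,\vec\gamma)$ with $\Tr_n(\xi,\vec\gamma)$ as the \emph{first} ordinal $\alpha$ varies, whereas in the present lemma $\alpha$ is fixed throughout and only the upper face index $i$ changes. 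If your boundary-case analysis genuinely needs these tools, the argument is broken; if it does not, then those paragraphs are padding that obscures what the actual pairing is.

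The correct mechanism is purely combinatorial and local to a single step. At a non-terminal node with label $(\alpha,\vec\beta)$ and $j+1=|\iota(\alpha,\vec\beta)|$, the step produces the $(n+2)$-tuple $(\iota(\alpha,\vec\beta),\min(C_{\tau(\alpha,\vec\beta)}\setminus\beta_j),\tau(\alpha,\vec\beta))$ and then deletes each coordinate $\ell_i\in\{1,\dots,n+1\}\setminus\{j+1\}$ with sign $(-1)^{j+\ell_i}$; the ``missing'' deletion $\ell=j+1$ recovers the parent. Thus the parent together with its $n$ children, all signed by $(-1)^{j+\ell}$, exhaust \emph{all} $n+1$ faces (with coordinate $0$ omitted) of a single $(n+2)$-tuple with alternating signs --- exactly a local $\partial$. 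The global statement is then the cancellation $\partial\circ\partial=0$ applied to $(\alpha,\vec\gamma)$, organised inductively via the restart lemma. No property of $\mc{C}$ beyond the bare definition of $\Tr_n$ enters; your sketch should be rewritten to reflect this.
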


\subsection{The case $n=3$} In this subsection, we present the proof that $\Phi(\resh_3)$ is 
coherent if $\mc{C}$ is a coherent $3$-$C$-sequence. We begin with a few technical lemmas.

\begin{lemma}\label{alphaAlphaTerminal}
Let $\C$ be a coherent $3$-$C$-sequence on an infinite ordinal $\lambda$. Let $\alpha<\beta<\gamma<\lambda$ and consider the node $(\alpha,\alpha,\beta,\gamma)$. Then every node of the form $(\alpha,\alpha',\beta',\gamma')$ with $\alpha<\alpha'$ occurring along the walk down from $(\alpha,\alpha,\beta,\gamma)$ must be terminal. In particular, we must have $\beta' \in C_{\gamma'}$. Moreover, there exists $\nu<\alpha$ such that, for every $\xi\in [\nu,\alpha]$, the node $(\xi,\alpha',\beta',\gamma')$ is terminal.

\end{lemma}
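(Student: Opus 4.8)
The plan is to analyze the walk down from $(\alpha,\alpha,\beta,\gamma)$ directly, tracking what happens to the coordinate-$0$ entry. Recall that for a node with label $(\alpha,\vec\beta)$ with $\beta_0 = \alpha$, the recursion splits only when $C_{\tau(\vec\beta)} \setminus \beta_j \neq \emptyset$, and in that case each immediate successor has a coordinate-$0$ entry equal to the coordinate-$0$ entry of $(\iota(\vec\beta),\min(C_{\tau(\vec\beta)}\setminus\beta_j),\tau(\vec\beta))^{\ell_i}$. Since $\ell_i \geq 1$, coordinate $0$ is never deleted, so it is inherited unchanged \emph{unless} $\ell_i = 1$, i.e.\ unless the index $j+1$ being incremented is not position $1$. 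The key observation to extract is: once the coordinate-$0$ entry has become some $\alpha' > \alpha$, it can only have arisen from a step in which position $1$ was the ``active'' position (so that $\min(C_{\tau}\setminus\beta_j)$ got installed into coordinate $0$); and at the node $(\alpha,\alpha,\beta,\gamma)$ itself, since $\iota = \langle\alpha\rangle$ has length $1$, we have $j = 0$ and $j+1 = 1$, so the very first step \emph{is} one that acts on position $1$.

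First I would set up the structural claim: for any node $(\alpha,\vec\beta')$ occurring along the walk with $\beta'_0 = \alpha' > \alpha$, I claim $|\tau(\vec\beta')| = n$, i.e.\ $x \in E_n$ at the step installing $\alpha'$, and moreover once we reach such a node the walk is ``stuck'' in the sense that the next step either terminates or again acts on position $1$ with the $C$-club fully determined. Concretely, I would argue that when position $1$ is the active position $j+1$, we have $j = 0$, meaning $\iota(\vec\beta) = \langle \beta_0 \rangle$, so $\tau(\vec\beta)$ has length $n$ (here $n = 3$); the step replaces $\beta_0$ by $\min(C_{\tau(\vec\beta)} \setminus \beta_0)$ and cyclically reshuffles. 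Using coherence of $\mc{C}$ — specifically that $C_{\tau(\vec\beta)} \cap \alpha'' = C_{\alpha''}$ whenever $\alpha'' \in \acc(C_{\tau(\vec\beta)})$ and the second clause of the coherence condition — the successive clubs encountered are all initial segments of one another, so the walk from such a node is completely rigid: it marches down a single club $C_{\vec\delta}$ for a fixed top index, and hits $\beta'$ only if $\beta' \in C_{\gamma'}$, forcing terminality. This gives the first two sentences of the lemma: the node $(\alpha,\alpha',\beta',\gamma')$ with $\alpha' > \alpha$ must be terminal and must satisfy $\beta' \in C_{\gamma'}$ (here I use that the top index equals $\langle\gamma'\rangle$ after the reshuffle and $|\tau| = n$ means $\tau = \langle\beta',\gamma'\rangle \in I(\mc{C})$, so $\beta' \in C_{\gamma'}$).

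For the ``moreover'' clause, I would invoke Lemma~\ref{7.7}. The node $(\alpha,\alpha',\beta',\gamma')$, being terminal, lies in $S_n(\alpha,\alpha,\beta,\gamma)$, and I want to perturb the coordinate-$1$ entry from $\alpha$ down to some $\xi \in [\nu,\alpha]$ without changing the tree up to and including this node. Let $x$ be the node carrying the label $(\alpha,\alpha',\beta',\gamma')$; by Lemma~\ref{7.7} it suffices to take $\nu := \L_n(\alpha,\alpha,\beta,\gamma)(x\restriction(|x|-1))$, which is $< \alpha$ since the lower trace is built from finitely many ordinals of the form $\sup(\alpha \cap C_{\cdots})$, each strictly below $\alpha$ (because the relevant node that first moved coordinate $0$ above $\alpha$ must have had $\sup(\alpha \cap C_{\tau(\vec\beta)}) < \alpha$, as otherwise $\alpha$ itself would be an accumulation point and coordinate $0$ would have stayed $\alpha$). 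Then for every $\xi \in (\nu, \alpha]$, Lemma~\ref{7.7} gives $x \in S_n(\xi,\alpha,\beta,\gamma)$ with the same label except coordinate $1$ is $\xi$, hence still terminal; extending $\nu$ down by one if necessary to get a closed interval $[\nu,\alpha]$ rather than $(\nu,\alpha]$ (or simply noting we may replace $\nu$ by $\nu+1 \leq \alpha$) completes the argument.

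The main obstacle I anticipate is the bookkeeping around the cyclic reshuffling: in dimension $3$ the map $\vec\beta \mapsto (\iota(\vec\beta),\min(C_{\tau(\vec\beta)}\setminus\beta_j),\tau(\vec\beta))^{\ell_i}$ permutes the three ``slot'' roles, and I need to verify carefully that \emph{every} step occurring strictly after the coordinate-$0$ entry first exceeds $\alpha$ is forced to act on position $1$ again (never on positions $2$ or $3$), so that the walk genuinely gets stuck descending a single club. This should follow because once $|\tau(\vec\beta)| = n$ there is no room to grow $\tau$, so the active index $j+1$ can only be $1$ — but checking that this configuration is \emph{preserved} by the step (i.e.\ the successor still has $|\tau| = n$ and coordinate-$0$ entry $> \alpha$) is exactly where coherence of $\mc{C}$ and minimality at successors get used, and is the crux of the diagram-based reasoning the authors allude to. Everything else is routine once that rigidity is established.
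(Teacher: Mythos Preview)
Your structural claim is incorrect. You assert that at any non-spectacled node $(\alpha,\alpha',\beta',\gamma')$ occurring in the walk one has $|\tau(\alpha,\alpha',\beta',\gamma')| = 3$, but this fails already at the first step when $\beta \notin C_\gamma$: setting $\gamma'' = \min(C_\gamma \setminus \beta)$, the unique non-spectacled immediate successor of $(\alpha,\alpha,\beta,\gamma)$ is $(\alpha,\beta,\gamma'',\gamma)$, and since $\beta \notin C_\gamma \supseteq C_{\gamma''\gamma}$ we get $\tau = (\gamma'',\gamma)$ of length $2$. Similarly, your claim that at the root $\iota = \langle\alpha\rangle$ (hence $j=0$) holds only when $\alpha \in C_{\beta\gamma}$, in which case the root is already terminal. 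The correct reason the non-spectacled successor is terminal is not that $|\tau| = 3$ but simply that if $\mu = \min(C_\tau \setminus \beta_j)$ is the newly inserted ordinal, then $C_{\mu\tau} \setminus \beta_j \subseteq C_\tau \cap [\beta_j,\mu) = \emptyset$; this works regardless of the length of $\tau$, and coherence of $\mc{C}$ plays no role whatsoever. The paper does exactly this: a two-case analysis on whether $\beta \in C_\gamma$, checking terminality of the one non-spectacled successor by the inclusion just described, and observing that the remaining two successors are again of the form $(\alpha,\alpha,\beta^*,\gamma^*)$, so the full statement follows by induction on the length of the node.

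Your ``moreover'' argument via Lemma~\ref{7.7} also has a gap. That lemma only guarantees that the label $(\xi,\alpha',\beta',\gamma')$ occurs in the walk from $(\xi,\alpha,\beta,\gamma)$; it says nothing about whether this label is terminal, and indeed the whole apparatus of bad nodes in Lemmas~\ref{badNodes} and~\ref{end_ext} exists precisely because terminal nodes of the $\alpha$-walk can become non-terminal in the $\xi$-walk. In the case $\beta \in C_\gamma$, $\alpha \notin C_{\beta\gamma}$, the label $(\xi,\alpha',\beta,\gamma)$ has $\tau = (\alpha',\beta,\gamma)$ and $|\iota| = 1$, so terminality is the condition $C_{\alpha'\beta\gamma} \setminus \xi = \emptyset$; this genuinely requires the extra observation that $C_{\beta\gamma} \cap (\nu,\alpha] = \emptyset$ for some $\nu < \alpha$ (using closure of $C_{\beta\gamma}$ and $\alpha \notin C_{\beta\gamma}$), which is not encoded in the lower-trace bound. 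The paper establishes the ``moreover'' clause by the same direct case analysis.
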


\begin{proof}
Suppose first that $(\alpha,\alpha',\beta',\gamma')$ is an immediate descendant of $(\alpha,\alpha,\beta,\gamma)$. We distinguish two cases:

\begin{itemize}
\item Suppose first that $\beta\not\in C_\gamma$. Set $\gamma':=\min(C_\gamma\setminus\beta)$. The situation is displayed in Figure \ref{alphaAlphaTerminal1}.

\begin{figure}[ht]
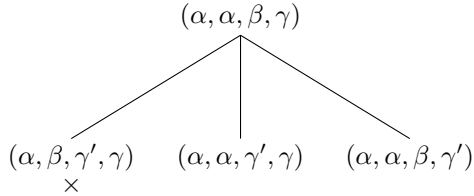

\ctikzfig{alphaAlphaTerminal}
\caption{The first step on the walk down from $(\alpha,\alpha,\beta,\gamma)$. The symbol $\times$ underneath a node indicates that said node is terminal.}
\label{alphaAlphaTerminal1}
\end{figure}
Note that $(\alpha,\beta,\gamma',\gamma)$ must be terminal, because $\beta\not\in C_\gamma$ and $C_{\gamma'\gamma}\setminus\beta \subset C_\gamma \cap [\gamma',\gamma)=\emptyset$. By the same reasoning, $(\xi,\beta,\gamma',\gamma)$ is terminal for every $\xi\le \alpha$.

\item Suppose now that $\beta\in C_{\gamma}$. If $\alpha\in C_{\beta\gamma}$ or $C_{\beta\gamma}\setminus\alpha=\emptyset$, then $(\alpha,\alpha,\beta,\gamma)$ is terminal and there's nothing to do. Suppose therefore that $\alpha\not\in C_{\beta\gamma}$ and $C_{\beta\gamma}\setminus\alpha\neq\emptyset$ and let $\alpha':=\min(C_{\beta\gamma}\setminus\alpha)$. The next step of the walk is displayed in Figure \ref{alphaAlphaTerminal2}.

\begin{figure}[ht]
\ctikzfig{alphaAlphaTerminal2}
\caption{}
\label{alphaAlphaTerminal2}
\end{figure}
Note that $(\alpha,\alpha',\beta,\gamma)$ must be terminal, because $C_{\alpha'\beta\gamma}\setminus\alpha\subset C_{\beta\gamma}\cap [\alpha,\alpha')=\emptyset$.

To see the ``Moreover", note that $\alpha\not\in C_{\beta\gamma}$ implies that there exists some $\nu<\alpha$ such that $[\nu,\alpha]\cap C_{\beta\gamma}=\emptyset$. Then, if $\xi \in [\nu,\alpha]$, we have $C_{\alpha'\beta\gamma}\setminus \xi\subset [\nu,\alpha]\cap C_{\beta\gamma}=\emptyset$.
\end{itemize}
Note that in either of the two previous cases, the other two nodes appearing in the next step of the walk are of the form $(\alpha,\alpha,\beta^*,\gamma^*)$, i.e. their first two coordinates are the same. Therefore, the general case follows by an easy induction.
\end{proof}

\begin{definition}\label{bad}
Let $\C$ be a $n$-$C$-sequence on an ordinal $\lambda$, with $n\ge 2$. Let 
$(\alpha,\vec{\gamma}) \in \lambda \otimes [\lambda]^{n}$. We will say that $x\in S_n(\alpha,\vec\gamma)$ is \emph{bad for $(\alpha,\vec\gamma)$} iff, letting $(\alpha,\vec\beta)=\Tr_n(\alpha,\vec\gamma)(x)$, we have that $\vec \beta\in I(\C)$ and $\alpha\in\acc(C_{\vec\beta})$. When the sequence $(\alpha,\vec\gamma)$ is clear from context, we will simply say that \emph{$x$ is bad}.

We will often blur the distinction between $x$ and its label $(\alpha,\vec\beta)$ and speak of $(\alpha,\vec\beta)$ as being \emph{bad}, the key realization being that it is only the label of $x$ that determines its badness, and not its position in $S_n(\alpha,\vec\gamma)$. We note that, if 
$(\alpha,\vec{\beta})$ is bad, then $\alpha$ must be a limit ordinal.
\end{definition}

\begin{lemma}\label{badImmediate}
Let $\C$ be a coherent $3$-$C$-sequence on an ordinal $\lambda$. Let $\alpha\le\beta<\gamma<\delta<\lambda$. Suppose that $x\in S_3(\alpha,\beta,\gamma,\delta)$ is bad. Then no $y\supsetneq x$ is bad.
\end{lemma}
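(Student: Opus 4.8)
The plan is to exploit the coherence of $\mc{C}$ together with Lemma~\ref{alphaAlphaTerminal} to show that, once a bad node is encountered, the walk is ``pinned down'' to the canonical club $C_\alpha$ in such a way that no further bad node can appear. First I would unpack the definition of badness at $x$: letting $(\alpha,\vec\beta) = \Tr_3(\alpha,\beta,\gamma,\delta)(x)$, we have $\vec\beta \in I(\mc{C})$ and $\alpha \in \acc(C_{\vec\beta})$, so $\alpha \in X(\mc{C})$. By the coherence condition (Definition~\ref{def: n_c_sequence}(\ref{coherence_condition})), this forces $C_{\vec\beta} \cap \alpha = C_\alpha$, and moreover $C_{\vec\beta'} = C_\alpha$ for any $\vec\beta'$ with first coordinate $\alpha$ and $\sup(C_{\vec\beta'}) = \alpha$. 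Since $\alpha \in \acc(C_{\vec\beta})$, the next step of the walk from $(\alpha,\vec\beta)$ uses $\min(C_{\vec\beta}\setminus\beta_j) = \alpha$ itself (because $\alpha$ is a limit point of $C_{\vec\beta}$ lying at or below the relevant coordinate — here one must check the indexing carefully, using that $\alpha = \beta_0$ is in coordinate $0$ and $\alpha \in \acc(C_{\tau(\vec\beta)})$), so the immediate successors $(\alpha,\vec\beta'')$ of $(\alpha,\vec\beta)$ all have their first \emph{two} ordinal coordinates equal to $\alpha$.

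This is exactly the configuration that Lemma~\ref{alphaAlphaTerminal} governs: every node of the form $(\alpha,\alpha',\beta',\gamma')$ with $\alpha < \alpha'$ occurring below $(\alpha,\alpha,\beta'',\gamma'')$ is terminal, and terminal nodes are not bad (a terminal node $x$ satisfies $C_{\tau(\Tr_3(x))}\setminus (\text{coordinate}) = \emptyset$, whereas badness requires $\alpha \in \acc(C_{\vec\beta})$, forcing $C_{\vec\beta}$ to be unbounded below $\alpha$, contradicting terminality). So I would argue: after the step from $x$, all immediate successors $y$ of $x$ have label with first two coordinates both $\alpha$; applying Lemma~\ref{alphaAlphaTerminal} to each such successor, every further descendant either still has its first two coordinates equal to $\alpha$ (and the induction in the proof of that lemma continues), or it has a first coordinate $\alpha$ but a larger second coordinate $\alpha'$, in which case it is terminal. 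A terminal node cannot be bad. It remains to rule out that a node with both first coordinates equal to $\alpha$ and strictly below $x$ in the ``$\supsetneq$'' order is bad — but such a node $(\alpha,\alpha,\beta^*,\gamma^*)$ has second coordinate $\alpha$ again equal to the first, and by the structure of the walk from $(\alpha,\alpha,\beta^*,\gamma^*)$ (it is precisely the situation analysed in Lemma~\ref{alphaAlphaTerminal}, where one either reaches a terminal node immediately or steps to another ``doubled'' node), badness would again require $\alpha \in \acc(C_{\vec\beta^*})$ for the current label $\vec\beta^*$; but then coherence would give $C_{\vec\beta^*} = C_\alpha$ as well, and one checks the walk from such a doubled node proceeds inside $C_\alpha$ toward smaller ordinals without ever returning $\alpha$ to a coordinate position where it becomes a limit point of the relevant club again.

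The cleanest way to package the induction is: define the ``trace below $x$'' via the restart lemma (Lemma in the excerpt, ``a walk can be restarted at any node''), so that $\{z \in S_3(\alpha,\beta,\gamma,\delta) : x \subsetneq z\}$ is identified with $S_3(\alpha,\vec\beta)\setminus\{\emptyset\}$ where $(\alpha,\vec\beta)$ is the label of $x$; then reduce to showing no node of $S_3(\alpha,\vec\beta)$ other than the root is bad. Since $\alpha \in \acc(C_{\vec\beta})$, the first step from $(\alpha,\vec\beta)$ produces only doubled nodes $(\alpha,\alpha,\cdot,\cdot)$, and then Lemma~\ref{alphaAlphaTerminal} directly says all subsequent non-terminal nodes are still doubled and all nodes of the form $(\alpha,\alpha',\cdot,\cdot)$ with $\alpha < \alpha'$ are terminal — and I would finish by observing that a \emph{doubled} node $(\alpha,\alpha,\beta^*,\gamma^*)$ is never bad, because for it to be bad we'd need the \emph{entire} label $(\alpha,\alpha,\beta^*,\gamma^*) \in I(\mc{C})$ with $\alpha \in \acc(C_{(\alpha,\alpha,\beta^*,\gamma^*)})$, but $C_{(\alpha,\alpha,\beta^*,\gamma^*)}$ is a club in $\alpha \cap C_{(\alpha,\beta^*,\gamma^*)}$ and, being minimal at successors or having $\alpha$ appear twice, cannot have $\alpha$ as an accumulation point — indeed $\sup C_{(\alpha,\alpha,\beta^*,\gamma^*)} < \alpha$ always, since $C_{(\alpha,\alpha,\cdots)}$ is a club \emph{in} $\alpha \cap C_{(\alpha,\cdots)}$ and the latter is $\subseteq \alpha$ is bounded below $\alpha$ precisely when the second coordinate equals the first. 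The main obstacle I anticipate is the bookkeeping around \emph{which} coordinate the walk descends in and confirming that $\alpha \in \acc(C_{\vec\beta})$ really does force the next-step minimum to be $\alpha$, so that the doubled-coordinate phenomenon is triggered; this is where the precise indexing conventions of Definition~\ref{Tr} (the role of $\iota$, $\tau$, and the position $j$) must be handled with care, and it is the only genuinely non-routine point.
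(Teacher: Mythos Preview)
Your approach is correct and matches the paper's: one step from the bad node yields only spectacled successors (since $\alpha\in\acc(C_{\vec\beta})$ gives $\min(C_{\vec\beta}\setminus\alpha)=\alpha$), Lemma~\ref{alphaAlphaTerminal} forces every non-terminal descendant to remain spectacled, and spectacled nodes are never bad. One bookkeeping slip to fix: for a spectacled label $(\alpha,\alpha,\beta^*,\gamma^*)$, badness would require $\vec\beta=(\alpha,\beta^*,\gamma^*)\in I(\mc{C})$ (a $3$-tuple, not the full $4$-tuple label) and $\alpha\in\acc(C_{\alpha\beta^*\gamma^*})$, and the latter fails simply because $C_{\alpha\beta^*\gamma^*}$, being a club in $\alpha\cap C_{\beta^*\gamma^*}$, is contained in $\alpha$; also, the coherence identity $C_{\vec\beta}\cap\alpha=C_\alpha$ you invoke plays no role in this lemma (it is what drives Lemma~\ref{badNodes}, not this one).
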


\begin{proof}
Say $\Tr_3(\alpha,\beta,\gamma,\delta)(x)=(\alpha,\beta^*,\gamma^*,\delta^*)$, so that $\alpha\in \acc(C_{\beta^*\gamma^*\delta^*})$. The first step down from $(\alpha,\beta^*,\gamma^*,\delta^*)$ is displayed in Figure \ref{first_step_bad}.
\begin{figure}[ht]
\ctikzfig{first_step_bad}
\caption{}
\label{first_step_bad}
\end{figure}
The symbol $\times$ indicates that the node $(\alpha,\alpha,\gamma^*,\delta^*)$ is terminal, Indeed, $\alpha\in \acc(C_{\beta^*\gamma^*\delta^*})\subset \acc(C_{\gamma^*\delta^*})\subset C_{\gamma^*\delta^*}$ and so $C_{\alpha\gamma^*\delta^*}$ is defined and, by definition, $C_{\alpha\gamma^*\delta^*}\setminus\alpha=\emptyset$.

By Lemma \ref{alphaAlphaTerminal}, every non-terminal node underneath $(\alpha,\beta^*,\gamma^*,\delta^*)$ must therefore have its first two coordinates identical. But this precludes badness.
\end{proof}

Tuples of the form $(\alpha,\alpha,\dots)$, already appearing above in Lemma \ref{alphaAlphaTerminal}, will continue playing a pivotal role. As such, it will be helpful to introduce some terminology:

\begin{definition}\label{spectacled}
Let $\C$ be a $n$-$C$-sequence on an ordinal $\lambda$, with $n\ge 2$. Let 
$(\alpha,\vec{\gamma}) \in \lambda \otimes [\lambda]^n$. We will say that $x\in S_n(\alpha,\vec\gamma)$ is \emph{spectacled} iff $\Tr_n(\alpha,\vec\gamma)(x)=(\alpha,\alpha,\vec\beta)$ for some sequence $\vec\beta$.

Note that being spectacled depends on the parameters $(\C,\alpha,\vec\gamma)$, but these will be clear from context, so we will avoid mentioning them for reasons of brevity. Also, being spectacled is really a property of the label of $x$, so we will usually abuse notation and refer to the ``spectacled node" $(\alpha,\alpha,\vec\beta)$, or its signed version $(\pm,\alpha,\alpha,\vec\beta)$.
\end{definition}

\begin{lemma}\label{badNodes}
Let $\C$ be a coherent $3$-$C$-sequence on an ordinal $\lambda$. Let $\alpha\leq\beta<\gamma<\delta<\lambda$. Suppose that $x\in S_3(\alpha,\beta,\gamma,\delta)$ is bad, say $\Tr_3(\alpha,\beta,\gamma,\delta)(x)=(\alpha,\beta^*,\gamma^*,\delta^*)$. If $\xi<\alpha$, let $\eta_\xi:=\min(C_\alpha\setminus\xi)$. Then there exists $\xi^*<\alpha$ such that, for every $\xi\in [\xi^*,\alpha)$,
\begin{enumerate}[(i)]
\item If $y\in S_3(\alpha,\beta,\gamma,\delta)$ and $y\subset x$, then $y\in S_3(\xi,\beta,\gamma,\delta)$ and
\[
\Tr_3(\pm,\xi,\beta,\gamma,\delta)(y)=\sub_1^\xi(\Tr_3(\pm,\alpha,\beta,\gamma,\delta)(y)).
\]
\item If $y\in S_3(\alpha,\beta,\gamma,\delta)$, $x\subset y$ and $y$ is spectacled, then $y\in S_3(\xi,\beta,\gamma,\delta)$ and
\[
\Tr_3(\pm,\xi,\beta,\gamma,\delta)(y):=\sub_{1,2}^{\xi,\eta_\xi}(\Tr_3(\pm,\alpha,\beta,\gamma,\delta)(y)).
\]
\item If $y\in S_3(\alpha,\beta,\gamma,\delta)$ is not spectacled, $x\subset y$ and $y$ is terminal, then $y\in S_3(\xi,\beta,\gamma,\delta)$ and
\[
\Tr_3(\pm,\xi,\beta,\gamma,\delta)(y)=\sub_1^\xi(\Tr_3(\pm,\alpha,\beta,\gamma,\delta)(y)).
\]
\end{enumerate}
\end{lemma}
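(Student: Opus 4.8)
The plan is to obtain (i) directly from Lemma \ref{7.7}, and to prove (ii) and (iii) together by induction on the nodes of $S_3(\alpha,\beta,\gamma,\delta)$ lying weakly above $x$, showing that the walk from $(\xi,\beta,\gamma,\delta)$ shadows the walk from $(\alpha,\beta,\gamma,\delta)$ step for step. For (i), note first that if some $y\subsetneq x$ were bad, then Lemma \ref{badImmediate} applied to $y$ would forbid $x$ from being bad; hence $x$ is the \emph{unique} bad node of $S_3(\alpha,\beta,\gamma,\delta)$. In particular, for every $y\in E_3(\alpha,\beta,\gamma,\delta)$ with $y\subsetneq x$ we have $\alpha\notin\acc(C_{(\Tr_3(\alpha,\beta,\gamma,\delta)(y))^0})$ (being in $E_3$ and accumulating would make $y$ bad), and since these clubs are closed and $\alpha$ is a limit, $\sup(\alpha\cap C_{(\cdot)^0})<\alpha$; therefore $\L_3(\alpha,\beta,\gamma,\delta)(x\restr(|x|-1))<\alpha$, and choosing $\xi^*$ above this ordinal, Lemma \ref{7.7} gives (i) for all $y\subset x$.

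For the base of the induction, write $(\alpha,\beta^*,\gamma^*,\delta^*)=\Tr_3(\alpha,\beta,\gamma,\delta)(x)$. Badness means $(\beta^*,\gamma^*,\delta^*)\in I(\C)$ and $\alpha\in\acc(C_{\beta^*\gamma^*\delta^*})$, so $\alpha\in X(\C)$ and, by coherence of $\C$, $C_{\beta^*\gamma^*\delta^*}\cap\alpha=C_\alpha$. Since $\alpha\in C_{\beta^*\gamma^*\delta^*}$, it follows that $\min(C_{\beta^*\gamma^*\delta^*}\setminus\xi)=\min(C_\alpha\setminus\xi)=\eta_\xi$ for every $\xi<\alpha$. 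By (i), for $\xi\in[\xi^*,\alpha)$ the $\xi$-walk reaches $x$ with label $(\xi,\beta^*,\gamma^*,\delta^*)$ and takes there a step of exactly the same shape as in the $\alpha$-walk (same longest terminal index $(\beta^*,\gamma^*,\delta^*)$, hence the same deletion indices), but now producing the three children $(\xi,\eta_\xi,\gamma^*,\delta^*),(\xi,\eta_\xi,\beta^*,\delta^*),(\xi,\eta_\xi,\beta^*,\gamma^*)$ in place of the $\alpha$-walk's spectacled children $(\alpha,\alpha,\gamma^*,\delta^*),(\alpha,\alpha,\beta^*,\delta^*),(\alpha,\alpha,\beta^*,\gamma^*)$; these labels are related by $\sub_{1,2}^{\xi,\eta_\xi}$, and the signs agree because the index data governing the new signs are identical in the two walks.

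For the inductive step, recall from Lemma \ref{alphaAlphaTerminal} that every non-terminal node of $S_3(\alpha,\beta,\gamma,\delta)$ strictly above $x$ is spectacled, say of the form $(\alpha,\alpha,\beta',\gamma')$, and the step there is of one of two kinds: (a) $(\alpha,\beta',\gamma')\notin I(\C)$ but $(\beta',\gamma')\in I(\C)$, in which case $\alpha\notin C_{\beta'\gamma'}$, so $\alpha\notin\acc(C_{\beta'\gamma'})$ and $\zeta_{\beta'\gamma'}:=\sup(C_{\beta'\gamma'}\cap\alpha)<\alpha$; or (b) $\beta'\notin C_{\gamma'}$, and the step walks the club $C_{\gamma'}$ from $\beta'$, with no reference to $\alpha$. (The remaining possibility $(\alpha,\beta',\gamma')\in I(\C)$ makes the node terminal and is already covered.) Granting inductively that the $\xi$-walk reaches this node with label $(\xi,\eta_\xi,\beta',\gamma')$: in case (a), provided $\xi>\zeta_{\beta'\gamma'}$ we get $\eta_\xi\notin C_{\beta'\gamma'}$, so the $\xi$-walk walks $C_{\beta'\gamma'}$ from its own second coordinate $\eta_\xi$ and $\min(C_{\beta'\gamma'}\setminus\eta_\xi)=\min(C_{\beta'\gamma'}\setminus\alpha)$, the same value used by the $\alpha$-walk; in case (b) the two steps literally agree outside the first two coordinates. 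Either way the step has the same deletion pattern in both walks, so the child deleting coordinate $1$ is a non-spectacled — hence, by Lemma \ref{alphaAlphaTerminal}, terminal — node related to its $\alpha$-counterpart by $\sub_1^\xi$, which is (iii), while the two other children are spectacled and related by $\sub_{1,2}^{\xi,\eta_\xi}$, which is (ii); the signs again match. Taking $\xi^*$ to be the maximum of $\L_3(\alpha,\beta,\gamma,\delta)(x\restr(|x|-1))$ together with the finitely many ordinals $\zeta_{\beta'\gamma'}$ arising at the (finitely many) non-terminal spectacled nodes above $x$ completes the argument.

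The point that needs care is the bookkeeping in the inductive step: one must verify at each non-terminal spectacled node that the two walks invoke the same club of $\C$ with the same cut-off, hence perform the same deletion with matching signs. This rests on the dichotomy (a)/(b) — the observation that if $\alpha\in C_{\beta'\gamma'}$ then $(\alpha,\beta',\gamma')\in I(\C)$ and the node is already terminal, so at non-terminal spectacled nodes $\alpha$ lies in a bounded gap of $C_{\beta'\gamma'}$, which is exactly what lets $\xi^*$ be chosen — and on the use of coherence at the bad node $x$, which is what forces the second coordinate of the $\xi$-walk's spectacled nodes to be $\eta_\xi$ rather than some uncontrolled ordinal. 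Note that we neither claim nor need that $S_3(\xi,\beta,\gamma,\delta)$ terminates where $S_3(\alpha,\beta,\gamma,\delta)$ does; the $\xi$-walk may continue below terminal nodes of the $\alpha$-walk, but this is harmless for the stated conclusions.
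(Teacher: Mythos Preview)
Your proof is correct and follows essentially the same approach as the paper: establish (i) via Lemma~\ref{7.7} after noting (via Lemma~\ref{badImmediate}) that no predecessor of $x$ is bad, then handle (ii) and (iii) by an induction on the nodes above $x$, using coherence at the bad node to identify $\min(C_{\beta^*\gamma^*\delta^*}\setminus\xi)$ with $\eta_\xi$, and the dichotomy (a)/(b) at non-terminal spectacled nodes to see that the $\alpha$- and $\xi$-walks invoke the same club with the same cutoff. The paper organizes the induction slightly differently---it first argues that (ii) implies (iii) and then proves (ii) alone---but the content is the same.

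One small imprecision: your conclusion ``hence $x$ is the \emph{unique} bad node of $S_3(\alpha,\beta,\gamma,\delta)$'' is not justified, since nothing rules out bad nodes on branches incomparable with $x$. But you never actually use uniqueness---only that no proper predecessor of $x$ is bad, which you argue correctly---so this does not affect the proof.
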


\begin{proof}
By assumption, $\alpha\in\acc(C_{\beta^*\gamma^*\delta^*})$, so by coherence $C_\alpha=C_{\beta^*\gamma^*\delta^*}\cap\alpha$.

By Lemma \ref{badImmediate}, no predecessor of $x$ is bad, hence
\[
\xi_0:=L_3(\alpha,\beta,\gamma,\delta)(x\restr (|x|-1))<\alpha.
\]
By Lemma \ref{7.7}, if $\xi\in [\xi_0,\alpha)$, then (i) holds.

The goal is to show that (ii) and (iii) hold for a tail of $\xi<\alpha$. In fact, if (ii) holds for a tail of $\xi<\alpha$, then so does (iii): indeed, fix $y\in S_3(\alpha,\beta,\gamma,\delta)$ as in (iii), say\footnote{In this paragraph, we omit any reference to the signs. The reader can check that these behave as in (iii), but writing them out would make the notation too cumbersome.} $\Tr_3(\alpha,\beta,\gamma,\delta)(y)=(\alpha,\beta',\gamma',\delta')$ with $\alpha<\beta'$. By Lemma \ref{alphaAlphaTerminal}, the immediate predecessor of $y$ in $S_3(\alpha,\beta,\gamma,\delta)$, call it $z$, was spectacled. Since $y$ is non-spectacled, we infer that  $\Tr_n(\alpha,\beta,\gamma,\delta)(z)=(\alpha,\alpha,\gamma',\delta')$. By (iii), $\Tr_n(\xi,\beta,\gamma,\delta)(z)=(\xi,\eta_\xi,\gamma',\delta')$. Since $z$ is not terminal, $\tau(\alpha,\alpha,\gamma',\delta')$ is a final segment of $ (\gamma',\delta')$. We will now assume that $\gamma'\in C_{\delta'}$ and leave the easier case $\gamma'\not\in C_{\delta'}$ to the reader. Note that $\tau(\alpha,\alpha,\gamma',\delta')=(\gamma',\delta')$. Fix $\bar\alpha<\alpha$ such that $[\bar\alpha,\alpha]\cap C_{\gamma'\delta'}=\emptyset$. If $\xi \in [\bar\alpha,\alpha)$, then $\eta_\xi\not\in C_{\gamma'\delta'}$, $\tau(\xi,\eta_\xi,\gamma',\delta')=\tau(\alpha,\alpha,\gamma',\delta')$ and
\[
\beta'=\min(C_{\tau(\alpha,\alpha,\gamma'\delta')}\setminus \alpha)=\min(C_{\tau(\xi,\eta_\xi,\gamma'\delta')}\setminus \eta_\xi).
\] 
It follows that, in stepping down from either $(\alpha,\alpha,\gamma',\delta')$ or $(\xi,\eta_\xi,\gamma',\delta')$, the same new ordinal is inserted. Moreover, when going from $z$ to $y$ in $\Tr_n(\alpha,\beta,\gamma,\delta)$, it is the second ordinal (namely, $\alpha$) that gets replaced with $\beta'$ (because $y$ is not spectacled), hence the same must be true in $\Tr_n(\xi,\beta,\gamma,\delta)$. Therefore
\[
\Tr_n(\xi,\beta,\gamma,\delta)(y)=(\xi,\eta_\xi,\beta',\gamma',\delta')^1=(\xi,\beta',\gamma',\delta')=\sub_1^\xi(\Tr_n(\alpha,\beta,\gamma,\delta)(y)),
\]
as desired.

We will be done if we can argue that (ii) holds. It clearly suffices to show that for every $y\in S_3(\alpha,\beta,\gamma,\delta)$ with $x\subset y$ and $y$ non-spectacled, there exists some $\xi_y<\alpha$ such that, for every $\xi \in [\xi_y,\alpha)$, $\Tr_3(\pm,\xi,\beta,\gamma,\delta)(y)=\sub_1^\xi(\Tr_3(\pm,\alpha,\beta,\gamma,\delta)(y))$. This will be proven by induction on $y$.

Our proof will be aided by a series of figures. In order to keep the clutter down, we have rendered these figures without indicating the signs of the nodes. Of course, the signs are very important, but the fact that they remain unchanged upon replacing $\alpha$ by $\xi$ will be immediate from the analysis, so we will not comment on it any further.

To start the induction, we look at the step down from $(\alpha,\beta^*,\gamma^*,\delta^*)$, noting that $\alpha=\min(C_{\beta^*\gamma^*\delta^*}\setminus\alpha)$, so that the situation is as in Figure \ref{first_step_bad}. Note that the node $(\alpha,\alpha,\gamma^*,\delta^*)$ is terminal, because $\alpha\in \acc(C_{\beta^*\gamma^*\delta^*})\subset C_{\gamma^*\delta^*}$ and therefore $C_{\alpha\gamma^*\delta^*}\setminus\alpha=\emptyset$.

Now let $\xi\in [\xi_0,\alpha)$. By coherence, $C_\alpha=C_{\beta^*\gamma^*\delta^*}\cap\alpha$, so $\min(C_{\beta^*\gamma^*\delta^*}\setminus \xi)=\eta_\xi$. Applying (i) and (ii), we have that the walk down from $(\xi,\beta,\gamma,\delta)$, up to the step down from $(\xi,\beta^*,\gamma^*,\delta^*)$, is as in Figure \ref{first_step_bad_xi}.

\begin{figure}[ht]
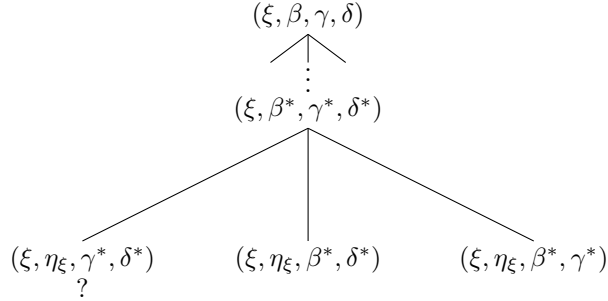

\ctikzfig{first_step_bad_xi}
\caption{The ? indicates that we do not know whether the node $(\xi,\eta_\xi,\gamma^*,\delta^*)$ is terminal. Compare with Figure \ref{first_step_bad}.}
\label{first_step_bad_xi}
\end{figure}

Comparing figures \ref{first_step_bad} and \ref{first_step_bad_xi}, we see that (ii) holds for 
the immediate successors of $x$.

For the inductive step, assume that (ii) holds for some spectacled $y\supseteq x$, say $\Tr_3(\alpha,\beta,\gamma,\delta)(y)=(\alpha,\alpha,\beta',\gamma')$. We will show that (ii) holds for the immediate successors of $y$. We must now embark on some case analysis. 

\begin{itemize}
\item Suppose that $\beta'\not\in C_{\gamma'}$ and set $\beta'':=\min(C_{\gamma'}\setminus \beta')$. The step down from $(\alpha,\alpha,\beta',\gamma')$ is then as in Figure \ref{second_step_new}.

\begin{figure}[ht]
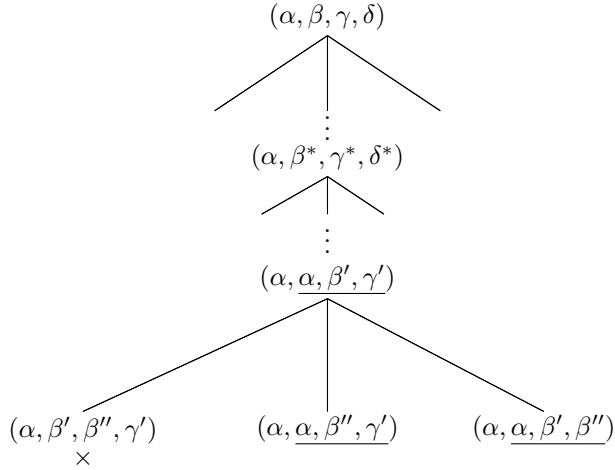

\ctikzfig{second_step_new}
\caption{Focusing on the underlined nodes, we see that our walk is ``simulating" a two-dimensional walk down from the node $(\alpha,\beta',\gamma')$, in the sense that we append an $\alpha$ to our labels and have a dummy (i.e. terminal) off-shoot. The fact that $(\alpha,\beta',\beta'',\gamma')$ is terminal follows from Lemma \ref{alphaAlphaTerminal}.}
\label{second_step_new}
\end{figure}
Now, if $\xi\in [\xi_0,\alpha)$, then the next step of the walk down from $(\xi,\eta_\xi,\beta',\gamma')$ is as in Figure \ref{second_step_new_xi}.

\begin{figure}[H]
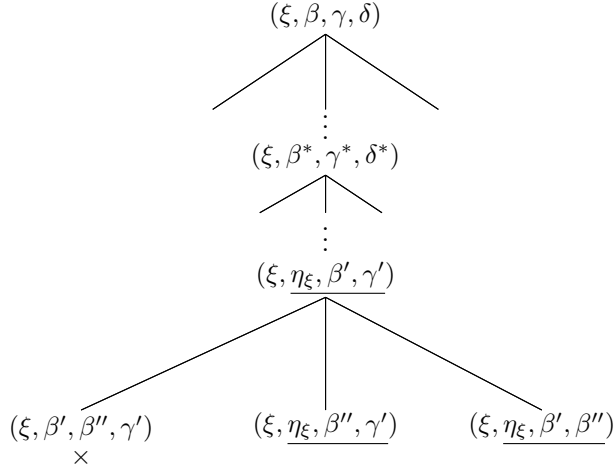

\ctikzfig{second_step_new_xi}
\caption{Again, the walk simulates a two-dimensional walk, this time down from the node $(\eta_\xi,\beta',\gamma')$. The node $(\xi,\beta',\beta'',\gamma')$ is terminal by Lemma \ref{alphaAlphaTerminal}.
Note also that the node $(\xi,\beta'',\beta',\gamma')$ showcases an instance of (iii).}
\label{second_step_new_xi}
\end{figure}

Comparing figures \ref{second_step_new} and \ref{second_step_new_xi}, we see that (ii) holds at the immediate successors of $y$.

\item Suppose now that $\beta'\in C_{\gamma'}$. If $\alpha\in C_{\beta'\gamma'}$, then $(\alpha,\alpha,\beta',\gamma')$ is terminal, and there is nothing to do. Suppose therefore that $\alpha\not\in C_{\beta'\gamma'}$. If $C_{\beta'\gamma'}\setminus\alpha=\emptyset$, then $y$ is again terminal, so we may assume that $C_{\beta'\gamma'}\setminus\alpha\neq\emptyset$ and therefore set $\alpha':=\min(C_{\beta'\gamma'}\setminus\alpha)$. The step down from $(\alpha,\alpha,\beta',\gamma')$ is now as in Figure \ref{second_step_new2}.

\begin{figure}[h]
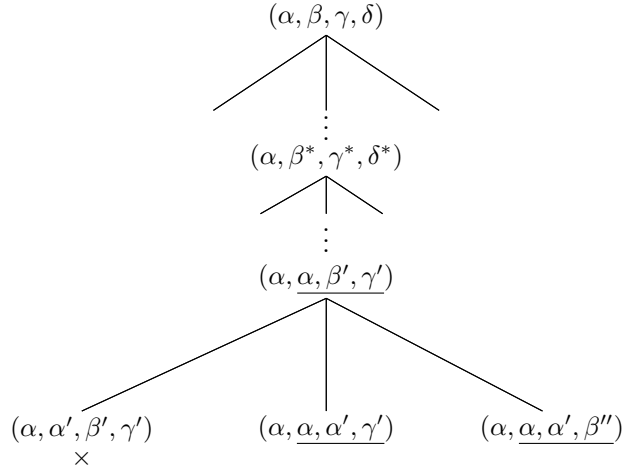

\ctikzfig{second_step_new2}
\caption{We are once again simulating a two-dimensional walk between the underlined nodes. The fact that $(\alpha,\alpha',\beta',\gamma')$ is terminal follows from Lemma \ref{alphaAlphaTerminal}.}
\label{second_step_new2}
\end{figure}

Let $\nu<\alpha$ be as in Lemma \ref{alphaAlphaTerminal}. Since $C_{\beta'\gamma'}$ is a club, there exists some $\xi_1\in [\max\{\nu,\xi_0\},\alpha)$ such that $[\xi_1,\alpha]\cap C_{\beta'\gamma'}=\emptyset$.  Let $\xi\in [\xi_1,\alpha)$. Then $\xi_1\le \xi \le \eta_\xi < \alpha$, so that $\eta_\xi\not\in C_{\beta'\gamma'}$ and $\min(C_{\beta'\gamma'}\setminus\eta_\xi)=\alpha'$. Therefore, the step down from $(\xi,\eta_\xi,\beta',\gamma')$ is as in Figure \ref{second_step_new_xi2}.

\begin{figure}[h]
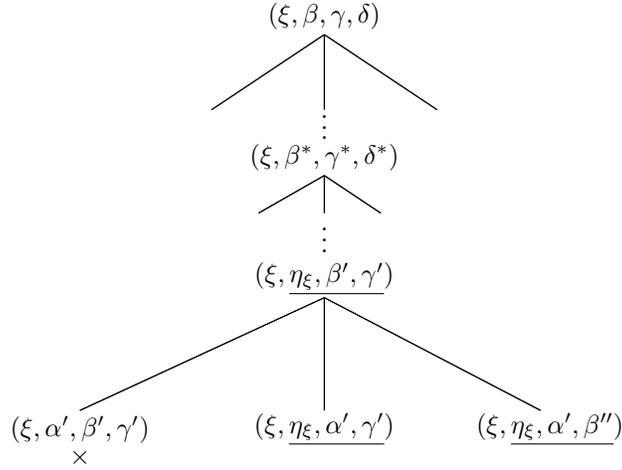

\ctikzfig{second_step_new_xi2}
\caption{Node $(\xi,\alpha',\beta',\gamma')$ is terminal by Lemma \ref{alphaAlphaTerminal}.}
\label{second_step_new_xi2}
\end{figure}
\end{itemize}
We are done proving (ii), and therefore the lemma.\end{proof}

\begin{lemma}\label{end_ext}
Let $\C$ be a coherent $3$-$C$-sequence on an ordinal $\lambda$. Let $\alpha\leq\beta<\gamma<\delta<\lambda$, with $\alpha$ a limit ordinal. Then there exists $\xi^*<\alpha$ such that, for every $\xi\in [\xi^*,\alpha]$, $S_3(\pm,\xi,\beta,\gamma,\delta)$ is an end-extension of $S_3(\pm,\alpha,\beta,\gamma,\delta)$ and, for all $x \in S_3(\alpha,\beta,\gamma,\delta)$, 
$\sigma_{\xi\beta\gamma\delta}(x) = \sigma_{\alpha\beta\gamma\delta}(x)$.
\end{lemma}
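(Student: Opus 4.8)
The plan is to compare, for ordinals $\xi<\alpha$ sufficiently close to $\alpha$, the walk down from $(\xi,\beta,\gamma,\delta)$ with the walk down from $(\alpha,\beta,\gamma,\delta)$ node by node, and to show that for such $\xi$ every $y\in S_3(\alpha,\beta,\gamma,\delta)$ still belongs to $S_3(\xi,\beta,\gamma,\delta)$, with $\Tr_3(\pm,\xi,\beta,\gamma,\delta)(y)$ obtained from $\Tr_3(\pm,\alpha,\beta,\gamma,\delta)(y)$ by modifying only its first two ordinal coordinates. Granting this, the sign of the label and its final two ordinal coordinates are untouched, so $\sgn_{\xi\beta\gamma\delta}(y)=\sgn_{\alpha\beta\gamma\delta}(y)$ and $\sigma_{\xi\beta\gamma\delta}(y)=\sigma_{\alpha\beta\gamma\delta}(y)$; combined with downward closure of the trees this gives the asserted end-extension $S_3(\pm,\alpha,\beta,\gamma,\delta)\subseteq S_3(\pm,\xi,\beta,\gamma,\delta)$. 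The case $\xi=\alpha$ being trivial, I would focus on $\xi<\alpha$.

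The ideal tool here is Lemma \ref{7.7}, which would yield $\Tr_3(\pm,\xi,\beta,\gamma,\delta)(y)=\sub_1^\xi(\Tr_3(\pm,\alpha,\beta,\gamma,\delta)(y))$ the moment $\xi$ exceeds $\L_3(\alpha,\beta,\gamma,\delta)$ evaluated at the immediate predecessor of $y$. The obstruction — and the reason Lemmas \ref{alphaAlphaTerminal}, \ref{badImmediate}, and \ref{badNodes} were proved first — is that $\max\bigl(\L_3(\alpha,\beta,\gamma,\delta)\bigr)$ can equal $\alpha$: if $x$ is \emph{bad} in the sense of Definition \ref{bad}, say $\Tr_3(\alpha,\beta,\gamma,\delta)(x)=(\alpha,\vec\beta)$, then $\alpha\in\acc(C_{\vec\beta})$, so $\sup(\alpha\cap C_{\vec\beta})=\alpha$ while $x\in E_3(\alpha,\beta,\gamma,\delta)$, whence $\L_3$ takes the value $\alpha$ at every node extending $x$. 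So I would first list the bad nodes $x_1,\dots,x_k$ of $S_3(\alpha,\beta,\gamma,\delta)$ — finitely many, as the walk is finite — and note by Lemma \ref{badImmediate} that they form an antichain, so at most one of them lies on the path from $\emptyset$ to any given node.

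The argument then splits according to whether a bad node meets the path to $y$. If no bad node does, I claim $\L_3(\alpha,\beta,\gamma,\delta)(y\restr(|y|-1))<\alpha$: any $z\in E_3(\alpha,\beta,\gamma,\delta)$ with $z\subsetneq y$ is non-terminal (it has a successor) and, by the choice of $y$, not bad, so writing $\Tr_3(\alpha,\beta,\gamma,\delta)(z)=(\alpha,\vec\beta)$ with $\vec\beta\in I(\C)$, the alternative $\alpha\in\acc^+(C_{\vec\beta})$ would force, by closure of the club $C_{\vec\beta}$, either $\alpha\in\acc(C_{\vec\beta})$ (so $z$ bad) or $C_{\vec\beta}\setminus\alpha=\emptyset$ (so $z$ terminal), both excluded; hence $\sup(\alpha\cap C_{\vec\beta})<\alpha$, and a maximum over the finitely many such $z$ bounds $\L_3$ at $y\restr(|y|-1)$ strictly below $\alpha$. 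Let $\xi_0^*<\alpha$ be the supremum of these bounds over all such $y$, so Lemma \ref{7.7} applies to each of them as soon as $\xi\geq\xi_0^*$. If instead the unique bad node $x_i$ lies on the path to $y$, then Lemma \ref{badNodes}(i) covers $y\subseteq x_i$, and for $y\supsetneq x_i$ Lemma \ref{alphaAlphaTerminal} forces $y$ to be spectacled or terminal, so Lemma \ref{badNodes}(ii),(iii) apply; in every case the $\xi$-label is $\sub_1^\xi(\cdot)$ or $\sub_{1,2}^{\xi,\eta_\xi}(\cdot)$ of the $\alpha$-label, and the statement holds once $\xi$ exceeds a threshold $\xi^*_i<\alpha$ (which also absorbs the ordinals $\nu$ furnished by Lemma \ref{alphaAlphaTerminal}). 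Taking $\xi^*=\max\{\xi_0^*,\xi^*_1,\dots,\xi^*_k\}<\alpha$ then does the job, since every node is handled by one of the two cases and, in each, $\Tr_3(\pm,\xi,\beta,\gamma,\delta)(y)$ differs from $\Tr_3(\pm,\alpha,\beta,\gamma,\delta)(y)$ only in its first two ordinal coordinates. I expect the only genuinely delicate part to be organizing this case split cleanly — in particular verifying, from Lemmas \ref{badImmediate} and \ref{alphaAlphaTerminal}, that the two cases are exhaustive and mutually exclusive — since the combinatorial content is already packaged in Lemmas \ref{7.7}, \ref{alphaAlphaTerminal}, and \ref{badNodes}.
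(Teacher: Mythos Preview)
Your proposal is correct and follows essentially the same approach as the paper: define thresholds coming from Lemma~\ref{badNodes} (one per bad node) and from the values of $\L_3$ that lie strictly below $\alpha$, take their maximum, and then invoke Lemmas~\ref{7.7} and~\ref{badNodes} to cover every node of $S_3(\alpha,\beta,\gamma,\delta)$. The paper's proof is considerably terser---it simply writes down $\xi_0=\max\{\xi_x^*:x\text{ bad}\}$ and $\xi_1=\max\{\L_3(\alpha,\beta,\gamma,\delta)(x):\L_3(\alpha,\beta,\gamma,\delta)(x)<\alpha\}$ and declares the result follows---whereas you have spelled out the case split and verified that the dichotomy ``bad node on the path / no bad node on the path'' is exhaustive and that in each case only the first two ordinal coordinates of the label change; this added detail is sound and matches what the paper leaves implicit.
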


\begin{proof}
For each bad node $x\in S_3(\alpha,\beta,\gamma,\delta)$, let $\xi^*_x<\alpha$ be a witness to Lemma \ref{badNodes}. Then let
\[
\xi_0:=\max\{\xi_x^*:x\in S_3(\alpha,\beta,\gamma,\delta)\text{ is bad}\}.
\]
Also, set
\[
\xi_1:=\max\{L_n(\alpha,\beta,\gamma,\delta)(x):x\in S_3(\alpha,\beta,\gamma,\delta) \wedge L_n(\alpha,\beta,\gamma,\delta)(x)<\alpha\}.
\]
Not let $\xi^*:=\max\{\xi_0,\xi_1\}$, so that $\xi^*<\alpha$. The conclusion now follows by combining Lemmas \ref{7.7} and \ref{badNodes}.
\end{proof}

The next lemma is little more than a summary of things we've already proven, but will nevertheless be quite convenient in the sequel.

\begin{lemma}\label{easyNodes}
Let $\C$ be a coherent $3$-$C$-sequence on an ordinal $\lambda$. Let $\alpha\leq\beta<\gamma<\delta<\lambda$, with $\alpha$ a limit ordinal. Let $x\in S_3(\alpha,\beta,\gamma,\delta)$ be terminal with $\Tr_3(\pm,\alpha,\beta,\gamma,\delta)(x)=((-1)^k,\alpha,\beta',\gamma',\delta')$ and $\alpha<\beta'$ (i.e., $x$ is not spectacled). Then there exists $\xi^*<\alpha$ such that, for all $\xi\in [\xi^*,\alpha)$,
\[
\Tr_3(\pm,\xi,\beta,\gamma,\delta)(x)=((-1)^k,\xi,\beta',\gamma',\delta').
\]
\end{lemma}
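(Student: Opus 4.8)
Lemma \ref{easyNodes} is essentially a distillation of the three preceding lemmas, so the plan is to trace how a terminal, non-spectacled node $x$ is reached along the walk down from $(\alpha,\beta,\gamma,\delta)$ and check that replacing $\alpha$ by a sufficiently large $\xi<\alpha$ in coordinate $1$ reproduces the same terminal node, with the same sign.

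\medskip

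The plan is as follows. First I would dispose of the trivial case: if no bad node lies below $x$ (including the possibility that $x$ itself is terminal and no proper initial segment of $x$ is bad), then $L_3(\alpha,\beta,\gamma,\delta)(x\restr(|x|-1))<\alpha$ by Lemma \ref{badImmediate}, and the conclusion follows immediately from Lemma \ref{7.7} — taking $\xi^*$ to be this lower-trace value, for all $\xi\in[\xi^*,\alpha)$ we get $\Tr_3(\pm,\xi,\beta,\gamma,\delta)(x\restr j)=\sub_1^\xi(\Tr_3(\pm,\alpha,\beta,\gamma,\delta)(x\restr j))$ for every $j\le|x|$, which gives exactly what we want with the sign $(-1)^k$ unchanged.

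\medskip

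The main case is when there is a bad node along the way to $x$. By Lemma \ref{badImmediate}, there is a \emph{unique} maximal bad node $z_0\subseteq x$ (no $y\supsetneq z_0$ is bad, and if $z_1\subsetneq z_0$ were also bad this would contradict Lemma \ref{badImmediate} applied to $z_1$), so I may fix the bad node $z_0\subseteq x$. Now apply Lemma \ref{badNodes} to $z_0$, obtaining a witness $\xi^*_{z_0}<\alpha$; I would set $\xi^*:=\xi^*_{z_0}$ (or its maximum with the lower-trace bound $\xi_1$ from the proof of Lemma \ref{end_ext}, to be safe). The point is that $x$ is terminal, non-spectacled, and lies above the bad node $z_0$, so $x$ falls precisely under case (iii) of Lemma \ref{badNodes}: for every $\xi\in[\xi^*,\alpha)$ we have $x\in S_3(\xi,\beta,\gamma,\delta)$ and
\[
\Tr_3(\pm,\xi,\beta,\gamma,\delta)(x)=\sub_1^\xi(\Tr_3(\pm,\alpha,\beta,\gamma,\delta)(x))=((-1)^k,\xi,\beta',\gamma',\delta'),
\]
which is the asserted equality, signs included (recall that Lemma \ref{badNodes} preserves signs, as noted in its proof).

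\medskip

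I expect the only real subtlety to be bookkeeping: confirming that the dichotomy ``$x$ lies below a bad node'' versus ``$x$ lies below no bad node'' is exhaustive and that in the first case $x$ genuinely satisfies the hypotheses of clause (iii) rather than (i) or (ii). Clause (ii) is excluded because $x$ is non-spectacled by hypothesis; clause (i) concerns nodes $y\subseteq z_0$, whereas here $z_0\subseteq x$ and $z_0\ne x$ (a bad node has identical-first-two-coordinate, i.e.\ spectacled, immediate successors by Lemma \ref{alphaAlphaTerminal}, whereas $x$ is terminal and non-spectacled, so $x$ is a proper extension of $z_0$). Hence (iii) applies, and there is nothing further to do. This is genuinely just a matter of assembling Lemmas \ref{7.7}, \ref{badImmediate}, and \ref{badNodes}, which is why the statement is billed as ``little more than a summary''.
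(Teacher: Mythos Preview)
Your proposal is correct and follows essentially the same two-case split as the paper: if $x$ descends from no bad node, apply Lemma~\ref{7.7}; if it does, apply Lemma~\ref{badNodes}. Your citation of clause~(iii) of Lemma~\ref{badNodes} is in fact the right one for a terminal non-spectacled node---the paper's reference to clause~(ii) appears to be a typo, since (ii) handles spectacled nodes and would give $\sub_{1,2}^{\xi,\eta_\xi}$ rather than $\sub_1^\xi$. The extra bookkeeping you supply (uniqueness of the bad ancestor via Lemma~\ref{badImmediate}, and the check that $z_0\subsetneq x$ because a bad node is never terminal) is correct, though the attribution of the latter to Lemma~\ref{alphaAlphaTerminal} is slightly off: it really follows directly from the walk definition, as in Figure~\ref{first_step_bad}.
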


\begin{proof}
If $x$ descends from a bad node in $\Tr_3(\pm,\alpha,\beta,\gamma,\delta)$, then the conclusion follows from Lemma \ref{badNodes}(ii). If $x$ does not descend from any bad nodes, then the conclusion follows from Lemma \ref{7.7}.
\end{proof}

Recall that, if $\gamma$ is an ordinal, $f$ is a function with domain $\gamma$, and 
$\alpha \leq \gamma$ is a limit ordinal,  we say that $f$ is \emph{locally semi-constant at $\alpha$} 
iff there exists $\xi_0<\alpha$ such that $f\restr [\xi_0,\alpha)$ is constant. We say 
that $f$ is locally semi-constant if it is locally semi-constant at every limit ordinal
$\alpha \leq \gamma$.

\begin{theorem}\label{main3}
Let $\C$ be a coherent $3$-$C$-sequence on an infinite ordinal $\lambda$. Then the family
\[
\Phi(\resh_3):=\left\langle\resh_3(\cdot,\vec\gamma):\gamma_0 \ra \bigoplus_{[\lambda]^{2}} \bb{Z} \ \middle| \ \vec\gamma\in[\lambda]^3\right\rangle
\]
is $3$-coherent modulo locally semi-constant functions, i.e., for every $\vec\gamma\in[\lambda]^{4}$,
\begin{equation}\label{sumRho}
\sum_{i=0}^3(-1)^i\resh_3(\cdot,\vec\gamma^i)
\end{equation}
is locally semi-constant at every limit ordinal $\alpha\le\gamma_0$.
\end{theorem}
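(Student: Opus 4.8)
The plan is to fix $\vec\gamma\in[\lambda]^4$ and a limit ordinal $\alpha\le\gamma_0$, and to produce $\xi^*<\alpha$ such that the function in \eqref{sumRho} is constant on $[\xi^*,\alpha)$. Unwinding Definition \ref{rho2}, the value of \eqref{sumRho} at an argument $\zeta<\alpha$ is the signed sum $\sum_{i=0}^3(-1)^i\sum_{x\in S_3(\zeta,\vec\gamma^i)}\sigma_{\zeta\vec\gamma^i}(x)$ of $\sigma$-labels over the four walk trees $S_3(\zeta,\vec\gamma^i)$, $i\le 3$ (all of these are defined, since $\alpha\le\gamma_0\le(\vec\gamma^i)_0$ for $i\ge 1$ and $\alpha\le\gamma_0\le\gamma_1=(\vec\gamma^0)_0$).

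First I would apply Lemma \ref{end_ext} to each of the four triples $\vec\gamma^0,\vec\gamma^1,\vec\gamma^2,\vec\gamma^3\in[\lambda]^3$, obtaining a single $\xi_0<\alpha$ such that, for every $\xi\in[\xi_0,\alpha)$ and every $i\le 3$, the tree $S_3(\xi,\vec\gamma^i)$ is an end-extension of $S_3(\alpha,\vec\gamma^i)$ and $\sigma_{\xi\vec\gamma^i}(x)=\sigma_{\alpha\vec\gamma^i}(x)$ for all $x\in S_3(\alpha,\vec\gamma^i)$. Writing $c:=\sum_{i=0}^3(-1)^i\resh_3(\alpha,\vec\gamma^i)$ (a constant independent of $\xi$), the value of \eqref{sumRho} at such a $\xi$ is then $c+\sum_{i=0}^3(-1)^i R_i(\xi)$, where $R_i(\xi):=\sum\{\sigma_{\xi\vec\gamma^i}(x)\mid x\in S_3(\xi,\vec\gamma^i)\setminus S_3(\alpha,\vec\gamma^i)\}$ collects the contributions of the nodes that are \emph{new} at level $\xi$. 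It therefore suffices to prove that $\sum_{i=0}^3(-1)^i R_i(\xi)$ is constant — and I expect it in fact to vanish — for $\xi$ in a tail below $\alpha$.

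The next step is to describe the new nodes. A node is new in $S_3(\xi,\vec\gamma^i)$ exactly when it lies strictly below some terminal node $z$ of $S_3(\alpha,\vec\gamma^i)$ that becomes non-terminal once $\alpha$ is replaced by $\xi$ in coordinate $1$. Lemma \ref{easyNodes}, together with the case analysis in the proof of Lemma \ref{badNodes} (and Lemmas \ref{alphaAlphaTerminal} and \ref{badImmediate}), shows that — after finitely many increases of $\xi_0$ — such $z$ come in only two flavours: non-spectacled terminal nodes whose label has the form $((-1)^k,\alpha,\beta',\gamma',\delta')$ with $(\beta',\gamma',\delta')\in I(\C)$ and $C_{\beta'\gamma'\delta'}\setminus\alpha=\emptyset$ but $C_{\beta'\gamma'\delta'}\setminus\xi\ne\emptyset$; and spectacled terminal nodes $(\alpha,\alpha,\beta',\gamma')$, whose label becomes $(\xi,\eta_\xi,\beta',\gamma')$ with $\eta_\xi=\min(C_\alpha\setminus\xi)$. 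In either case, the figures accompanying the proof of Lemma \ref{badNodes} exhibit the fragment of $S_3(\xi,\vec\gamma^i)$ hanging below $z$ as a relabelling of a two-dimensional walk (together with inert terminal off-shoots, which contribute nothing to $\sigma$), and the relabelling only alters the first two ordinal coordinates — precisely the ones discarded in forming $\sigma$. Moreover the labels and signs of the nodes $z$ themselves do not depend on $\xi$, by Lemma \ref{end_ext}.

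Finally I would prove the cancellation $\sum_{i=0}^3(-1)^i R_i(\xi)=0$. The mechanism is that the degenerate terminal nodes $z$ of the four trees $S_3(\alpha,\vec\gamma^i)$ — recorded with their signs, which by the previous paragraph is the only datum affecting the $\sigma$-contribution of the sub-walk each spawns — match up in sign-reversing pairs across the four values of $i$, and this matching transfers to the two-dimensional off-shoot walks, so that off-shoots of paired terminals are copies of one another with opposite signs and cancel term-by-term inside $\sum_{i=0}^3(-1)^i R_i(\xi)$. I would obtain the required matching from Lemma \ref{7.12}: applying its cancelling partition of $\bigsqcup_{i\le 3}\bS_3(\zeta,\vec\gamma^i)$ at the two base points $\zeta=\alpha$ and $\zeta=\xi$ and comparing the two partitions along the end-extension supplied by Lemma \ref{end_ext}, one sees that the pairs lying inside the common part $S_3(\alpha,\vec\gamma^i)$ coincide, so that the only discrepancy between the level-$\alpha$ and level-$\xi$ alternating sums is the alternating sum of the new-node contributions — which the partition at level $\xi$ then pairs off among itself. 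I expect this last step — tracking the four walks simultaneously and checking that the pairing of degenerate terminal nodes is compatible with the pairing of the two-dimensional sub-walks they generate — to be the main obstacle; everything preceding it is an orchestration of the lemmas already established.
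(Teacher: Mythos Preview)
Your setup is correct and mirrors the paper: fix $\vec\gamma$ and $\alpha$, apply Lemma~\ref{end_ext}, split off the constant part, and reduce to the contributions hanging below the terminal nodes of the $S_3(\alpha,\vec\gamma^i)$, paired via Lemma~\ref{7.12}. The gap is in your treatment of spectacled terminals and the expectation that $\sum_i(-1)^iR_i(\xi)=0$.

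You assert that a spectacled terminal $(\alpha,\alpha,\beta',\gamma')$ always has $\xi$-image $(\xi,\eta_\xi,\beta',\gamma')$. This is only true when the node descends from a bad node (Lemma~\ref{badNodes}(ii)); if it does \emph{not} descend from a bad node, Lemma~\ref{7.7} gives the $\xi$-image $(\xi,\alpha,\beta',\gamma')$ instead. Since a pair $(+,\alpha,\alpha,\beta',\gamma')$, $(-,\alpha,\alpha,\beta',\gamma')$ produced by Lemma~\ref{7.12} lives in two \emph{different} walks $S_3(\alpha,\vec\gamma^{i(t)})$ and $S_3(\alpha,\vec\gamma^{i(t_-)})$, one of the pair can be ``bad'' while the other is ``good''. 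In that mixed case the sub-walks hanging below the two $\xi$-images are \emph{not} sign-reversed copies of one another, so your cancellation mechanism fails. The paper devotes most of its proof to exactly this case: an explicit analysis (splitting on whether $\alpha\in C_{\beta'\gamma'}$, whether $\alpha\in\acc(C_{\beta'\gamma'})$, and invoking a simulated one-dimensional walk inside the good branch) shows that the combined contribution is eventually \emph{constant} in $\xi$---not zero. Thus $\sum_i(-1)^iR_i(\xi)$ is eventually constant but need not vanish, and your appeal to Lemma~\ref{7.12} at level $\xi$ cannot substitute for this case analysis.
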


\begin{notation}\label{xiImage}
Suppose that $x\in S(\alpha,\vec\gamma)$ and $\Tr_n(\pm,\alpha,\vec\gamma)(x)=((-1)^m,\alpha,\vec\beta)$. We will refer to $\Tr_n(\pm,\xi,\vec\gamma)(x)$ as the \emph{$\xi$-image of $((-1)^m,\alpha,\vec\beta)$} whenever this makes sense. In the proof that follows, we will have multiple walks taking place simultaneously (because of \eqref{sumRho}), so the $\xi$-image terminology will allow us to remain agnostic as to which of these walks the node we're looking at is descended from. This ambiguity will not compromise the argument, but it will reduce the clutter of symbols.
\end{notation}

\begin{proof}[Proof of Theorem \ref{main3}]

Let us fix $\vec\gamma\in [\lambda]^4$ and $\alpha\leq\gamma_0$ a limit ordinal. Let $f:\gamma_0\to\bigoplus_{[\lambda]^2}\Z$ be the function defined by setting
\begin{equation}\label{f}
f(\xi):=\sum_{i\le 3}\resh(\xi,\vec\gamma^i)
\end{equation}
for all $\xi < \gamma_0$. We want to argue that $f$ is locally semi-constant at $\alpha$.

Let $\xi^*<\alpha$ simultaneously satisfy the conclusion of Lemma \ref{end_ext} for the tuple 
$(\alpha,\vec{\gamma}^i)$ for all $i \leq 3$. If $i\le 3$ and $\xi\in [\xi^*,\alpha)$, then
\begin{equation}\label{rhosplit}
\resh_3(\xi,\vec\gamma^i)=\sum_{x\in S_3(\xi,\vec\gamma^i)}\sigma_{\xi\vec\gamma^i}(x)=\sum_{x\in S_3(\alpha,\vec\gamma^i)}\sigma_{\alpha\vec\gamma^i}(x)+\sum_{t\in \bS_3(\alpha,\vec\gamma^i)}\sum_{x\supsetneq t}\sigma_{\xi\vec\gamma^i}(x).
\end{equation}

Multiplying \eqref{rhosplit} by $(-1)^i$, summing over $i\le 3$ and applying Lemma \ref{7.12}, we obtain
\begin{multline}\label{rhosum}
\sum_{i\le 3}(-1)^i\resh_3(\xi,\vec\gamma^i)= \\ \left[\sum_{i\le 3}(-1)^i\resh_3(\alpha,\vec\gamma^i)\right]+\sum_{t\in Z}\underbrace{\left[\sum_{x\supsetneq t}(-1)^{i(t)}\sigma_{\xi\vec\gamma^{i(t)}}(x)+\sum_{x\supset t_-}(-1)^{i(t_-)}\sigma_{\xi\vec\gamma^{i(t_-)}}(x)\right]}_{g_t(\xi)}
\end{multline}
for some finite set $Z$. Now, the quantity between the first pair of square brackets does not depend on $\xi$, hence can be ignored. We aim to show that, for each $t\in Z$, there is some $\xi_t<\alpha$ so that $g_t\restr [\xi_t,\alpha)$ is constant. Then, if $\hat{\xi}:=\max\{\xi_t:t\in Z\}$, we will have 
shown that $f \restriction (\hat{\xi},\alpha)$ is constant, completing the proof.

Suppose now that we have fixed $t\in Z$. Going forward, we will almost exclusively not be working with $t$ and $t_-$, but rather with their labels. Let us suppose, then, that 
$\Tr_3((-1)^{i(t)},\alpha,\vec{\gamma}^{i(t)})(t) = (+,\alpha,\beta',\gamma',\delta')$ and 
$\Tr_3((-1)^{i(t_-)},\alpha,\vec{\gamma}^{i(t_-)})(t_-) = (-,\alpha,\beta',\gamma',\delta')$ (the case 
in which the signs are reversed is symmetric). Recall that both are terminal nodes in their 
respective walks. The non-spectacled case (i.e. $\alpha<\beta'$) is easy and can be dispatched quickly. Indeed, by Lemma \ref{easyNodes}, there exists $\xi_t<\alpha$ such that, for all $\xi \in 
[\xi_t,\alpha)$, we have $\Tr_3((-1)^{i(t)},\xi,\vec{\gamma}^{i(t)})(t) = (+,\xi,\beta',\gamma',
\delta')$ and $\Tr_3((-1)^{i(t_-)},\xi,\vec{\gamma}^{i(t_-)})(t) = (-,\xi,\beta',\gamma',\delta')$. 
It follows that, for all $x \supseteq t$, we have $x \in S_3(\xi,\vec{\gamma}^{i(t)})$ if and only 
if $x \in S_3(\xi,\vec{\gamma}^{i(t_-)})$ and, for all such $x$ in $S_3(\xi,\vec{\gamma}^{i(t)})$, 
we have
\[
  \Tr_3((-1)^{i(t)},\xi,\vec{\gamma}^{i(t)})(x) = - \Tr_3((-1)^{i(t_-)},\xi,\vec{\gamma}^{i(t_-)})(x).
\]
The relevant expressions in $\resh_3$ then cancel out in the computation of $g_t(\xi)$, that is to say, $g_t(\xi)=0$ for all $\xi\in [\xi_t,\alpha)$.

We are therefore left with studying nodes of the form $(\alpha,\alpha,\beta',\gamma')$ which are terminal in $S_3(\alpha,\vec\gamma^i)$ for some $i\le 3$. Our strategy is now as follows:

\begin{enumerate}[(a)]
\item Each terminal $(+,\alpha,\alpha,\beta',\gamma')$ is of one of two types: \textit{good} and \textit{bad}, with the bad case falling under the domain of Lemma \ref{badNodes}.
\item We then consider three cases: both nodes good, both bad, one good and one bad.
\item Argue that, in each case, the function $g_t$ is locally semi-constant at $\alpha$.
\end{enumerate}

By relabeling certain ordinals, suppose that $\Tr_3((-1)^{i(t)},\alpha,\vec{\gamma}^{i(t)})(t) = (+,\alpha,\alpha,\beta',\gamma')$ and 
$\Tr_3((-1)^{i(t_-)},\alpha,\vec{\gamma}^{i(t_-)})(t_-) = (-,\alpha,\alpha,\beta',\gamma')$. We will now case on whether they descend from bad nodes (simultaneously or not). If neither of $(+,\alpha,\alpha,\beta',\gamma')$ and $(-,\alpha,\alpha,\beta',\gamma')$ descend from a bad node, then Lemma \ref{7.7} implies that, for all large enough $\xi<\alpha$, the $\xi$-images of $(+,\alpha,\alpha,\beta',\gamma')$ and $(-,\alpha,\alpha,\beta',\gamma')$ are $(+,\xi,\alpha,\beta',\gamma')$ and $(-,\xi,\alpha,\beta',\gamma')$, respectively. Therefore, the cones\footnote{i.e. where the cone at a node is the set of descendants of that node in the walk being performed.} beneath these nodes must cancel out in the computation of $f$. If instead both $(+,\alpha,\alpha,\beta',\gamma')$ and $(-,\alpha,\alpha,\beta',\gamma')$ descend from bad nodes, then Lemma \ref{badNodes} implies that, for all large enough $\xi<\alpha$, the $\xi$-images of $(+,\alpha,\alpha,\beta',\gamma')$ and $(-,\alpha,\alpha,\beta',\gamma')$ are $(+,\xi,\eta_\xi,\beta',\gamma')$ and $(-,\xi,\eta_\xi,\beta',\gamma')$ respectively, where $\eta_\xi=\min(C_\alpha\setminus\xi)$. Once again, the cones underneath these nodes cancel out.

Suppose now that $(+,\alpha,\alpha,\beta',\gamma')$ descends from a bad node $(\pm,\alpha,\beta^*,\gamma^*,\delta^*)$, while $(-,\alpha,\alpha,\beta',\gamma')$ descends from no bad node (the reverse case is symmetric). For all large enough $\xi<\alpha$, the situation is summarized by Figures \ref{alpha_xi_sides} and \ref{goodFig}; note in particular that the $\xi$-images of $(+,\alpha,\alpha,\beta',\gamma')$ and $(-,\alpha,\alpha,\beta',\gamma')$ are $(+,\xi,\eta_\xi,\beta',\gamma')$ and $(-,\xi,\alpha,\beta',\gamma')$, respectively.

\begin{figure}[ht]
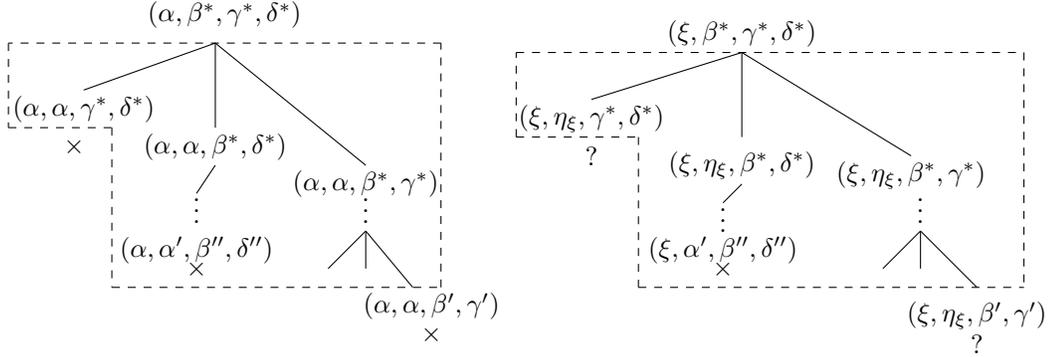

\tikzfig{alpha_xi_sides}
\caption{The bad case. By Lemma \ref{badNodes}, the parts of $S_3(\alpha,\beta^*,\gamma^*,\delta^*)$ and $S_3(\xi,\beta^*,\gamma^*,\delta^*)$ that lie within the dashed lines are the same.}
\label{alpha_xi_sides}
\end{figure}

\begin{figure}[ht]
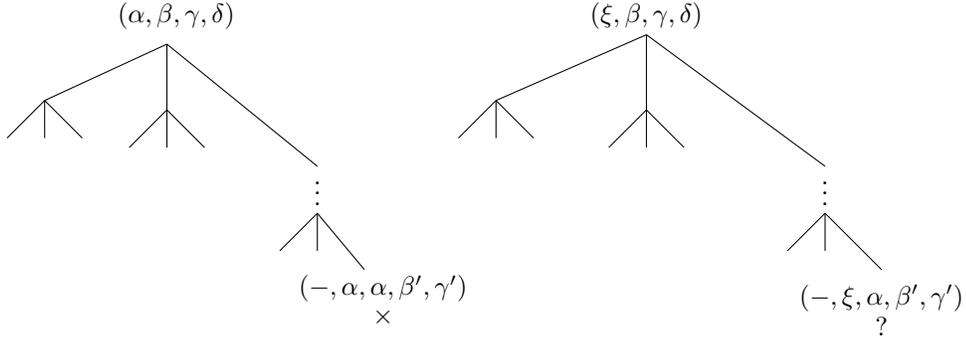

\tikzfig{good}
\caption{The good case. We have assumed for definiteness that the starting node is $(\alpha,\beta,\gamma,\delta)$. Here, the left and the right trees are the same, by Lemma \ref{7.7}.}%
\label{goodFig}
\end{figure}

Our aim is to compare what happens underneath the nodes $(+,\xi,\eta_\xi,\beta',\gamma')$ and $(-,\xi,\alpha,\beta',\gamma')$, and argue that the contributions to the value of $g_t(\xi)$ do not depend on $\xi$ for large enough $\xi<\alpha$. We will argue this in two, combined, ways: first, by identifying pairs of nodes with opposite-signed labels, which therefore cancel out in the computation of $g_t(\xi)$, and, second, by arguing that the signed trees under the remaining nodes, and the values of the $\sigma_{\xi\vec\gamma^i}$, for $i\in \{i(t),i(t_-)\}$ on said trees, do not depend on the value of $\xi$ (for all large enough $\xi<\alpha$). This will imply that the values they contribute to $g_t(\xi)$ is constant for all large enough $\xi<\alpha$.

Our analysis will require some case analysis. To guide us, let us first explore what it means for $(+,\alpha,\alpha,\beta',\gamma')$ to be terminal:

\begin{equation*}
(+,\alpha,\alpha,\beta',\gamma') \text{ is terminal } \iff \begin{cases}
\hspace{1cm}\beta'\not\in C_{\gamma'}\wedge C_{\gamma'}\setminus \beta'=\emptyset\\
\hspace{2.5cm} \text{ or }\\
\beta'\in C_{\gamma'}\wedge \alpha\not\in C_{\beta'\gamma'}\wedge C_{\beta'\gamma'}\setminus \alpha=\emptyset\\
\hspace{2.5cm}\text{ or }\\
\hspace{1cm}\beta'\in C_{\gamma'}\wedge \alpha\in C_{\beta'\gamma'}.
\end{cases}
\end{equation*}
However, the first alternative is impossible, for if $\beta'<\gamma'$, then $C_{\gamma'}\setminus\beta'\neq\emptyset$ because $C_{\gamma'}$ is cofinal in $\gamma'$. We are thus left with two possibilities.
\begin{itemize}
\item Suppose first that $\beta'\in C_{\gamma'}$ and $\alpha\in C_{\beta'\gamma'}$. This will require a further case analysis, depending on whether $\alpha\in\acc(C_{\beta'\gamma'})$
\begin{itemize}
\item Suppose that $\alpha\in\acc(C_{\beta'\gamma'})$. By coherence, $C_\alpha=C_{\alpha\beta'\gamma'}$. %
The step down from the node $(-,\xi,\alpha,\beta',\gamma')$ (consult Figure \ref{goodFig}) is therefore as displayed in Figure \ref{good_step}.

\begin{figure}[ht]
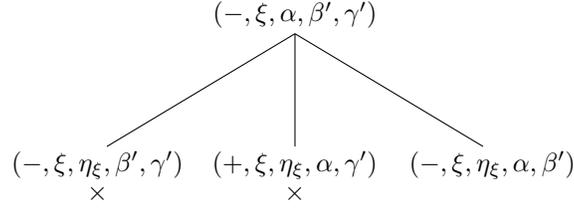

\tikzfig{good_step2}
\caption{The good case. One can argue that the node $(-,\xi,\eta_\xi,\beta',\gamma')$ is terminal, but this isn't needed, see below.}
\label{good_step}
\end{figure}

Comparing Figure \ref{alpha_xi_sides} and Figure \ref{good_step}, we see that the nodes $(+,\xi,\eta_\xi,\beta',\gamma')$ and $(-,\xi,\eta_\xi,\beta',\gamma')$ cancel out, and so their net contribution to $g_t(\xi)$ is $0$. Also, as $\alpha\in \acc(C_{\beta'\gamma'})\subset \acc(C_{\gamma'})$, by coherence $C_{\alpha\gamma'}=C_\alpha$ and so $C_{\eta_\xi\alpha\gamma'}\setminus\xi=\emptyset$, justifying that $(+,\xi,\eta_\xi,\alpha,\gamma'$) is terminal. The key point is that this is independent of the value of $\xi$, so the contribution of this node to $g_t(\xi)$ is constant for large enough $\xi<\alpha$. To deal with the node $(-\xi,\eta_\xi,\alpha,\beta')$, we must split into cases once more. First, suppose that $\alpha\in\acc(C_{\beta'})$. Then $C_{\alpha\beta'}=C_\alpha$ by coherence and so $C_{\eta_\xi\alpha\beta'}\setminus\xi=\emptyset$ by the choice of $\eta_\xi$, hence $(-\xi,\eta_\xi,\alpha,\beta')$ is terminal. If instead $\alpha\not\in\acc(C_{\beta'})$, let $\bar\alpha:=\max(\alpha\cap C_{\beta'})$. If $\alpha\in C_{\beta'}$, then whenever $\bar\alpha<\xi<\alpha$ we have $C_{\alpha\beta'}\setminus \eta_\xi=\emptyset$, hence $(-\xi,\eta_\xi,\alpha,\beta')$ is terminal. If instead $\alpha\not\in C_{\beta'}$, then $(-\xi,\eta_\xi,\alpha,\beta')$ will simulate a one-dimensional walk from $\beta'$ down to $\alpha$ as follows. First, let $\alpha=\beta_m<\beta_{m-1}<\dots<\beta_0=\beta'$ be the one-dimensional walk from $\beta'$ down to $\alpha$ using the $C$-sequence $\seq{C_\nu:\nu<\kappa}$. For $i<n$, we have that $\alpha\not\in C_{\beta_i}$, hence $\sup(\alpha\cap C_{\beta_i})<\alpha$. Set $\epsilon:=\max_{i<m-1}\sup(\alpha\cap C_{\beta_i})$, so that $\epsilon<\alpha$. The situation can be seen in Figures \ref{1dwalk} and \ref{1dstep}.

\begin{figure}[ht]
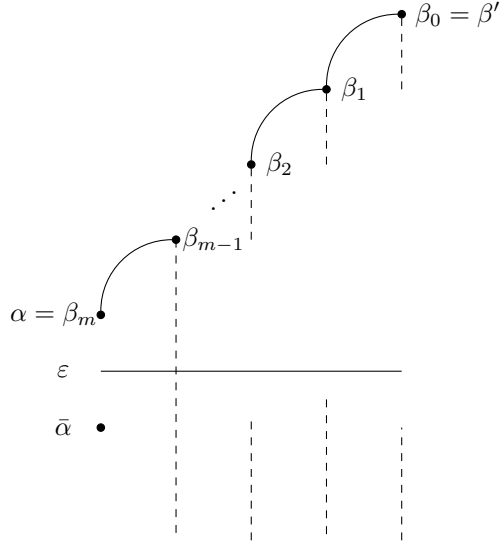

\tikzfig{1dwalk}
\caption{The one dimensional walk between $\beta'$ and $\alpha$ and all the ordinals involved. The dashed lines represent the relevant clubs, and the absence thereof indicates that the club in question is disjoint from the interval at hand.}
\label{1dwalk}
\end{figure}

\begin{figure}[ht]
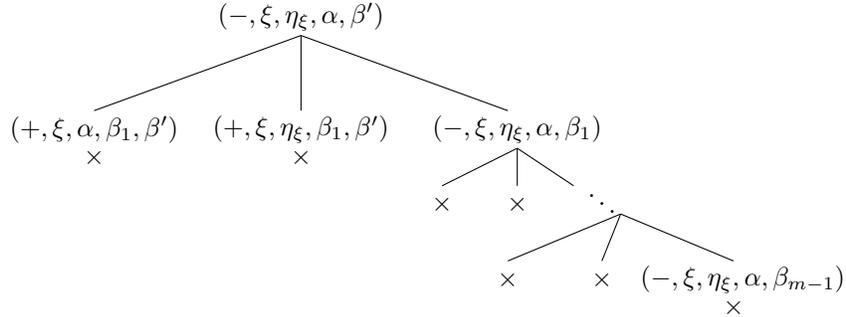

\tikzfig{1dstep}
\caption{The walk from $(\xi,\eta_\xi,\alpha,\beta')$. }
\label{1dstep}
\end{figure}

In Figure \ref{1dstep}, for $\xi \in (\varepsilon,\alpha)$, the node $(\xi,\alpha,\beta_1,\beta')$ is terminal, because $\beta_1=\min(C_\beta'\setminus\alpha)$ and hence $C_{\beta_1\beta'}\setminus\alpha=\emptyset$. The node $(\xi,\eta_\xi,\beta_1,\beta')$ must then also terminal, since $\alpha\not\in C_{\beta'}$ and so $C_{\beta'}\cap [\eta_\xi,\beta_1)=\emptyset$ for large enough $\xi<\alpha$. The remaining node, $(\xi,\eta_\xi,\alpha,\beta_1)$, is obtained from the original node $(\xi,\eta_\xi,\alpha,\beta')$ by performing a one-dimensional step from $\beta'$ down to $\alpha$ and replacing $\beta'$ with the next step on that lower walk. 

Studying Figure \ref{1dstep}, we see that the entire walk down from $(\xi,\eta_\xi,\alpha,\beta')$ is a simulacrum of the one-dimensional walk from $\beta'$ down to $\alpha$, with two dummy off shoots at each step. Finally, the node $(\xi,\eta_\xi,\alpha,\beta_{m-1})$ is always terminal, for large enough $\xi$: if $\alpha\not\in\acc(C_{\beta_{m-1}})$, then $C_{\alpha\beta_{m-1}}\setminus \xi=\emptyset$ for large enough $\xi<\alpha$. If instead $\alpha\in\acc(C_{\beta_{m-1}})$, then by coherence $C_\alpha=C_{\alpha\beta_{m-1}}$ and, since $\eta_\xi=\min(C_\alpha\setminus\xi)$, it follows that $C_{\eta_\xi\alpha\beta_{m-1}}\setminus\xi=\emptyset$.

Taking stock, we have that the signed tree underneath the node $(-,\xi,\eta_\xi,\alpha,\beta')$ is as in Figure \ref{1dstep} for all large enough $\xi<\alpha$. In particular, the contribution to $g_t(\xi)$ is a constant value in the group $\bigoplus_{[\lambda]^2}\Z$; namely that which results from considering the labelled tree in Figure \ref{1dstep}, deleting the first two ordinal coordinates, and adding up the results (in the aforementioned group). Once again, the key point is that this does not depend on $\xi$, as long as $\xi$ is sufficiently large (below $\alpha$).

\item Suppose that $\alpha\not\in\acc(C_{\beta'\gamma'})$ and set $\bar\alpha:=\max(\alpha\cap C_{\beta'\gamma'})$. The situation at the bad node for $\xi>\bar\alpha$ is displayed in Figure \ref{bad_node_not_acc}.

\begin{figure}[ht]
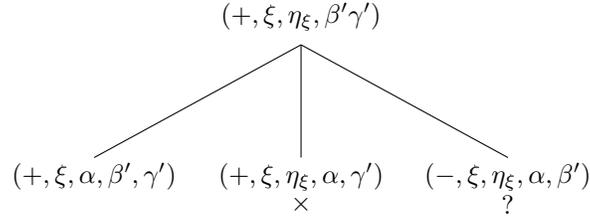

\tikzfig{bad_node_not_acc}
\caption{If $\xi>\bar\alpha$, then $\eta_\xi\not\in C_{\beta'\gamma'}$ and $\min(C_{\beta'\gamma'}\setminus \eta_\xi)=\alpha$.}
\label{bad_node_not_acc}
\end{figure}
Comparing with Figure \ref{good_step}, we see that the cones underneath $(\pm,\xi,\alpha,\beta',\gamma')$ cancel out. We now claim that the node $(+,\xi,\eta_\xi,\alpha,\gamma')$ is always terminal (for sufficiently large $\xi$). Indeed, if $\alpha\in\acc(C_{\gamma'})$, then by coherence $C_\alpha=C_{\alpha\gamma'}$, so in particular $\eta_\xi\in C_{\alpha\gamma'}$ and $C_{\eta_\xi\alpha\gamma'}\subset\xi=C_\alpha\cap [\xi,\eta_\xi)=\emptyset$. If instead $\alpha\not\in\acc(C_{\gamma'})$, set $\alpha^*=\max(\alpha\cap C_{\gamma'})$. Then, if $\xi>\alpha^*$, it follows that $\eta_\xi\not\in C_{\gamma'}$ and therefore $C_{\alpha\gamma'}\setminus \eta_\xi \subset C_{\gamma'}\cap[\eta_\xi,\alpha)=\emptyset$. As for the node $(-,\xi,\eta_\xi,\alpha,\beta')$, this was already analysed before, see Figure \ref{1dstep} and the surrounding discussion.

The upshot now is that, for large enough $\xi$, the values of $g_t(\xi)$ are constant, for we have two cancelling cones plus a constant tree stemming from the simulated one-dimensional walk, as in Figure \ref{1dstep}.
\end{itemize}
\item Suppose now that $\beta'\in C_{\gamma'}$, $\alpha\not\in C_{\beta'\gamma'}$ and $C_{\beta'\gamma'}\setminus\alpha=\emptyset$. Referring to Figure \ref{goodFig}, note that the node $(-\xi,\alpha,\beta',\gamma')$ is still terminal. On the other hand, since $C_{\beta'\gamma'}$ is closed, we can let $\bar\alpha:=\max(\alpha\cap C_{\beta'\gamma'})<\alpha$. If $\xi>\bar\alpha$, then $C_{\beta'\gamma'}\setminus\eta_\xi=\emptyset$, hence the node $(+,\xi,\eta_\xi,\beta',\gamma')$ in Figure \ref{alpha_xi_sides} is also terminal. Hence they both contribute equally to the computation of $g_t$.
\end{itemize}
This completes the proof.
\end{proof}

\subsection{Interlude: truncated and simulated walks}

In the proof of Theorem \ref{main3}, we bore witness to the following phenomenon: a $3$-dimensional walk ``simulated" a $1$-dimensional walk. This is not a coincidence. In our analysis of higher walks, the following scenario will repeatedly occur: 
during the course of an $(n+2)$-dimensional walk, we will encounter a node of 
the form $(\xi,\eta,\alpha,\vec{\gamma})$, and the walk from this node will 
have essentially the same structure as a truncated $n$-dimensional walk from 
$(\alpha,\vec{\gamma})$. In what follows, we will make this more precise and 
isolate the key features of this scenario.

\begin{definition}
  Suppose that $\mc{C}$ is an $n$-$C$-sequence on an ordinal $\lambda$ and
  $(\alpha,\vec{\gamma}) \in \lambda^{[n+1]}$. Define a 
  tree $S_n^-(\alpha, \vec{\gamma}) \subseteq {^{<\omega}}n$ as follows; 
  note that the previously defined $S_n(\alpha,\vec{\gamma})$ will be an end-extension 
  of $S_n^-(\alpha, \vec{\gamma})$. First, require that $\emptyset \in 
  S_n^-(\alpha, \vec{\gamma})$. Next, suppose that $x \in S_n^-(\alpha,\vec{\gamma})$ 
  and that $\mathrm{Tr}_n(\alpha, \vec{\gamma})(x) = (\alpha, 
  \vec{\beta})$. Then declare $x$ to be a terminal node of $S_n^-(\alpha,\vec{\gamma})$ if either
  \begin{enumerate}
    \item $x$ is a terminal node of $S_n(\alpha, \vec{\gamma})$; \emph{or}
    \item $\tau(\alpha,\vec{\beta}) = \vec{\beta}$ and $\alpha \in C_{\vec{\beta}}$.
  \end{enumerate}
  If $x$ is not a terminal node of $S_n^-(\alpha,\vec{\gamma})$, then demand that 
  $x^\smallfrown \langle i \rangle \in S_n^-(\alpha,\vec{\gamma})$ for all $i < n$. 
  Set $\mathrm{Tr}_n^-((-1)^k,\alpha,\vec{\gamma}) = \mathrm{Tr}_n((-1)^k,\alpha,
  \vec{\gamma}) \restriction S_n^-(\alpha,\vec{\gamma})$, and define
  \[
    L_n^-(\alpha,\vec{\gamma}) = \max\{L_n(\alpha,\vec{\gamma})(x) \mid 
    x \in S_n^-(\alpha,\vec{\gamma}) \wedge L_n(\alpha,\vec{\gamma})(x) < \alpha\}.
  \]
  Note that we always have $L_n^-(\alpha,\vec{\gamma}) < \alpha$.
\end{definition}

Note that, if $1 \leq m < n < \omega$ and $\mc{C}$ is an $n$-$C$-sequence 
on an ordinal $\lambda$, then $\mc{C}$ naturally induces an $m$-$C$-sequence $\mc{C}'$ on 
$\lambda$. Namely, $I(\mc{C}') = I(\mc{C}) \cap [\lambda]^{\leq m}$ and 
$C'_{\vec{\gamma}} = C_{\vec{\gamma}}$ for all $\vec{\gamma} \in I(\mc{C}')$. 
If we are in a context in which we have fixed some $n$-$C$-sequence $\mc{C}$ and 
we discuss performing an $m$-dimensional walk for some $1 \leq m < n$, it should be 
understood that this walk is the one defined using this $\mc{C}'$.

We next introduce some notation regarding stretching $m$-trees to higher dimensions.

\begin{definition}
  Suppose that $1 \leq m < n < \omega$.
  \begin{enumerate}
    \item Given $x \in {^{<\omega}}m$, define $s_{m,n}(x) \in {^{<\omega}}n$ by 
    setting $\dom(s_{m,n}(x)) = \dom(x)$ and, for all $k < \dom(x)$, setting 
    $s_{m,n}(x)(k) = x(k) + (n-m)$.
    \item Given a full $m$-tree $S \subseteq {^{<\omega}}m$, define a full 
    $n$-tree $S^n$ as follows:
    \begin{enumerate}
      \item for all $x \in S$, put $s_{m,n}(x) \in S^n$;
      \item for all $x \in S$, $x$ is a terminal node of $S$ if and only if 
      $s_{m,n}(x)$ is a terminal node of $S^n$;
      \item for all splitting nodes $x \in S$ and all $i < (n-m)$, 
      $s_{m,n}(x)^\smallfrown \langle i \rangle$ is a terminal node of $S^n$.
    \end{enumerate}
  \end{enumerate}
\end{definition}

\begin{figure}[ht]
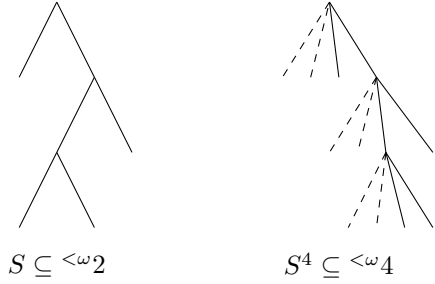

\ctikzfig{augmented_tree}
\caption{The effect of stretching a full $2$-tree into a full $4$-tree.}
\label{augmented_tree}
\end{figure}

We now present our main technical lemma about simulating $m$-dimensional walks inside 
$(m+2)$-dimensional walks. Before reading the lemma, recall the definition of 
$X(\mc{C})$ from Definition \ref{def: n_c_sequence}.

\begin{lemma}\label{simulation}
  Suppose that $m$ is a positive integer and $\mc{C}$ is a coherent 
  $(m+2)$-sequence on an ordinal $\lambda$. Suppose that $\vec{\gamma} \in 
  \lambda^{[m]}$ and $\alpha \leq \gamma_0$ is in $X(\mc{C})$.
  Suppose moreover that $L_m^-(\alpha,\vec{\gamma}) < \xi < \alpha$, and
  let $\eta_\xi = \min(C_\alpha \setminus \xi)$. Then
  \begin{enumerate}
    \item $S_{m+2}(\xi,\eta_\xi,\alpha,\vec{\gamma}) = 
    (S^-_m(\alpha,\vec{\gamma}))^{m+2}$; and
    \item for all $x \in S^-_m(\alpha,\vec{\gamma})$, if 
    $\mathrm{Tr}_m(+,\alpha,\vec{\gamma})(x) = ((-1)^j,\alpha,\vec{\beta})$, 
    then 
    \[
      \mathrm{Tr}_{m+2}(+,\xi,\eta_\xi,\alpha,\vec{\gamma})(s_{m,m+2}(x)) = 
      ((-1)^j,\xi,\eta_\xi,\alpha,\vec{\beta}).
    \]
  \end{enumerate}
\end{lemma}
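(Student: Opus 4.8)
The plan is to prove the two claims simultaneously by induction on the height of $x$ in $S_m^-(\alpha,\vec\gamma)$. The overall shape of the argument is exactly the one that was rehearsed at length in the $n=3$ case (Lemma~\ref{badNodes} and the surrounding figures): a walk started at a ``spectacled-looking'' node $(\xi,\eta_\xi,\alpha,\vec\beta)$ behaves, step for step, like the truncated $m$-dimensional walk from $(\alpha,\vec\beta)$, with two dummy (terminal) off-shoots inserted at each splitting node precisely because $s_{m,m+2}$ shifts indices up by $n-m=2$ and declares $s_{m,m+2}(x)^\frown\langle i\rangle$ terminal for $i<2$.

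\textbf{Base case.} At $x=\emptyset$ we must understand the first step down from $(\xi,\eta_\xi,\alpha,\vec\gamma)$. Since $\alpha\in X(\mc C)$, there is some $\vec\varepsilon\in I(\mc C)\cap[\lambda]^{m+2}$ with $\alpha\in\acc(C_{\vec\varepsilon})$, and hence by coherence $C_\alpha = C_{\vec\varepsilon}\cap\alpha$; this is what makes $\eta_\xi=\min(C_\alpha\setminus\xi)$ the ``right'' ordinal to insert. One checks directly from Definition~\ref{Tr} that $\tau(\xi,\eta_\xi,\alpha,\vec\gamma)=\tau(\alpha,\vec\gamma)$ (the final segment lying in $I(\mc C)$ is unaffected by prepending short entries), that the splitting produces immediate successors whose labels are $(\xi,\eta_\xi,\alpha,\vec\beta^{\,\ell})$ for the successors $(\alpha,\vec\beta)$ of the $m$-walk — matching clause (2) — together with two extra successors in coordinates $0,1$ which, because $\alpha\in C_{\tau}$ after one insertion and $C_{\alpha,\tau}\setminus\alpha=\emptyset$ (resp.\ $C_{\eta_\xi,\alpha,\tau}\setminus\xi=\emptyset$ by choice of $\eta_\xi$), are terminal. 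The sign bookkeeping is the routine check that the exponent shift $(-1)^{m+j+\ell_i}$ in Definition~\ref{Tr} is the same for the $(m+2)$-walk as for the $m$-walk once one accounts for the two frozen initial coordinates; this is where I'd be careful but expect no surprises.

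\textbf{Inductive step.} Suppose the conclusion holds for a non-terminal $x\in S_m^-(\alpha,\vec\gamma)$, with $\mathrm{Tr}_{m+2}(+,\xi,\eta_\xi,\alpha,\vec\gamma)(s_{m,m+2}(x))=((-1)^j,\xi,\eta_\xi,\alpha,\vec\beta)$. Because $x$ is not terminal in $S_m^-$, by definition $\tau(\alpha,\vec\beta)\neq\vec\beta$ or $\alpha\notin C_{\vec\beta}$; in either case the node $(\xi,\eta_\xi,\alpha,\vec\beta)$ is non-terminal in the $(m+2)$-walk, $\tau(\xi,\eta_\xi,\alpha,\vec\beta)=\tau(\alpha,\vec\beta)$, and the minimal element of $C_\tau$ above $\alpha$ (the ordinal being inserted in the $m$-walk) equals the minimal element above $\eta_\xi$ — this uses $L_m^-(\alpha,\vec\gamma)<\xi$, exactly as in Lemma~\ref{7.7}, to guarantee that $C_\tau$ is disjoint from $[\xi,\alpha)$ whenever relevant so that $\eta_\xi$ and $\alpha$ ``see'' the same next ordinal. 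Hence the step down from $s_{m,m+2}(x)$ reproduces the step down from $x$ in the $m$-walk (in the shifted coordinates $2,\dots,m+1$), together with two new terminal off-shoots in coordinates $0,1$: these are terminal because after the insertion we are in a configuration $(\xi,\eta_\xi,\alpha,\ldots)$ or $(\xi,\alpha,\ldots)$ where, by coherence applied to the club being used ($C_\alpha=C_{\tau'}\cap\alpha$ when $\alpha\in\acc(C_{\tau'})$, and a $\bar\alpha<\alpha$ bound otherwise), the relevant $C$ above $\xi$ is empty. Feeding the new labels into the induction hypothesis closes the loop, and the identification of underlying trees in (1) is then just the statement that the nodes we have declared terminal are precisely the images under $s_{m,m+2}$ of terminal nodes of $S_m^-$ plus the frozen $i<2$ off-shoots.

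\textbf{Main obstacle.} The genuinely delicate point is the claim that, at every splitting node, ``$\min(C_\tau\setminus\alpha)=\min(C_\tau\setminus\eta_\xi)$ and the two extra off-shoots are terminal,'' uniformly for all $\xi$ in a tail below $\alpha$. When $\tau$ has length exactly $m+2$ one is forced to invoke the coherence of $\mc C$ (Definition~\ref{def: n_c_sequence}(\ref{coherence_condition})) to conclude $C_\alpha$ agrees with the tail of the relevant club; when $\tau$ is shorter one instead needs a bound $\bar\alpha<\alpha$ with $[\bar\alpha,\alpha]\cap C_\tau=\emptyset$, and one must check finitely many such $\tau$'s arise so that a single $\xi^*<\alpha$ works — this is precisely the role played by $L_m^-$ being $<\alpha$ together with finiteness of $S_m^-$. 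I would organize this as a short auxiliary sublemma ("at each $s_{m,m+2}(x)$ one of: $\alpha\in\acc(C_\tau)$, in which case coherence applies; or there is $\bar\alpha<\alpha$ past which $C_\tau$ is empty below $\alpha$"), and then the main induction runs cleanly. Everything else is the same pattern already executed in Lemmas~\ref{alphaAlphaTerminal}--\ref{badNodes}, now stated for general $m$ rather than $m=1$ inside a $3$-walk.
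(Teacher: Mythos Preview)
Your overall strategy—induction on $|x|$, proving simultaneously that $s_{m,m+2}(x)$ lies in the $(m+2)$-tree, that labels match, that terminal-ness matches, and that the two extra off-shoots are terminal—is exactly what the paper does. But the detailed reasoning you sketch contains real mistakes that would derail the proof if executed as written.

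First, the reason the two off-shoots in coordinates $0,1$ are terminal is \emph{not} that ``$\alpha\in C_\tau$ after one insertion and $C_{\alpha,\tau}\setminus\alpha=\emptyset$''. At a splitting node with label $(\alpha,\vec\beta)$, write $\tau=\tau(\alpha,\vec\beta)$ and $j=|\iota(\alpha,\vec\beta)|-1$; the inserted ordinal is $\delta=\min(C_\tau\setminus\beta_j)$, and the off-shoot at coordinate $0$ has label $(\xi,\alpha,\beta_0,\dots,\beta_j,\delta,\tau)$. This is terminal simply because $\delta^\frown\tau\in I(\mc{C})$ and $C_{\delta\tau}\setminus\beta_j\subseteq C_\tau\cap[\beta_j,\delta)=\emptyset$ by minimality of $\delta$—nothing to do with $\alpha$ being in any club. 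The off-shoot at coordinate $1$ is similar when $j\geq 0$; only in the special case $j=-1$ (so $\tau=\vec\beta$ and $\alpha\notin C_{\vec\beta}$) does one need $L^-_m(\alpha,\vec\gamma)<\xi$ to conclude $C_{\delta\vec\beta}\setminus\eta_\xi=\emptyset$. Relatedly, your equation ``$\min(C_\tau\setminus\alpha)=\min(C_\tau\setminus\eta_\xi)$'' is a red herring: the inserted ordinal is $\min(C_\tau\setminus\beta_j)$, and the point is that $\beta_j$ is the \emph{same} entry in both walks, since $\tau$ (hence the last entry of $\iota$) coincides at splitting nodes.

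Second, and more importantly, you never squarely address the case where $x$ is terminal in $S_m^-$ \emph{because} $\tau(\alpha,\vec\beta)=\vec\beta$ and $\alpha\in C_{\vec\beta}$. Here your claim $\tau(\xi,\eta_\xi,\alpha,\vec\beta)=\tau(\alpha,\vec\beta)$ fails: the $(m+2)$-side $\tau$ can be strictly longer. This is precisely where coherence enters, and it enters through the \emph{second} bullet of Definition~\ref{def: n_c_sequence}(\ref{coherence_condition}): since $\alpha\in X(\mc{C})$ and $\sup(C_{\alpha\vec\beta})=\alpha$ when $\alpha\in\acc(C_{\vec\beta})$, one gets $C_{\alpha\vec\beta}=C_\alpha$, whence $\eta_\xi\in C_{\alpha\vec\beta}$ and $C_{\eta_\xi\alpha\vec\beta}\setminus\xi=\emptyset$, so the $(m+2)$-node is terminal. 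The $\nacc$ subcase instead uses $\max(C_{\vec\beta}\cap\alpha)\leq L_m^-(\alpha,\vec\gamma)<\xi$. Your ``main obstacle'' paragraph gestures at coherence but attaches it to the wrong place (``$\tau$ has length exactly $m+2$'' is not how the case arises); reorganize so that coherence is invoked exactly in the terminal-matching item, not in the off-shoot analysis.
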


\begin{proof}
  We will prove the following statements for all $x \in S^-_m(\alpha,\vec{\gamma})$ 
  by induction on $|x|$:
  \begin{enumerate}[label = (\alph*)]
    \item $s_{m,m+2}(x) \in S_{m+2}(\xi,\eta_\xi,\alpha,\vec{\gamma})$;
    \item if $\mathrm{Tr}_m(+,\alpha,\vec{\gamma})(x) = 
    ((-1)^j,\alpha,\vec{\beta})$, then 
    $\mathrm{Tr}_{m+2}(+,\xi,\eta_\xi,\alpha,\vec{\gamma})(s_{m,m+2}(x)) = 
    ((-1)^j,\xi,\eta_\xi,\alpha,\vec{\beta})$;
    \item $s_{m,m+2}(x)$ is a terminal node of $S_{m+2}(\xi,\eta_\xi,\alpha, 
    \vec{\gamma})$ if and only if $x$ is a terminal node of 
    $S^-_m(\alpha,\vec{\gamma})$;
    \item if $x$ is a splitting node of $S^-_m(\alpha,\vec{\gamma})$, then, for $i < 2$, 
    $s_{m,m+2}(x)^\smallfrown \langle i \rangle$ is a terminal node of 
    $S_{m+2}(\xi,\eta_\xi,\alpha,\vec{\gamma})$.
  \end{enumerate}
  This will clearly suffice to establish the lemma.
  We begin with the base case, i.e., $x = \emptyset$. Items (a) and (b) are 
  immediate. To see (c), let $\tau(\vec{\gamma})$ denote the longest final segment 
  of $\vec{\gamma}$ that is in $I(\mc{C})$. Let 
  $k = |\tau(\vec{\gamma})|$ and $j = (m-1)-k$. In particular, setting 
  $\alpha = \gamma_{-1}$ for convenience, $\gamma_j$ is the largest ordinal appearing 
  in $\langle \alpha \rangle^\smallfrown \vec{\gamma}$ that does not appear in 
  $\tau(\vec{\gamma})$. There are now a number of options:
  \begin{itemize}
    \item If $\tau(\vec{\gamma}) = \vec{\gamma}$ and $\alpha \in C_{\vec{\gamma}}$, 
    then $x$ is a terminal node of $S^-_m(\alpha,\vec{\gamma})$. There are now two 
    subcases to consider:
    \begin{itemize}
      \item If $\alpha \in \acc(C_{\vec{\gamma}})$, then we have 
      $C_{\alpha\vec{\gamma}}= C_\alpha$. In particular, 
      $\langle \eta_\xi,\alpha \rangle ^\smallfrown \vec{\gamma} \in I(\mc{C})$ 
      and $C_{\eta_\xi\alpha\vec{\gamma}} \setminus \xi = \emptyset$, so $x = s_{m,m+2}(x)$ 
      is a terminal node of $S_{m+2}(\xi,\eta_\xi,\alpha,\vec{\gamma})$.
      \item If $\alpha \in \nacc(C_{\vec{\gamma}})$, then we have 
      $\max(C_{\vec{\gamma}} \cap \alpha) \leq L^-_m(\alpha,\vec{\gamma}) < 
      \xi < \eta_\xi < \alpha$. In particular, $\langle \alpha \rangle ^\smallfrown 
      \vec{\gamma}$ is the longest final segment of $\langle \xi, \eta_\xi, 
      \alpha, \vec{\gamma} \rangle$ in $I(\mc{C})$ and $C_{\alpha\vec{\gamma}} 
      \setminus \eta_\xi = \emptyset$, so again $x$ is a terminal node of 
      $S_{m+2}(\xi,\eta_\xi,\alpha,\vec{\gamma})$.
    \end{itemize}
    \item If $C_{\tau(\vec{\gamma})} \setminus \gamma_j = \emptyset$, then again 
    $x$ is a terminal node of $S^-_m(\alpha,\vec{\gamma})$. In this case, 
    $\tau(\vec{\gamma})$ is also the longest final segment of 
    $\langle \xi,\eta_\xi,\alpha \rangle^\smallfrown \vec{\gamma}$ that is in 
    $I(\mc{C})$ and $\gamma_j$ is the largest element of $\langle \xi, 
    \eta_\xi, \alpha \rangle ^\smallfrown \vec{\gamma}$ not appearing in 
    $\tau(\vec{\gamma})$, so it immediately follows 
    that $x$ is also a terminal node of $S_{m+2}(\xi,\eta_\xi,\alpha,\vec{\gamma})$.
    \item If we are in none of the above cases, then $x$ is a splitting node 
    of $S^-_m(\alpha,\vec{\gamma})$. In particular, we have $\gamma_j \notin 
    C_{\tau(\vec{\gamma})}$ and $C_{\tau(\vec{\gamma})} \setminus \gamma_j 
    \neq \emptyset$. As in the previous item, we again see that $\tau(\vec{\gamma})$ 
    is the longest final segment of $\langle \xi,\eta_\xi,\alpha \rangle^\smallfrown 
    \vec{\gamma}$ that is in $I(\mc{C})$, and the fact that 
    $C_{\tau(\vec{\gamma})} \setminus \gamma_j \neq \emptyset$ implies that 
    $x$ is a splitting node of $S_{m+2}(\xi,\eta_\xi,\alpha,\vec{\gamma})$.
  \end{itemize}
  We finally verify (d). If $x$ is a splitting node of $S^-_m(\alpha, \vec{\gamma})$, 
  then we are in the case of the final bullet point above. Let 
  $\beta = \min(C_{\tau(\vec{\gamma})} \setminus \gamma_j)$, and note that 
  $\gamma_j < \beta$. In this case, recalling the definition of $\mathrm{Tr}_{m+2}$, we have
  \[
    \mathrm{Tr}_{m+2}(\xi,\eta_\xi,\alpha,\vec{\gamma})(\langle 0 \rangle) = 
    (\xi, \alpha, \gamma_0, \ldots, \gamma_j, \beta, \tau(\vec{\gamma})),
  \]
  which is terminal due to the fact that $\langle \beta \rangle ^\smallfrown 
  \tau(\vec{\gamma}) \in I(\mc{C})$ and $C_{\beta \tau(\vec{\gamma})} \setminus 
  \gamma_j = \emptyset$.
  
  If $j > -1$, then we have
  \[
    \mathrm{Tr}_{m+2}(\xi,\eta_\xi,\alpha,\vec{\gamma})(\langle 1 \rangle) = 
    (\xi, \eta_\xi, \gamma_0, \ldots, \gamma_j, \beta, \tau(\vec{\gamma})),
  \]
  which is terminal for the same reason. If $j = -1$, and hence 
  $\tau(\vec{\gamma}) = \vec{\gamma}$, then we have
  \[
    \mathrm{Tr}_{m+2}(\xi,\eta_\xi,\alpha,\vec{\gamma})(\langle 1 \rangle) = 
    (\xi, \eta_\xi, \beta, \tau(\vec{\gamma})).
  \]
  In this case, we know that $\max(C_{\tau(\vec{\gamma})} \cap \alpha) 
  \leq L^-_m(\alpha,\vec{\gamma}) < \xi < \eta_\xi < \alpha$, and hence we have 
  $\langle \beta \rangle ^\smallfrown \tau(\vec{\gamma}) \in I(\mc{C})$ and 
  $C_{\beta \tau(\vec{\gamma})} \setminus \eta_\xi = \emptyset$, so again 
  $\langle 1 \rangle$ is terminal in $S_{m+2}(\xi,\eta_\xi,\alpha,\vec{\gamma})$. 
  This completes the base case of the induction.
  
  Now suppose that $x \in S^-_m(\alpha, \vec{\gamma})$ is a splitting node and 
  we have established (a)--(d) for $x$. Fix $i < m$; we will establish 
  (a)--(d) for $x^\smallfrown \langle i \rangle$. First note that 
  $s_{m,m+2}(x^\smallfrown \langle i \rangle) = s_{m,m+2}(x)^\smallfrown \langle i+2 \rangle$. 
  Item (a) now follows immediately from the fact that, by the inductive hypothesis, 
  $s_{m,m+2}(x)$ is a splitting node of $S_{m+2}(\xi,\eta_\xi,\alpha,\vec{\gamma})$.
  
  To see (b), let $\mathrm{Tr}_m(+,\alpha,\vec{\gamma})(x) = ((-1)^\ell, \alpha, 
  \vec{\beta})$. As in the base case, let $\tau(\vec{\beta})$ be the longest final 
  segment of $\vec{\beta}$ that is in $I(\mc{C})$, let $k = |\tau(\vec{\beta})|$, 
  let $j = (m-1)-k$ and, for convenience, set $\beta_{-1} = \alpha$. Since $x$ is a 
  splitting node, it must be the case that $\beta_j \notin C_{\tau(\vec{\beta})}$ 
  and $C_{\tau(\vec{\beta})} \setminus \beta_j \neq \emptyset$. By the inductive 
  hypothesis, we have $\mathrm{Tr}_{m+2}(+,\xi,\eta_\xi,\alpha,\vec{\gamma})
  (s_{m,m+2}(x)) = ((-1)^\ell, \xi, \eta_\xi, \alpha, \vec{\beta})$; moreover, $\tau(\vec{\beta})$ 
  must be the longest final segment of $\langle \xi,\eta_\xi,\alpha \rangle^\frown \vec{\beta}$ 
  that is in $I(\mc{C})$.
  
  Let $\delta = \min(C_{\tau(\vec{\beta})} \setminus \beta_j)$. If $i \leq j$, then 
  \[
    \mathrm{Tr}_m(+,\alpha,\vec{\gamma})(x^\smallfrown \langle i \rangle) = 
    ((-1)^{\ell+j+i},\alpha, (\vec{\beta} \restriction (j+1))^i, \delta, \tau(\vec{\beta}))
  \]
  and 
  \[
    \mathrm{Tr}_{m+2}(+,\xi,\eta_\xi,\alpha,\vec{\gamma})(s_{m,m+2}(x)^\smallfrown 
    \langle i+2 \rangle) = ((-1)^{\ell+j+i}, \xi,\eta_\xi,\alpha, (\vec{\beta} \restriction (j+1))^i, 
    \delta, \tau(\vec{\beta})).
  \]
  If $i > j$, then 
  \[
    \mathrm{Tr}_m(+,\alpha,\vec{\gamma})(x^\smallfrown \langle i \rangle) = 
    ((-1)^{\ell+j+i+1}, \alpha, \vec{\beta} \restriction (j+1), \delta, \tau(\vec{\beta})^{i-(j+1)})
  \]
  and 
  \[
    \mathrm{Tr}_{m+2}(+,\xi,\eta_\xi,\alpha,\vec{\gamma})(s_{m,m+2}(x)^\smallfrown 
    \langle i+2 \rangle) = ((-1)^{\ell+j+i+1}, \xi,\eta_\xi,\alpha, \vec{\beta} \restriction (j+1), 
    \delta, \tau(\vec{\beta})^{i-(j+1)}).
  \]
  In either case, $x^\smallfrown \langle i \rangle$ satisfies item (b).
  
  Items (c) and (d) are now verified exactly as in the base case, \emph{mutatis 
  mutandis}. We therefore leave this to the reader.
\end{proof}

\subsection{The general case}

Before giving the proof of the main result, we need to generalize some lemmas from the three-dimensional setting. First, we wish to extend Lemma \ref{alphaAlphaTerminal} to the case of a general $n\ge 3$. For reasons of clarity, it is convenient to unfold the lemma into two halves:

\begin{lemma}\label{alphaAlphaimmediate}
Let $n\ge 3$. Let $\C$ be a coherent $n$-$C$-sequence on an ordinal $\lambda$, and let 
$(\alpha,\vec{\gamma}) \in [\lambda]^n$. Let $(\alpha,\beta,\vec\delta)$ with $\alpha<\beta$ be an immediate descendant of the node $(\alpha,\alpha,\vec\gamma)$. Then $(\alpha,\beta,\vec\delta)$ is terminal.
\end{lemma}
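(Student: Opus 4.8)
The plan is to make the first step of the walk down from $(\alpha,\alpha,\vec\gamma)$ completely explicit. Write $\gamma_0<\dots<\gamma_{n-2}$ for the entries of $\vec\gamma$ and set $\gamma_{-1}:=\alpha$ for uniformity; the node in question has ordinal part $(\alpha,\alpha,\gamma_0,\dots,\gamma_{n-2})$, of length $n+1$, whose only repeated value is $\alpha$. Its final segments of length $\le n$ are exactly the strictly increasing sequences $(\gamma_q,\dots,\gamma_{n-2})$ for $-1\le q\le n-2$, so $\tau(\alpha,\alpha,\vec\gamma)=(\gamma_q,\dots,\gamma_{n-2})$ for some such $q$, and then $\iota(\alpha,\alpha,\vec\gamma)$ ends in $\gamma_{q-1}$, i.e.\ $\gamma_{q-1}$ is the ordinal at which the step is taken. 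If $q=-1$, then $\tau(\alpha,\alpha,\vec\gamma)=(\alpha,\gamma_0,\dots,\gamma_{n-2})$ (the step is taken at the \emph{first} $\alpha$), and $C_{\tau(\alpha,\alpha,\vec\gamma)}=C_{\alpha\gamma_0\cdots\gamma_{n-2}}\subseteq\alpha$, so $C_{\tau(\alpha,\alpha,\vec\gamma)}\setminus\alpha=\emptyset$ and $(\alpha,\alpha,\vec\gamma)$ is itself terminal, leaving nothing to prove. So I may assume $q\ge 0$ and $C_{\gamma_q\cdots\gamma_{n-2}}\setminus\gamma_{q-1}\ne\emptyset$ (otherwise again $(\alpha,\alpha,\vec\gamma)$ is terminal); set $\delta:=\min(C_{\gamma_q\cdots\gamma_{n-2}}\setminus\gamma_{q-1})$.

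Next I would extract two consequences of the maximality of $\tau$. First, $(\gamma_{q-1},\gamma_q,\dots,\gamma_{n-2})\notin I(\C)$, so since $(\gamma_q,\dots,\gamma_{n-2})\in I(\C)$, Definition \ref{def: n_c_sequence}(2) gives $\gamma_{q-1}\notin C_{\gamma_q\cdots\gamma_{n-2}}$; hence $\gamma_{q-1}<\delta<\gamma_q$ and
\[
\delta\cap C_{\gamma_q\cdots\gamma_{n-2}}=\gamma_{q-1}\cap C_{\gamma_q\cdots\gamma_{n-2}}\subseteq\gamma_{q-1}.
\]
Second, unwinding Definition \ref{Tr}, the step from $(\alpha,\alpha,\vec\gamma)$ forms the length-$(n+2)$ sequence $(\alpha,\alpha,\gamma_0,\dots,\gamma_{q-1},\delta,\gamma_q,\dots,\gamma_{n-2})$ and passes to its immediate successors by deleting one coordinate, never the leading $\alpha$ nor $\delta$. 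Deleting any coordinate other than the second $\alpha$ leaves $\alpha$ in both of the first two positions, so the unique immediate successor whose first two coordinates differ — the node $(\alpha,\beta,\vec\delta)$ of the statement — is
\[
N := (\alpha,\gamma_0,\dots,\gamma_{q-1},\delta,\gamma_q,\dots,\gamma_{n-2})
\]
(together with its sign, which is irrelevant here). It remains to show $N$ is terminal.

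To that end I would compute $\tau(N)$. Since $\delta\in C_{\gamma_q\cdots\gamma_{n-2}}$ and $(\gamma_q,\dots,\gamma_{n-2})\in I(\C)$, we have $(\delta,\gamma_q,\dots,\gamma_{n-2})\in I(\C)$, and then $C_{\delta\gamma_q\cdots\gamma_{n-2}}\subseteq\delta\cap C_{\gamma_q\cdots\gamma_{n-2}}\subseteq\gamma_{q-1}$ by the displayed inclusion above. In particular $\gamma_{q-1}\notin C_{\delta\gamma_q\cdots\gamma_{n-2}}$, so $(\gamma_{q-1},\delta,\gamma_q,\dots,\gamma_{n-2})\notin I(\C)$; since $I(\C)$ is closed under passing to final segments (Definition \ref{def: n_c_sequence}(2)), no final segment of $N$ longer than $(\delta,\gamma_q,\dots,\gamma_{n-2})$ lies in $I(\C)$, and therefore $\tau(N)=(\delta,\gamma_q,\dots,\gamma_{n-2})$ while $\iota(N)=(\alpha,\gamma_0,\dots,\gamma_{q-1})$ ends in $\gamma_{q-1}$. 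The terminality criterion for $N$ then reads $C_{\tau(N)}\setminus\gamma_{q-1}=C_{\delta\gamma_q\cdots\gamma_{n-2}}\setminus\gamma_{q-1}=\emptyset$, which holds by the inclusion just established. Thus $N$ is a terminal node of the walk, as required.

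The heart of the argument — and the only place where more than bookkeeping is needed — is the inclusion $C_{\delta\gamma_q\cdots\gamma_{n-2}}\subseteq\gamma_{q-1}$, which both pins down $\tau(N)$ and yields terminality; it follows at once from $\delta=\min(C_{\gamma_q\cdots\gamma_{n-2}}\setminus\gamma_{q-1})$ together with $\gamma_{q-1}\notin C_{\gamma_q\cdots\gamma_{n-2}}$. I expect no genuine obstacle: coherence of $\C$ is not actually used, the hypothesis $n\ge 3$ only keeps us in the regime of Lemma \ref{alphaAlphaTerminal}, and the boundary value $q=0$ (where $\gamma_{q-1}=\alpha$ and the block $\gamma_0,\dots,\gamma_{q-1}$ is empty) is subsumed uniformly under the convention $\gamma_{-1}=\alpha$.
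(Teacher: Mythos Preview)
Your proof is correct and follows essentially the same approach as the paper's own argument: identify the unique non-spectacled immediate successor of $(\alpha,\alpha,\vec\gamma)$ and show it is terminal via the inclusion $C_{\delta\gamma_q\cdots\gamma_{n-2}}\subseteq\gamma_{q-1}$. The only difference is cosmetic: the paper splits into the cases $\tau=\vec\gamma$ and $\tau\subsetneq\vec\gamma$, whereas you unify them with the convention $\gamma_{-1}=\alpha$, and you are a bit more explicit about pinning down $\tau(N)$ before reading off terminality. Your remark that coherence is not used is also accurate.
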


\begin{proof} Write $(\alpha,\alpha,\vec\gamma)=\iota(\alpha,\alpha,\vec\gamma)^\smallfrown\tau(\alpha,\alpha,\vec\gamma)$ and set $\tau:=\tau(\alpha,\alpha,\vec\gamma)$ to simplify the notation. We case on the length of $\tau$:
\begin{itemize}
\item Suppose first that $\tau$ is a proper final segment of $\vec\gamma$, say $\tau=\vec\gamma\restr (j+1,n-1)$. Put $\beta:=C_\tau\setminus\gamma_j$, and note that the only immediate descendant of $(\alpha,\alpha,\vec\gamma)$ whose first and second coordinates are different is
\[
(\alpha,\vec\gamma\restr(j+1),\beta,\tau).
\]
Clearly, $\beta\in C_\tau$ and $\gamma_j\not\in C_{\tau}$. Since $C_{\beta\tau}\setminus \gamma_j \subset C_\tau \cap [\gamma_j,\beta)=\emptyset$, it follows that $(\alpha,\vec\gamma\restr(j+1),\beta,\tau)$ must be terminal.

\item Suppose that $\tau=\vec\gamma$. By definition of $\tau$, $\alpha\not\in C_{\vec\gamma}$. Let $\alpha':=\min(C_{\vec\gamma}\setminus \alpha)$. Then the only immediate descendant of $(\alpha,\alpha,\vec\gamma)$ which has distinct first and second coordinates is the node $(\alpha,\alpha',\vec\gamma)$. Now note that $C_{\alpha'\vec\gamma}\setminus\alpha\subset C_{\vec{\gamma}}[\alpha,\alpha')=\emptyset$, so $(\alpha,\alpha',\vec\gamma)$ is terminal.

\item The case $\tau=(\alpha,\vec\gamma)$ is impossible because it would imply that $(\alpha,\alpha,\vec\gamma)$ is terminal.\qedhere
\end{itemize}
\end{proof}

\begin{lemma}\label{nAlphaTerminal}
Let $n\ge 3$. Let $\C$ be a coherent $n$-$C$-sequence on an ordinal $\lambda$, and let
$(\alpha,\vec{\gamma}) \in [\lambda]^n$. Then every node of the form $(\alpha,\beta,\vec\delta)$ with $\alpha<\beta$ occurring along the walk down from $(\alpha,\alpha,\vec\gamma)$ must be terminal. Moreover, for each such $(\alpha,\beta,\vec{\delta})$, there exists $\nu<\alpha$ such that for every $\xi\in [\nu,\alpha)$, the node $(\xi,\beta,\vec\delta)$ is terminal.
\end{lemma}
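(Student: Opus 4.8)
The plan is to prove Lemma~\ref{nAlphaTerminal} by induction on the height of the node $(\alpha,\beta,\vec\delta)$ in the tree $S_n(\alpha,\alpha,\vec\gamma)$, reducing in one step to Lemma~\ref{alphaAlphaimmediate}. The crucial structural observation, exactly as in the $3$-dimensional proof of Lemma~\ref{alphaAlphaTerminal}, is that among the $n$ immediate successors of the spectacled node $(\alpha,\alpha,\vec\gamma)$, only \emph{one} fails to be spectacled: all the others still have their first two ordinal coordinates equal to $\alpha$. Indeed, writing $(\alpha,\alpha,\vec\gamma)=\iota^\smallfrown\tau$ with $\tau=\tau(\alpha,\alpha,\vec\gamma)$ and letting $\beta'=\min(C_\tau\setminus(\text{the relevant entry}))$, the single immediate successor in which the new ordinal $\beta'$ is inserted \emph{into position $1$} (bumping the coordinate-$1$ copy of $\alpha$ out to position $2$) is the non-spectacled one, and Lemma~\ref{alphaAlphaimmediate} says it is terminal; every other immediate successor inserts $\beta'$ at a position $\ge 2$ and so keeps $\alpha$ in positions $0$ and $1$, hence remains spectacled.

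So the inductive step runs as follows. Suppose $(\alpha,\beta,\vec\delta)$ with $\alpha<\beta$ occurs along the walk down from $(\alpha,\alpha,\vec\gamma)$ at some node $x\in S_n(\alpha,\alpha,\vec\gamma)$. Let $y=x\restr(|x|-1)$ be the immediate predecessor, with label $(\alpha,\vec\epsilon)=\Tr_n(\alpha,\alpha,\vec\gamma)(y)$. If $y=\emptyset$, then $\vec\epsilon=\langle\alpha\rangle^\smallfrown\vec\gamma$ and $(\alpha,\beta,\vec\delta)$ is an immediate descendant of $(\alpha,\alpha,\vec\gamma)$, so we are done by Lemma~\ref{alphaAlphaimmediate}. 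Otherwise, by the observation above applied inductively along the branch from $\emptyset$ to $y$, the node $y$ must itself be spectacled (any ancestor of a node whose coordinate-$1$ entry is $\alpha$ but whose coordinate-$0$ entry is also $\alpha$ must have been spectacled, since the only way to break the ``positions $0$ and $1$ both equal $\alpha$'' pattern is the one-time, immediately-terminal move of Lemma~\ref{alphaAlphaimmediate}); say $\Tr_n(\alpha,\alpha,\vec\gamma)(y)=(\alpha,\alpha,\vec\zeta)$. Then $(\alpha,\beta,\vec\delta)$ is an immediate descendant of the spectacled node $(\alpha,\alpha,\vec\zeta)$ with $\alpha<\beta$, and Lemma~\ref{alphaAlphaimmediate} (applied to $\vec\zeta$ in place of $\vec\gamma$) gives that it is terminal. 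This establishes the first assertion.

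For the ``Moreover'' part, I would track the finitely many ordinals below $\alpha$ that the relevant steps depend on. Concretely, the node $(\alpha,\beta,\vec\delta)$ arises as the non-spectacled immediate successor of some spectacled $(\alpha,\alpha,\vec\zeta)$ occurring along the walk; in the two cases of the proof of Lemma~\ref{alphaAlphaimmediate}, terminality of $(\alpha,\beta,\vec\delta)$ was witnessed either by $C_\tau\cap[\zeta_j,\beta)=\emptyset$ (a condition not involving $\alpha$ at all, hence robust under replacing $\alpha$ by any $\xi<\alpha$ once the ancestors are undisturbed) or by $C_{\beta'\vec\zeta}\cap[\alpha,\beta')=\emptyset$ together with the fact that $\alpha\notin C_{\vec\zeta}$, which yields a $\nu_0<\alpha$ with $[\nu_0,\alpha]\cap C_{\vec\zeta}=\emptyset$ and hence $C_{\beta'\vec\zeta}\setminus\xi=\emptyset$ for all $\xi\in[\nu_0,\alpha)$. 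Combining $\nu_0$ with $L_n(\alpha,\alpha,\vec\gamma)(y)$ (which is $<\alpha$ since, by the first part, no proper ancestor of $x$ can be bad in the sense of Definition~\ref{bad} — it is either spectacled or not in $E_n$) and invoking Lemma~\ref{7.7} to ensure the walk from $(\xi,\alpha,\vec\gamma)$ reaches the relevant ancestors unchanged with the coordinate-$0$ $\alpha$ replaced by $\xi$, we get the desired $\nu<\alpha$.

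The main obstacle I anticipate is bookkeeping the ``$\xi$-image'' of the ancestor chain: one needs that for $\xi$ in a tail of $\alpha$ the walk down from $(\xi,\alpha,\vec\gamma)$ passes through the $\xi$-images of exactly the same spectacled ancestors (now of the form $(\xi,\alpha,\vec\zeta)$), so that the terminality computations for the non-spectacled successor transfer verbatim with $\alpha$ replaced by $\xi$ in position $0$. This is precisely the content of Lemma~\ref{7.7} applied with bound $L_n(\alpha,\alpha,\vec\gamma)(y)<\xi\le\alpha$, provided we have checked there are no bad ancestors — which the first half of the lemma guarantees, since a bad node would have to be spectacled-or-lower and badness of a spectacled node $(\alpha,\alpha,\vec\zeta)$ would require $\alpha\in\acc(C_{\alpha\vec\zeta})$, impossible as $C_{\alpha\vec\zeta}\subseteq\alpha$. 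So the argument is essentially a careful reprise of the $n=3$ reasoning with the single non-spectacled branch isolated by Lemma~\ref{alphaAlphaimmediate}; the diagrams of the $3$-dimensional case are replaced by the coordinate-position analysis of $\Tr_n$.
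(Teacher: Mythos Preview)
Your argument for the first assertion is essentially identical to the paper's: both iterate the observation that a spectacled node has exactly one non-spectacled immediate successor, reducing to Lemma~\ref{alphaAlphaimmediate}.

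For the ``Moreover'' you over-engineer. The statement only asks that the fixed tuple $(\xi,\beta,\vec\delta)$ be terminal --- a property of the tuple alone, not of its position in any walk --- so there is no need to invoke Lemma~\ref{7.7} or to track the $\xi$-image of the ancestor chain down from $(\xi,\alpha,\vec\gamma)$. The paper simply observes (as you also do, before the superfluous bookkeeping) that once $(\alpha,\beta,\vec\delta)$ is identified as the non-spectacled immediate successor of some $(\alpha,\alpha,\vec\zeta)$, the terminality witness from Lemma~\ref{alphaAlphaimmediate} either does not involve the first coordinate at all (the case $|\tau|<n-1$), or involves only the condition $\alpha\notin C_{\vec\zeta}$, which by closedness of $C_{\vec\zeta}$ persists on a tail $[\nu,\alpha)$. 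Your anticipated ``main obstacle'' is therefore a non-issue.
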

\begin{proof}
First, note that there is exactly one immediate descendant of $(\alpha,\alpha,\vec\gamma)$ which is not spectacled. Therefore, by iterating Lemma \ref{alphaAlphaimmediate}, we may assume that $(\alpha,\alpha,\vec\gamma)$ immediately descends to $(\alpha,\beta,\vec\delta)$ and therefore that the latter is terminal. To see the moreover, let $\tau=\tau(\alpha,\alpha,\vec\gamma)$ and case on $|\tau|$:
\begin{itemize}
\item If $\tau$ is a proper final segment of $\vec\gamma$, say $\tau=\vec\gamma\restr(j+1,n-1)$, then $(\alpha,\beta,\vec\delta)=(\alpha,\vec\gamma\restr(j+1),\alpha',\tau)$, where $\alpha':=\min(C_\tau\setminus\gamma_j)$. Note that $\gamma_j\not\in C_{\alpha'\tau}$ by the choice of $\alpha'$ and $C_{\alpha'\tau}\setminus \gamma_j=\emptyset$, hence $(\xi,\vec\gamma\restr(j+1),\alpha',\tau)$ is terminal for any $\xi\le\alpha$.
\item Suppose that $\tau=\vec\gamma$. By definition of $\tau$, $\alpha\not\in C_{\vec\gamma}$. Setting $\alpha':=\min(C_{\vec\gamma}\setminus\alpha)$, we see that $(\alpha,\beta,\vec\delta)=(\alpha,\alpha',\vec\gamma)$. Also,
there exists $\nu<\alpha$ such that $[\nu,\alpha]\cap C_{\vec\gamma}=\emptyset$. But then $(\xi,\beta,\vec\delta)$ is terminal whenever $\nu<\xi<\alpha$.
\item $\tau=(\alpha,\vec\gamma)$ is impossible because it implies that $(\alpha,\alpha,\vec\gamma)$ is terminal.\qedhere
\end{itemize}
\end{proof}

\begin{lemma}\label{nbadImmediate}
Let $n\ge 3$. Let $\C$ be a coherent $n$-$C$-sequence on an ordinal $\lambda$, and let 
$(\alpha,\vec{\gamma}) \in \lambda \otimes [\lambda]^n$. Suppose that $x\in S_n(\alpha,\vec\gamma)$ is bad. Then no $y\supsetneq x$ is bad for $(\alpha,\vec{\gamma})$.
\end{lemma}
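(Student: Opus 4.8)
\textbf{Plan for the proof of Lemma \ref{nbadImmediate}.}
The strategy is to mimic the proof of Lemma \ref{badImmediate}, replacing the two explicit figures used there by appeals to Lemmas \ref{alphaAlphaimmediate} and \ref{nAlphaTerminal}, which are precisely the general-$n$ versions of Lemma \ref{alphaAlphaTerminal}. First I would fix a bad node $x\in S_n(\alpha,\vec{\gamma})$ and write its label as $\Tr_n(\alpha,\vec{\gamma})(x)=(\alpha,\vec{\beta})$, so that $\vec{\beta}\in I(\mc{C})$ and $\alpha\in\acc(C_{\vec{\beta}})$. By coherence of $\mc{C}$ we then have $C_{\vec{\beta}}\cap\alpha=C_\alpha$; in particular $\min(C_{\vec{\beta}}\setminus\alpha)=\alpha$, so the first step down from $(\alpha,\vec{\beta})$ inserts $\alpha$ itself. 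Since $\vec{\beta}\in I(\mc{C})$ and thus $\tau(\vec{\beta})=\vec{\beta}$, the coordinate being duplicated is $\beta_0$, and the resulting immediate successors all have the form $(\alpha,\alpha,\vec{\beta}^{\,0})$ or $(\alpha,(\vec{\beta})^{\ell_i},\dots)$ — concretely, one of them is the spectacled node $(\alpha,\alpha,\vec{\beta})$ with the last coordinate duplicated appropriately, and I should record that this spectacled node is terminal: indeed $\alpha\in\acc(C_{\vec{\beta}})\subseteq C_{\vec{\beta}}$, so $C_{\alpha\vec{\beta}}$ is defined and $C_{\alpha\vec{\beta}}\setminus\alpha=\emptyset$.

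The key observation, exactly as in the three-dimensional case, is that every immediate successor of $(\alpha,\vec{\beta})$ is \emph{either} terminal \emph{or} spectacled, i.e. has its first two ordinate coordinates equal to $\alpha$. I would justify this by inspecting the recursive clause defining $\Tr_n$: when stepping down from $(\alpha,\vec{\beta})$ with $\min(C_{\vec{\beta}}\setminus\alpha)=\alpha$, the newly inserted ordinal is $\alpha$, and in each of the $n$ immediate successors the multiset of ordinal entries is obtained from $(\alpha,\vec{\beta})$ by inserting a second copy of $\alpha$ and deleting one entry $\ell_i$; since $\vec{\beta}$ is strictly increasing above $\alpha$, in every case the two smallest entries of the successor are $\alpha,\alpha$ \emph{unless} that successor is the one where the deleted entry sits in position $0$ — and a short check shows that successor is terminal (it is the node of Lemma \ref{nAlphaTerminal}-type whose label is $(\alpha,\dots)$ with $C$-club already exhausted). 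Then, by Lemma \ref{nAlphaTerminal}, any non-terminal node $y\supsetneq x$ lying below a spectacled node must again be spectacled, i.e. must have $y$'s label of the form $(\alpha,\alpha,\vec{\delta})$. But a spectacled label $(\alpha,\alpha,\vec{\delta})$ cannot be bad, since badness of a label $(\alpha,\vec{\varepsilon})$ requires $\vec{\varepsilon}\in I(\mc{C})$, hence in particular $\vec{\varepsilon}$ strictly increasing, whereas $(\alpha,\vec{\delta})$ with a repeated leading $\alpha$ is not strictly increasing (here I use $\alpha$ limit, so $\alpha<\delta_0$ is forced and the repetition is genuine). This gives the conclusion: no $y\supsetneq x$ is bad.

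To assemble this cleanly I would argue by induction on $|y|-|x|$ along the subtree of $S_n(\alpha,\vec{\gamma})$ rooted at $x$, with the inductive invariant being ``every node $z$ with $x\subsetneq z$ that is not terminal is spectacled''. The base step is the step down from $(\alpha,\vec{\beta})$ analyzed above; the inductive step is an application of Lemma \ref{alphaAlphaimmediate} (each immediate successor of a spectacled node is either terminal or again spectacled, since the unique non-spectacled immediate descendant is terminal by that lemma). Once the invariant is established, badness of any proper descendant is ruled out by the observation on strict monotonicity of indices in $I(\mc{C})$.

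The main obstacle I anticipate is purely bookkeeping: carefully tracking, in the general-$n$ recursive step, \emph{which} coordinate gets duplicated and \emph{which} gets deleted in each of the $n$ branches, so as to confirm that the only immediate successor of $(\alpha,\vec{\beta})$ that fails to be spectacled is terminal. In the $n=3$ picture this was read off from a figure; here it must be done symbolically using the $\iota/\tau$ decomposition and the explicit formula for $\Tr_n(x^\smallfrown\seq{i})$ in Definition \ref{Tr}. Everything else — the use of coherence to get $C_{\vec{\beta}}\cap\alpha=C_\alpha$, the terminality of $(\alpha,\alpha,\vec{\beta})$-type nodes, and the monotonicity argument against badness of spectacled labels — is routine and parallels the three-dimensional proof verbatim.
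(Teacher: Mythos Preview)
Your overall strategy is correct and is exactly the paper's: reduce to Lemma~\ref{badImmediate} with Lemmas~\ref{alphaAlphaimmediate} and~\ref{nAlphaTerminal} in place of Lemma~\ref{alphaAlphaTerminal}, the invariant being that every non-terminal proper descendant of the bad node is spectacled.

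However, your justification of the last step is wrong. You claim a spectacled label $(\alpha,\alpha,\vec\delta)$ cannot be bad because $\vec\varepsilon=(\alpha,\vec\delta)$ ``has a repeated leading $\alpha$'' and so fails to be strictly increasing. It does not: the repetition lives in the full label $(\alpha,\alpha,\vec\delta)$, while $\vec\varepsilon=(\alpha,\vec\delta)$ is strictly increasing and may well lie in $I(\mc{C})$. The correct reason is that badness would require $\alpha\in\acc(C_{\alpha\vec\delta})$, but $C_{\alpha\vec\delta}$ is club in $\alpha\cap C_{\vec\delta}$ and in particular $C_{\alpha\vec\delta}\subseteq\alpha$, so $\alpha\notin C_{\alpha\vec\delta}$. (Note also that a bad node is never terminal, since $\alpha\in C_{\vec\varepsilon}$ gives $C_{\vec\varepsilon}\setminus\alpha\neq\emptyset$; so ``terminal or spectacled'' really does rule out badness.) A smaller confusion: in the step down from $(\alpha,\vec\beta)$ you have $\tau=\vec\beta$, $j=0$, and the deleted positions $\ell_i$ range over $\{2,\dots,n+1\}$, so \emph{every} immediate successor is $(\alpha,\alpha,\vec\beta^{\,i})$ and hence spectacled --- none of them ``deletes the entry in position $0$''.
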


\begin{proof}
The argument is identical to that of Lemma \ref{badImmediate}, appealing to Lemmas \ref{nAlphaTerminal} and \ref{alphaAlphaimmediate} instead of Lemma \ref{alphaAlphaTerminal}.
\end{proof}

\begin{lemma}\label{nbadNodes}
Let $n\ge 3$. Let $\C$ be a coherent $n$-$C$-sequence on an ordinal $\lambda$, and let
$(\alpha,\vec{\gamma}) \in \lambda \otimes [\lambda]^n$. Suppose that $x\in S_n(\alpha,\vec\gamma)$ is bad. If $\xi<\alpha$, let $\eta_\xi:=\min(C_\alpha\setminus\xi)$. There exists $\xi^*<\alpha$ such that, for every $\xi\in [\xi^*,\alpha)$,
\begin{enumerate}[(i)]
\item if $y\in S_n(\alpha,\vec\gamma)$ and $y\subset x$, then $y\in S_n(\xi,\vec\gamma)$ and
\[
\Tr_n(\pm,\xi,\vec\gamma)(y)=\sub_1^\xi(\Tr_n(\pm,\alpha,\vec\gamma)(y));
\]
\item if $y\in S_n(\alpha,\vec\gamma)$ is spectacled and $x\subset y$, then $y\in S_n(\xi,\vec\gamma)$ and
\[
\Tr_n(\pm,\xi,\vec\gamma)(y)=\sub_{1,2}^{\xi,\eta_\xi}(\Tr_n(\pm,\alpha,\vec\gamma)(y));
\]
\item if $y\in S_n(\alpha,\vec\gamma)$ is not spectacled and $x\subset y$, then $y\in S_n(\xi,\vec\gamma)$ and
\[
\Tr_n(\pm,\xi,\vec\gamma)(y)=\sub_1^\xi(\Tr_n(\pm,\alpha,\vec\gamma)(y)).
\]
\end{enumerate}
\end{lemma}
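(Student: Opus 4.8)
The plan is to follow the template of the three-dimensional Lemma~\ref{badNodes}, replacing the figure-driven case analysis by the bookkeeping built into Definition~\ref{Tr} together with the already-established Lemmas~\ref{nAlphaTerminal} and~\ref{nbadImmediate}. First I would fix the label of the bad node, writing $\Tr_n(\alpha,\vec\gamma)(x)=(\alpha,\vec\beta)$ with $\vec\beta\in I(\C)\cap[\lambda]^n$ and $\alpha\in\acc(C_{\vec\beta})$; coherence of $\C$ then gives both $\alpha\in X(\C)$ and the crucial identity $C_{\vec\beta}\cap\alpha=C_\alpha$. For part (i), Lemma~\ref{nbadImmediate} says no proper predecessor of $x$ is bad, which is exactly what guarantees $\xi_1:=\L_n(\alpha,\vec\gamma)(x\restr(|x|-1))<\alpha$; Lemma~\ref{7.7} then yields (i) for every $\xi\in(\xi_1,\alpha)$.

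For parts (ii) and (iii) I would prove, by induction on $|y|$ running over the finite cone $\{y\in S_n(\alpha,\vec\gamma):x\subseteq y\}$, that each such $y$ admits a bound $\xi_y<\alpha$ so that for all $\xi\in(\xi_y,\alpha)$ one has $y\in S_n(\xi,\vec\gamma)$ with $\Tr_n(\pm,\xi,\vec\gamma)(y)=\sub_{1,2}^{\xi,\eta_\xi}(\Tr_n(\pm,\alpha,\vec\gamma)(y))$ when $y$ is spectacled and $=\sub_1^\xi(\Tr_n(\pm,\alpha,\vec\gamma)(y))$ otherwise; taking $\xi^*$ to be the maximum of $\xi_1$ and of all $\xi_y$ then proves the lemma. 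The base case $y=x$ is (i). The structural fact driving the inductive step, read off from Definition~\ref{Tr} (and encoded in the figures of the $n=3$ proof), is that the first ordinal coordinate — which is always $\alpha$ on the $\alpha$-side and always $\xi$ on the $\xi$-side — is never deleted when one steps down, so that from the bad node $x$ all $n$ immediate successors are spectacled, whereas from a splitting spectacled node exactly one immediate successor is non-spectacled (the one deleting the second coordinate) and the other $n-1$ are spectacled. Two observations then control the first two coordinates along the $\xi$-side: at the step down from $x$ one has $\min(C_{\vec\beta}\setminus\xi)=\eta_\xi$, by $C_{\vec\beta}\cap\alpha=C_\alpha$; and at a later splitting spectacled node $z$, for which necessarily $\alpha\notin C_{\tau(z)}$, picking $\xi_z>\sup(\alpha\cap C_{\tau(z)})$ forces $\eta_\xi\notin C_{\tau(z)}$ and $\min(C_{\tau(z)}\setminus\eta_\xi)=\min(C_{\tau(z)}\setminus\alpha)$. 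Hence the final segment $\tau$ used at $z$ and the ordinal inserted there are the same whether one reads off the $\alpha$-label or the $\xi$-label; feeding in the inductive hypothesis for $z$, its spectacled successors acquire the $\sub_{1,2}^{\xi,\eta_\xi}$-form (this is~(ii)) and its unique non-spectacled successor acquires the $\sub_1^\xi$-form (this is~(iii)). Any non-spectacled $y$ with $x\subsetneq y$ is terminal by Lemma~\ref{nAlphaTerminal} applied to the walk restarted at the spectacled immediate predecessor of $y$, so the induction never has to continue past such a node. The behavior of the signs is automatic, since on both sides they are multiplied by the same factors $(-1)^{m+j+\ell_i}$; as in the $n=3$ argument I would leave this verification to the reader.

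I expect the only genuine difficulty to be organizational: carrying out the ``step down'' computation in full generality — checking, in the handful of cases distinguished by the length of the final segment $\tau$, that $\tau$ and the inserted ordinal are unaffected by replacing $\alpha$ (resp.\ the second $\alpha$) by $\xi$ (resp.\ $\eta_\xi$), and then confirming that the resulting labels on $S_n(\xi,\vec\gamma)$ match the claimed substitutions coordinate for coordinate. This is precisely where the three-dimensional proof could rely on the figures; the general case must do the indexing of Definition~\ref{Tr} by hand. The value of having Lemmas~\ref{nAlphaTerminal} and~\ref{nbadImmediate} available beforehand is exactly that they let us discard everything lying below a non-spectacled node or below a second bad node, so the induction only ever propagates through the tree of spectacled nodes, whose rigid shape is what the interlude on simulated walks isolates.
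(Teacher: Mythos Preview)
Your proposal is correct and follows essentially the same approach as the paper, whose proof is the single sentence ``This is proven in the same way as Lemma~\ref{badNodes}''; you have simply spelled out what that means, organizing the inductive step exactly as in the three-dimensional argument (part~(i) via Lemma~\ref{nbadImmediate} and Lemma~\ref{7.7}, then an induction through the cone of spectacled nodes below $x$, with non-spectacled descendants disposed of by Lemma~\ref{nAlphaTerminal}). One small expository imprecision: your phrase ``for which necessarily $\alpha\notin C_{\tau(z)}$'' is literally true only in the sub-case $\tau(z)=\vec{\gamma}'$; when $\tau(z)$ is a proper final segment of $\vec{\gamma}'$ the pivot is some $\gamma'_{k-1}$ and neither $\alpha$ nor $\eta_\xi$ enters the step computation at all, so that case is trivial --- but you already acknowledge this in your closing remarks about ``the handful of cases distinguished by the length of~$\tau$''.
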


\begin{proof}
This is proven in the same way as Lemma \ref{badNodes}.
\end{proof}

\begin{lemma}\label{end_ext_n}
Let $n\ge 3$. Let $\C$ be a coherent $n$-$C$-sequence on an ordinal $\lambda$, and let 
$(\alpha,\vec{\gamma}) \in \lambda \otimes [\lambda]^n$, with $\alpha$ a limit ordinal.
Then there exists $\xi^*<\alpha$ such that, for all $\xi\in [\xi^*,\alpha]$, $S_n(\pm,\xi,\vec\gamma)$ is an end-extension of $S_n(\pm,\alpha,\vec\gamma)$.
\end{lemma}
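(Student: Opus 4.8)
The goal is to show that for a coherent $n$-$C$-sequence $\mc{C}$ on $\lambda$, a limit ordinal $\alpha<\gamma_0$, and $\vec\gamma\in\lambda^{[n]}$, there is $\xi^*<\alpha$ so that for all $\xi\in[\xi^*,\alpha]$ the signed tree $S_n(\pm,\xi,\vec\gamma)$ end-extends $S_n(\pm,\alpha,\vec\gamma)$. This is the higher-$n$ analogue of Lemma \ref{end_ext}, and I would prove it in exactly the same way, now that all the ingredients have been generalized. The key point is that every node $x\in S_n(\alpha,\vec\gamma)$ falls into one of two classes: either it descends from a bad node, or it does not. For nodes in the first class, apply Lemma \ref{nbadNodes}; for nodes in the second class, apply Lemma \ref{7.7}. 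In both cases one gets, for all sufficiently large $\xi<\alpha$, an explicit description of the $\xi$-image of $x$ (namely, replace the $\alpha$ in coordinate $1$ by $\xi$, and in the spectacled sub-case also insert $\eta_\xi$ in coordinate $2$), which in particular guarantees $x\in S_n(\xi,\vec\gamma)$ and that $\sgn_{\xi\vec\gamma}(x)=\sgn_{\alpha\vec\gamma}(x)$.

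First I would, for each bad node $x\in S_n(\alpha,\vec\gamma)$, fix a witness $\xi^*_x<\alpha$ to Lemma \ref{nbadNodes}; since $S_n(\alpha,\vec\gamma)$ is finite, $\xi_0:=\max\{\xi^*_x : x\in S_n(\alpha,\vec\gamma)\text{ is bad}\}<\alpha$. Next I would set
\[
\xi_1 := \max\{L_n(\alpha,\vec\gamma)(x) : x\in S_n(\alpha,\vec\gamma)\ \wedge\ L_n(\alpha,\vec\gamma)(x)<\alpha\},
\]
which is again $<\alpha$ by finiteness. Put $\xi^*:=\max\{\xi_0,\xi_1\}<\alpha$ and fix $\xi\in[\xi^*,\alpha]$ (the case $\xi=\alpha$ being trivial, so assume $\xi<\alpha$). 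For a node $x$ not descending from any bad node, all nodes $y\subseteq x$ have $L_n(\alpha,\vec\gamma)(y)\le\xi_1<\xi$, so Lemma \ref{7.7} applies along the whole branch to $x$ and gives $x\in S_n(\xi,\vec\gamma)$ with the expected label and sign. For a node $x$ descending from a bad node $z$ (take $z$ the $\subseteq$-least such, necessarily unique on the branch by Lemma \ref{nbadImmediate}), parts (i)–(iii) of Lemma \ref{nbadNodes} with witness $\xi^*_z\le\xi_0<\xi$ cover, respectively, the part of the branch below $z$, the spectacled nodes above $z$, and the non-spectacled nodes above $z$; in each case $x\in S_n(\xi,\vec\gamma)$ with a label differing from $\Tr_n(\pm,\alpha,\vec\gamma)(x)$ only in coordinates $1$ (and possibly $2$), hence with unchanged sign. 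This establishes that $S_n(\alpha,\vec\gamma)\subseteq S_n(\xi,\vec\gamma)$ and that the labels (and signs) agree modulo the coordinate-$1$/coordinate-$2$ substitution, i.e.\ $S_n(\pm,\xi,\vec\gamma)$ end-extends $S_n(\pm,\alpha,\vec\gamma)$.

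I expect essentially no serious obstacle here: the work has already been front-loaded into Lemmas \ref{nAlphaTerminal}, \ref{nbadImmediate}, and \ref{nbadNodes}, which generalize the three-dimensional lemmas whose combination yielded Lemma \ref{end_ext}. The only mild subtlety is bookkeeping: one must check that every node of $S_n(\alpha,\vec\gamma)$ really is accounted for by one of the two cases and that the witnesses chosen are uniform over the (finitely many) branches — but this is exactly the structure of the proof of Lemma \ref{end_ext}, and the diagrams that were needed there were only needed to \emph{prove} Lemma \ref{badNodes}, not to assemble Lemma \ref{end_ext} from it. So the proof is, as in the $n=3$ case, "little more than a summary of things we've already proven": fix the finitely many bad-node witnesses and the finitely many lower-trace values, take the max, and invoke Lemmas \ref{7.7} and \ref{nbadNodes}.
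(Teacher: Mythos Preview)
Your proposal is correct and follows exactly the same approach as the paper. The paper's proof is a one-liner (``same as the proof of Lemma \ref{end_ext}, replacing the appeal to Lemma \ref{badNodes} with one to Lemma \ref{nbadNodes}''), and you have simply unpacked that one-liner: take $\xi_0$ to be the maximum of the finitely many bad-node witnesses from Lemma \ref{nbadNodes}, take $\xi_1$ to be the maximum of the lower-trace values below $\alpha$, set $\xi^*=\max\{\xi_0,\xi_1\}$, and then invoke Lemma \ref{7.7} for nodes on branches with no bad node and Lemma \ref{nbadNodes}(i)--(iii) for the rest.
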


\begin{proof}
This is the same as the proof of Lemma \ref{end_ext}, replacing the appeal to Lemma \ref{badNodes} with one to \ref{nbadNodes}.
\end{proof}

\begin{lemma}\label{nEasyNodes}
Let $n\ge 3$. Let $\C$ be a coherent $n$-$C$-sequence on an ordinal $\lambda$, and let 
$(\alpha,\vec{\gamma}) \in \lambda \otimes [\lambda]^n$, with $\alpha$ a limit ordinal. Let $x\in S_n(\alpha,\vec\gamma)$ be terminal and non-spectacled. Then there exists $\xi^*<\alpha$ such that, for all $\xi\in [\xi^*,\alpha)$,
\[
\Tr_n(\pm,\xi,\vec\gamma)(x)=\sub_1^\xi(\Tr(\pm,\alpha,\vec\gamma)(x)).
\]
\end{lemma}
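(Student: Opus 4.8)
The plan is to follow the proof of Lemma~\ref{easyNodes} (its three-dimensional instance) almost verbatim, substituting Lemma~\ref{nbadNodes} for Lemma~\ref{badNodes}, and splitting according to whether or not $x$ has a predecessor $x_0\subsetneq x$ that is bad for $(\alpha,\vec\gamma)$. First, two structural remarks, both immediate from the recursion defining $\Tr_n$. (1) A bad node is never terminal: if its label is $(\alpha,\vec\beta)$ then $\tau(\alpha,\vec\beta)=\vec\beta$ and $\alpha\in\acc(C_{\vec\beta})\subseteq C_{\vec\beta}$, so the step down from it inserts $\min(C_{\vec\beta}\setminus\alpha)=\alpha$ and it does have immediate successors; in particular the terminal node $x$ is never itself bad, so in the second case we may legitimately fix a bad $x_0\subsetneq x$. (2) A bad node is non-spectacled, since $\alpha\in\acc(C_{\vec\beta})$ forces $\alpha<\beta_0$ (the club $C_{\vec\beta}$ consists of ordinals below $\beta_0$); combined with (1) and Lemma~\ref{nAlphaTerminal} this shows bad nodes can only arise when $\alpha<\gamma_0$ --- for $\alpha=\gamma_0$ the root is spectacled, whence Lemma~\ref{nAlphaTerminal} would force every non-spectacled, in particular every bad, node to be terminal. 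So whenever we are in the second case, the hypothesis $\alpha<\gamma_0$ of Lemma~\ref{nbadNodes} holds. Finally, the case $x=\emptyset$ is trivial --- both sides of the asserted identity then equal the starting label with $\alpha$ in position~$1$ replaced by $\xi$, for every $\xi$ --- so we may also assume $x\neq\emptyset$.

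Suppose now that no $x_0\subsetneq x$ is bad. I would show $\L_n(\alpha,\vec\gamma)(x\restr(|x|-1))<\alpha$ and then apply Lemma~\ref{7.7} with $\xi^\ast:=\L_n(\alpha,\vec\gamma)(x\restr(|x|-1))$. Each $y$ with $y\subseteq x\restr(|x|-1)$ is a proper initial segment of $x$, hence a \emph{non-terminal} node of $S_n(\alpha,\vec\gamma)$ (it has a successor on the branch to $x$); if in addition $y\in E_n(\alpha,\vec\gamma)$, write $\Tr_n(\alpha,\vec\gamma)(y)=(\alpha,\vec\beta)$, so that $\vec\beta\in I(\C)$, and since $y$ is non-terminal the step-ordinal $\min(C_{\vec\beta}\setminus\alpha)$ exists, so $C_{\vec\beta}$ has a member $\geq\alpha$. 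As $y$ is not bad we have $\alpha\notin\acc(C_{\vec\beta})$, and therefore $\sup(\alpha\cap C_{\vec\beta})<\alpha$ (otherwise $\alpha\in\acc^+(C_{\vec\beta})\subseteq C_{\vec\beta}$, giving $\alpha\in\acc(C_{\vec\beta})$). Maximising over the finitely many such $y$ (with the convention $\max\emptyset=0$) yields $\xi^\ast<\alpha$, and Lemma~\ref{7.7} then delivers $x\in S_n(\xi,\vec\gamma)$ and $\Tr_n(\pm,\xi,\vec\gamma)(x)=\sub_1^\xi(\Tr_n(\pm,\alpha,\vec\gamma)(x))$ for all $\xi\in[\xi^\ast,\alpha)$. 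In the opposite case, with $x_0\subsetneq x$ bad, I would let $\xi^\ast<\alpha$ be the witness that Lemma~\ref{nbadNodes} supplies for $x_0$; since $x$ is not spectacled and $x_0\subset x$, part~(iii) of that lemma gives exactly $\Tr_n(\pm,\xi,\vec\gamma)(x)=\sub_1^\xi(\Tr_n(\pm,\alpha,\vec\gamma)(x))$ for all $\xi\in[\xi^\ast,\alpha)$, as required.

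I do not anticipate a real obstacle: this is the faithful $n$-dimensional transcription of Lemma~\ref{easyNodes}, and every tool it uses (Lemmas~\ref{7.7}, \ref{nAlphaTerminal}, \ref{nbadNodes}) is already available for general $n$. The only steps needing a moment's care are the two structural remarks of the first paragraph --- that bad nodes are non-terminal and non-spectacled, which guarantees both that the case split is exhaustive and that Lemma~\ref{nbadNodes} applies --- together with the short computation that a non-bad $E_n$-node lying on the branch to $x$ feeds a value strictly below $\alpha$ into the lower trace, which is precisely what places us in the hypothesis of Lemma~\ref{7.7} in the first case.
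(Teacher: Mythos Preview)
Your proof is correct and follows exactly the same approach as the paper's: split on whether $x$ has a bad predecessor, then invoke Lemma~\ref{nbadNodes}(iii) or Lemma~\ref{7.7} accordingly. You simply fill in the details the paper leaves implicit---namely the two structural remarks about bad nodes and the verification that the lower trace is below $\alpha$ in the non-bad case.
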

\begin{proof}
If $x$ descends from a bad node in $\Tr_n(\pm,\alpha,\vec\gamma)$, then use Lemma \ref{nbadNodes}(iii). Otherwise, apply Lemma \ref{7.7}.
\end{proof}

The following establishes Theorem B:

\begin{theorem}\label{mainn}
Let $n$ be a positive integer. Let $\C$ be a coherent $n$-$C$-sequence on an ordinal $\lambda$. Then the $n$-family
\[
\Phi(\resh_n):=\left\langle \resh_n(\cdot, \vec{\gamma}):\gamma_0 \ra \bigoplus_{[\lambda]^{n-1}} 
\ \middle| \ \vec{\gamma} \in [\lambda]^n \right\rangle
\]
is coherent modulo locally semi-constant functions, i.e., for every $\vec\beta\in[\lambda]^{n+1}$,
\[
\sum_{i=0}^n(-1)^i\resh_n(\cdot,\vec\beta^i)
\]
is locally semi-constant at every limit ordinal $\alpha\le\beta_0$.
\end{theorem}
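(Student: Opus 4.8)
The plan is to carry out the proof of Theorem~\ref{main3} essentially verbatim in dimension $n$, substituting the general lemmas (Lemmas~\ref{nAlphaTerminal}, \ref{nbadImmediate}, \ref{nbadNodes}, \ref{end_ext_n}, \ref{nEasyNodes}) for their three-dimensional prototypes and, crucially, replacing the figure-driven analysis of the \emph{mixed} case by a clean appeal to Lemma~\ref{simulation}. The cases $n=1$ (classical) and $n=2$ are handled separately by the same, strictly simpler, argument (after a bad node one simulates a one-dimensional walk, with a single dummy offshoot per step rather than the $n-2$ offshoots needed below), so we concentrate on $n\ge 3$. Fix $\vec\beta\in[\lambda]^{n+1}$ and a limit ordinal $\alpha\le\beta_0$; the goal is that $f(\xi):=\sum_{i\le n}(-1)^i\resh_n(\xi,\vec\beta^i)$ be constant on a final segment of $\alpha$. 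First I would invoke Lemma~\ref{end_ext_n}, together with the observation (as in Lemma~\ref{end_ext}) that the relabelling on the common part touches only the first one or two ordinal coordinates, which $\sigma$ does not see, so that $\sigma$-values are preserved there, to obtain $\xi^*<\alpha$ such that for all $i\le n$ and all $\xi\in[\xi^*,\alpha)$ the tree $S_n(\xi,\vec\beta^i)$ end-extends $S_n(\alpha,\vec\beta^i)$. This lets me write $\resh_n(\xi,\vec\beta^i)$ as a $\xi$-independent term coming from $S_n(\alpha,\vec\beta^i)$ plus a ``tail'' $\sum_{t\in\bS_n(\alpha,\vec\beta^i)}\sum_{x\supsetneq t}\sigma_{\xi\vec\beta^i}(x)$ over the signed terminal nodes $\bS_n(\alpha,\vec\beta^i)$ of the $\alpha$-walk. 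Multiplying by $(-1)^i$, summing over $i$, and applying Lemma~\ref{7.12} to $\bigsqcup_{i\le n}\bS_n(\alpha,\vec\beta^i)$, the problem reduces to showing that for each of the finitely many cancelling pairs $\{t,t_-\}$ produced by Lemma~\ref{7.12} the function $g_t(\xi)$ — the signed sum of $\sigma$-contributions below the $\xi$-image of $t$ minus that below the $\xi$-image of $t_-$ — is eventually constant; taking the maximum of the resulting thresholds then finishes the proof.

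Next I would dispatch the easy cases of a pair $\{t,t_-\}$ with common terminal label $\ell$, exactly as in Theorem~\ref{main3}. If $\ell$ is non-spectacled, Lemma~\ref{nEasyNodes} shows that for large $\xi$ the two $\xi$-images agree up to sign, so the subtrees below them coincide and $g_t\equiv 0$ near $\alpha$. If $\ell=(\alpha,\alpha,\vec\delta)$ is spectacled (with $\vec\delta\in\lambda^{[n-1]}$), I split further according to whether each of $t$ and $t_-$ descends from a node that is \emph{bad} for its walk: if neither does, Lemma~\ref{7.7} gives $\xi$-images of the form $(\pm,\xi,\alpha,\vec\delta)$, which together with everything below them cancel; if both do, Lemma~\ref{nbadNodes}(ii) gives $\xi$-images $(\pm,\xi,\eta_\xi,\vec\delta)$, with $\eta_\xi=\min(C_\alpha\setminus\xi)$, which likewise cancel.

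The genuinely new case is the \emph{mixed} one, say $t$ descends from a bad node and $t_-$ does not. By Lemma~\ref{nbadNodes}(ii) the $\xi$-image of $t$ is $(\varepsilon,\xi,\eta_\xi,\vec\delta)$ and by Lemma~\ref{7.7} the $\xi$-image of $t_-$ is $(-\varepsilon,\xi,\alpha,\vec\delta)$ for a suitable sign $\varepsilon$; here coherence enters, since $t$ being bad forces $\alpha\in X(\mc{C})$. Running through the few sub-cases that make $(\alpha,\alpha,\vec\delta)$ terminal — according to whether $\alpha$ belongs to, or accumulates in, $C_{\vec\delta}$, and the corresponding possibilities one coordinate deeper — one sees that, for all sufficiently large $\xi$, one further step on one of the two sides (which side depends on the sub-case) produces a node whose children consist of: a single copy of the other side's node, which cancels the entire cone below it; boundedly many terminal nodes with $\xi$-independent labels; and boundedly many nodes of the form $(\xi,\eta_\xi,\alpha,\vec{\delta}')$ with $\vec{\delta}'\in\lambda^{[n-2]}$. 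To each such node I apply Lemma~\ref{simulation} with $m=n-2$, having enlarged $\xi^*$ past $L_{n-2}^-(\alpha,\vec{\delta}')$: the subtree it spawns is precisely the stretched truncated walk $(S_{n-2}^-(\alpha,\vec{\delta}'))^{n}$, and since its labels agree with those of the $(n-2)$-walk from $(\alpha,\vec{\delta}')$ in all but the first two ordinal coordinates — exactly the ones $\sigma$ discards — its total $\sigma$-contribution is independent of $\xi$. Hence $g_t$ is constant on a final segment of $\alpha$ in every sub-case.

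The main obstacle is precisely this mixed case: verifying that, for all sufficiently large $\xi<\alpha$, the picture beneath $(\xi,\alpha,\vec\delta)$ and $(\xi,\eta_\xi,\vec\delta)$ organizes into (i) a cancelling cone, (ii) boundedly many terminal nodes with $\xi$-independent labels, and (iii) residual subtrees governed by Lemma~\ref{simulation}. This still demands the sub-case bookkeeping familiar from the $n=3$ proof, now without the crutch of diagrams, and one must also track that the signs propagate correctly through the stretch $s_{m,n}$ and through the cancellations — routine from the recursion in Definition~\ref{Tr}, but requiring care.
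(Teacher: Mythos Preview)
Your proposal is correct and follows essentially the same route as the paper's proof: reduce via Lemma~\ref{7.12} to pairs of terminal nodes, dispose of non-spectacled and same-type spectacled pairs using Lemmas~\ref{nEasyNodes}, \ref{7.7}, and \ref{nbadNodes}, and in the mixed case step once (on the good side when $\alpha\in\acc(C_{\vec\delta})$, on the bad side when $\alpha\in\nacc(C_{\vec\delta})$) to produce a cancelling cone together with residual nodes of the form $(\xi,\eta_\xi,\alpha,\vec\delta^i)$, which are then handled either as terminal or via Lemma~\ref{simulation}. One small wording issue: the ``terminal nodes with $\xi$-independent labels'' you mention do carry $\xi$ and $\eta_\xi$ in their first two ordinal slots; what is $\xi$-independent is their $\sigma$-value, which is all that matters---but you clearly have this in mind elsewhere in the write-up.
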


\begin{proof}
The case $n=1$ is classical and the case $n=2$ is as in \cite[Theorem 7.2(3)]{bergfalk2024introductionhigherwalks}, \textit{mutatis mutandis} (it can also be established by an easy modification of the argument below). We therefore focus on the case $n\ge 3$.

Fix $n\ge 3$. We proceed like in the three-dimensional case: fix a limit ordinal $\alpha$ and a sequence $\vec\beta\in [\lambda]^{n+1}$ with $\alpha\le\beta_0$, and argue that the function $\sum_{i=0}^n(-1)^i\resh_n(\cdot,\vec\beta^i)$ is eventually constant for large enough $\xi<\alpha$.

Having given all the details in Theorem \ref{main3}, we now give a more informal sketch; all the main ideas already appeared in the $n=3$ case. As before, by Lemma \ref{7.12}, $\bigsqcup_{i\le n}\bS_n((-1)^i,\alpha,\vec\beta^i)$ can be partitioned into pairs $\{(i_+,t_+),(i_-,t_-)\}$ so that the values of $t_+$ and $t_-$ under the respective $\Tr_n$-functions are signed $(n+1)$-tuples of ordinals 
with opposite signs but equal ordinal entries. Fix such a pair of tuples, say $+\vec\zeta$ and $-\vec\zeta$, coming from inputs $(i_+,t_+)$ and $(i_-,t_-)$. If $\vec\zeta$ is not spectacled, then, by Lemma \ref{nEasyNodes}, the cones of $S_n(\xi,\vec\beta^{i_+})$ and $S_n(\xi,\vec\beta^{i_-})$ below $t_+$ and $t_-$, respectively, must coincide for all large enough $\xi<\alpha$. Moreover, the values of $\Tr_n((-1)^{i_+},\xi,\vec\beta^{i_+})$ and $\Tr_n((-1)^{i_-},\xi,\vec\beta^{i_-})$ on said cones must be the same but with opposite signs, hence cancel out in the computation of $\resh_n$.

We may therefore assume that $\vec\zeta$ is spectacled. If both $+\vec\zeta$ and $-\vec\zeta$ descend from no bad nodes, then, by Lemma \ref{7.7}, for all large enough $\xi<\alpha$ the portions of the signed trees underneath $t_+$ and $t_-$ must again cancel out in the computation of $\resh_n$. If both $+\vec\zeta$ and $-\vec\zeta$ descend from bad nodes, the same holds by an application of Lemma \ref{nbadNodes}(iv). All that remains is the case of a pair of spectacled nodes $+\vec\zeta=(+,\alpha,\alpha,\vec\gamma)$ and $-\vec\zeta=(-,\alpha,\alpha,\vec\gamma)$ where (say) the former descends from a bad node but the latter does not. Their $\xi$-images (recall Notation \ref{xiImage}) are therefore $(+,\xi,\eta_\xi,\vec\gamma)$ and $(-,\xi,\alpha,\vec\gamma)$, respectively.

Let us first suppose that $\vec\gamma$ is not a $\C$-index, i.e. that $\vec\gamma\not\in I(\C)$. Then there exists $i<n-1$ such that $\gamma_i\not\in C_{\gamma_{i+1}\dots\gamma_{n-2}}$ and $\seq{\gamma_{i+1},\dots,\gamma_{n-2}}\in I(\C)$. Since $(\pm,\alpha,\alpha,\vec\gamma)$ is terminal, it follows that $C_{\gamma_{i+1}\dots\gamma_{n-2}}\setminus\gamma_i=\emptyset$. But then $(\pm,\xi,\nu,\vec\gamma)$ is terminal for every $\xi<\alpha$ and every $\nu$ with $\xi\le\nu\le\gamma_0$, so the $\xi$-images $(+,\xi,\eta_\xi,\vec\gamma)$ and $(-,\xi,\alpha,\vec\gamma)$ are also terminal, hence their contribution to $\resh_n$ is constant.

By the previous paragraph, we may assume from now on that $\vec\gamma\in I(\C)$. Suppose now that $\alpha\not\in C_{\vec\gamma}$. Since $(\pm,\alpha,\alpha,\vec\gamma)$ is terminal, we know that $C_{\vec\gamma}\setminus \alpha=\emptyset$. As $C_{\vec\gamma}$ is closed, it follows that $C_{\vec\gamma}\setminus \xi=\emptyset$ for all large enough $\xi<\alpha$, hence the $\xi$-images of $(\pm,\alpha,\alpha,\vec\gamma)$ are also eventually terminal.

We now come to the harder case, $\alpha\in C_{\vec\gamma}$, which will occupy the bulk of our work. Say $(+,\alpha,\alpha,\vec\gamma)$ descends from the bad node $(\pm,\alpha,\vec\delta)$. By coherence, we have that $C_\alpha=\alpha\cap C_{\vec\delta}$. We now embark on some case analysis:
\begin{itemize}
\item Suppose that $\alpha\in\acc(C_{\vec\gamma})$. By coherence, $C_\alpha=\alpha\cap C_{\vec\gamma}=\alpha\cap C_{\vec\delta}$. Letting $\eta_\xi:=\min(C_\alpha\setminus\xi)$ for $\xi<\alpha$, we see that the walk down from $(\xi,\vec\delta)$ is as in Figure \ref{bad_n_xi}. For comparison, the $\xi$-image of the good case is displayed in Figure \ref{good_n}.

\begin{figure}[ht]
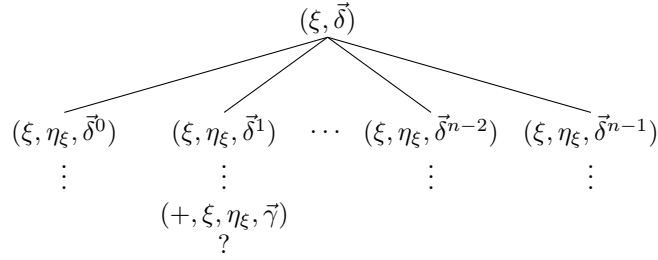

\ctikzfig{bad_n_xi}
\caption{We assume for concreteness that $(\alpha,\alpha,\vec\gamma)$ descends from $(\alpha,\alpha,\vec\delta^1)$.}
\label{bad_n_xi}
\end{figure}

\begin{figure}[ht]
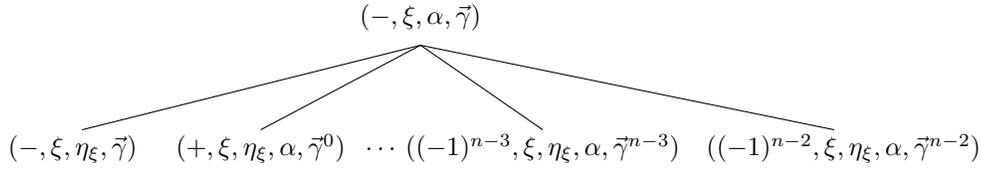

\ctikzfig{good_n}
\caption{The $\xi$-image of the good case.}
\label{good_n}
\end{figure}

We thus see that everything under the nodes $(+,\xi,\eta_\xi,\vec\gamma)$ and $(-,\xi,\eta_\xi,\vec\gamma)$ cancels out. We are therefore left with arguing that the behaviour under the nodes $(\xi,\eta_\xi,\alpha,\vec\gamma^i)$ for $i<n-1$ does not depend on $\xi$ so long as $\xi$ is sufficiently large. Fix $i<n-1$.

\begin{itemize}
\item If $\alpha\in \acc(C_{\vec \gamma^i})$, then $(\xi,\eta_\xi,\alpha,\vec\gamma^i)$ is terminal for all $\xi<\alpha$. Indeed, $\alpha\vec\gamma^i$ is a $\C$-index and, by coherence, $C_\alpha=C_{\alpha\vec\gamma^i}$, hence $\eta_\xi=\min(C_{\alpha\vec\gamma^i}\setminus\xi)$ and therefore 
\[C_{\eta_\xi\alpha\vec\gamma^i}\setminus\xi\subset C_{\alpha\vec\gamma^i}\cap [\xi,\eta_\xi)=\emptyset.\]

\item If $\alpha\in \nacc(C_{\vec \gamma^i})$, set $\bar\alpha=\max(\alpha\cap C_{\vec \gamma^i})$. If $\xi>\bar\alpha$, then $\eta_\xi\not\in C_{\vec\gamma^i}$, and hence $(\alpha,\vec\gamma^i)$ is the longest final segment of $(\xi,\eta_\xi,\alpha,\vec\gamma^i)$ that is a $\C$-index. But $C_{\alpha\vec\gamma_i}\setminus \eta_\xi\subset C_{\vec\gamma^i}\cap (\bar\alpha,\alpha)=\emptyset$. Therefore, $(\xi,\eta_\xi,\alpha,\vec\gamma^i)$ is terminal.

\item If $\alpha\not\in C_{\vec\gamma^i}$, then by Lemma \ref{simulation}, the walk simulates an $(n-2)$-dimensional walk down from the node $(\alpha,\vec\gamma^i)$. The tree $S_n(\xi,\eta_\xi,\alpha,\vec\gamma^i)$ is therefore independent of $\xi$ (for large enough $\xi<\alpha$) and moreover its labels (minus the first two ordinal entries) must also be independent of $\xi$ (for large enough $\xi<\alpha$), again by Lemma \ref{simulation}(1)
\end{itemize}
\item Suppose now that $\alpha\in\nacc(C_{\vec\gamma})$. Set $\bar\alpha=\max(\alpha\cap C_{\vec\gamma})$. If $\xi\in (\bar\alpha,\alpha)$, then the node $(-,\xi,\alpha,\vec\gamma)$ is terminal, because $C_{\alpha\vec\gamma}\setminus\xi \subset C_{\vec\gamma}\cap (\bar\alpha,\alpha)=\emptyset$. Similarly, $\eta_\xi\not\in C_{\vec\gamma}$ whenever $\xi\in (\bar\alpha,\alpha)$, and so the situation underneath the $\xi$-image $(+,\xi,\eta_\xi,\vec\gamma)$ of the node $(+,\alpha,\alpha,\vec\gamma)$ is as in Figure \ref{bad_n_nacc}.

\begin{figure}[h]
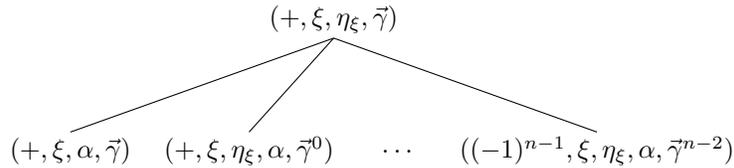

\ctikzfig{bad_n_nacc}
\caption{The $\xi$-image of the bad case when $\alpha\in\nacc(C_{\vec\gamma})$.}
\label{bad_n_nacc}
\end{figure}

The behaviour underneath the nodes $(\xi,\eta_\xi,\alpha,\vec\gamma^i)$ is then analysed as before.
\end{itemize}
This completes the proof.
\end{proof}

\section{Nontriviality} \label{section: nontriviality}

In this section, we prove that, for every positive integer $n$, performing 
walks along a $\Sq{n}^s(\lambda)$-sequence $\mc{C}$ yields a function 
$\resh^{\mc{C}}_n$ such that the $n$-family $\Phi(\resh^{\mc{C}}_n)$ is 
nontrivial in addition to being coherent. We begin with the classical case 
of $n = 1$, where this result was already known but, as far as we can tell, 
not recorded in the literature in this precise form. Recalling that $\resh_1$ is 
(modulo irrelevant cosmetic differences) the classical number of steps function $\rho_2$, 
we work here with $\rho_2$ rather than $\resh_1$.

\begin{fact}{\cite[Theorem 6.3.2]{todorcevic_walks_book}} \label{fact: unbddness}
  Suppose that $\lambda$ is an ordinal, $D$ is a club in $\lambda$, and 
  $\mc{C}$ is a $\Sq{1}^s(D)$-sequence. Then, for every unbounded set 
  $A \subseteq D$ and every $k < \omega$, there are $\alpha < \beta$, both 
  in $A$, such that $\rho_2^{\mc{C}}(\alpha,\beta) > k$.
\end{fact}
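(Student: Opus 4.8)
This statement is classical, due to Todorcevic; the plan I would follow is to argue by contradiction, extracting from a counterexample a \emph{thread} through $\mc{C}$ — a club $E$ in $\lambda$ with $C_\alpha = E \cap \alpha$ for all $\alpha \in \acc(E)$ — which is impossible since $\mc{C}$ is a $\square(D)$-sequence (the negation of item (4) of Definition \ref{def: partial_square}). So suppose $A \subseteq D$ is unbounded and $k < \omega$ satisfies $\rho_2^{\mc{C}}(\alpha,\beta) \leq k$ for all $\alpha < \beta$ in $A$; assume $k$ is least such. The underlying mechanism is the interaction of the walk with coherence.

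The first lemma I would isolate is that walks factor through accumulation points: if $\beta \in \acc(C_\gamma)$ and $\alpha < \beta$, then coherence gives $C_\gamma \cap \beta = C_\beta$, so $\min(C_\gamma \setminus \alpha) = \min(C_\beta \setminus \alpha)$, whence $\Tr(\alpha,\gamma) = \{\gamma\} \cup \Tr(\alpha,\beta)$ and $\rho_2^{\mc{C}}(\alpha,\gamma) = \rho_2^{\mc{C}}(\alpha,\beta) + 1$. Consequently, each passage of a walk through an accumulation point of the club currently in use increments the number of steps; in particular a chain $\beta_0 < \beta_1 < \cdots < \beta_k$ with $\beta_i \in \acc(C_{\beta_{i+1}})$ for $i < k$, together with any $\alpha \in A$ below $\beta_0$ and with $\beta_k \in A$, would already produce $\rho_2^{\mc{C}}(\alpha,\beta_k) > k$. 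So the hypothesis must forbid such configurations along an unbounded set, and the substance of the argument is that this forces a thread.

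Concretely, I would pass to the club $E_0 = \acc^+(A) \subseteq D$ and then argue, using the subadditivity of $\rho_2^{\mc{C}}$ via the full lower trace (and/or the coherence of the family $\Phi(\rho_2^{\mc{C}})$ recalled in the introduction), that for $\delta \in E_0$ the walk from $\delta$ to points of $A \cap \delta$ is controlled to within a bounded number of steps by the walks among points of $A$ below $\delta$. For such $\delta$, pigeonholing over the finitely many possible step-counts on the cofinal set $A \cap \delta$ yields a cofinal $A_\delta \subseteq A \cap \delta$ on which the first-step map $\alpha \mapsto \min(C_\delta \setminus \alpha)$ behaves rigidly; peeling off first steps (this is where the minimality of $k$, i.e.\ an induction on $k$, enters, replacing the bound $k$ by $k-1$ on a related configuration) one concludes that $C_\delta$ must eventually be traced out coherently by data coming from $A$. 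Performing this uniformly on a club of $\delta \in E_0$ and amalgamating the resulting coherent end-segments of the $C_\delta$'s produces the club $E$ threading $\mc{C}$.

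The step I expect to be the main obstacle is making this extraction rigorous, because the natural recursion does not literally descend to a smaller $\square$-sequence: when one restarts the analysis at a first-step target $\gamma = \min(C_\delta \setminus \alpha)$, the restriction of $\mc{C}$ below $\gamma$ is coherent but \emph{threadable} (threaded by $C_\delta \cap \gamma$), so the inductive hypothesis in the form stated does not apply to it. The honest bookkeeping replaces the recursion by explicit control of order types — one shows the level sets $\{\alpha < \delta : \rho_2^{\mc{C}}(\alpha,\delta) \leq k\}$ have order type bounded in terms of $k$ and the accumulation structure of $C_\delta$ — and invokes non-threadability of $\mc{C}$ only at the global level, across the club of $\delta$'s; alternatively, one runs the whole argument with $\rho_1^{\mc{C}}$ in place of $\rho_2^{\mc{C}}$, exploiting its cleaner subadditivity, and transfers the conclusion. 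Either way the conceptual picture is the same: a cofinal set on which $\rho_2^{\mc{C}}$ is bounded is, up to this analysis, a branch through the $\rho_2^{\mc{C}}$-tree, i.e.\ a thread, and $\square(D)$-sequences admit none.
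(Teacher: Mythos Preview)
The paper does not prove this Fact; it is stated with a citation to \cite[Theorem~6.3.2]{todorcevic_walks_book} and used as a black box, so there is no in-paper argument to compare against.

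Assessing your sketch on its own merits: the high-level strategy (boundedness of $\rho_2^{\mc{C}}$ on an unbounded set should yield a thread through $\mc{C}$, contradicting item~(4) of Definition~\ref{def: partial_square}) is the right one, and your lemma that $\beta \in \acc(C_\gamma)$ implies $\rho_2^{\mc{C}}(\alpha,\gamma) = \rho_2^{\mc{C}}(\alpha,\beta) + 1$ is correct and central. But the proposal is, as you yourself flag, not a proof: the paragraph beginning ``Concretely'' is a sequence of plausible moves (pigeonholing step-counts, peeling off first steps, amalgamating end-segments) without the bookkeeping that makes them go through, and the two escape routes you name at the end (explicit order-type control of the level sets, or transferring from $\rho_1$) are each substantial arguments that you do not sketch. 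In particular, your observation that the natural induction on $k$ does not descend to a smaller $\square$-sequence is exactly the point where Todorcevic's actual proof does real work, and your proposal does not get past it. For the details, see the cited reference.
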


This immediately yields the following corollary stating that, if $\mc{C}$ is a $\Sq{1}^s(D)$ 
sequence, then the family $\Phi(\rho^{\mc{C}}_2)$ is nontrivial. Recall that, 
if $\lambda$ is an ordinal of uncountable cofinality, then the nonstationary 
ideal on $\lambda$ is weakly normal, i.e., for every stationary set 
$S \subseteq \lambda$ and every regressive function $f:S \ra \lambda$, there 
is a stationary set $S' \subseteq S$ and an ordinal $\alpha < \lambda$ such 
that, for all $\gamma \in S$, we have $f(\gamma) \leq \alpha$.

\begin{corollary} \label{cor: rho_nontriv}
  Suppose that $\lambda$ is an ordinal, $D$ is a club in $\lambda$, and 
  $\mc{C}$ is a $\Sq{1}^s(D)$-sequence. Then 
  \[
    \Phi(\rho_2^{\mc{C}}) = \langle \rho^{\mc{C}}_2(\cdot, \gamma) : D \cap \gamma 
    \ra \bb{Z} \mid \gamma \in D \rangle
  \]
  is nontrivial.
\end{corollary}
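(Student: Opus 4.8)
The plan is to derive nontriviality of $\Phi(\rho_2^{\mc{C}})$ from the unboundedness statement of Fact~\ref{fact: unbddness} together with the weak normality of the nonstationary ideal on $\lambda$. Suppose toward a contradiction that $\Phi(\rho_2^{\mc{C}})$ is trivial; fix a function $\psi : D \ra \bb{Z}$ witnessing this, so that for every $\gamma \in D$ the function $\rho_2^{\mc{C}}(\cdot,\gamma) - \psi \restriction (D \cap \gamma)$ is locally semi-constant. First I would reduce to the case in which $\lambda$ has uncountable cofinality: if $\cf(\lambda) \leq \omega$ then $D$ has a cofinal subset of order type $\omega$, and one checks directly (using Fact~\ref{fact: unbddness} applied to this cofinal set, or the classical observation that $\square(D)$ forces $\otp(D)$-related constraints) that no $\square(D)$-sequence can exist in the relevant sense, or alternatively that the statement is vacuous there — in any case the interesting content is at uncountable cofinality, so assume $\cf(\lambda) > \omega$.

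For each limit $\gamma \in \acc(D)$, local semi-constancy of $\rho_2^{\mc{C}}(\cdot,\gamma) - \psi \restriction (D \cap \gamma)$ at $\gamma$ itself yields an ordinal $\eta_\gamma < \gamma$ and an integer $k_\gamma$ such that $\rho_2^{\mc{C}}(\xi,\gamma) - \psi(\xi) = k_\gamma$ for all $\xi \in D \cap (\eta_\gamma, \gamma)$. Now I would apply weak normality: the map $\gamma \mapsto \eta_\gamma$ is regressive on the stationary set $\acc(D)$, so there is a stationary $S \subseteq \acc(D)$ and a fixed $\eta^* < \lambda$ with $\eta_\gamma \leq \eta^*$ for all $\gamma \in S$. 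Shrinking $S$ further, we may assume $k_\gamma$ is constant, equal to some $k^*$, on $S$ (there are only countably many possible values since $\cf(\lambda) > \omega$; actually we just need $S$ stationary with constant $k_\gamma$, obtained by a further partition into $\aleph_0$ pieces). Then for all $\gamma \in S$ and all $\xi \in D \cap (\eta^*,\gamma)$ we have $\rho_2^{\mc{C}}(\xi,\gamma) = \psi(\xi) + k^*$.

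Fix any $\xi_0 \in D$ with $\xi_0 > \eta^*$; then for every $\gamma \in S$ with $\gamma > \xi_0$ we get $\rho_2^{\mc{C}}(\xi_0,\gamma) = \psi(\xi_0) + k^*$, a value independent of $\gamma$. This says that $\rho_2^{\mc{C}}(\xi_0, \cdot)$ is bounded — indeed constant — on the unbounded set $S \setminus (\xi_0 + 1) \subseteq D$. But this contradicts Fact~\ref{fact: unbddness}: taking the unbounded set $A = S \setminus (\xi_0+1)$ (or $A = S$) and any $k$ exceeding $\psi(\xi_0) + k^*$, there must be $\alpha < \beta$ in $A$ with $\rho_2^{\mc{C}}(\alpha,\beta) > k$, whereas our analysis forces $\rho_2^{\mc{C}}(\alpha,\beta) = \psi(\alpha) + k^*$ whenever $\alpha$ is large enough — so I would instead apply the bound uniformly: fix $\xi_0 > \eta^*$ and note $\rho_2^{\mc{C}}(\xi_0, \beta) = \psi(\xi_0) + k^*$ for all $\beta \in S$ above $\xi_0$, contradicting unboundedness of $\{\rho_2^{\mc{C}}(\xi_0,\beta) : \beta \in S, \beta > \xi_0\}$, which follows from Fact~\ref{fact: unbddness} applied to the unbounded set $\{\xi_0\} \cup (S \setminus (\xi_0+1))$ together with the standard fact that $\rho_2(\xi_0, \cdot)$ is unbounded on any unbounded set containing $\xi_0$ (one coordinate fixed). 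The main obstacle is the bookkeeping around which unbounded set to feed into Fact~\ref{fact: unbddness} and making sure the weak-normality reduction interacts correctly with local semi-constancy at the right ordinals; once $\eta^*$ and $k^*$ are extracted this is essentially the classical argument of Todorcevic, and no new ideas beyond Fact~\ref{fact: unbddness} and weak normality are required.
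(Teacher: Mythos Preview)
Your approach matches the paper's up through the extraction of $\eta^*$ and $k^*$, but the final step has a genuine gap. After you obtain $\rho_2^{\mc{C}}(\xi,\gamma) = \psi(\xi) + k^*$ for all $\gamma \in S$ and $\xi \in D \cap (\eta^*,\gamma)$, you try to derive a contradiction by fixing a single $\xi_0$ and appealing to unboundedness of $\beta \mapsto \rho_2^{\mc{C}}(\xi_0,\beta)$ on an unbounded set. But Fact~\ref{fact: unbddness} does \emph{not} assert this one-coordinate-fixed unboundedness, and indeed it can fail: for a $\square(D)$-sequence one can easily have $\rho_2(\xi_0,\cdot)$ bounded on large sets (e.g.\ on $\{\beta : \xi_0 \in C_\beta\}$, which may well be unbounded). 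Your alternative attempt, applying the Fact to $A = S \setminus (\xi_0+1)$ to get $\alpha < \beta$ in $A$ with $\rho_2(\alpha,\beta)$ large, only yields $\rho_2(\alpha,\beta) = \psi(\alpha) + k^*$; since $\psi(\alpha)$ is not controlled, there is no contradiction.

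The missing step, which the paper supplies, is to thin $S$ once more: since $\psi$ takes values in $\bb{Z}$, there is an unbounded $A \subseteq S \setminus (\eta^*+1)$ on which $\psi$ is constant, say equal to $k$. Then for all $\alpha < \beta$ in $A$ one has $\rho_2^{\mc{C}}(\alpha,\beta) = \psi(\alpha) + k^* = k + k^*$, a single fixed integer, and \emph{now} Fact~\ref{fact: unbddness} applied to $A$ gives the contradiction directly. You were one line away: you even wrote down the equation $\rho_2^{\mc{C}}(\alpha,\beta) = \psi(\alpha) + k^*$ before pivoting to the incorrect ``standard fact''; stabilizing $\psi$ on an unbounded subset is all that was needed.
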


\begin{proof}
  Suppose for the sake of contradiction that 
  $\psi : D \ra \bb{Z}$ trivializes $\Phi(\rho_2^{\mc{C}})$. Then, for each 
  $\gamma \in \acc(D)$, we can find $\alpha_\gamma < \gamma$ and $j_\gamma \in 
  \bb{Z}$ such that $\rho_2^{\mc{C}}(\beta,\gamma) - \psi(\beta) = j_\gamma$ for 
  all $\beta \in (\alpha_\gamma,\gamma)$. Note that, since $\mc{C}$ is a $\Sq{1}^s(D)$-
  sequence, we must have $\cf(\lambda) \geq \aleph_1$. Therefore, we can fix 
  a stationary $S \subseteq D$, an $\alpha \in D$, and a $j \in \bb{Z}$ such that, 
  for all $\gamma \in S$, we have $\alpha_\gamma \leq \alpha$ and $j_\gamma = j$.
  We can also fix an unbounded set $A \subseteq S \setminus (\alpha + 1)$ and a 
  $k \in \bb{Z}$ such that, for all $\gamma \in A$, we have $\psi(\gamma) = k$. 
  It follows that, for all $\gamma < \delta$, both in $A$, we have 
  $\rho^{\mc{C}}_2(\gamma,\delta) = j_\delta + \psi(\gamma) = j + k$. 
  However, by Fact \ref{fact: unbddness}, we can find $\gamma < \delta$, both in 
  $A$, such that $\rho^{\mc{C}}_2(\gamma,\delta) > |j + k|$. This contradiction 
  completes the proof.
\end{proof}

We now proceed to the general case. We first need some technical lemmas.

\begin{proposition} \label{prop: prop_613}
  Suppose that $n$ is a positive integer, $\lambda$ is an ordinal, 
  $D$ is a club in $\lambda$, and $\mc{C}$ is an $(n+1)$-$C$-sequence on $D$. 
  Fix $\vec{\gamma} \in [D]^{n+2}$. For each $x \in S^{\mc{C}}_{n+1}(\vec{\gamma})$, 
  let $\varepsilon(\vec{\gamma})(x)$ denote the maximal ordinal in 
  $\Tr^{\mc{C}}_{n+1}(+,\vec{\gamma})(x)$. Then, for each $x \in 
  S^{\mc{C}}_{n+1}(\vec{\gamma})$, the following are equivalent:
  \begin{enumerate}
    \item $\varepsilon(\vec{\gamma})(x) = \gamma_{n+1}$;
    \item $x \in {^{<\omega}}n$.
  \end{enumerate}
\end{proposition}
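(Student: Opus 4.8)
The plan is as follows. Since the signs play no role, I would work only with the ordinal part of each label $\Tr_{n+1}(+,\vec\gamma)(x)$, a weakly increasing $(n+2)$-tuple $\vec\beta$; then $\varepsilon(\vec\gamma)(x)$ is simply the last coordinate $\beta_{n+1}$ of this tuple (which is its maximum). Two routine bookkeeping facts are needed. First, every ordinal occurring in a walk label lies in $D$: this is immediate from $\vec\gamma\in[D]^{n+2}$ together with the fact that each ordinal inserted during a step belongs to some $C_{\vec\delta}\subseteq C_\emptyset=D$. Second, at a splitting node with label $\vec\beta$, writing $\vec\beta=\iota(\vec\beta)^\frown\tau(\vec\beta)$ and $j+1=|\iota(\vec\beta)|$, one has $0\le j\le n$ — because $|\vec\beta|=n+2$ forces $\vec\beta\notin I(\mc C)$ so $\iota(\vec\beta)\neq\emptyset$, while $\beta_{n+1}\in D$ forces $\langle\beta_{n+1}\rangle\in I(\mc C)$ so $\tau(\vec\beta)\neq\emptyset$ — and the inserted ordinal $\beta^*:=\min(C_{\tau(\vec\beta)}\setminus\beta_j)$ satisfies $\beta^*<\beta_{j+1}\le\beta_{n+1}$, since $C_{\tau(\vec\beta)}\subseteq\beta_{j+1}$.

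The heart of the matter is the following claim, which I would prove by induction on $|x|$: for every $x\in{^{<\omega}}n\cap S_{n+1}(\vec\gamma)$, the label of $x$ has $\gamma_{n+1}$ occurring \emph{exactly once}, namely as its last coordinate. The base case $x=\emptyset$ is the hypothesis that $\vec\gamma$ is strictly increasing. For the inductive step at a non-terminal $y\in{^{<\omega}}n$ and $i<n$, note that by the preliminary facts and the induction hypothesis, $\beta^*<\beta_{j+1}\le\beta_{n+1}=\gamma_{n+1}$, so the length-$(n+3)$ intermediate tuple $(\iota(\vec\beta),\beta^*,\tau(\vec\beta))$ still has $\gamma_{n+1}$ occurring only once, at its last position $n+2$, with $\beta^*$ inserted at position $j+1\le n+1$. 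The children are obtained by deleting position $\ell_i$, where $\langle\ell_i:i<n+1\rangle$ increasingly enumerates $\{1,\dots,n+2\}\setminus\{j+1\}$; since $j+1\le n+1$ we get $\ell_n=n+2$, so for $i<n$ the deleted position $\ell_i$ is $\le n+1$, leaving the last coordinate $\gamma_{n+1}$ untouched and still unique. This proves the claim, and with it the implication $x\in{^{<\omega}}n\ \Rightarrow\ \varepsilon(\vec\gamma)(x)=\gamma_{n+1}$, i.e. $(2)\Rightarrow(1)$.

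For the converse I would argue the contrapositive, using first that $\varepsilon(\vec\gamma)$ is non-increasing along every branch of $S_{n+1}(\vec\gamma)$: the intermediate tuple has the same maximum as the current label (as $\beta^*<\beta_{j+1}\le\beta_{n+1}$), and deletion cannot raise a maximum. Now if $x\notin{^{<\omega}}n$, let $x_0^\frown\langle n\rangle$ be the shortest initial segment of $x$ whose last entry is $n$; by downward closure it lies in $S_{n+1}(\vec\gamma)$, so $x_0$ is non-terminal and $x_0\in{^{<\omega}}n$. By the claim the label of $x_0$ has $\gamma_{n+1}$ only at its last position, which becomes position $n+2$ of the intermediate tuple, and the direction-$n$ step deletes exactly position $\ell_n=n+2$ — the sole occurrence of $\gamma_{n+1}$ — so $\varepsilon(\vec\gamma)(x_0^\frown\langle n\rangle)<\gamma_{n+1}$; by monotonicity $\varepsilon(\vec\gamma)(x)\le\varepsilon(\vec\gamma)(x_0^\frown\langle n\rangle)<\gamma_{n+1}$, giving $\neg(2)\Rightarrow\neg(1)$. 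The one point genuinely needing care is the precise form of the claim: tracking not merely that $\gamma_{n+1}$ survives but that it remains a \emph{simple} maximum. A repeated top entry can in fact occur in these walks, and that would break the argument that a direction-$n$ step strictly lowers the maximum; the resolution is exactly the observation that every inserted ordinal $\beta^*$ is strictly below $\beta_{j+1}\le$ the current maximum.
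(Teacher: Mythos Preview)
Your proof is correct and follows the same inductive approach the paper sketches. The paper's proof is a one-line remark that the result follows by induction on $|x|$ together with the monotonicity $\varepsilon(\vec{\gamma})(x^\frown\langle i\rangle)\le\varepsilon(\vec{\gamma})(x)$; you have simply written that induction out in full, and in doing so you correctly isolate the one point the paper's sketch leaves implicit---namely, that along the ${^{<\omega}}n$ part of the tree the maximum $\gamma_{n+1}$ is always \emph{simple}, so that a direction-$n$ step strictly lowers it while directions $i<n$ preserve it.
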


\begin{proof}
  The proof is a straightforward induction on $|x|$, using the definition of 
  $\Tr^{\mc{C}}_{n+1}$ together with the observation that, for all 
  non-terminal nodes $x \in S^{\mc{C}}_{n+1}(\vec{\gamma})$ and all 
  $i < n+1$, we have $\varepsilon(\vec{\gamma})(x^\frown \langle i \rangle) 
  \leq \varepsilon(\vec{\gamma})(x)$.
\end{proof}

\begin{lemma} \label{lemma: dimension_reduction}
  Suppose that $n$ is a positive integer, $\lambda$ is an ordinal, $D$ is a club 
  in $\lambda$, and $\mc{C}$ is an $(n+1)$-$C$-sequence on $D$. 
  Fix $\delta \in \acc(D)$, and 
  let $\mc{C}^\delta = \langle C_{\vec{\gamma}\delta} \mid \vec{\gamma} \in I_\delta(\mc{C}) 
  \rangle$. Then, for all $\vec{\gamma} \in [C_\delta]^{n+1}$, 
  \begin{enumerate}
    \item $S^{\mc{C}^\delta}_n(\vec{\gamma}) = S^{\mc{C}}_{n+1}(\vec{\gamma},\delta) 
    \cap {^{<\omega}}n$;
    \item for all $x \in S^{\mc{C}^\delta}_n(\vec{\gamma})$, we have
    \begin{itemize}
      \item $\Tr^{\mc{C}}_{n+1}(+,\vec{\gamma},\delta)(x) = 
      (\Tr^{\mc{C}^\delta}_n(+,\vec{\gamma})(x))^\frown \langle 
      \delta \rangle$;
      \item $L_{n+1}^{\mc{C}}(\vec{\gamma},\delta)(x) = 
      L_n^{\mc{C}^\delta}(\vec{\gamma})(x)$.
    \end{itemize}
  \end{enumerate}
\end{lemma}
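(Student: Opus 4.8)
The plan is to prove both parts of Lemma~\ref{lemma: dimension_reduction} simultaneously by induction on $|x|$, closely mirroring the structure of Lemma~\ref{simulation} but in the simpler setting where we append a single coordinate $\delta$ rather than two coordinates $\xi,\eta_\xi$. The key observation, which is really the engine of the whole argument, is that along any node $x \in {}^{<\omega}n$ of $S^{\mc{C}}_{n+1}(\vec{\gamma},\delta)$, the final coordinate of the label remains equal to $\delta$ and the club $C_\delta$ is never consulted when taking a step; this is exactly (the content of) Proposition~\ref{prop: prop_613}, which tells us that $\varepsilon(\vec{\gamma},\delta)(x) = \delta$ precisely for $x \in {}^{<\omega}n$. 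So first I would invoke Proposition~\ref{prop: prop_613} to establish that no node in ${}^{<\omega}n$ ever uses the index-position $n+1$ (i.e. never inserts an ordinal between $\gamma_n$ and $\delta$ and never walks down via $C_\delta$); this is what makes the correspondence with the $n$-dimensional walk down from $\vec{\gamma}$ using $\mc{C}^\delta$ exact.

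For the induction: the base case $x = \emptyset$ is immediate from the definitions, since $\Tr^{\mc{C}}_{n+1}(+,\vec{\gamma},\delta)(\emptyset) = (+,\vec{\gamma},\delta)$ and $\Tr^{\mc{C}^\delta}_n(+,\vec{\gamma})(\emptyset) = (+,\vec{\gamma})$. For the inductive step, suppose the claim holds for a non-terminal $x \in S^{\mc{C}^\delta}_n(\vec{\gamma})$ with $\Tr^{\mc{C}^\delta}_n(+,\vec{\gamma})(x) = ((-1)^\ell,\vec{\beta})$; by hypothesis $\Tr^{\mc{C}}_{n+1}(+,\vec{\gamma},\delta)(x) = ((-1)^\ell,\vec{\beta},\delta)$. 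Writing $\tau(\vec{\beta})$ for the longest final segment of $\vec{\beta}$ in $I(\mc{C}^\delta)$ and letting $j$ index the largest entry of $\vec{\beta}$ not in $\tau(\vec{\beta})$, one checks that $\tau(\vec{\beta})^\frown\langle\delta\rangle$ is the longest final segment of $\vec{\beta}{}^\frown\langle\delta\rangle$ in $I(\mc{C})$ (using clause (2) of Definition~\ref{def: n_c_sequence} and the fact that $\vec{\beta} \in [C_\delta]^{\leq n}$ since $x$ is a node of the $\mc{C}^\delta$-walk), so $\tau(\vec{\beta},\delta) = \tau(\vec{\beta})^\frown\langle\delta\rangle$. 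The step-down rule in Definition~\ref{Tr} then inserts the \emph{same} ordinal $\min(C^\delta_{\tau(\vec{\beta})}\setminus\beta_j) = \min(C_{\tau(\vec{\beta})\delta}\setminus\beta_j)$ in both walks at the same coordinate, and since the index-enumeration $\langle\ell_i\rangle$ in the $(n+1)$-dimensional walk skips position $j+1$ exactly as in the $n$-dimensional walk (position $n+1$, holding $\delta$, is never skipped because $\delta$ is the largest entry and $j+1 \leq n$), the signs match as well. This verifies that $x^\frown\langle i\rangle$ satisfies part (2) and that the trees agree, giving part (1). Terminality is also seen to transfer, since $C_{\tau(\vec{\beta})\delta}\setminus\beta_j = \emptyset$ iff $C^\delta_{\tau(\vec{\beta})}\setminus\beta_j = \emptyset$.

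Finally, the statement about the lower traces: since by Definition~\ref{extreme} the set $E_{n+1}(\vec{\gamma},\delta) \cap {}^{<\omega}n$ consists exactly of those nodes $x$ where $\tau$ of the label has length $n+1$, i.e. $\tau(\vec{\beta})$ has length $n$ in the $\mc{C}^\delta$-picture, this coincides with $E_n(\vec{\gamma})$ under the correspondence; and the quantity $\sup(\gamma_0 \cap C_{(\Tr(\cdot)(y))^0})$ being maximized in Definition~\ref{lowerTrace} is computed from the same clubs $C_{\vec{\beta}\delta} = C^\delta_{\vec{\beta}}$ in both cases. Hence $L_{n+1}^{\mc{C}}(\vec{\gamma},\delta)(x) = L_n^{\mc{C}^\delta}(\vec{\gamma})(x)$ for all relevant $x$. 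I expect the main obstacle to be purely bookkeeping: getting the sign exponents and the coordinate-index enumerations $\langle\ell_i : i<n\rangle$ to line up exactly between the two walks, since the $(n+1)$-dimensional walk enumerates a subset of $\{1,\dots,n+2\}$ while the $n$-dimensional one enumerates a subset of $\{1,\dots,n+1\}$; the resolution is that the extra top index is never excluded, so the two enumerations differ only by a harmless relabeling that does not affect the ordinal data or the parity.
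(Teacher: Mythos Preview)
Your proposal is correct and follows essentially the same approach as the paper's proof: an induction on $|x|$ showing that the labels agree (up to appending $\langle\delta\rangle$), that $\tau^{\mc{C}^\delta}(\vec{\beta})^\frown\langle\delta\rangle = \tau^{\mc{C}}(\vec{\beta},\delta)$, and that terminality transfers. The paper runs the induction over $x \in S^{\mc{C}}_{n+1}(\vec{\gamma},\delta) \cap {^{<\omega}}n$ rather than over $S^{\mc{C}^\delta}_n(\vec{\gamma})$, and does not separately invoke Proposition~\ref{prop: prop_613} or spell out the lower-trace and sign bookkeeping, but these are cosmetic differences.
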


\begin{proof}
  Fix $\vec{\gamma} \in [C_\delta]^{n+1}$.
  By induction on $|x|$, we will prove that, for all $x \in 
  S^{\mc{C}}_{n+1}(\vec{\gamma},\delta) \cap {^{<\omega}}n$, the following 
  statements hold:
  \begin{enumerate}[label = (\alph*)]
    \item $x \in S^{\mc{C}^\delta}_n(\vec{\gamma})$;
    \item $\Tr^{\mc{C}}_{n+1}(+,\vec{\gamma},\delta)(x) = 
      (\Tr^{\mc{C}^\delta}_n(+,\vec{\gamma})(x))^\frown \langle 
      \delta \rangle$;
    \item $x$ is a terminal node of $S^{\mc{C}^\delta}_n(\vec{\gamma})$ 
    if and only if it is a terminal node of $S^{\mc{C}}_{n+1}(\vec{\gamma},\delta)$.
  \end{enumerate}
  This will clearly suffice to establish the lemma.
  
  If $x = \emptyset$, then (a) and (b) hold by assumption. To establish 
  (c), note that, since $\vec{\gamma} \in [C_\delta]^{n+1}$, we certainly 
  have $|\tau^{\mc{C}}(\vec{\gamma},\delta)| > 1$. It then follows that 
  $\tau^{\mc{C}^\delta}(\vec{\gamma})^\frown \langle \delta \rangle = 
  \tau^{\mc{C}}(\vec{\gamma},\delta)$ and $\iota^{\mc{C}^\delta}(\vec{\gamma}) 
  = \iota^{\mc{C}}(\vec{\gamma},\delta)$. Let $\gamma^*$ denote the maximal 
  entry in $\iota^{\mc{C}^\delta}(\vec{\gamma})$.
  Then $\emptyset$ is a terminal node of $S^{\mc{C}^\delta}_n(\vec{\gamma})$ 
  if and only if $C_{\tau^{\mc{C}^\delta}(\vec\gamma)^\frown\langle \delta\rangle} \setminus \gamma^* = \emptyset$, 
  which in turn holds if and only if $\emptyset$ is a terminal node of 
  $S^{\mc{C}}_{n+1}(\vec{\gamma},\delta)$, thus establishing (c).
  
  Suppose now that $x$ is a non-terminal node of $S^{\mc{C}}_{n+1}(\vec{\gamma}, 
  \delta)$ and we have established (a)--(c) for $x$. Fix $i < n$; we will 
  establish (a)--(c) for $x^\frown \langle i \rangle$. Note first that, 
  by the induction hypothesis, we know that $x$ is not a terminal node of 
  $S^{\mc{C}^\delta}(\vec{\gamma})$, and hence $x^\frown \langle i \rangle 
  \in S^{\mc{C}^\delta}(\vec{\gamma})$, establishing (a).
  
  Suppose that $\Tr^{\mc{C}^\delta}_n(+,\vec{\gamma})(x) = 
  ((-1)^m, \vec{\beta})$. By the induction hypothesis, it follows that 
  $\Tr^{\mc{C}}_{n+1}(+,\vec{\gamma},\delta) = ((-1)^m, \vec{\beta}, 
  \delta)$. In particular, we know that $\vec{\beta} \in [C_\delta]^{n+1}$, 
  and hence we have
  \begin{itemize}
    \item $\tau^{\mc{C}^\delta}(\vec{\beta})^\frown \langle \delta \rangle 
    = \tau^{\mc{C}}(\vec{\beta},\delta)$;
    \item $\iota^{\mc{C}^\delta}(\vec{\beta}) = \iota^{\mc{C}}(\vec{\beta}, 
    \delta)$.
  \end{itemize}
  Set $j + 1 = |\iota^{\mc{C}^\delta}(\vec{\beta})|$. Note that 
  $j \leq n$, and let $\langle \ell_i \mid i < n \rangle$ be the increasing 
  enumeration of the set $\{1, \ldots, n+1\} \setminus \{j+1\}$. 
  Let $\beta^*$ denote the maximal element of $\iota^{\mc{C}^\delta}(\vec{\beta})$, 
  and let $\alpha^* = \min(C_{\tau^{\mc{C}}(\vec{\beta},\delta)} 
  \setminus \beta^*)$. Then, we have
  \begin{align*}
    \Tr^{\mc{C}}_{n+1}(+,\vec{\gamma},\delta)(x^\frown \langle i \rangle) 
    & = ((-1)^{m+\ell_i}, (\iota^{\mc{C}}(\vec{\beta},\delta), 
    \beta^*, \tau^{\mc{C}}(\vec{\beta},\delta))^{\ell_i}) \\ 
    & = ((-1)^{m + \ell_i}, (\iota^{\mc{C}^\delta}(\vec{\beta}), 
    \beta^*, \tau^{\mc{C}^\delta}(\vec{\beta}))^{\ell_i}) ^\frown \langle 
    \delta \rangle \\ 
    & = (\Tr^{\mc{C}^\delta}_n(+,\vec{\gamma})(x^\frown \langle i 
    \rangle))^\frown \langle \delta \rangle.
  \end{align*}
  This establishes (b). Using the above calculations, item (c) is verified in 
  exactly the same way as it was in the base case, so we leave this to the 
  reader.
\end{proof}

Suppose that $n$ is a positive integer, $\delta < \lambda$ are infinite 
ordinals, and $D$ is a cofinal subset of $\delta$. Define a homomorphism $\pi^D_n : \bigoplus_{[\lambda]^n} 
\bb{Z} \ra \bigoplus_{[D]^{n-1}} \bb{Z}$ by letting 
$\pi^\delta_n(\lfloor \vec{\gamma}^\frown \langle \delta \rangle 
\rfloor) = \lfloor \vec{\gamma} \rfloor$ for all 
$\vec{\gamma} \in [D]^n$ and $\pi^\delta_n(\lfloor \vec{\beta} 
\rfloor) = 0$ for all $\vec{\beta} \in [\lambda]^{n+1}$ with 
$\beta_n \neq \delta$ or $\vec{\beta}^n \not\subseteq D$. 
The following is now immediate from Lemma \ref{lemma: dimension_reduction} and 
Definition \ref{rho2}.

\begin{corollary} \label{cor: resh_dim}
  Suppose that $n$ is a positive integer, $\lambda$ is an ordinal, 
  $D$ is a club in $\lambda$, 
  and $\mc{C}$ is an $(n+1)$-$C$-sequence on $D$. Suppose also that 
  $\delta \in \acc(D)$, and let $\mc{C}^\delta$ be the $n$-$C$-sequence 
  on $C_\delta$ defined by letting $I(\mc{C}^\delta) = \{\vec{\gamma} \mid 
  \vec{\gamma}^\frown \langle \delta \rangle \in I(\mc{C})\}$ and 
  $C^\delta_{\vec{\gamma}} = C_{\vec{\gamma}\delta}$ for all 
  $\vec{\gamma} \in I(\mc{C}^\delta)$. Then, for all $\vec{\gamma} \in 
  [C_\delta]^n$ and all $\alpha \in C_\delta \cap \gamma_0$, we have 
  \[
    \resh_n^{\mc{C}^\delta}(\alpha,\vec{\gamma}) = 
    \pi^{C_\delta}_n(\resh_{n+1}^{\mc{C}}(\alpha,\vec{\gamma},\delta)).
  \]
\end{corollary}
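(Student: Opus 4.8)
The plan is simply to unwind both sides via Definition~\ref{rho2} and to match summands using Lemma~\ref{lemma: dimension_reduction} and Proposition~\ref{prop: prop_613}. Fix $\vec{\gamma} \in [C_\delta]^n$ and $\alpha \in C_\delta \cap \gamma_0$, so that $(\alpha,\vec{\gamma}) \in [C_\delta]^{n+1}$ and $(\alpha,\vec{\gamma},\delta) \in [D]^{n+2}$. Applying Lemma~\ref{lemma: dimension_reduction} to the $(n+1)$-$C$-sequence $\mc{C}$ with the tuple $(\alpha,\vec{\gamma})$ playing the role of its ``$\vec{\gamma}$'', we obtain $S^{\mc{C}^\delta}_n(\alpha,\vec{\gamma}) = S^{\mc{C}}_{n+1}(\alpha,\vec{\gamma},\delta) \cap {}^{<\omega}n$ and, for every $x \in S^{\mc{C}^\delta}_n(\alpha,\vec{\gamma})$,
\[
  \Tr^{\mc{C}}_{n+1}(+,\alpha,\vec{\gamma},\delta)(x) = (\Tr^{\mc{C}^\delta}_n(+,\alpha,\vec{\gamma})(x))^\smallfrown\langle \delta \rangle .
\]
In particular the two traces carry the same sign, and the ordinal tuple of the $(n+1)$-dimensional output is that of the $n$-dimensional one with $\delta$ appended at the end.

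By Definition~\ref{rho2} and the fact that $\pi^{C_\delta}_n$ is a homomorphism, $\pi^{C_\delta}_n(\resh^{\mc{C}}_{n+1}(\alpha,\vec{\gamma},\delta)) = \sum_{y \in S^{\mc{C}}_{n+1}(\alpha,\vec{\gamma},\delta)} \pi^{C_\delta}_n(\sigma_{\alpha\vec{\gamma}\delta}(y))$, where $\sigma_{\alpha\vec{\gamma}\delta}(y)$ is the signed basis element attached to $\Tr^{\mc{C}}_{n+1}(+,\alpha,\vec{\gamma},\delta)(y)$ with its first two ordinal coordinates deleted. I would split the sum according to whether $y \in {}^{<\omega}n$. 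For $y \notin {}^{<\omega}n$, the remark in the proof of Proposition~\ref{prop: prop_613} (that $\varepsilon(\alpha,\vec{\gamma},\delta)$ is nonincreasing along the tree and equals $\delta$ at the root) together with Proposition~\ref{prop: prop_613} itself shows that the maximal ordinal appearing in $\Tr^{\mc{C}}_{n+1}(+,\alpha,\vec{\gamma},\delta)(y)$ is strictly below $\delta$; hence the last coordinate of the basis element carried by $\sigma_{\alpha\vec{\gamma}\delta}(y)$ is not $\delta$, so $\pi^{C_\delta}_n(\sigma_{\alpha\vec{\gamma}\delta}(y)) = 0$. For $y \in {}^{<\omega}n$, the displayed identity above shows that deleting the first two ordinal coordinates of $\Tr^{\mc{C}}_{n+1}(+,\alpha,\vec{\gamma},\delta)(y)$ yields the corresponding deleted-coordinate output of $\Tr^{\mc{C}^\delta}_n(+,\alpha,\vec{\gamma})(y)$ with $\delta$ appended; a short induction on the length of the walk—using that every club occurring in $\mc{C}^\delta$ is a subset of $C_\delta$ and that $(\alpha,\vec{\gamma}) \in [C_\delta]^{n+1}$—shows that all the non-$\delta$ entries lie in $C_\delta$. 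Therefore $\pi^{C_\delta}_n(\sigma_{\alpha\vec{\gamma}\delta}(y))$ is exactly the signed basis element attached to $\Tr^{\mc{C}^\delta}_n(+,\alpha,\vec{\gamma})(y)$ with its first two ordinal coordinates deleted, i.e.\ the value of $\sigma_{\alpha\vec{\gamma}}(y)$ computed with respect to $\mc{C}^\delta$.

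Combining these, and using $S^{\mc{C}^\delta}_n(\alpha,\vec{\gamma}) = S^{\mc{C}}_{n+1}(\alpha,\vec{\gamma},\delta) \cap {}^{<\omega}n$ in the middle step,
\[
  \pi^{C_\delta}_n\bigl(\resh^{\mc{C}}_{n+1}(\alpha,\vec{\gamma},\delta)\bigr) = \sum_{y \in S^{\mc{C}}_{n+1}(\alpha,\vec{\gamma},\delta) \cap {}^{<\omega}n} \sigma_{\alpha\vec{\gamma}}(y) = \sum_{x \in S^{\mc{C}^\delta}_n(\alpha,\vec{\gamma})} \sigma_{\alpha\vec{\gamma}}(x) = \resh^{\mc{C}^\delta}_n(\alpha,\vec{\gamma}),
\]
as desired. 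The whole argument is bookkeeping once Lemma~\ref{lemma: dimension_reduction} and Proposition~\ref{prop: prop_613} are in hand; the only points that require any care are keeping track of exactly which coordinates and which sign are carried by the $(\cdot)^1$ operation and by $\pi^{C_\delta}_n$, together with the elementary observation that an $n$-dimensional walk performed on a $C$-sequence over $C_\delta$ never leaves $C_\delta$. I do not anticipate a genuine obstacle.
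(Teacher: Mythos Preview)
Your proof is correct and follows essentially the same approach as the paper, which treats the corollary as ``immediate from Lemma~\ref{lemma: dimension_reduction} and Definition~\ref{rho2}.'' You have simply (and correctly) unpacked that immediacy: splitting the sum over $S^{\mc{C}}_{n+1}(\alpha,\vec{\gamma},\delta)$ according to membership in ${}^{<\omega}n$, invoking Proposition~\ref{prop: prop_613} to kill the contribution from nodes outside ${}^{<\omega}n$, and using the trace identity of Lemma~\ref{lemma: dimension_reduction} to match the remaining summands with those of $\resh_n^{\mc{C}^\delta}(\alpha,\vec{\gamma})$.
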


We are now ready for the main theorem of this section, which establishes Theorem C in the introduction.

\begin{theorem}
  Suppose that $n$ is a positive integer, $D$ is a club in $\lambda$, and $\mc{C}$ 
  is a $\Sq{n}^s(D)$-sequence. Then $\Phi(\resh^{\mc{C}}_n)$ is nontrivial modulo locally semi-constant functions.
\end{theorem}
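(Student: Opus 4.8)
The plan is to proceed by induction on $n$. The case $n = 1$ is Corollary~\ref{cor: rho_nontriv}: a $\Sq{1}^s(D)$-sequence is in particular a $\square(D)$-sequence, and $\resh_1$ is $\rho_2$ up to an irrelevant additive constant, so $\Phi(\resh_1^{\mc{C}})$ is nontrivial. For the inductive step, fix $n \geq 2$, let $\mc{C}$ be a $\Sq{n}^s(D)$-sequence, and set $T = \{\delta \in D \mid \mc{C}^\delta \text{ is strongly nontrivial}\}$, which is stationary in $\lambda$ by Definition~\ref{def: trivial_c_sequence}; for each $\delta \in T$, the restriction $\mc{C}^\delta$ is a coherent $(n-1)$-$C$-sequence on $C_\delta$ (as $\mc{C}$ is $n$-coherent; see Section~\ref{section: squares}) that is strongly nontrivial, i.e.\ a $\Sq{n-1}^s(C_\delta)$-sequence. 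Suppose toward a contradiction that $\Phi(\resh_n^{\mc{C}})$ is trivial, and fix a witnessing family $\Psi = \langle \psi_{\vec\beta} : \beta_0 \ra \bigoplus_{\lambda^{[n-1]}} \bb{Z} \mid \vec\beta \in [D]^{n-1} \rangle$. I will produce a club $E \subseteq \acc(D)$ such that $\Phi(\resh_{n-1}^{\mc{C}^\delta})$ is trivial for every $\delta \in E$; any $\delta \in E \cap T$ then contradicts the inductive hypothesis applied to $\mc{C}^\delta$.

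The heart of the matter is a reflection argument that annihilates a ``diagonal'' term. Fix a large regular $\theta$ and a continuous $\in$-chain $\langle M_i \mid i < \cf(\lambda) \rangle$ of elementary submodels of $H_\theta$, each of size $<\cf(\lambda)$ and containing $\mc{C}$, $\Psi$, and $\lambda$, arranged so that each $M_i \cap \lambda$ is an ordinal and $\langle M_i \cap \lambda \mid i < \cf(\lambda) \rangle$ is strictly increasing. Then $E := \{M_i \cap \lambda \mid i < \cf(\lambda)\} \cap \acc(D)$ is club in $\lambda$. Fix $\delta \in E$, say $\delta = M_i \cap \lambda$, together with $\vec\gamma \in [C_\delta]^{n-1}$ and $\alpha < \gamma_0$. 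Since $C_\delta \subseteq D \cap \delta$ and $\alpha < \delta$, we have $\vec\gamma, \alpha \in M_i$, hence $\psi_{\vec\gamma}(\alpha) \in M_i$; as $\psi_{\vec\gamma}(\alpha)$ is a finitely supported element of $\bigoplus_{\lambda^{[n-1]}} \bb{Z}$, every ordinal occurring in any of its support sequences lies in $M_i \cap \lambda = \delta$, so no support sequence of $\psi_{\vec\gamma}(\alpha)$ ends in $\delta$. Hence $\pi^{C_\delta}_{n-1}(\psi_{\vec\gamma}(\alpha)) = 0$, where $\pi^{C_\delta}_{n-1} : \bigoplus_{\lambda^{[n-1]}} \bb{Z} \ra \bigoplus_{C_\delta^{[n-2]}} \bb{Z}$ is the projection defined just before Corollary~\ref{cor: resh_dim}. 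In other words, $\pi^{C_\delta}_{n-1} \circ \psi_{\vec\gamma}$ is identically $0$ for every $\vec\gamma \in [C_\delta]^{n-1}$.

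It remains to carry out the dimension reduction. Fix $\delta \in E$ and $\vec\gamma \in [C_\delta]^{n-1}$, so that $(\vec\gamma, \delta) \in [D]^n$. By triviality of $\Phi(\resh_n^{\mc{C}})$, the function $\resh_n^{\mc{C}}(\cdot, \vec\gamma, \delta) - \sum_{i<n} (-1)^i \psi_{(\vec\gamma,\delta)^i}$ is locally semi-constant on $D \cap \gamma_0$. Post-composing with the homomorphism $\pi^{C_\delta}_{n-1}$ and restricting the domain to $C_\delta$ preserves local semi-constancy (using $\acc(C_\delta) \subseteq \acc(D)$), and by Corollary~\ref{cor: resh_dim} the result of applying $\pi^{C_\delta}_{n-1}$ to the first term is $\resh_{n-1}^{\mc{C}^\delta}(\cdot, \vec\gamma)$. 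Since $(\vec\gamma,\delta)^i = (\vec\gamma^i,\delta)$ for $i<n-1$ while $(\vec\gamma,\delta)^{n-1} = \vec\gamma$, and since $\pi^{C_\delta}_{n-1} \circ \psi_{\vec\gamma} \equiv 0$, we obtain that
\[
  \resh_{n-1}^{\mc{C}^\delta}(\cdot, \vec\gamma) - \sum_{i<n-1} (-1)^i \bigl(\pi^{C_\delta}_{n-1} \circ \psi_{(\vec\gamma^i,\delta)}\bigr)
\]
is locally semi-constant. Therefore the family $\Psi^\delta := \langle \pi^{C_\delta}_{n-1} \circ \psi_{(\vec\beta,\delta)} \mid \vec\beta \in [C_\delta]^{n-2} \rangle$ witnesses that $\Phi(\resh_{n-1}^{\mc{C}^\delta})$ is trivial, for every $\delta \in E$. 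Taking $\delta \in E \cap T$ contradicts the inductive hypothesis, since $\mc{C}^\delta$ is a $\Sq{n-1}^s(C_\delta)$-sequence. This completes the induction.

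The step I expect to require the most care is the reflection argument showing $\pi^{C_\delta}_{n-1} \circ \psi_{\vec\gamma} \equiv 0$ for all $\vec\gamma \in [C_\delta]^{n-1}$; the rest is bookkeeping --- tracing the projection $\pi^{C_\delta}_{n-1}$ through the triviality equations and through Corollary~\ref{cor: resh_dim}, plus the routine observations that local semi-constancy is preserved under post-composition with a fixed homomorphism and under passing from $D$ to the sub-club $C_\delta$, and that $\mc{C}^\delta$ is a $\Sq{n-1}^s(C_\delta)$-sequence whenever $\delta \in T$.
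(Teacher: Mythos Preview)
Your proof is correct and follows essentially the same approach as the paper's: induct on $n$, assume a trivialization $\Psi$, find a club $E$ of $\delta$'s at which the ``diagonal'' term $\psi_{\vec\gamma}$ is killed by the projection $\pi^{C_\delta}_{n-1}$, and then push the triviality down to $\mc{C}^\delta$ via Corollary~\ref{cor: resh_dim} to contradict the inductive hypothesis at some $\delta \in E \cap T$. The only difference is that the paper obtains $E$ by a direct closure argument (using that $\otp(D)$ is regular uncountable to see that $\{\delta \in D \mid \psi_{\vec\gamma}[\gamma_0] \subseteq \bigoplus_{[D\cap\delta]^{n-1}}\bb{Z} \text{ for all } \vec\gamma \in [D\cap\delta]^{n-1}\}$ is club), whereas you use an elementary-submodel chain; these are interchangeable standard devices and yield the same club.
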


\begin{proof}
  The proof is by induction on $n$. If $n = 1$, then the result follows from 
  Corollary \ref{cor: rho_nontriv}, so suppose that 
  $n_0 \geq 1$ and $n = n_0 + 1$. Suppose for the sake of contradiction that 
  $\Phi(\resh_n^{\mc{C}})$ is trivial, and fix a trivializing family 
  \[
    \Psi = \left\langle \psi_{\vec{\gamma}}: D \cap \gamma_0 \ra 
    \bigoplus_{[D]^n} \bb{Z} \ \middle| \ \vec{\gamma} \in [D]^{n_0} 
    \right\rangle.
  \]
  Since $\mc{C}$ is a $\Sq{n}^s(D)$-sequence, we know that 
  $\kappa := \otp(D)$ is a regular uncountable cardinal. It follows that the set
  \[
    E := \left\{\delta \in D ~ \middle| ~ \forall \vec{\gamma} \in 
    [D \cap \delta]^{n-1} ~ \psi_{\vec{\gamma}}[D \cap \gamma_0] \subseteq 
    \bigoplus_{[D \cap \delta]^n} \bb{Z}\right\}
  \]
  is a club subset of $D$. We can therefore find $\delta \in E$ such that 
  $\mc{C}^\delta$ is a strongly nontrivial coherent $n_0$-$C$-sequence on 
  $C_\delta$, and hence, by the inductive hypothesis, 
  $\Phi(\resh_{n_0}^{\mc{C}^\delta})$ is nontrivial. 
  
  Let us now make a couple of observations. For each 
  $\vec{\gamma} \in [C_\delta]^{n_0}$, the function
  \[
    f_{\vec{\gamma}} := \resh_n^{\mc{C}}(\cdot, \vec{\gamma},\delta) - 
    \left( (-1)^{n_0} \psi_{\vec{\gamma}} + \sum_{i < n_0} (-1)^i 
    \psi_{\vec{\gamma}^i\delta} \right)
  \]
  is locally semi-constant, and hence $\pi^{C_\delta}_n \circ f_{\vec{\gamma}}$ 
  is also locally semi-constant. 
  Moreover, by Corollary \ref{cor: resh_dim}, 
  we know that $\resh_{n_0}^{\mc{C}_\delta}(\cdot, \vec{\gamma}) = 
  \pi^{C_\delta}_n \circ \resh_n^{\mc{C}}(\cdot, \vec{\gamma},\delta)$.
  Since $\delta \in E$, we know that 
  $\pi^{C_\delta}_n \circ \psi_{\vec{\gamma}} = 0$; putting this together, 
  it follows that 
  \begin{align} \label{eqn: triv_eqn}
    \pi^{C_\delta}_n \circ f_{\vec{\gamma}} = \resh_{n_0}^{\mc{C}^\delta}(\cdot, 
    \vec{\gamma}) - \sum_{i < n_0} (-1)^i \pi^{C_\delta}_n \circ 
    \psi_{\vec{\gamma}^i\delta}
  \end{align}
  is locally semi-constant for all $\vec{\gamma} \in [C_\delta]^{n_0}$
  
  Suppose first that $n_0 = 1$, and let $\sigma = \pi^{C_\delta}_n \circ 
  \psi_\delta$. Then equation (\ref{eqn: triv_eqn}) implies that $\sigma$ 
  trivializes $\Phi(\resh^{\mc{C}^\delta}_{n_0})$, contradicting the fact that 
  $\Phi(\resh^{\mc{C}^\delta}_{n_0})$ is nontrivial.
  
  Suppose next that $n_0 > 1$. For each $\vec{\beta} \in [C_\delta]^{n_0-1}$, 
  let $\sigma_{\vec{\beta}} = \pi^{C_\delta}_n \circ \psi_{\vec{\beta}\delta}$. 
  Then equation (\ref{eqn: triv_eqn}) implies that the family 
  $\langle \sigma_{\vec{\beta}} \mid \vec{\beta} \in [C_\delta]^{n_0-1} \rangle$ 
  trivializes $\Phi(\resh^{\mc{C}^\delta}_{n_0})$, again yielding the desired 
  contradiction.
\end{proof}

We end this section with the promised discussion of the prospects for 
deriving nontrivial coherent $n$-dimensional families of functions mapping 
into $\bb{Z}$ via the machinery of higher-dimensional walks. As mentioned 
in the introduction, it seems somewhat unlikely that this can be done in a 
canonical and uniform way for $n > 1$. The reason for this is the following 
fact, from a forthcoming work of Bergfalk, Zhang, and the first author.

\begin{fact}[\cite{coh_of_ord_book}]
  Suppose that $n < \omega$, $2^{\omega_n} = \omega_{n+1}$, and 
  \[
    \Phi = \langle \varphi_{\vec{\gamma}}:\gamma_0 \ra \bb{Z} \mid 
    \vec{\gamma} \in [\omega_{n+2}]^{n+2} \rangle
  \]
  is coherent mod finite. Then there is a cardinal-preserving forcing 
  extension in which $\Phi$ is trivial mod finite.
\end{fact}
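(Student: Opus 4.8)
The plan is to force with the natural poset $\mathbb{P}_\Phi$ that generically adds a trivializing family $\Psi = \langle \psi_{\vec\beta}:\beta_0 \ra \bb{Z} \mid \vec\beta \in [\omega_{n+2}]^{n+1} \rangle$ for $\Phi$. A condition in $\mathbb{P}_\Phi$ consists of a bounded amount of data: a set of fewer than $\omega_{n+1}$ many ``activated'' tuples and top-tuples, finitely many $\bb{Z}$-valued ``seed'' values, and, for each activated $(n+2)$-tuple $\vec\gamma$, a finite set recording the ordinals at which the alternating sum $\varphi_{\vec\gamma} - \sum_{i \le n+1}(-1)^i \psi_{\vec\gamma^i}$ is permitted to deviate from $0$; the functions $\psi_{\vec\beta}$ themselves are then canonically reconstructed (modulo finite) from $\Phi$ and the activated top-tuple data, so that in particular a condition is coded by a subset of $\omega_{n+2}$ of size ${<}\omega_{n+1}$. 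Conditions are ordered by end-extension, with the recorded deviation sets frozen once committed. A routine density argument shows that a generic $G$ yields a family $\Psi_G$ of the required form that trivializes $\Phi$ modulo finite; the only real content is an \emph{amalgamation lemma} asserting that any condition can be extended so as to activate a prescribed new tuple, which reduces, via the coherence of $\Phi$ at all dimensions $\le n+2$, to showing that the finitely many target constraints imposed on a single $\psi_{\vec\beta}$ are jointly realizable.

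For cardinal preservation I would first check that $\mathbb{P}_\Phi$ is ${<}\omega_{n+1}$-directed closed: the union of a ${<}\omega_{n+1}$-directed family of conditions is again a condition, since each deviation constraint involves only the $n+2$ faces of a single top-tuple and hence is witnessed inside a single condition of the family. This preserves all cardinals and cofinalities $\le \omega_{n+1}$. The remaining point is the $\omega_{n+2}$-chain condition, and this is where the hypothesis $2^{\omega_n} = \omega_{n+1}$ is used. Given $\omega_{n+2}$ many conditions, one thins to a $\Delta$-system in the underlying sets of tuples with root $r$ — the $\Delta$-system lemma applies precisely because $2^{\omega_n} = \omega_{n+1}$ gives $\mu^{\omega_n} \le \omega_{n+1} < \omega_{n+2}$ for every $\mu \le \omega_{n+1}$ — and then thins further so that the restrictions to $r$ all have the same ``profile'', of which there are at most $2^{\omega_n} \cdot \omega_{n+1} = \omega_{n+1}$ by the same cardinal arithmetic. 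Two conditions agreeing on the root in this strong sense are compatible: off the root their tuples are disjoint, so the only coupling is through $(n+2)$-tuples whose faces are split between the two conditions and the root, and these are reconciled by the same mechanism as in the density argument. This yields the $\omega_{n+2}$-cc, hence preservation of all cardinals $\ge \omega_{n+2}$; together with the closure, $\mathbb{P}_\Phi$ preserves all cardinals, and in $V^{\mathbb{P}_\Phi}$ the family $\Phi$ is trivial modulo finite.

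The main obstacle is the amalgamation lemma, in both of its incarnations. The difficulty is to pass from a collection of targets for a given $\psi_{\vec\beta}$ that are pairwise equal modulo finite — this pairwise agreement being exactly what the coherence of $\Phi$ supplies, via the vanishing modulo finite of the alternating sum over a common $(n+3)$-tuple, together with the already-committed data on the intermediate faces — to a single function $\psi_{\vec\beta}$ agreeing with \emph{all} of them modulo finite while respecting the frozen finite deviation sets of every relevant top-tuple. In the two-dimensional case ($n=0$) this can be arranged by a direct finite correction, but for $n \ge 1$ the constraints proliferate and must be organized into a recursion of length $\omega_{n+1}$ (enumerating the obstructions using $2^{\omega_n} = \omega_{n+1}$), and the whole argument must be bookkept so that the deviation sets stay finite throughout. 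Making this recursion close is the technical heart of the matter and is the reason the result warrants the dedicated treatment of \cite{coh_of_ord_book}.
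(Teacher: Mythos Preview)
The paper does not prove this statement; it is stated as a \emph{Fact} and attributed to the forthcoming work \cite{coh_of_ord_book} of Bergfalk, Zhang, and the first author, with no proof or sketch given in the present paper. There is therefore no ``paper's own proof'' against which to compare your proposal.

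As for the proposal itself: the broad strategy --- force with a poset of partial trivializations, argue ${<}\omega_{n+1}$-closure for downward preservation and the $\omega_{n+2}$-cc from $2^{\omega_n}=\omega_{n+1}$ for upward preservation --- is the expected shape for a result of this kind. But what you have written is a sketch of a strategy rather than a proof, and you acknowledge this yourself in the final paragraph. The description of conditions (``activated tuples'', ``seed values'', functions ``canonically reconstructed modulo finite'') is too loose to pin down the poset, and without a precise definition the closure and chain-condition claims cannot be checked. More seriously, the amalgamation lemma --- which you correctly identify as the heart of the matter --- is not proved but merely asserted to reduce to a joint realizability problem, and your closing remarks concede that organizing the recursion so that deviation sets stay finite is exactly the content deferred to \cite{coh_of_ord_book}. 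So the proposal is best read as a plausible outline of how such a proof might go, not as a proof.
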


An analogue of this fact should also hold for coherence and triviality 
modulo locally semi-constant functions.
To see the relevance of this fact, suppose that we are in a model of 
$\ZFC$ in which $n_0 < \omega$, $2^{\omega_{n_0}} = \omega_{n_0+1}$, and 
$\mc{C}$ is an order-type-minimal $(n_0+2)$-$C$-sequence on 
$\omega_{n_0+2}$. In particular, letting $n = n_0+2$, $\mc{C}$ is trivially a 
$\Sq{n}^s(\omega_n)$-sequence. Now suppose that 
\[
    \Phi_{\mc{C}} = \langle \varphi_{\vec{\gamma}}:\gamma_0 \ra \bb{Z} \mid 
    \vec{\gamma} \in [\omega_n]^{n} \rangle
\]
is a coherent $n$-family of functions derived in a sufficiently canonical 
and uniform way from performing $n$-dimensional walks using $\mc{C}$. 
By the above fact, there is a cardinal-preserving forcing extension $V'$ in 
which $\Phi_{\mc{C}}$ is trivial. However, in $V'$, $\mc{C}$ remains an 
order-type-minimal $n$-$C$-sequence and hence a $\Sq{n}^s(\omega_n)$-sequence. Moreover, as long as the passage from $\mc{C}$ to $\Phi_{\mc{C}}$ is 
sufficiently absolute, as is the case in all current families derived from 
the walks machinery, $\Phi_{\mc{C}}$ as computed in $V'$ will equal 
$\Phi_{\mc{C}}$ as computed in $V$. But $\Phi_{\mc{C}}$ is trivial in 
$V'$, suggesting that there cannot be a simple ``recipe'' for converting 
a $\Sq{n}^s(\lambda)$-sequence into a nontrivial coherent $n$-family of 
functions mapping into $\bb{Z}$. This observation partially justifies our 
moving from the function $\rho_2^n$ to the richer function $\resh_n$, which, 
for $n \geq 2$, maps not into $\bb{Z}$ but into the larger group 
$\bigoplus_{[\lambda]^{n-1}}\bb{Z}$.

\printbibliography
\end{document}